\documentclass[final]{dmtcs-episciences}


\usepackage[utf8]{inputenc}
\usepackage{subfigure}

%

\usepackage[round]{natbib}

\usepackage{amsmath,amsthm}

\newtheorem{thm}{Theorem}

\newtheorem{lem}[thm]{Lemma}
\newtheorem{prop}[thm]{Proposition}
\newtheorem{prob}[thm]{Problem}
\newtheorem{obs}[thm]{Observation}
\newtheorem{cor}[thm]{Corollary}
\theoremstyle{definition}
\newtheorem{defin}[thm]{Definition}

\newcommand{\rd}[1]{{#1}}
\newcommand{\rdrd}[1]{{#1}}

\newcommand{\IN}{\ensuremath{{\mathbb N}}}


\author{Stephan Dominique Andres\affiliationmark{1}
  \and Wai Lam Fong\affiliationmark{2}
}
\title{Line game-perfect graphs}
\affiliation{
  Institute of Mathematics and Computer Science, University of Greifswald, Germany\\
  Department of Mathematics, The University of Hong Kong, Hong Kong SAR, China}
\keywords{line graph, line perfect graph, edge colouring game,
game-perfect graph, graph colouring game, perfect graph, forbidden
subgraph characterisation}
\begin{document}
\publicationdata{vol. 26:2}{2024}{12}{10.46298/dmtcs.10971}{2023-02-17; 2023-02-17; 2024-02-15}{2024-05-24}
\maketitle
\begin{abstract}
\phantom{.}\\
The \emph{$[X,Y]$-edge colouring game} is played with a set
of $k\in\IN$ colours on a graph $G$ with initially
uncoloured edges by two players, Alice (A) and Bob (B).
The players move alternately. Player $X\in\{A,B\}$ has the first move.
$Y\in\{A,B,-\}$. If $Y\in\{A,B\}$, then only player $Y$ may skip any move,
otherwise skipping is not allowed for any player.
A move \rd{consists of} colouring an uncoloured edge with one of the $k$ colours
such that adjacent edges have distinct colours. When no more moves are possible,
the game ends. If every edge is coloured in the end, Alice wins; otherwise, 
 Bob wins.

The \emph{$[X,Y]$-game chromatic index} $\chi_{[X,Y]}'(G)$ 
is the smallest nonnegative integer $k$ such that Alice has a winning
strategy for the $[X,Y]$-edge colouring game played on $G$ with $k$ colours.
The graph $G$ is called \emph{line $[X,Y]$-perfect} if,
for any edge-induced subgraph $H$ of $G$,
\[\chi_{[X,Y]}'(H)=\omega(L(H)),\]
where $\omega(L(H))$ denotes the clique number of the line graph of~$H$.

For each of the six possibilities $(X,Y)\in\{A,B\}\times\{A,B,-\}$,
we characterise line $[X,Y]$-perfect graphs by forbidden {edge-induced}
subgraphs and by explicit structural descriptions, respectively.
\end{abstract}


\section{Introduction}

\rd{A subgraph $H$ of a graph $G$ is an \emph{induced subgraph} of $G$ 
if every edge of $G$ whose two end-vertices are in the vertex set of $H$ 
is also an edge of $H$. 
A subgraph $H$ of a graph $G$ is an \emph{edge-induced subgraph} of $G$ 
if the vertex set of $H$ consists of all end-vertices of edges of $H$. 
Equivalently, an edge-induced subgraph of $G$ is a subgraph of $G$ 
that contains no isolated vertices.}

The \emph{line graph} $L(G)$ of a graph $G=(V,E)$ is the
graph $(V',E')$ with $V'=E$ and where
the edge set $E'$ of $L(G)$ 
is the set of all unordered pairs $\{e_1,e_2\}$ of elements in $E$ 
such that $e_1$ and $e_2$ are adjacent as edges in~$G$.


Edge colouring of a graph $G$ is equivalent to vertex colouring
of its line graph~$L(G)$. Moreover, the colouring parameter for
edge colouring, the chromatic index $\chi'(G)$ of~$G$, equals
the chromatic number $\chi(L(G))$ of the line graph~$L(G)$.

In this sense, the well-established concept of perfectness for
vertex colouring has an interesting analog for edge colouring, namely
line perfectness, which was first defined by \cite{trotter}.
A graph $G$ is \emph{line perfect} if, for any {edge-induced} subgraph
$H$ of $G$,
\[\chi'(H)=\omega(L(H)),\]
where $\omega(L(H))$, the \emph{clique number} of the line graph of~$H$,
is the maximum number of mutually adjacent edges in~$H$.
Equivalently, as remarked by \cite{trotter}, 
a graph is line perfect if its line graph is a perfect graph.
The reason for this equivalence is the fact that, for any graph~$G$,
the set of the line graphs of the edge-induced subgraphs of~$G$ is the same
as the set of 
\rd{the induced} subgraphs of~$L(G)$.

It is known that line perfect graphs are perfect:

\begin{thm}[\cite{trotter}]\label{thm:trotter}
Line perfect graphs are perfect.
\end{thm}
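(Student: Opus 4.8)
The plan is to combine the definitional form of line perfection with the Strong Perfect Graph Theorem. I want the implication: if $G$ is line perfect, then $G$ is perfect. The first step is to extract from line perfection a clean structural restriction, namely that $G$ contains no odd cycle of length at least $5$ as a subgraph. Indeed, suppose $G$ contained such a cycle $C_{2k+1}$ with $k\ge 2$. Taking its $2k+1$ edges as the edge set determines an edge-induced subgraph $H\cong C_{2k+1}$ (no chords appear, since only the chosen edges are selected). For this $H$ one computes $\chi'(H)=3$, while $\omega(L(H))=2$, because every vertex of a cycle has degree $2$ and hence at most two of its edges are pairwise adjacent. This contradicts $\chi'(H)=\omega(L(H))$, so no odd cycle of length $\ge 5$ can occur in a line perfect graph.

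It remains to deduce perfection from the absence of odd cycles of length $\ge 5$. Here I would invoke the Strong Perfect Graph Theorem: $G$ is perfect if and only if neither $G$ nor its complement contains an induced odd hole. An induced odd hole is in particular an odd cycle of length $\ge 5$ as a subgraph, and such subgraphs have just been excluded, so $G$ has no odd hole. For the odd antiholes $\overline{C_n}$ with $n\ge 7$ one checks that they too contain an odd cycle of length $\ge 5$ as a (not necessarily induced) subgraph: on the vertex set $\{0,1,\dots,n-1\}$ of $\overline{C_n}$, the five vertices $0,2,4,6,3$ form a $5$-cycle, since each consecutive difference $2,2,2,3,3$ is $\not\equiv\pm 1\pmod n$ and hence is an edge of $\overline{C_n}$. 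Thus an odd antihole would again force a forbidden long odd cycle, so $G$ contains no odd antihole either, and the Strong Perfect Graph Theorem yields that $G$ is perfect.

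The main obstacle is exactly this second step: passing from the local ``no long odd cycle'' condition to global perfection. Leaning on the Strong Perfect Graph Theorem makes it short, but that theorem postdates Trotter's result, so a self-contained argument is worth outlining. The alternative route is to establish the block characterisation of line perfect graphs, namely that every block is bipartite, or $K_4$, or a triangular book $K_{1,1,n}$, and then to verify perfection for each block type directly: bipartite graphs are perfect classically, while $K_4$ and $K_{1,1,n}$ satisfy $\chi=\omega$ on every induced subgraph by inspection. Perfection of $G$ then follows because a graph is perfect whenever all of its blocks are, cut vertices being clique cutsets through which perfection is inherited. I expect establishing this block decomposition to be the genuinely technical part, whereas the reduction to it from line perfection reuses the long-odd-cycle obstruction above.
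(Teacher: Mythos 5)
Your proposal is correct, but your primary route differs from the one the paper takes. The paper treats this theorem as a consequence of Maffray's structure theorem (Theorem~\ref{thm:maffray}): every block of a line perfect graph is bipartite, a $K_4$, or a triangular book $K_{1,1,n}$; since each of these is perfect and perfection is preserved when blocks are glued at cut vertices, the whole graph is perfect. This is exactly the \emph{alternative} route you sketch in your last paragraph. Your main argument instead first extracts Trotter's forbidden-subgraph characterisation (no odd cycle of length at least $5$ as a subgraph, which you derive correctly from $\chi'(C_{2k+1})=3>2=\omega(L(C_{2k+1}))$ on the edge-induced cycle) and then invokes the Strong Perfect Graph Theorem, checking that odd holes are excluded outright and that every odd antihole $\overline{C_n}$, $n\ge 7$, contains a $C_5$ subgraph via the vertex sequence $0,2,4,6,3$ (the verification of the differences $2,2,2,3,3\not\equiv\pm1\pmod n$ is sound). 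What each approach buys: yours is short and needs only the easy odd-cycle obstruction, but it leans on the SPGT, a vastly heavier and anachronistic tool for a 1977 statement; the paper's route stays within elementary perfect graph theory (block gluing over clique cutsets) at the price of assuming Maffray's 1992 decomposition, which, as you rightly observe, is where the genuine technical work lies.
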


\cite{trotter} gave a characterisation of line perfect graphs
by a set of forbidden edge-induced subgraphs:

\begin{thm}[\cite{trotter}]\label{thm:trotterchar}
A graph is line perfect if and only if it contains no odd cycles of
length at least 5 as edge-induced subgraphs.
\end{thm}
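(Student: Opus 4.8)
The plan is to prove both implications, using throughout the equivalence noted in the excerpt that $G$ is line perfect iff its line graph $L(G)$ is perfect, together with the fact that the induced subgraphs of $L(G)$ are exactly the line graphs $L(H)$ of the edge-induced subgraphs $H$ of $G$. For the necessity (``only if'') I argue by contraposition and directly: if $G$ contains an odd cycle $H=C_{2k+1}$ with $2k+1\ge 5$ as an edge-induced subgraph, then $H$ is a witness, since $\chi'(H)=\chi(L(H))=\chi(C_{2k+1})=3$ while $\omega(L(H))=\omega(C_{2k+1})=2$ (a cycle of length $\ge 5$ has no triangle); hence $\chi'(H)\ne\omega(L(H))$ and $G$ is not line perfect. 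No extra machinery is needed here.

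For the sufficiency (``if'') assume $G$ has no edge-induced odd cycle of length $\ge 5$; I want $L(G)$ perfect. Invoking the Strong Perfect Graph Theorem, it suffices to check that $L(G)$ contains neither an odd hole nor an odd antihole as an induced subgraph. An odd hole is an induced $C_{2k+1}$ with $k\ge 2$; such a copy equals $L(H)$ for some edge-induced $H\subseteq G$, and since $2k+1\ge 5>3$ the Whitney exception $K_3/K_{1,3}$ does not intervene, so the only root is $H\cong C_{2k+1}$, an edge-induced odd cycle of length $\ge 5$, excluded by hypothesis. An odd antihole is an induced $\overline{C_{2k+1}}$ with $k\ge 3$ (the case $k=2$ gives $\overline{C_5}=C_5$, already handled). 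As every induced subgraph of a line graph is again a line graph, it remains to prove the key lemma that $\overline{C_n}$ is not a line graph for any odd $n\ge 7$.

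I would establish this lemma by a counting argument based on the Krausz clique-partition description of line graphs. If $\overline{C_n}=L(H)$, the stars $K_w=\{e\in E(H):w\in e\}$ form a family of cliques covering all adjacencies of $L(H)$ in which every vertex lies in exactly two non-trivial cliques; moreover the cliques of $\overline{C_n}$ are precisely the independent sets of $C_n$. A short local analysis at a vertex $v$ (whose $\overline{C_n}$-neighbourhood is never itself a clique) shows that its two covering cliques must be maximum independent sets of $C_n$ meeting only in $v$, each of size $(n-1)/2$; consequently every clique in the cover has size $(n-1)/2$. Counting vertex–clique incidences then forces the number of cliques to equal $4n/(n-1)=4+4/(n-1)$, which is an integer only when $(n-1)\mid 4$, i.e.\ for $n\le 5$ — a contradiction for every odd $n\ge 7$. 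This rules out all odd antiholes and completes the sufficiency.

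I expect the genuine obstacle to be this antihole lemma: the hole case and the reduction are bookkeeping, but excluding long odd antiholes from line graphs is where the structure of line graphs must really be used, and claw-freeness alone does not suffice since $\alpha(\overline{C_n})=2$. An SPGT-free alternative, closer to Trotter's original argument, would instead prove a structure theorem — that every $2$-connected graph with no edge-induced odd cycle of length $\ge 5$ is bipartite, $K_4$, or a triangular book $K_{1,1,m}$ — then verify $\chi'(B)=\omega(L(B))$ block by block (König for bipartite blocks, direct computation for $K_4$ and $K_{1,1,m}$) and reassemble the colourings along the block-cut tree, permuting colours at each cut vertex $v$ using that the total number of colours $\omega(L(G))\ge\Delta(G)\ge\deg_G(v)$. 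In that route the structure theorem, proved by a case analysis emanating from a triangle and exploiting that a shortest odd cycle is chordless and hence edge-induced, would be the main difficulty.
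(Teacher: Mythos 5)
The paper does not actually prove this statement: Theorem~\ref{thm:trotterchar} is quoted from Trotter's 1977 paper as a known result, so there is no in-paper proof to compare against, and your proposal must be judged on its own merits. It is correct. The necessity direction is right: for odd $n\ge 5$ the edge-induced cycle $H=C_n$ has $\chi'(H)=3$ but $\omega(L(H))=2$. In the sufficiency direction, the reduction via the Strong Perfect Graph Theorem is sound: an odd hole in $L(G)$ pulls back, by Whitney's theorem (Theorem~\ref{thm:whitney}), to an edge-induced odd cycle of the same length in $G$; and your Krausz-type counting for the antiholes checks out --- in $\overline{C_n}$ ($n$ odd, $n\ge 7$) the closed neighbourhood of a vertex has $n-2$ vertices while every clique has size at most $(n-1)/2$, which forces the two covering cliques to have size exactly $(n-1)/2$ and to meet only in that vertex, and then the incidence count (number of cliques) $\cdot\,(n-1)/2 = 2n$ fails to be integral for odd $n\ge 7$. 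Two remarks. First, your proof is anachronistic: it rests on the SPGT (2006), whereas Trotter's theorem dates from 1977; the SPGT-free route you sketch at the end --- every block is bipartite, $K_4$, or a triangular book, then colour block by block --- is essentially Maffray's theorem (Theorem~\ref{thm:maffray}), which the paper states separately and notes implies line perfect $\Rightarrow$ perfect. Second, the step you identify as the genuine obstacle, the antihole lemma, can be dispatched with machinery the paper already carries: for odd $n\ge 7$ the cycle $C_n$ contains an induced $P_2\cup P_3$, hence $\overline{C_n}$ contains an induced $\overline{P_2\cup P_3}=N_2$, one of Beineke's nine forbidden subgraphs (Theorem~\ref{thm:beineke}), so $\overline{C_n}$ is not a line graph --- no counting argument needed.
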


\cite{maffray} gave a complete characterisation of the
structure of line perfect graphs:

\begin{thm}[\cite{maffray}]\label{thm:maffray}
A graph $G$ is line perfect if and only if each of its blocks is either bipartite
or a complete graph $K_4$ on 4 vertices or a triangular book $K_{1,1,n}$ for
some positive integer~$n$. 
\end{thm}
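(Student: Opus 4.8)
The plan is to deduce Maffray's structure theorem from Trotter's forbidden-subgraph characterisation (Theorem~\ref{thm:trotterchar}) by a block decomposition, followed by a local analysis of the $2$-connected blocks. First I would observe that line perfectness is a local property of blocks: since an odd cycle of length at least $5$ is $2$-connected, it lies entirely inside a single block, and conversely every edge-induced subgraph of a block is an edge-induced subgraph of $G$. Hence, by Theorem~\ref{thm:trotterchar}, $G$ is line perfect if and only if each of its blocks is line perfect. As the blocks of $G$ are its isolated vertices, its bridges (copies of $K_2$), and its maximal $2$-connected subgraphs, and since the first two kinds are bipartite, it suffices to prove that a $2$-connected graph is line perfect if and only if it is bipartite, $K_4$, or a triangular book $K_{1,1,n}$.

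For the easy (backward) direction I would simply check, via Theorem~\ref{thm:trotterchar}, that none of the three types contains an odd cycle of length at least~$5$: bipartite graphs have no odd cycle at all; $K_4$ has only four vertices; and in $K_{1,1,n}$ every page vertex has its only neighbours among the two spine vertices, so every cycle is a triangle or a $4$-cycle. By Theorem~\ref{thm:trotterchar} each such block is line perfect, and by the block reduction so is any $G$ all of whose blocks are of these types.

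The substance is the forward direction. Let $B$ be a $2$-connected line perfect graph. If $B$ is bipartite we are done, so assume $B$ contains an odd cycle; a shortest one is chordless, hence an edge-induced odd cycle, so by Theorem~\ref{thm:trotterchar} it must be a triangle $T=\{a,b,c\}$. The strategy is a dichotomy: either $B$ contains a $K_4$, in which case $B=K_4$, or $B$ is $K_4$-free, in which case $B=K_{1,1,n}$. For the first case I would show that any vertex outside a $K_4$ subgraph produces a forbidden cycle: a vertex with two neighbours in the $K_4$ yields a $C_5$ directly, while a vertex with a single neighbour $a$ is joined by $2$-connectivity (via a path in $B-a$) to the rest of the $K_4$, and routing the return through the $K_4$ with a path of suitable length $1$, $2$, or $3$ realises an odd cycle of length at least $5$. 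For the second case, assuming $B$ is $K_4$-free, I would determine $|N(v)\cap T|$ for each vertex $v\notin T$: the value $3$ gives a $K_4$; the values $0$ and $1$ give, through a shortest path or a $2$-fan from $v$ to $T$ supplied by $2$-connectivity (Menger) together with a parity choice inside $T$, an odd cycle of length at least~$5$; hence $|N(v)\cap T|=2$ for every such $v$. Finally, two vertices attached to different pairs of $T$ span a $C_5$, so all vertices outside $T$ are attached to a common pair, say $\{a,b\}$; then every vertex other than $a$ and $b$ is adjacent to both, $K_4$-freeness forbids edges among these pages, and $B$ is exactly $K_{1,1,n}$.

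The main obstacle is the forward direction's cycle constructions: each candidate must be exhibited as a genuine simple cycle, which requires choosing shortest paths (or Menger/fan paths) whose interiors avoid $T$, and then adjusting the parity of the cycle by rerouting through the triangle. Keeping these cycles simple and getting the parity bookkeeping right — so that the constructed cycle is always odd and of length at least $5$ — is the delicate, error-prone part; the block reduction and the backward direction are routine by comparison.
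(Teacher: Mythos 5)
Your proof is correct, but there is nothing in the paper to compare it against: the paper does not prove this statement at all. Theorem~\ref{thm:maffray} is imported as a known result, cited to Maffray's 1992 paper on kernels in perfect line graphs, and is then used as a black box (notably in the implication (2)$\Longrightarrow$(3) of Theorem~\ref{thm:lineAA}). So your derivation is a genuinely different route in the strongest sense: you reconstruct Maffray's structure theorem from Trotter's forbidden-subgraph characterisation (Theorem~\ref{thm:trotterchar}), whereas the paper (and Maffray's original argument) does not pass through Trotter at all. Your route is sound at every step I checked: the block reduction is legitimate because a cycle is $2$-connected and hence lies in a single block, while any odd cycle of length at least $5$ occurring as a subgraph is automatically an edge-induced subgraph, so Trotter's criterion localises to blocks; the backward direction is the routine verification you describe; and in the forward direction both the $K_4$ case (the $C_5$ through two attachment vertices, and the parity trick of closing a path of length $\ell\ge 2$ through the $K_4$ by a path of length $1$, $2$, or $3$) and the $K_4$-free case work out. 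In the latter, the fan-lemma computation is exactly right: with $|N(v)\cap T|\le 1$ the two fan paths have total length at least $3$, the two closures inside $T$ give cycle lengths differing by one, and the odd one is forced to be at least $5$; the final step, that two pages attached to different pairs of $T$ span a $C_5$ and that an edge between pages attached to the same pair creates a $K_4$, pins down $K_{1,1,n}$. What your approach buys is a self-contained, elementary proof within the paper's own toolkit; what the citation buys the authors is brevity, since for their purposes only the statement is needed.
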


Since bipartite graphs, $K_4$, and triangular books are perfect,
the powerful Theorem~\ref{thm:maffray} implies the result of 
Theorem~\ref{thm:trotter}.

In this paper we combine the idea of line perfect graphs with
\emph{graph colouring games}. For each such game, we define a notion of
\emph{line game-perfectness} and give a characterisation of the structure {of}
such line game-perfect graphs.
These characterisations include two
equivalent descriptions: a characterisation by forbidden edge-induced
subgraphs analog to the Theorem of Trotter (Theorem~\ref{thm:trotterchar}) 
and a characterisation by an explicit structural description
analog to the Theorem of Maffray 
(Theorem~\ref{thm:maffray}). 
 
\subsection{Vertex colouring games}

A \emph{vertex colouring game} is played 
with a set of
$k$ colours ($k\in\IN$) 
on a graph $G=(V,E)$ whose
vertices are initially uncoloured 
by two players, Alice~(A) and Bob~(B).
The players move alternately.
A move consists in colouring an uncoloured vertex with one of the
$k$ colours such that adjacent vertices receive distinct colours.
The game ends when such a move is not possible.
If every vertex is coloured in the end, Alice wins. Otherwise, i.e.,
in the case that an uncoloured vertex is adjacent to vertices of all
$k$ colours, Bob wins.

Such a \emph{graph colouring game}, defined by Brams, appeared in a 
mathematical games
column by \cite{gardner}. Later it was reinvented by 
\cite{bodlaender} who defined the \emph{game chromatic number}
$\chi_g(G)$ as the smallest nonnegative integer $k$ such that Alice has
a winning strategy for the vertex colouring game played on~$G$. Since Alice
always wins if $k\ge|V|$, the parameter is well-defined.

To be precise, two more rules have to be fixed to make the game well-defined.
Firstly, we have to fix the player $X\in\{A,B\}$ who moves first.
Secondly, we have to fix whether skipping (any) moves is allowed for some
player $Y\in\{A,B\}$ or not allowed (which we denote by $Y\in\{-\}$).
Thus we have six different games, one game for any of the pairs
\[(X,Y)\in\{A,B\}\times\{A,B,-\},\]
and we call such a game the $[X,Y]$-colouring game and denote its
game chromatic number by $\chi_{[X,Y]}(G)$.

The distinction of the six games is important when we discuss game-theoretic
analogs of perfect graphs, the \emph{game-perfect graphs}.

\subsection{Game-perfect graphs}

A graph $G$ is \emph{$[X,Y]$-perfect} (or \emph{game-perfect} for the
$[X,Y]$-colouring game) if, for any induced subgraph $H$ of $G$,
\[\chi_{[X,Y]}(H)=\omega(H),\]
where $\omega(H)$, the \emph{clique number} of $H$, is the maximum number
of mutually adjacent vertices in~$H$.

The concept of game-perfect graphs was introduced by 
\cite{andresphd,andresgperfect}. For four of the six games,
structural characterisations of game-perfect graphs by forbidden
induced subgraphs and by an explicit structural description are known.
The characterisation by forbidden induced subgraphs
of two of these characterisations will be extremely useful 
as basis for two of our main theorems in the following
sections:

\begin{figure}[htbp]
\begin{center}
\begin{minipage}[b]{0.23\textwidth}
\begin{center}
\includegraphics[scale=0.4]{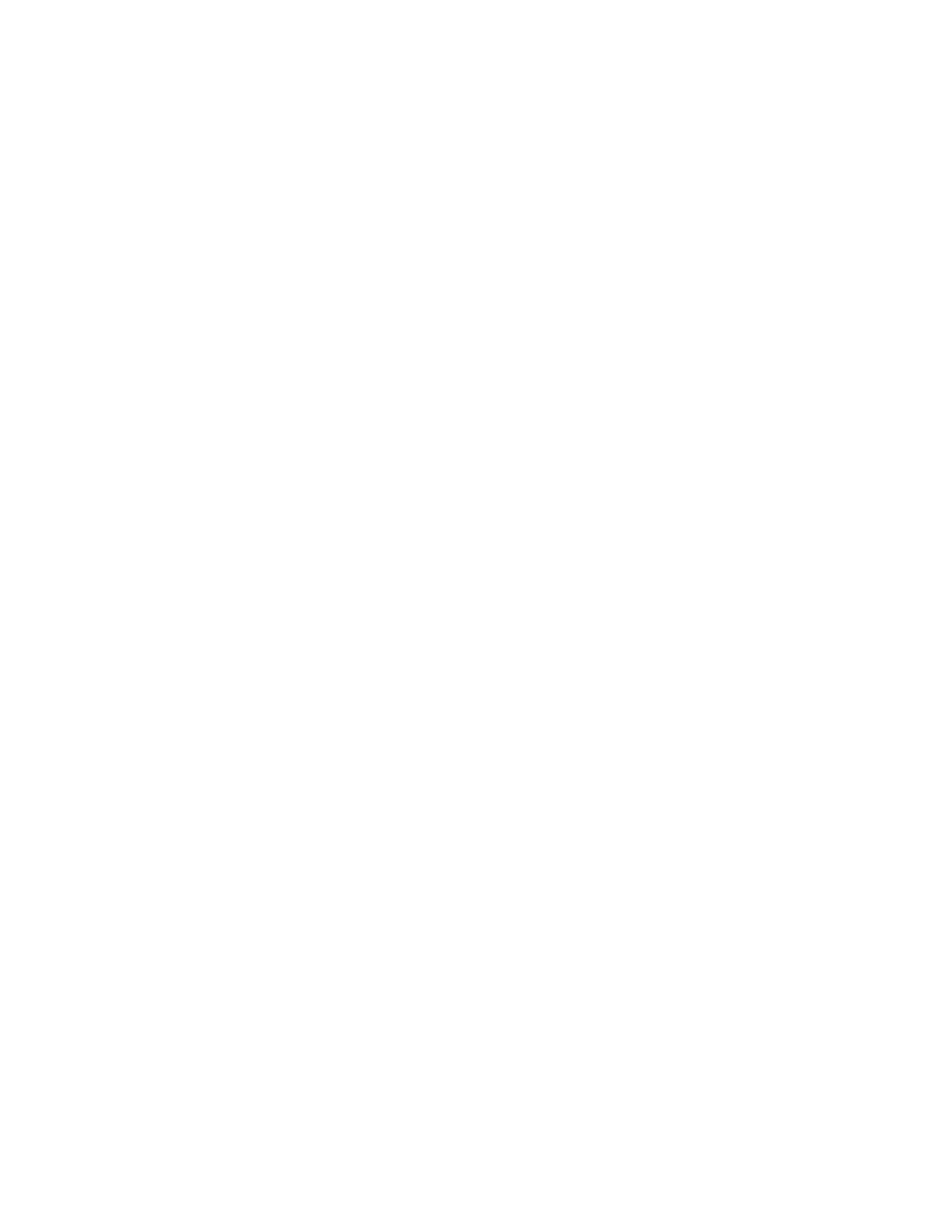}

$P_4$
\end{center}
\end{minipage}
\begin{minipage}[b]{0.23\textwidth}
\begin{center}
\includegraphics[scale=0.4]{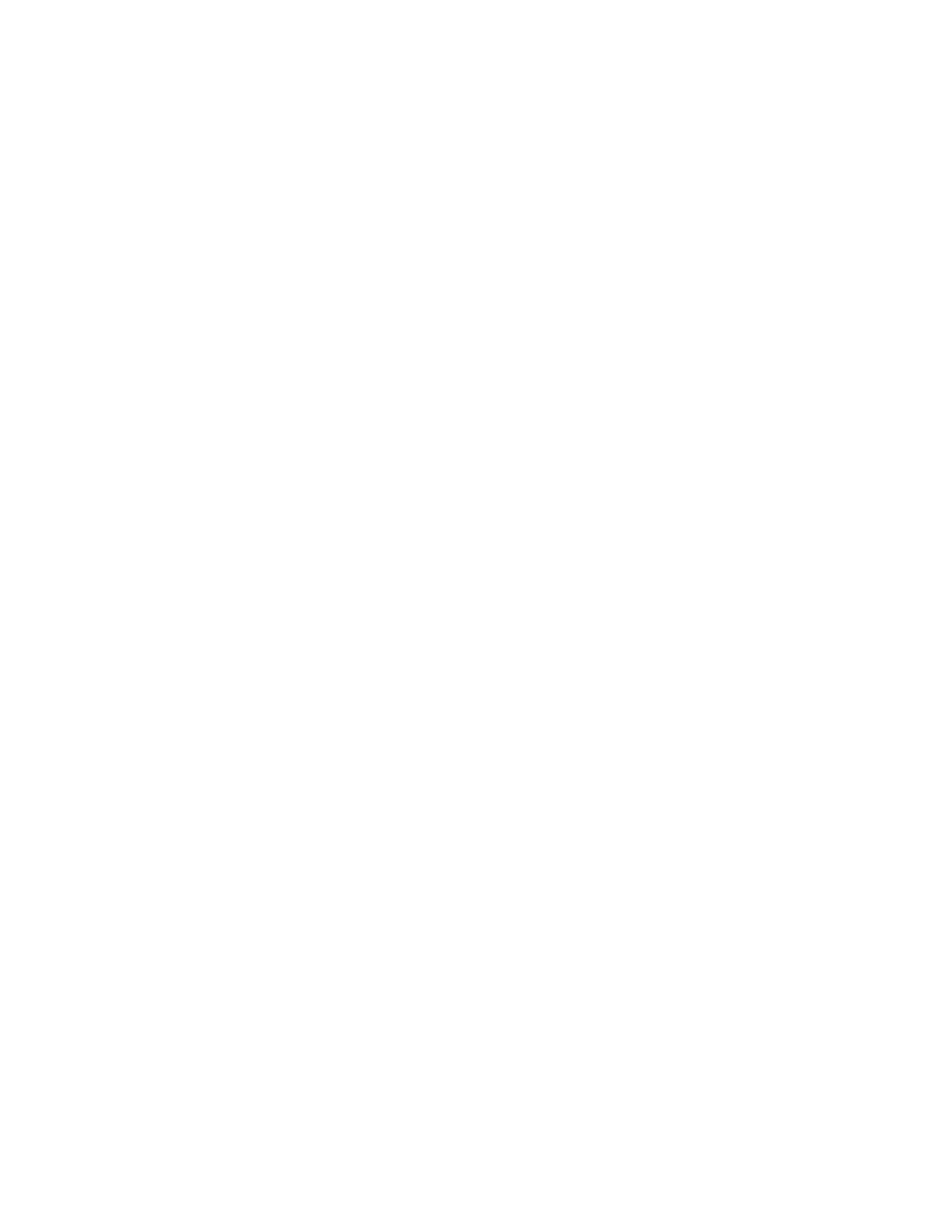}

$C_4$
\end{center}
\end{minipage}
\begin{minipage}[b]{0.23\textwidth}
\begin{center}
\includegraphics[scale=0.4]{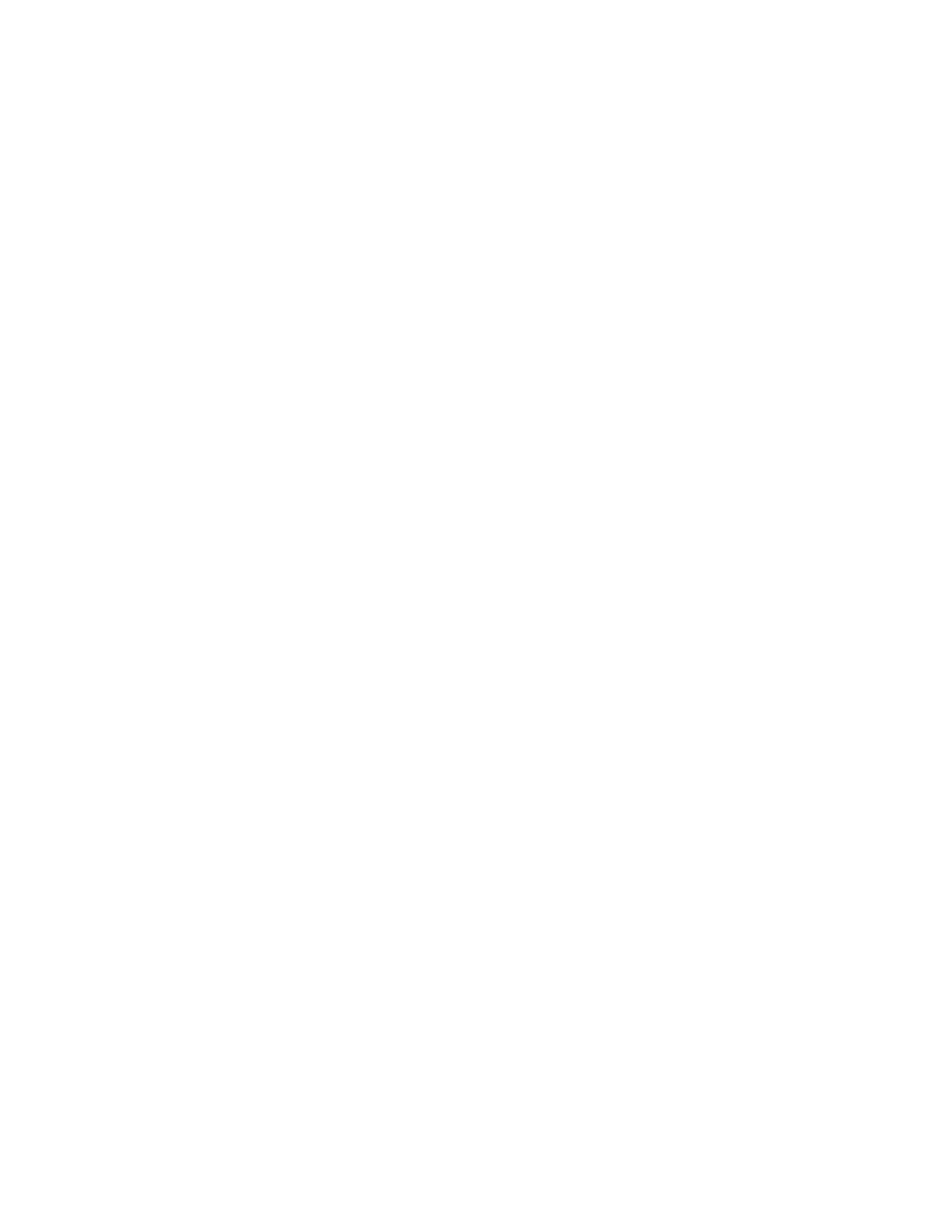}

triangle star
\end{center}
\end{minipage}
\begin{minipage}[b]{0.23\textwidth}
\begin{center}
\includegraphics[scale=0.4]{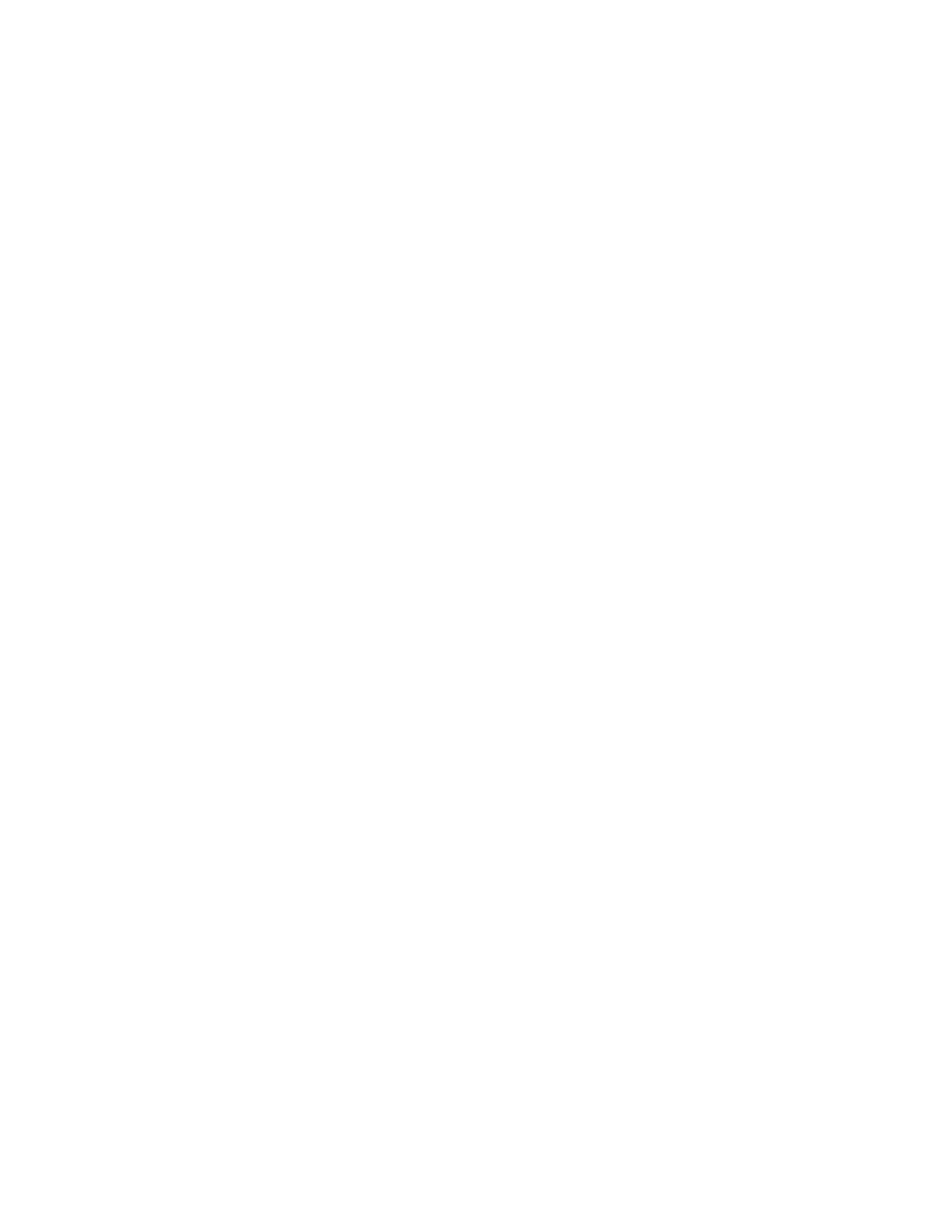}

$\Xi$-graph
\end{center}
\end{minipage}

\bigskip

\begin{minipage}[b]{0.31\textwidth}
\begin{center}
\includegraphics[scale=0.4]{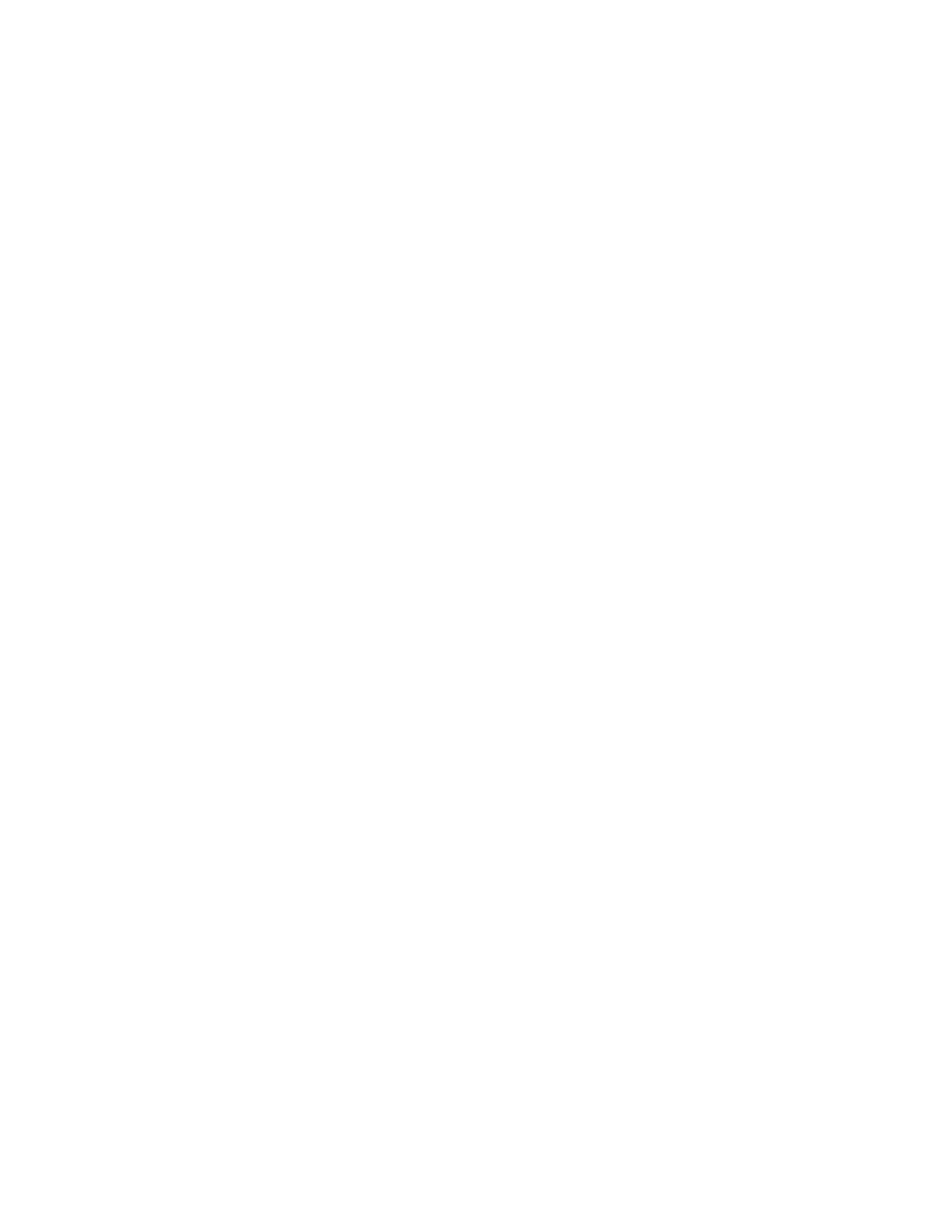}

\includegraphics[scale=0.4]{split3-star}

two split 3-stars
\end{center}
\end{minipage}
\begin{minipage}[b]{0.31\textwidth}
\begin{center}
\includegraphics[scale=0.4]{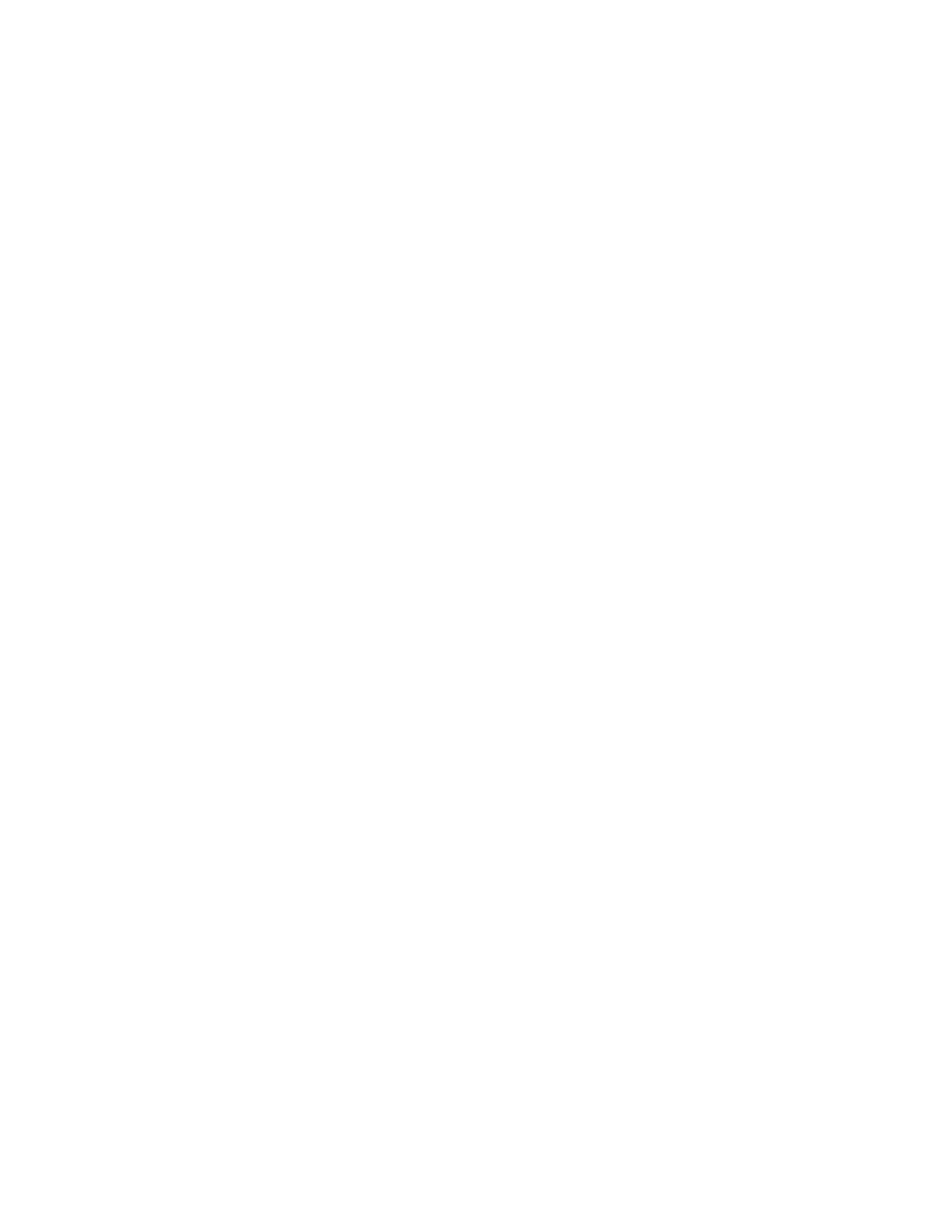}

\includegraphics[scale=0.4]{split3-star}

mixed graph
\end{center}
\end{minipage}
\begin{minipage}[b]{0.31\textwidth}
\begin{center}
\includegraphics[scale=0.4]{doublefan}

\includegraphics[scale=0.4]{doublefan}

two double fans
\end{center}
\end{minipage}
\end{center}
\caption{\label{fig:forbiddenAperf}Forbidden induced subgraphs for 
$[A,-]$-perfect graphs}
\end{figure}

\begin{figure}[htbp]
\begin{center}
\begin{minipage}[b]{0.31\textwidth}
\begin{center}
\includegraphics[scale=0.4]{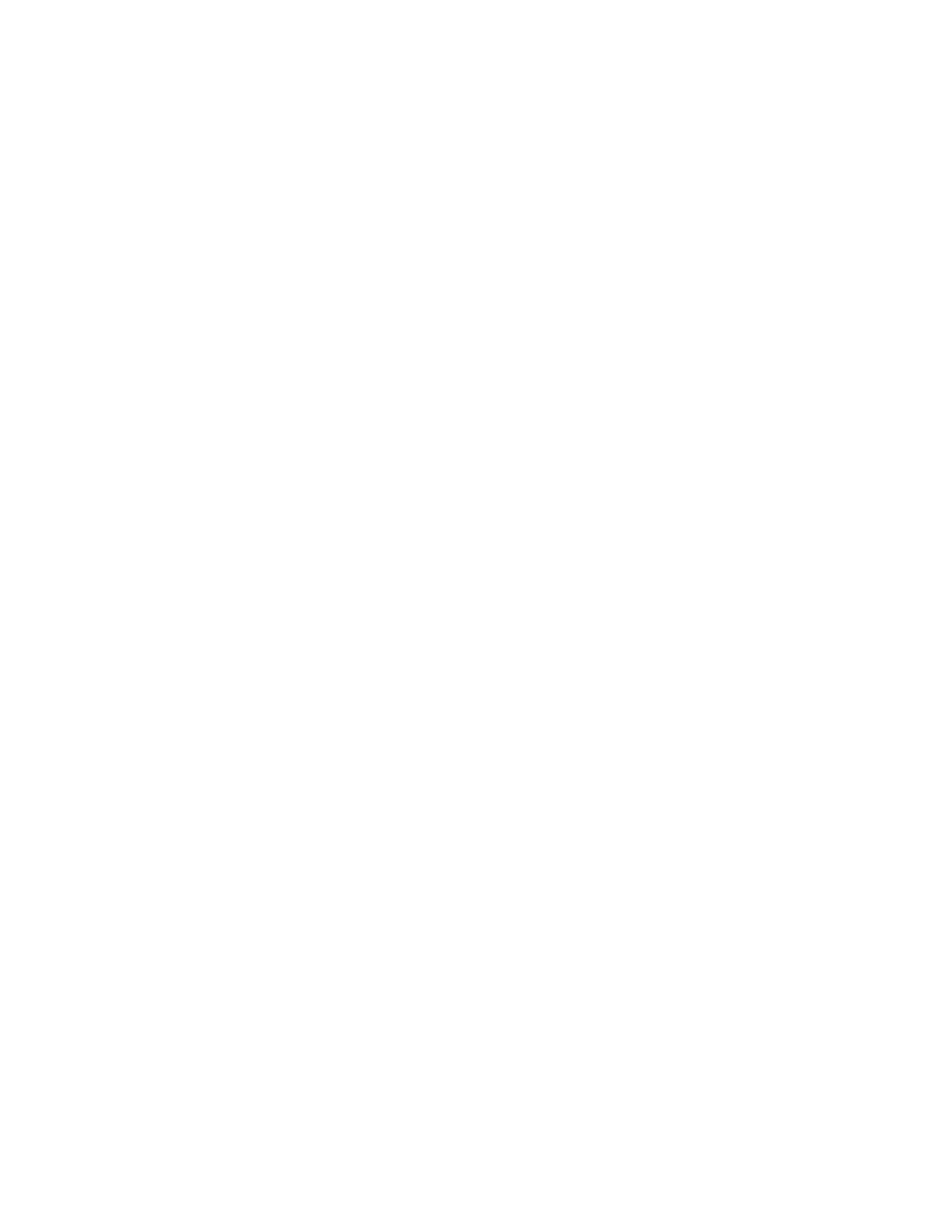}

chair
\end{center}
\end{minipage}
\begin{minipage}[b]{0.31\textwidth}
\begin{center}
\includegraphics[scale=0.4]{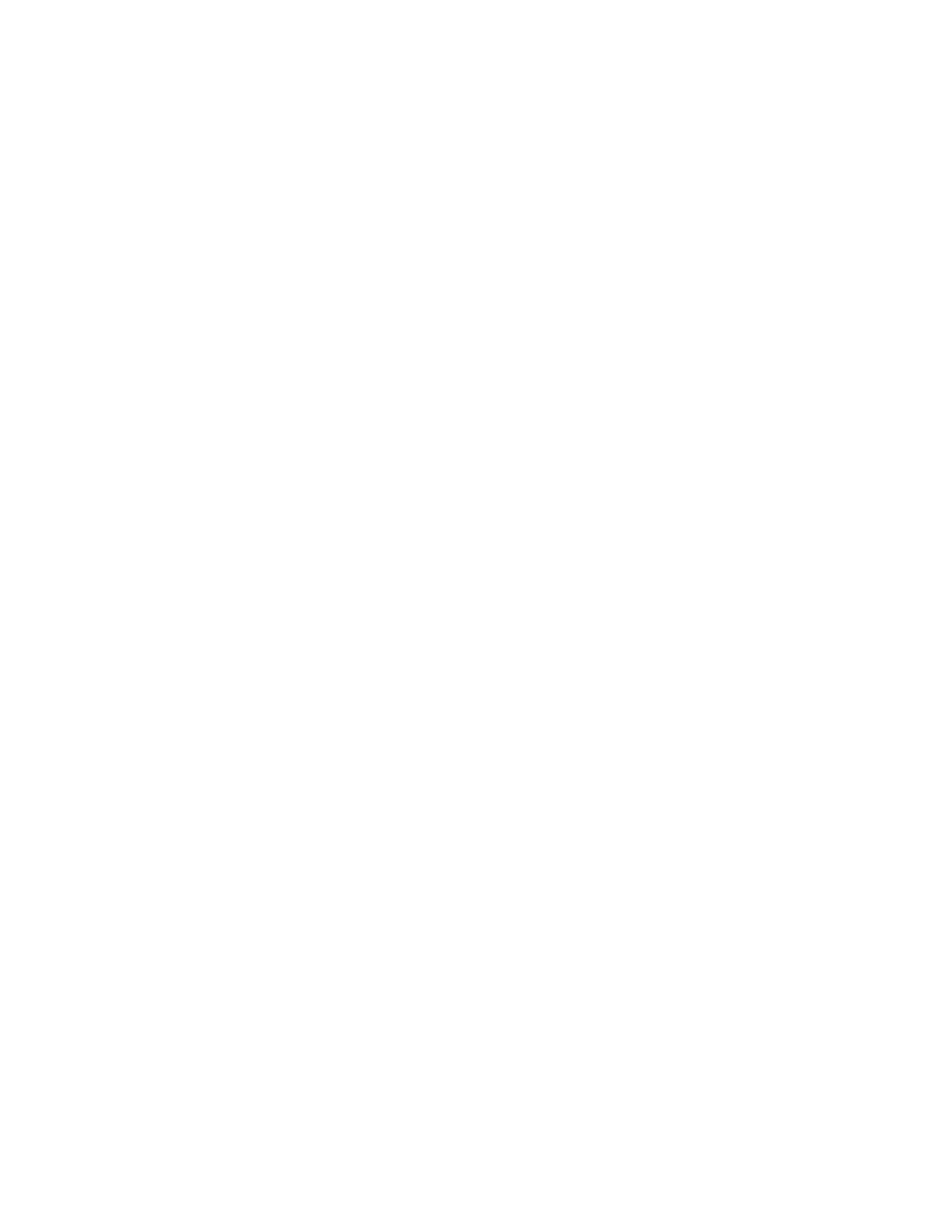}

$P_4\cup K_1$
\end{center}
\end{minipage}
\begin{minipage}[b]{0.31\textwidth}
\begin{center}
\includegraphics[scale=0.4]{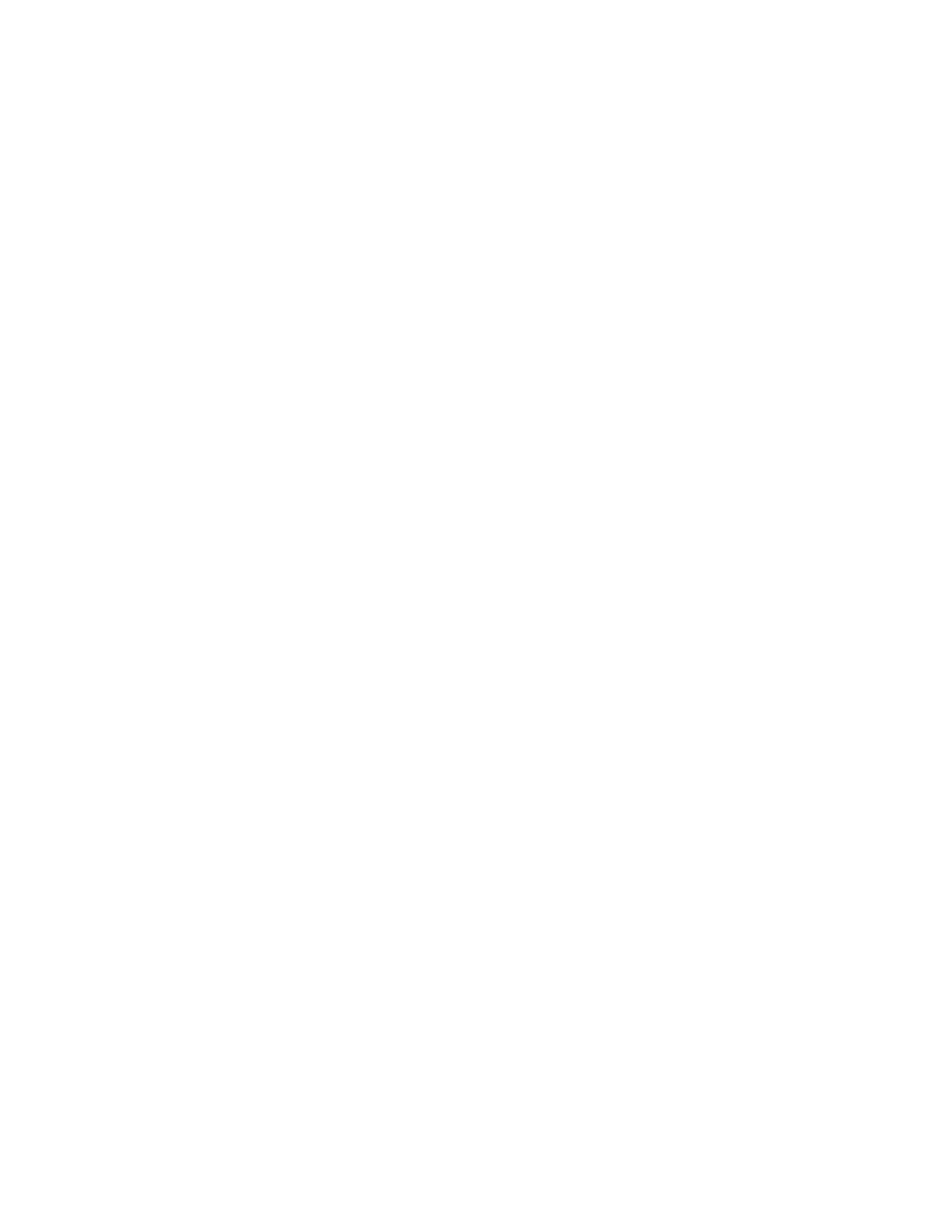}

$C_4\cup K_1$
\end{center}
\end{minipage}

\bigskip

\begin{minipage}[b]{0.22\textwidth}
\begin{center}
\includegraphics[scale=0.4]{split3-star}

split 3-star
\end{center}
\end{minipage}
\begin{minipage}[b]{0.4\textwidth}
\begin{center}
\includegraphics[scale=0.4]{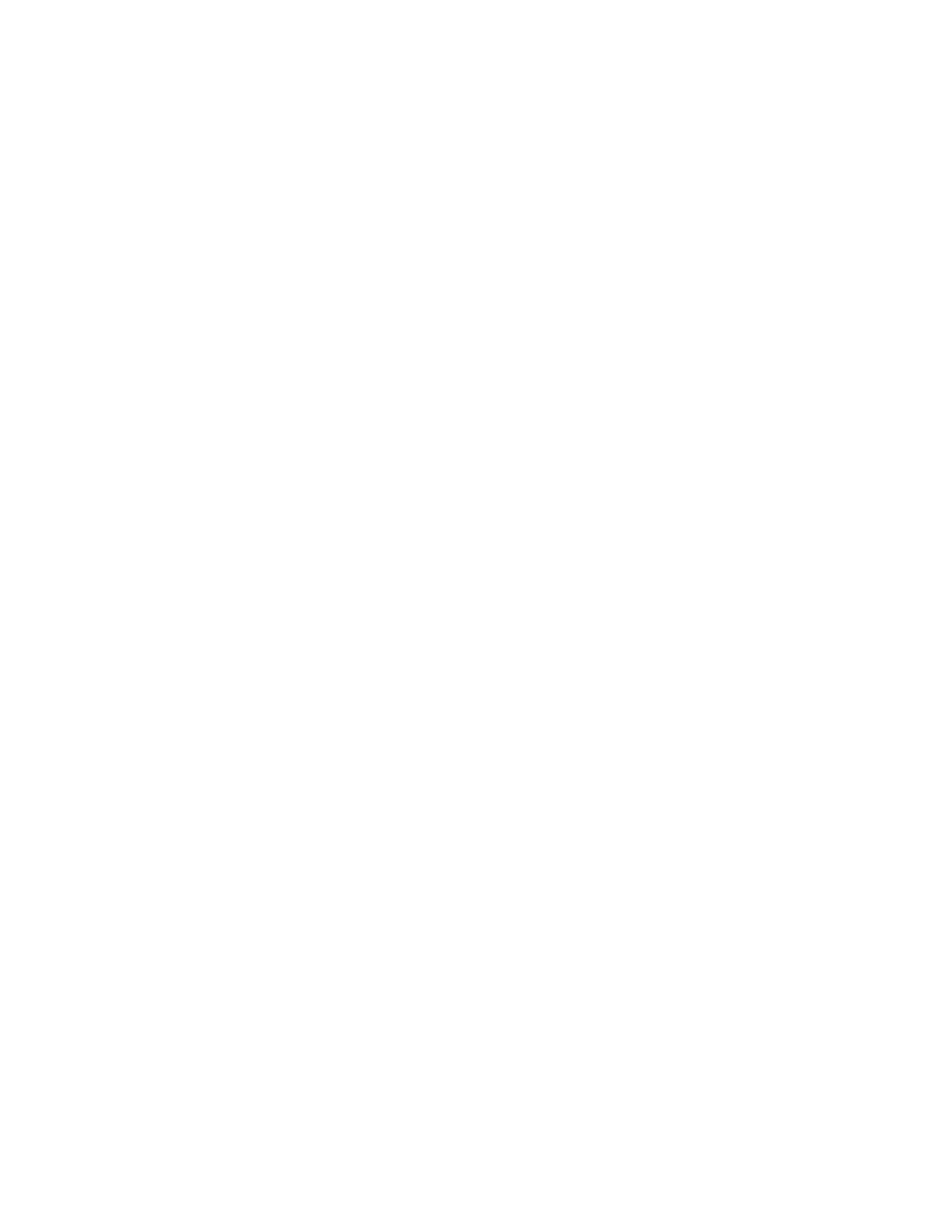}

$P_5$
\end{center}
\end{minipage}
\begin{minipage}[b]{0.22\textwidth}
\begin{center}
\includegraphics[scale=0.4]{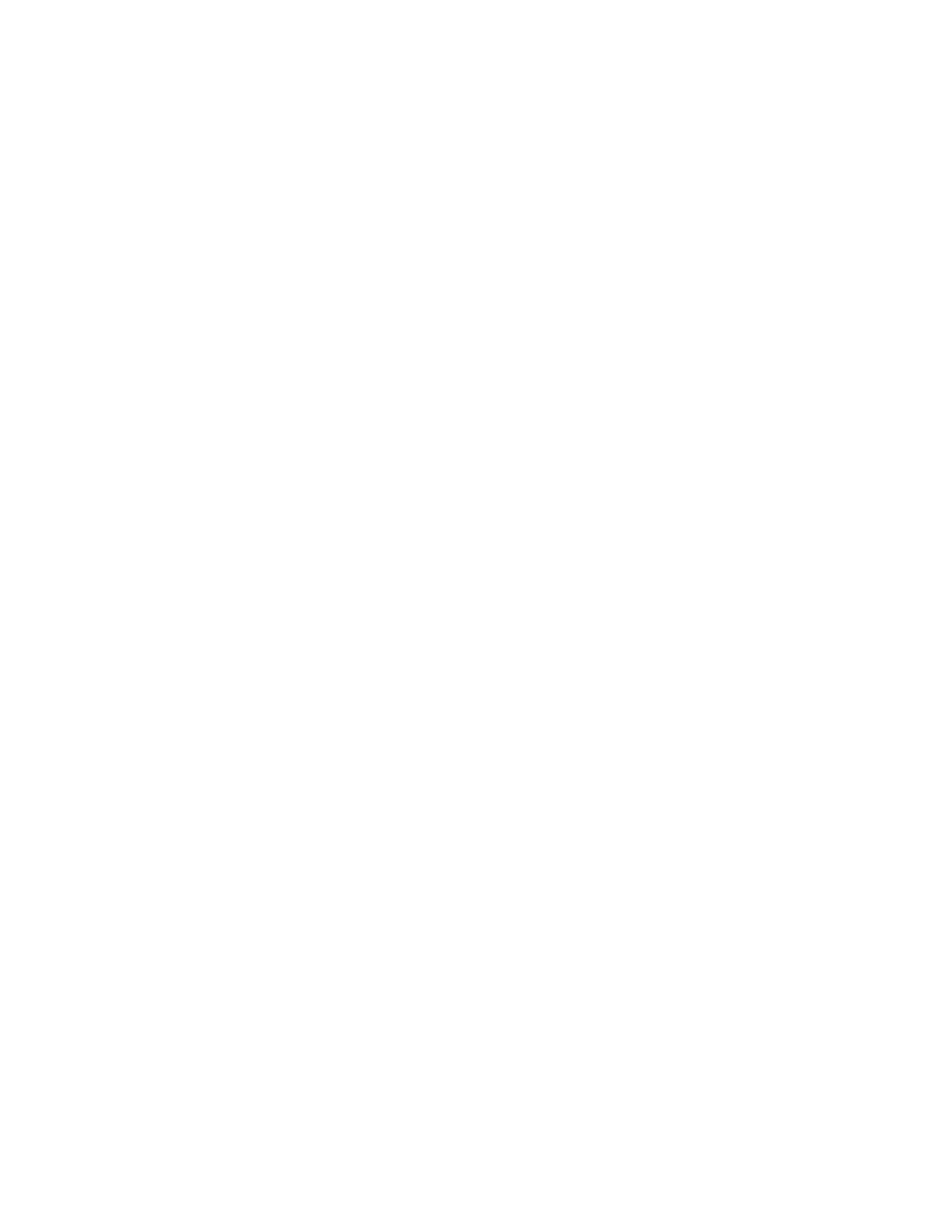}

$C_5$
\end{center}
\end{minipage}

\bigskip

\begin{minipage}[b]{0.31\textwidth}
\begin{center}
\includegraphics[scale=0.4]{doublefan}

double fan
\end{center}
\end{minipage}
\begin{minipage}[b]{0.31\textwidth}
\begin{center}
\includegraphics[scale=0.4]{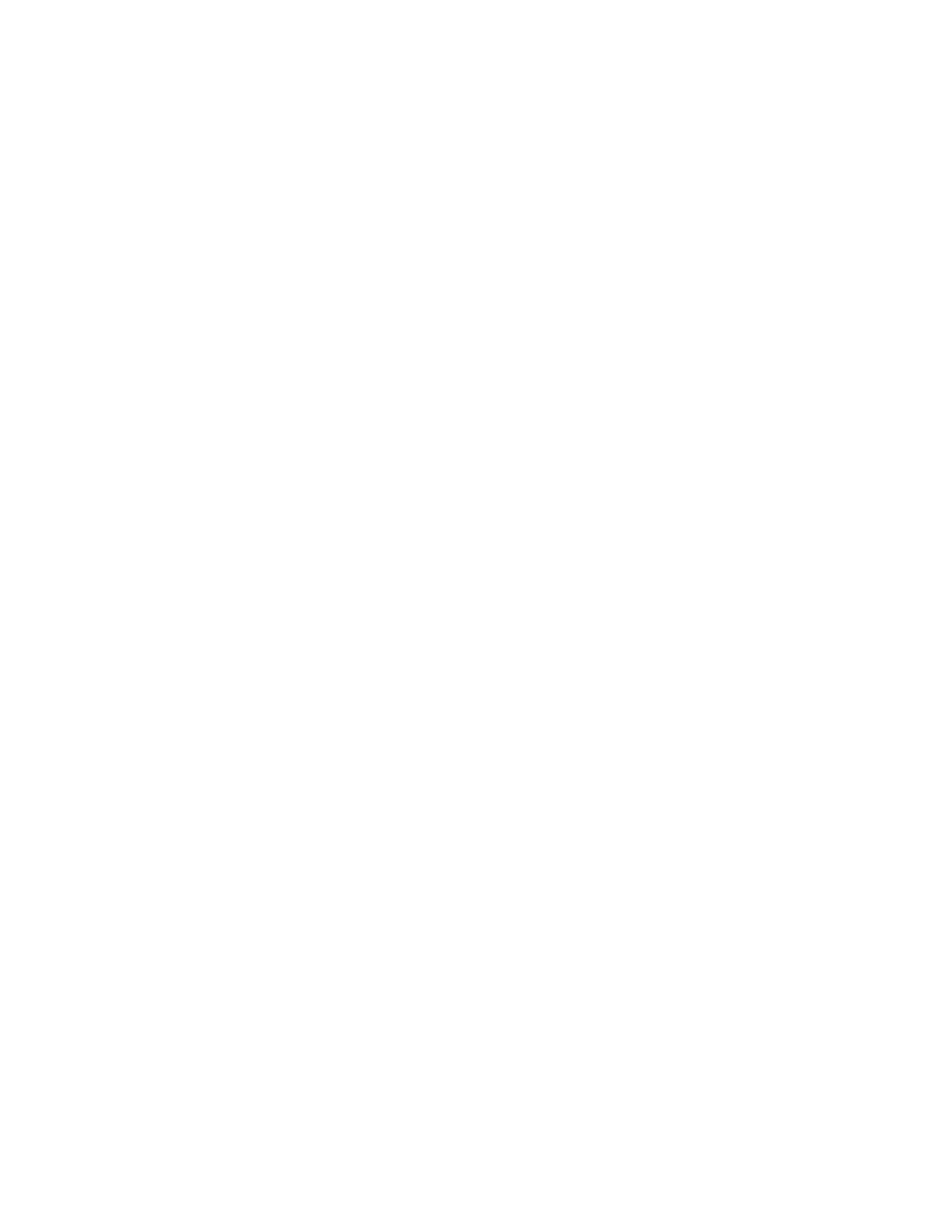}

4-fan
\end{center}
\end{minipage}
\begin{minipage}[b]{0.31\textwidth}
\begin{center}
\includegraphics[scale=0.4]{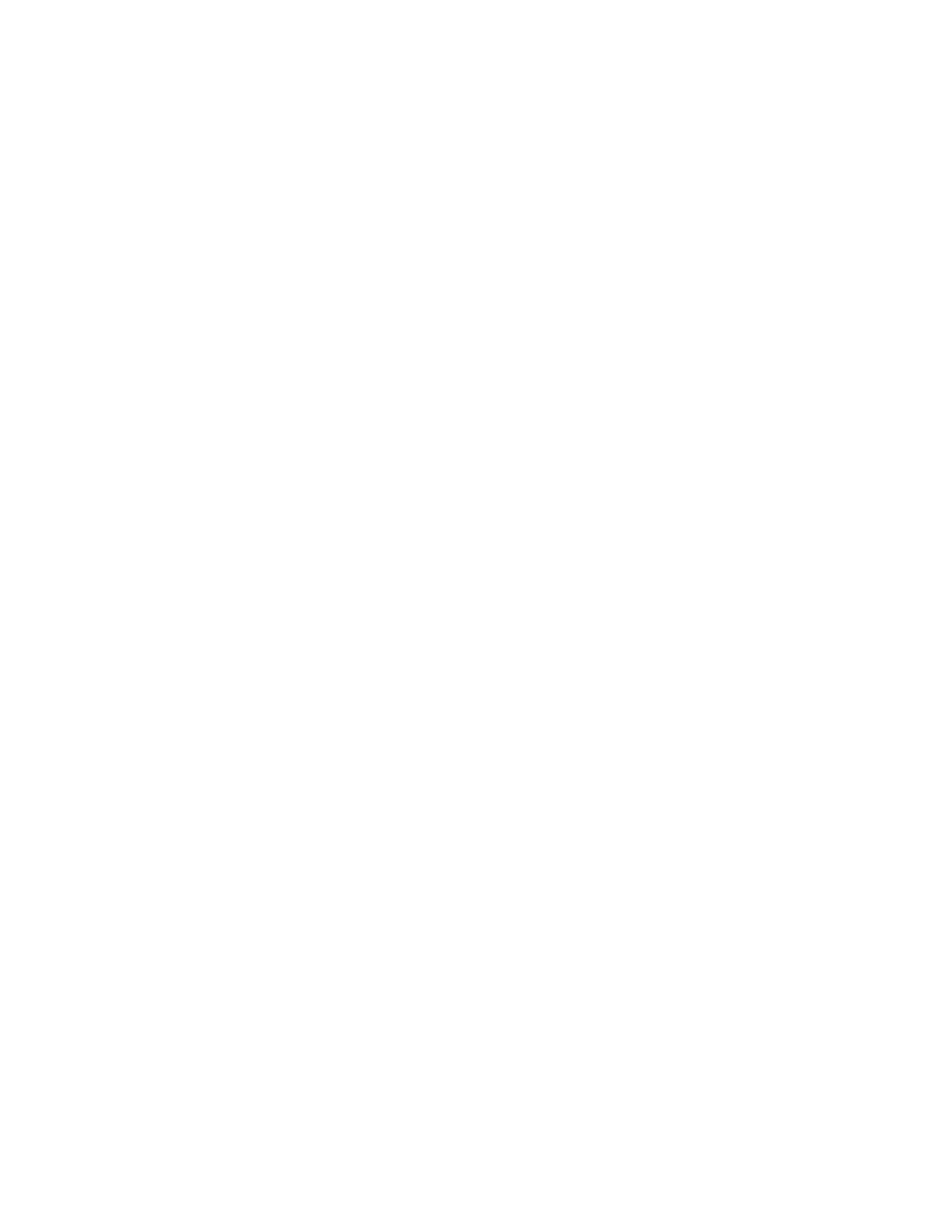}

4-wheel
\end{center}
\end{minipage}

\bigskip

\begin{minipage}[b]{0.31\textwidth}
\begin{center}
\includegraphics[scale=0.4]{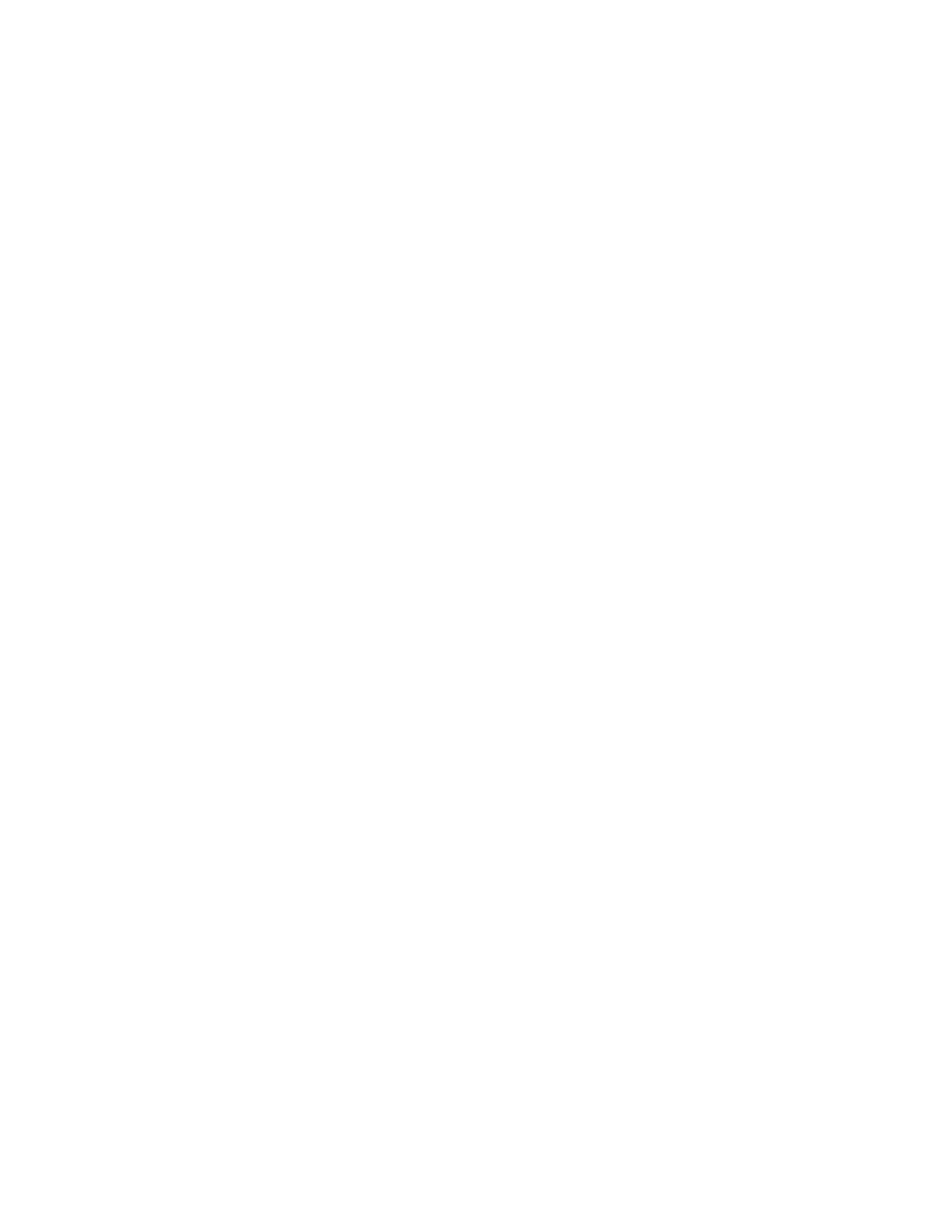}

$F_{10}$
\end{center}
\end{minipage}
\begin{minipage}[b]{0.31\textwidth}
\begin{center}
\includegraphics[scale=0.4]{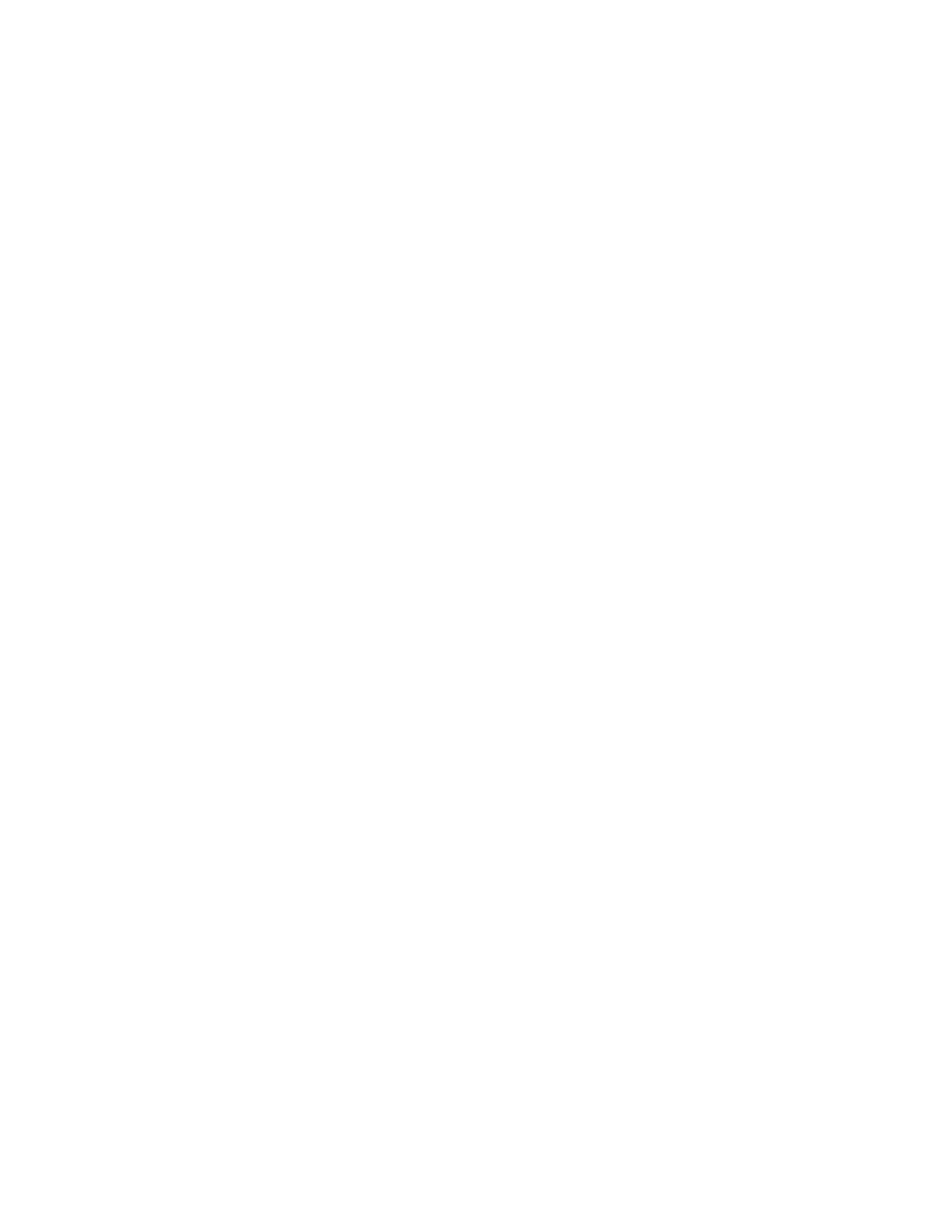}

$F_{11}$
\end{center}
\end{minipage}
\begin{minipage}[b]{0.31\textwidth}
\begin{center}
\includegraphics[scale=0.4]{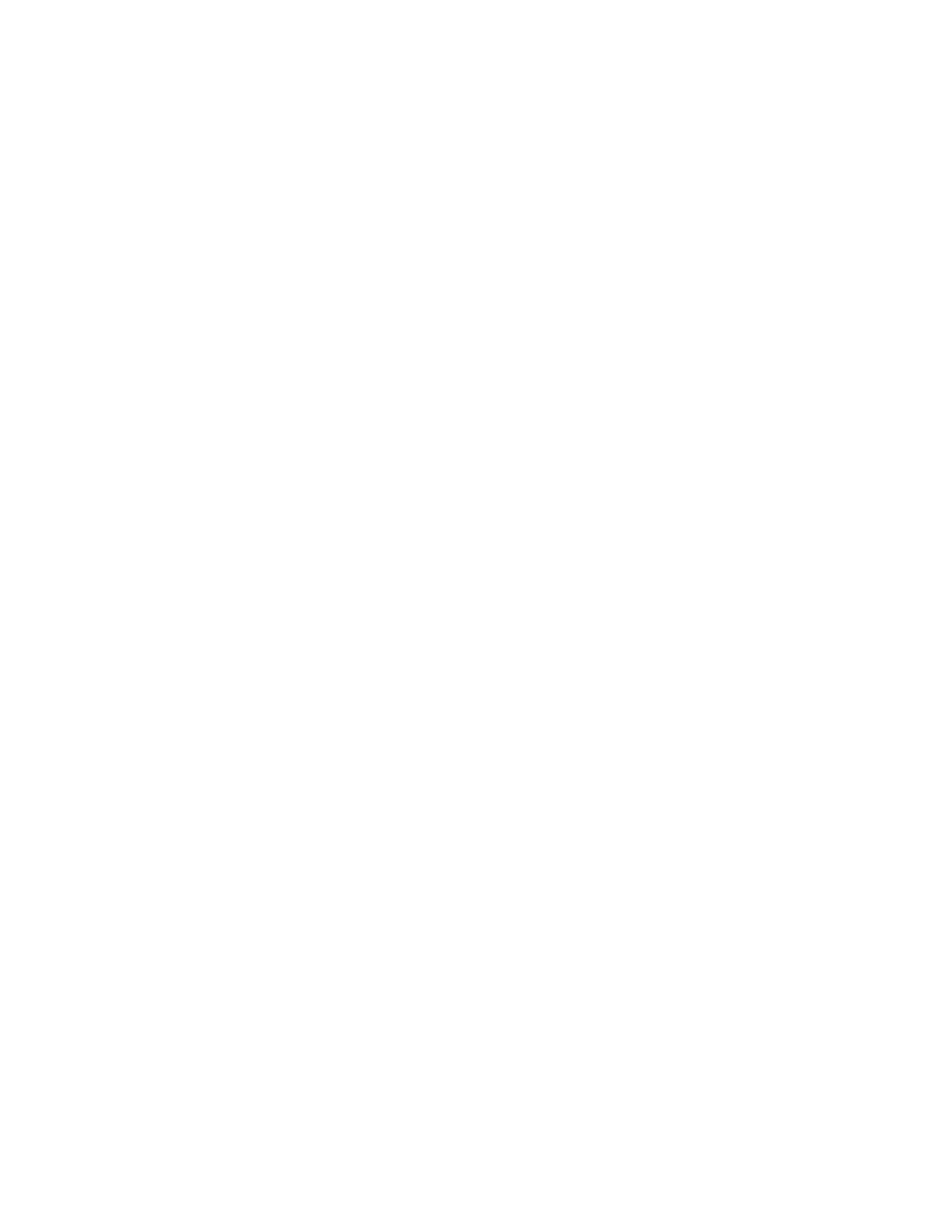}

$F_{12}$
\end{center}
\end{minipage}

\bigskip

\begin{minipage}[b]{0.31\textwidth}
\begin{center}
\includegraphics[scale=0.4]{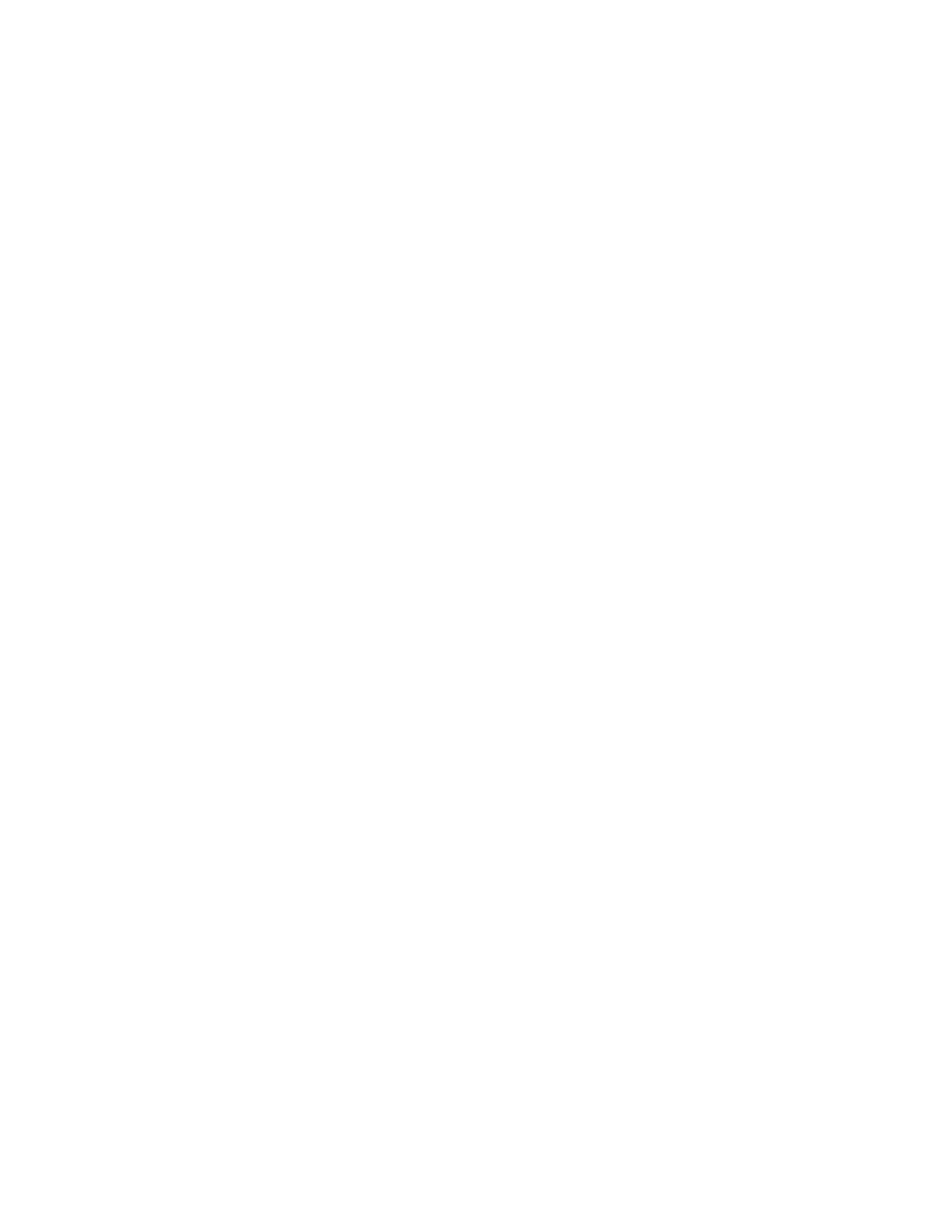}

$F_{13}$
\end{center}
\end{minipage}
\begin{minipage}[b]{0.31\textwidth}
\begin{center}
\includegraphics[scale=0.4]{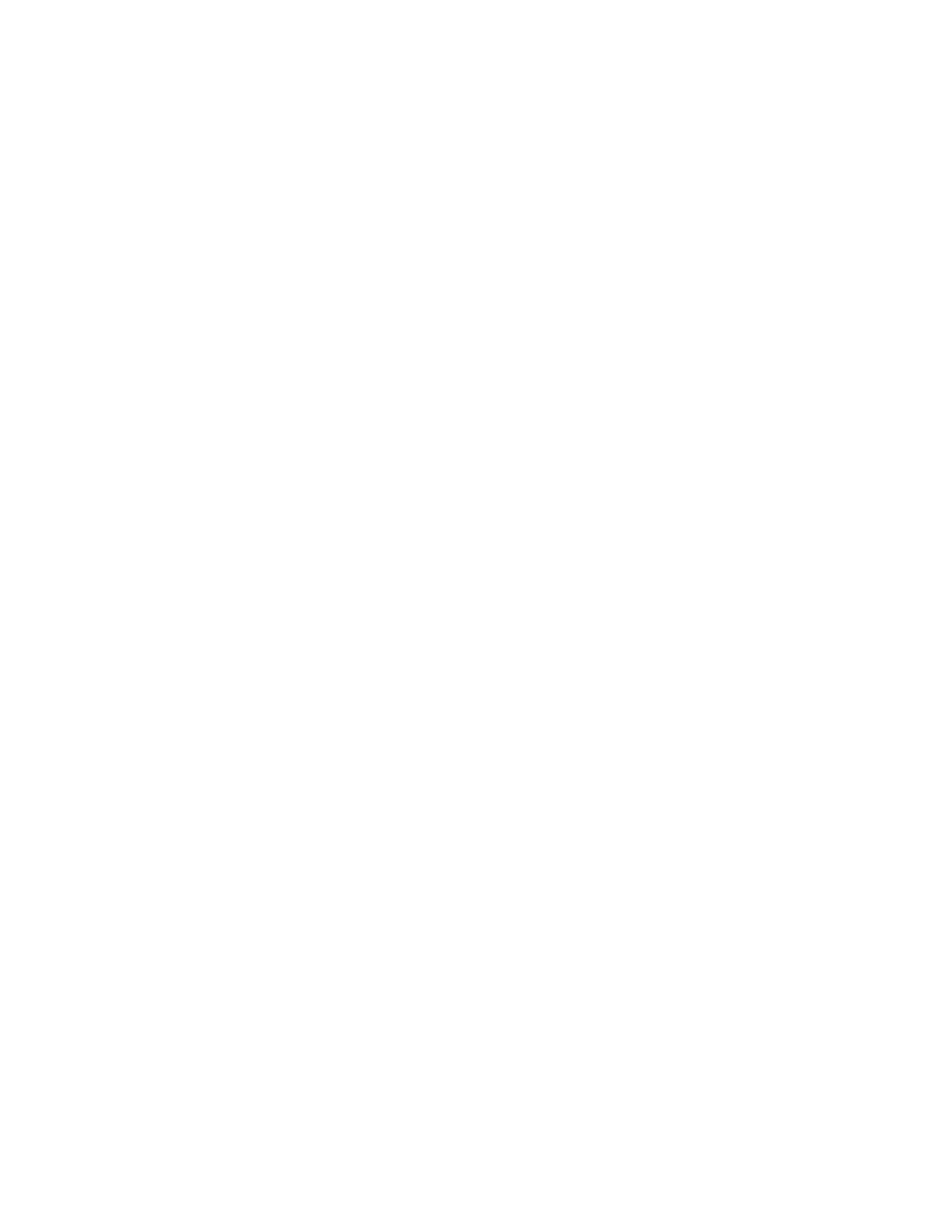}

$F_{14}$
\end{center}
\end{minipage}
\begin{minipage}[b]{0.31\textwidth}
\begin{center}
\includegraphics[scale=0.4]{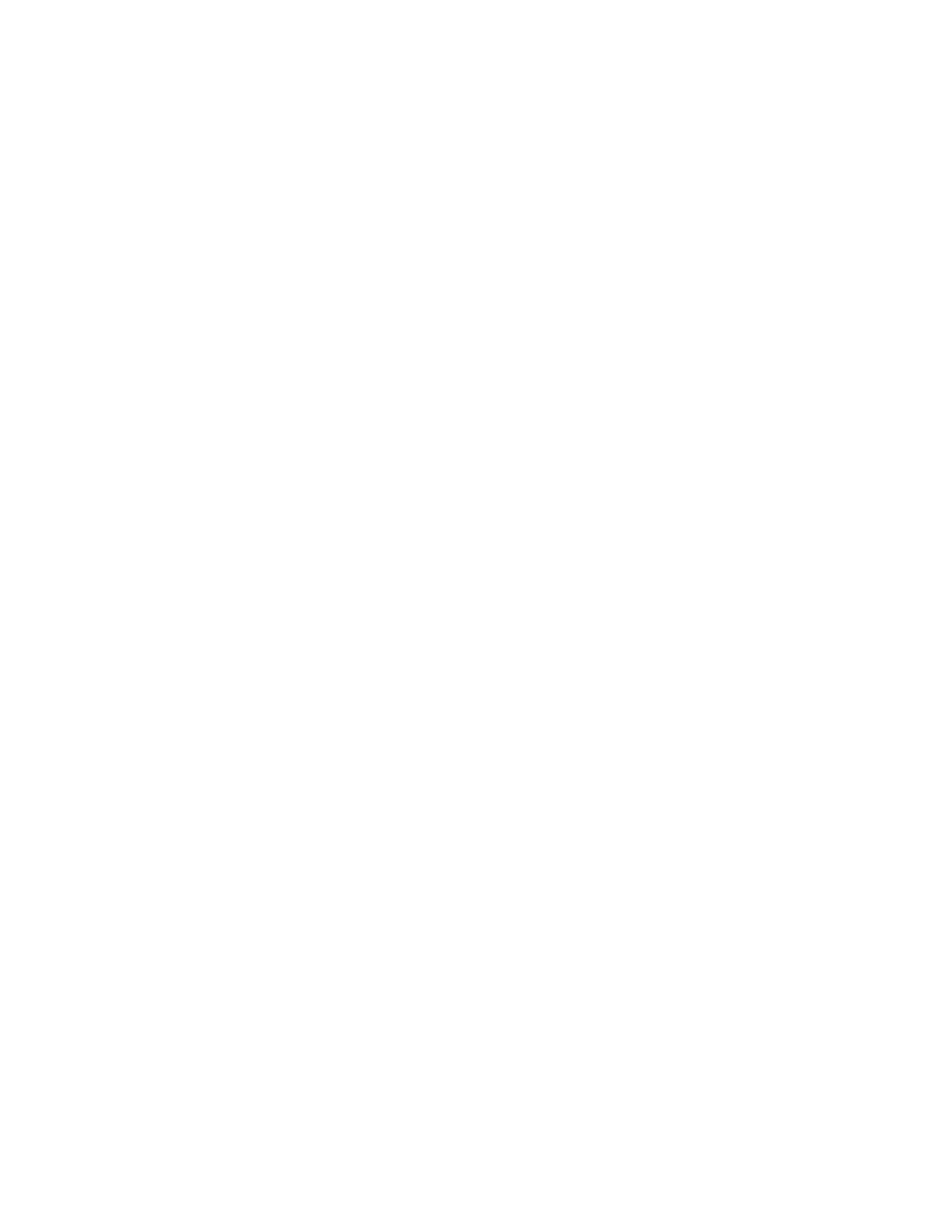}

$F_{15}$
\end{center}
\end{minipage}
\end{center}
\caption{\label{fig:forbiddenBperf}Forbidden induced subgraphs for 
$[B,-]$-perfect graphs}
\end{figure}

\begin{thm}[\cite{char}]\label{thm:A}
A graph is $[A,-]$-perfect if and only if it contains none of the seven
graphs depicted in Figure~\ref{fig:forbiddenAperf} as an induced subgraph.
\end{thm}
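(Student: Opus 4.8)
The plan is to prove the two directions separately, exploiting that $[A,-]$-perfectness is hereditary by definition: if $G$ is $[A,-]$-perfect then so is every induced subgraph, since the defining condition $\chi_{[A,-]}(H)=\omega(H)$ already quantifies over all induced subgraphs $H$, and every induced subgraph of an induced subgraph is again induced in $G$. This observation will let me reduce each direction to a statement about $G$ itself rather than about its whole subgraph lattice.

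For the necessity of the condition (the ``only if'' direction, equivalently: a forbidden configuration destroys game-perfectness) it suffices to treat the seven graphs one by one. For each graph $F$ in Figure~\ref{fig:forbiddenAperf} I would exhibit an explicit strategy for Bob witnessing $\chi_{[A,-]}(F)>\omega(F)$, so that $F$ itself violates the defining equality and is therefore not $[A,-]$-perfect. Since any graph that contains some $F$ as an induced subgraph inherits this failure, no $[A,-]$-perfect graph can contain any of the seven. Each such verification is a finite case analysis: fix $k=\omega(F)$ colours, let Alice move first, and describe, for every possible Alice move, a Bob reply that eventually strands an uncoloured vertex seeing all $k$ colours.

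For the sufficiency (the ``if'' direction), assume $G$ contains none of the seven graphs as an induced subgraph. Because this property is itself hereditary, every induced subgraph of $G$ again avoids all seven, so by the reduction above it suffices to prove the single inequality $\chi_{[A,-]}(G)\le\omega(G)$; the reverse inequality is immediate from $\chi_{[A,-]}(G)\ge\chi(G)\ge\omega(G)$. Concretely, I must furnish a winning strategy for Alice in the $[A,-]$-colouring game on $G$ with exactly $\omega(G)$ colours. The route I would take is to first derive an explicit structural description of the forbidden-subgraph-free graphs, an analog of Maffray's Theorem~\ref{thm:maffray} asserting that such graphs decompose into a short list of elementary pieces, and then to design Alice's strategy piece by piece, maintaining throughout the game the invariant that no uncoloured vertex is ever adjacent to vertices of all $\omega(G)$ colours.

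The main obstacle will be this last step: constructing and verifying Alice's strategy against an adaptive Bob. Unlike the static parameters $\chi$ and $\omega$, the game value is sensitive to move order, and because in the $[A,-]$ game no player may skip and Alice moves first, Alice must answer every Bob move while preserving her invariant. The forbidden-subgraph hypothesis enters precisely here: it is what forces the restricted local structure that guarantees a safe Alice reply always exists. Establishing that the structural description is equivalent to avoidance of the seven graphs, and that this structure genuinely suffices for Alice to win with $\omega(G)$ colours, including the delicate interaction across the pieces of the decomposition, is the technical heart of the argument.
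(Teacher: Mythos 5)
The statement you are attempting is not proved in this paper at all: it is Theorem~\ref{thm:A}, imported verbatim from Andres (2012)~\cite{char} and used here only as an input (via Theorem~\ref{thm:grund}) to derive Proposition~\ref{prop:lineA}. So your attempt can only be measured against the cited proof and against the standard of being a proof at all, and it falls short of the latter. Its skeleton is sound: both hereditary reductions are correct (necessity reduces to showing each of the seven configurations $F$ itself violates $\chi_{[A,-]}(F)=\omega(F)$; sufficiency reduces, because avoidance of the seven graphs is hereditary, to the single inequality $\chi_{[A,-]}(G)\le\omega(G)$ for every graph $G$ avoiding them, the reverse inequality being immediate). But everything below the skeleton is deferred rather than done, and what is deferred is precisely the content of the theorem.

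Concretely, three things are missing. First, the seven Bob strategies: you assert each verification is ``a finite case analysis'' but carry out none of them, and they are not uniform -- the disconnected configurations (two split 3-stars, the mixed graph, two double fans) require Bob to exploit switching between components, a different mechanism from the strategies on $P_4$, $C_4$, the triangle star, or the $\Xi$-graph. Second, the structural description of graphs containing none of the seven as induced subgraphs: you say you ``would derive'' a Maffray-type decomposition but never state it, so there is no candidate structure to verify and no way to check that the forbidden-subgraph hypothesis really forces it. Third, Alice's winning strategy with $\omega(G)$ colours on that unstated structure, which you yourself label the technical heart. A plan that reduces the theorem to its own hardest steps and stops there has a genuine gap: as written, nothing beyond the easy (and correct) hereditary reductions has been established.
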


\begin{thm}[\cite{lock}; \cite{andreslock}]\label{thm:B}
A graph is $[B,-]$-perfect if and only if it contains none of the fifteen
graphs depicted in Figure~\ref{fig:forbiddenBperf} as an induced subgraph.
\end{thm}

Furthermore, \cite{char} characterised the class of
$[B,B]$-perfect graphs by an explicit structural description. 
An \emph{ear animal} is a graph of the form
\[K_1\vee((K_a\vee(K_b\cup K_c))\cup K_{d_1}\cup K_{d_2}\cup\ldots\cup K_{d_k})\]
with $k\ge0$ and $a,b,c,d_1,d_2,\ldots,d_k\ge0$. 
Here, $G_1\cup G_2$ (resp., $G_1\vee G_2$) denotes the disjoint union 
(resp., the
join) of two graphs $G_1$ and $G_2$.

\begin{thm}[\cite{char}]\label{thm:BBcharE}
A graph is $[B,B]$-perfect if and only if 
each of its components is an ear animal.
\end{thm}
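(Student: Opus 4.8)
The plan is to prove both directions after two reductions. The lower bound is automatic: a winning play for Alice ends in a proper colouring, so $\chi_{[B,B]}(H)\ge\chi(H)\ge\omega(H)$ for every graph $H$; and every ear animal is assembled from single vertices by joins and disjoint unions, hence is a cograph and therefore perfect, so $\chi(H)=\omega(H)$ throughout the class. Thus only the upper bound on the class, and strict-inequality witnesses off it, carry content. For the second reduction I would check that the family $\mathcal{E}$ of graphs whose every component is an ear animal is closed under induced subgraphs: deleting the apex $u$ of an ear animal leaves $(K_a\vee(K_b\cup K_c))\cup K_{d_1}\cup\cdots\cup K_{d_k}\in\mathcal{E}$, while retaining $u$ and deleting other vertices only decreases the parameters $a,b,c,d_j$ (which are allowed to vanish). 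Closure lets me replace the ``for all induced subgraphs'' clause by the single assertion: if every component of $G$ is an ear animal, then Alice wins the $[B,B]$-game on $G$ with $\omega(G)$ colours.

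For sufficiency I would first isolate the vertices that can ever be \emph{trapped}, i.e.\ left uncoloured with all $\omega(G)$ colours present on their neighbourhood. In a component $J=K_1\vee((K_a\vee(K_b\cup K_c))\cup K_{d_1}\cup\cdots\cup K_{d_k})$ with apex $u$ and inner universal clique $A$, every vertex of $B$, of $C$, or of some $K_{d_j}$ has closed neighbourhood equal to a clique of order at most $\omega(G)$ and hence always keeps a free colour; so the only dangerous vertices are $W=\{u\}\cup A$. Alice's strategy then uses two devices. She colours each apex $u$ early, since $u$ is the only vertex menaced through the pendant cliques $K_{d_j}$. And, exploiting that $B$ and $C$ are non-adjacent, she \emph{balances} the two pages: whenever Bob places on one page a colour absent from the other, she copies it onto the other page, so that the number of distinct colours ever present on $B\cup C$ stays at most $\max(b,c)\le\omega(G)-1-a$. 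Together with the at most $a$ colours contributed by the clique $W$, this keeps the neighbourhood of every dangerous vertex strictly below $\omega(G)$ colours, so nothing is trapped and Alice wins.

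For necessity I would prove the contrapositive through forbidden induced subgraphs. By closure, $\mathcal{E}$ is the class of graphs avoiding a fixed finite list of minimal non-members, and a structural lemma would identify this list: it consists of $P_4$ (the minimal non-cograph), $C_4$, the graph $K_2\vee 3K_1$ (an apex placed over a claw, which is not a double book), the configurations with two non-clique components under a common apex, and so on. Here the chain $\chi_{[B,B]}\ge\chi_{[B,-]}$ together with Theorem~\ref{thm:B} is convenient: every graph in Figure~\ref{fig:forbiddenBperf} already satisfies $\chi_{[B,B]}>\omega$ for free, so only the \emph{additional} minimal obstructions (headed by $P_4$, $C_4$ and $K_2\vee 3K_1$) need a direct argument. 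For each such witness $F$ I would exhibit a winning Bob strategy with $\omega(F)$ colours: Bob drives $\omega(F)$ distinct colours into the neighbourhood of some un-protectable vertex, either by skipping his first move to force Alice to commit and then colouring a non-neighbour sharing a common neighbour $z$ with her vertex (this already traps $z$ when $\omega(F)=2$, as for $P_4$ and $C_4$), or by first colouring a vertex that is a common neighbour of several dangerous vertices at once (as for $K_2\vee 3K_1$). A short case check settles each witness.

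The main obstacle is the sufficiency strategy. Showing that the balancing device can always be carried out is delicate: Alice must be sure that a copy move is legal (a free vertex on the opposite page exists and may receive the prescribed colour), that balancing is compatible with securing the apices early without an off-by-one error in the page count, and that the shorter page cannot be overflowed by Bob after the longer one is full before Alice has coloured the remaining dangerous vertices. All of this must be maintained simultaneously across the components while Bob, who may skip, is free to concentrate his moves in whichever component he likes. The clean way to control this is to track, as a potential, the number of distinct colours in the neighbourhood of each uncoloured dangerous vertex and to prove it never reaches $\omega(G)$; establishing that invariant is the technical heart. By contrast, the structural lemma behind necessity and the individual Bob strategies are routine once the finite list of minimal obstructions is fixed.
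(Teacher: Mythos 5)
First, a point of reference: the paper contains no proof of this statement at all --- Theorem~\ref{thm:BBcharE} is imported verbatim from Andres (2012)~\cite{char} and used as a black box (e.g.\ in Lemma~\ref{lem:AwinsBBB}). So your attempt can only be judged on its own merits, and on those merits it is an outline rather than a proof. The scaffolding is sound: the lower bound via perfection of cographs, the hereditariness of the ear-animal class, the use of Observation~\ref{obs:comparechig} together with Theorem~\ref{thm:B} to dispose of the obstructions that already occur in Figure~\ref{fig:forbiddenBperf} (the split 3-star and the double fan are both in that list), and Bob's skip-then-trap strategy on $P_4$ and $C_4$ with two colours are all correct.

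The genuine gaps are the two things you defer. (i) Sufficiency: you write that establishing the potential invariant ``is the technical heart'' --- but that invariant \emph{is} the theorem; the rest is bookkeeping. Concretely, in the $[B,B]$-game Alice cannot skip, so the strategy must prescribe a harmless move whenever Bob skips; the copy move need not exist once the opposite page is exhausted, after which Bob can pump fresh colours into the longer page; and the apex $u$ is adjacent to \emph{all} of $A\cup B\cup C\cup\bigcup_j K_{d_j}$, so your accounting, which only bounds the colours appearing on $B\cup C$ and on $W$, does not protect $u$ against colours Bob plants in the pendant cliques --- ``colour each apex early'' must be turned into a precise rule compatible with the copying moves in every component simultaneously, while Bob is free to open a new front at each turn. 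None of this is resolved in the proposal. (ii) Necessity: the whole contrapositive rests on the claim that the minimal non-members of the class are exactly $P_4$, $C_4$, $K_2\vee 3K_1$ and the double fan $K_1\vee(P_3\cup P_3)$, but you leave this list open-ended (``and so on''). The claim is true and provable by a cograph decomposition argument (a connected non-cograph contains $P_4$; a connected cograph with no universal vertex contains $C_4$; if $G=K_1\vee H$ and $H$ has two non-clique components one finds a double fan, and otherwise a claw-type component inside $H$ yields a split 3-star), but until that case analysis is written down, the necessity direction is conditional on an unproved structural lemma, so neither implication of the equivalence is actually established.
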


We will use this result in order to simplify the proof of one of our main
theorems (Theorem~\ref{thm:lineBB}).

\cite{lock} and \cite{andreslock} characterised the class
of $[B,-]$-perfect graphs by a (very large) explicit structural description.
From this description we will need only two partial results, given
in Proposition~\ref{prop:BEfive} and Proposition~\ref{prop:BEsix},
in order to simplify the proof of another one of our main theorems 
(Theorem~\ref{thm:lineB}).

\begin{figure}[htbp]
\begin{center}
\includegraphics[scale=0.4]{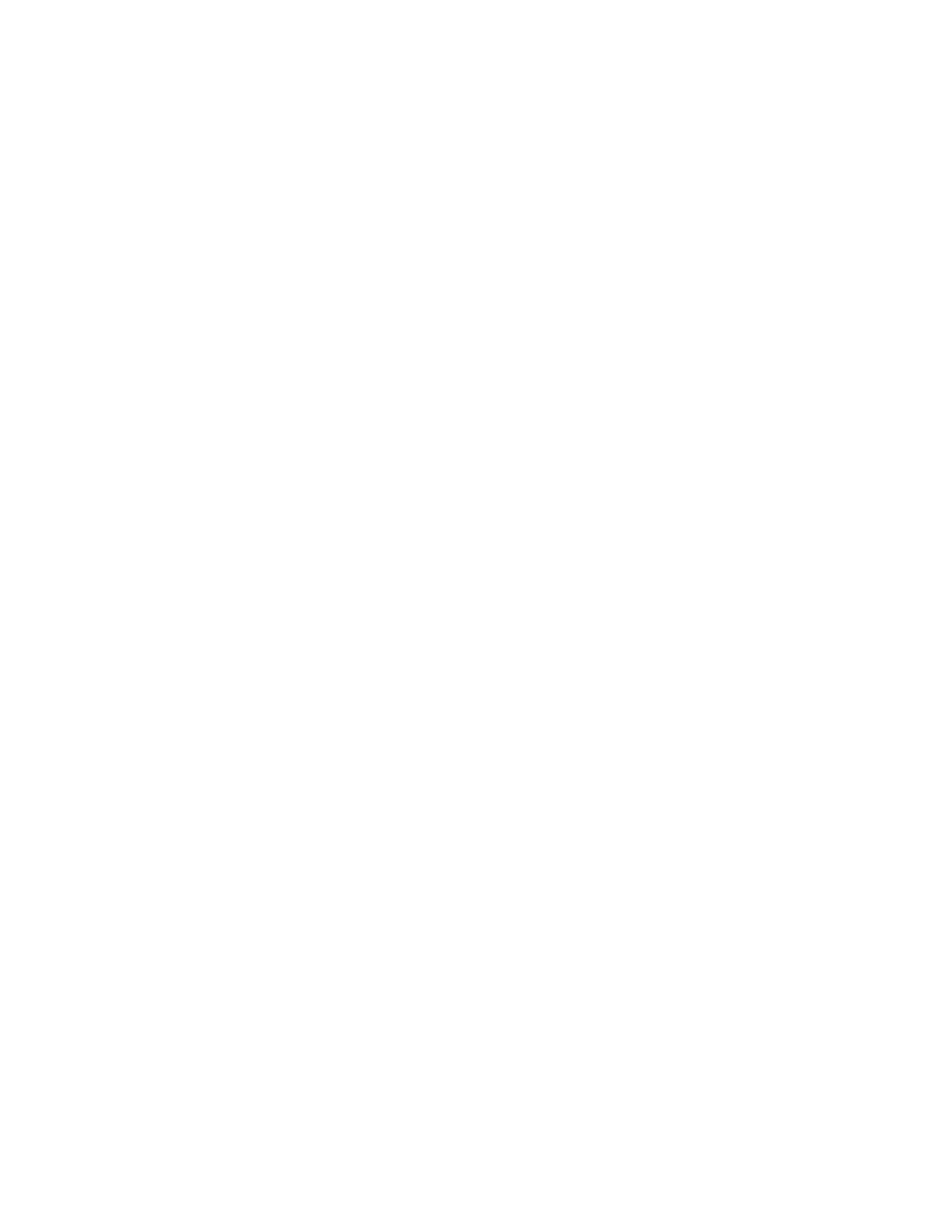}
\end{center}
\caption{\label{fig:expandedcocobi}An expanded cocobi}
\end{figure}

\begin{figure}[htbp]
\begin{center}
\includegraphics[scale=0.4]{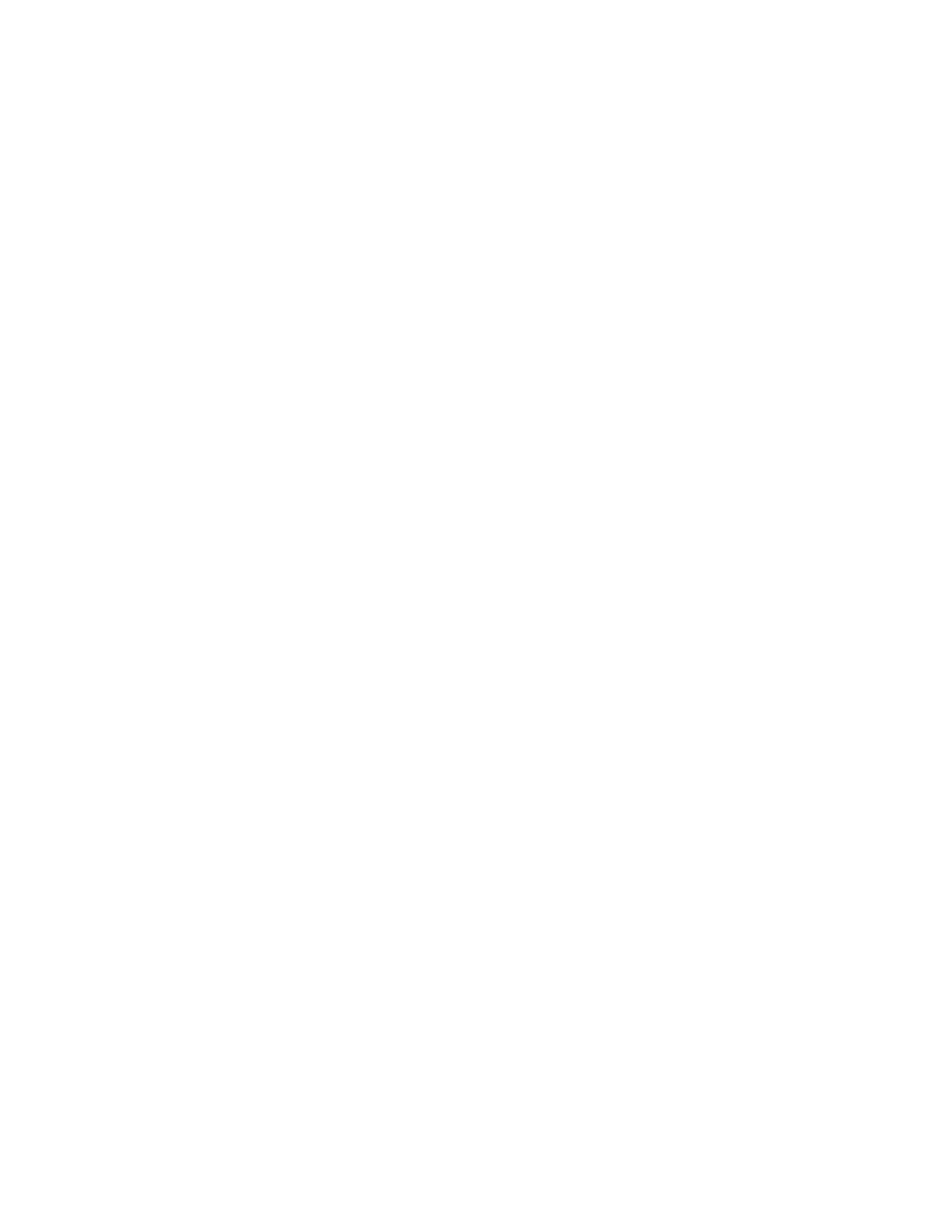}
\end{center}
\caption{\label{fig:expandedbull}An expanded bull}
\end{figure}

An \emph{expanded cocobi} 
is a graph of the type depicted in 
Figure~\ref{fig:expandedcocobi} with $k\ge1$, $a,d\ge0$, and 
$b_1,\ldots,b_k\ge1$, $c_1,\ldots,c_k\ge1$. 
\rd{An expanded cocobi consists of $2k$ nonempty cliques
$B_1,\ldots,B_k,C_1,\ldots,C_k$ 
and two
(possibly empty) cliques $A$ and $D$ with cardinalities
$|B_i|=b_i$, $|C_i|=c_i$, $|A|=a$, and $|D|=d$, for $1\le i\le k$,
respectively, such that between all pairs of cliques in
$\{A,B_1,\ldots,B_k\}$ 
there is a complete join,
between all pairs of cliques in 
$\{D,C_1,\ldots,C_k\}$
there is a complete join, and, for any $i\in\{1,\ldots,k\}$,
there is a complete join between the cliques $B_i$ and $C_i$
(and no further edges are allowed).
The dashed lines in Figure~\ref{fig:expandedcocobi} indicate
that $k$ may be arbitrarily large.}
An \emph{expanded bull}
is a graph of the type depicted in 
Figure~\ref{fig:expandedbull} with $a,b,d\ge1$ and $c\ge0$.
In both figures, pairs of lines indicate complete joins of the cliques.

\begin{prop}[\cite{lock}; \cite{andreslock}]\label{prop:BEfive}
Every expanded cocobi is $[B,-]$-perfect.
\end{prop}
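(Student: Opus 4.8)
The plan is to reduce the statement to a finite structural check by invoking the forbidden--subgraph characterisation of Theorem~\ref{thm:B}. Since no proper colouring, let alone a game, can use fewer than $\chi(H)$ colours, we always have $\chi_{[B,-]}(H)\ge\chi(H)\ge\omega(H)$ for every induced subgraph $H$; thus the only content of the equality $\chi_{[B,-]}(H)=\omega(H)$ is the upper bound, i.e.\ that Alice wins with $\omega(H)$ colours. By Theorem~\ref{thm:B} this upper bound holds simultaneously for \emph{all} induced subgraphs of a graph $G$ as soon as $G$ itself contains none of the fifteen graphs of Figure~\ref{fig:forbiddenBperf} as an induced subgraph: forbidden--subgraph--freeness is inherited by induced subgraphs, and $[B,-]$-perfectness is exactly the assertion about all of them. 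So it suffices to prove the single statement that no expanded cocobi $G$ contains any of the fifteen forbidden graphs as an induced subgraph.

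First I would record the structural features of an expanded cocobi that do the work. Reading off Figure~\ref{fig:expandedcocobi}, $G$ is built from the cliques $K_a$, $K_d$ and the paired cliques $K_{b_i},K_{c_i}$ ($1\le i\le k$) glued together by the indicated complete joins; in particular $G$ is connected and possesses a clique that is completely joined to almost all of the graph. Two consequences drive the argument. The dominating clique forbids a vertex that is non-adjacent to an otherwise separate configuration, which excludes the graphs $P_4\cup K_1$ and $C_4\cup K_1$, and, together with the low--degree shape of those graphs, also the chair and the split $3$-star. Separately, the join pattern prevents long induced paths and induced odd holes: any would-be induced $P_5$ or $C_5$ would have to traverse two completely joined cliques, and the join edges short--circuit the path or cycle. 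Establishing these two facts rigorously from the figure is the core of the sparse part of the check.

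It then remains to exclude the dense members of the list: the double fan, the $4$-fan, the $4$-wheel, and $F_{10}$ through $F_{15}$. For these I would argue by a neighbourhood count. Because $G$ is assembled entirely from cliques and complete joins, every induced subgraph of $G$ is again of the same clique--join type, so an induced copy of one of these targets would force its prescribed local structure (an independent pair sitting inside a join, a specific induced $P_4$, or two disjoint non-adjacent edges) to be realised inside the clique--join skeleton of $G$. One checks case by case that each such demand forces a join edge of $G$ that is already present, destroying the induced copy, for every choice of the parameters $k,a,d,b_i,c_i$.

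The main obstacle I anticipate is precisely this last, uniform verification: the graphs $F_{10},\ldots,F_{15}$ are themselves small clique--join graphs that superficially resemble fragments of an expanded cocobi, so ruling each of them out requires carefully tracking which pairs of cliques are joined and exhibiting, in every case, an unavoidable join edge of $G$ that the forbidden graph cannot contain. A subtlety worth flagging is that an induced $C_4$ is \emph{permitted} in a $[B,-]$-perfect graph (only $C_4\cup K_1$ is forbidden), so the domination argument must be applied not to forbid $C_4$ itself but to show it never occurs together with a private non-neighbour. Should the forbidden--subgraph route risk circularity, because in the source Theorem~\ref{thm:B} and the structural description are developed in tandem, I would instead exhibit an explicit winning strategy for Alice with $\omega(G)$ colours: Bob moves first and cannot skip, and Alice replies by a colour--matching rule across the paired cliques $K_{b_i},K_{c_i}$, maintaining the invariant that at every stage each still--uncoloured vertex sees at most $\omega(G)-1$ distinct colours, which guarantees it remains colourable and hence that Alice wins.
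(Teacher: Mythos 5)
First, a point of order: the paper contains no proof of Proposition~\ref{prop:BEfive} to compare against. The proposition is imported verbatim from Lock (2016) and Andres--Lock (2019) precisely so that the strategy analysis need not be repeated here; in those sources it is established by an explicit winning strategy for Alice on the expanded cocobi, and it is one of the ingredients (``permitted structure $\Rightarrow$ game-perfect'') in the proof of the very characterisation you invoke as Theorem~\ref{thm:B}. This is the circularity you flag yourself: deducing the proposition from Theorem~\ref{thm:B} is formally legitimate only if that theorem is taken as a black box; as an independent proof it inverts the logical order of the literature, since Theorem~\ref{thm:B} is proved \emph{using} the proposition. Your fallback---Alice ``colour-matches across the paired cliques $K_{b_i},K_{c_i}$'' while maintaining the invariant that every uncoloured vertex sees at most $\omega(G)-1$ colours---is exactly the hard content of the proposition, and it is asserted in one sentence rather than proved: no rule is given for moves inside $K_a$ or $K_d$, and no argument is given that the invariant survives Bob's replies.

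Second, even within your chosen route the verification is both incomplete and, at one point, wrong as stated. A dominating clique does \emph{not} exclude $P_4\cup K_1$ or $C_4\cup K_1$: attach a universal vertex to $P_4\cup K_1$ and you obtain a graph with a dominating clique that still contains $P_4\cup K_1$ as an induced subgraph. The observation that actually does the work is that an expanded cocobi is \emph{co-bipartite}: its vertex set is covered by the two cliques $A\cup B_1\cup\cdots\cup B_k$ and $D\cup C_1\cup\cdots\cup C_k$, so its independence number is at most $2$ and this property is hereditary. That single fact disposes of twelve of the fifteen graphs of Figure~\ref{fig:forbiddenBperf}, namely all those with three pairwise nonadjacent vertices: $P_4\cup K_1$, $C_4\cup K_1$, $P_5$, $F_{11}$, and the eight claw-containing graphs (chair, split $3$-star, double fan, $F_{10}$, $F_{12},\ldots,F_{15}$). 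The $C_5$ is excluded because it is not co-bipartite (two cliques cover at most four of its vertices). Only the $4$-fan and the $4$-wheel then require the bespoke argument you gesture at: any induced $P_4$ or $C_4$ has exactly two vertices in each covering clique (three in one clique would force a triangle), its two cross edges lie in distinct joins $B_i\vee C_i$ and $B_j\vee C_j$, and hence no single vertex can be adjacent to all four of its vertices, so neither $P_4\vee K_1$ nor $C_4\vee K_1$ occurs. Since your write-up neither isolates the co-bipartite property nor carries out these cases---the dense graphs are dismissed with ``one checks case by case''---the proposal as it stands is a plan with the decisive steps missing, one of its named tools failing, and its fallback (the strategy that the cited sources actually supply) unproven.
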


\begin{prop}[\cite{lock}; \cite{andreslock}]\label{prop:BEsix}
Every expanded bull is $[B,-]$-perfect.
\end{prop}

\subsection{The edge colouring game}

An \emph{edge colouring game} has the same rules as a vertex colouring game,
except that the players have to colour uncoloured edges (instead of
uncoloured vertices) such that adjacent edges receive distinct colours.
And Alice wins if every edge (instead of vertex) is coloured in the end.

The \emph{game chromatic index} $\chi_g'(G)$ of $G$ is the smallest nonnegative
integer $k$ such that Alice has a winning strategy for the
edge colouring game played on $G$ with $k$ colours.

Thus, the edge colouring game on $G$ is equivalent to the vertex colouring game
on the line graph $L(G)$ of $G$; and the game chromatic index and the
game chromatic number are related by
\begin{equation}
\chi_g'(G)=\chi_g(L(G)).
\end{equation}

The edge colouring game was introduced by \cite{lamshiuxu}
and \cite{caizhu}.
Determining the maximum game chromatic index of some classes of graphs
has been {the} subject of several 
papers (cf.\ \cite{andresforest,andresetalwheel,bartnickigrytczuk,caizhu,%
channong,charpentierep,fongchan,fongchanarxiv,fongchannong,lamshiuxu})
as well as the game chromatic index {of} random 
graphs (cf.\ \cite{beveridgeetal,keusch})
as well as different types of edge-colouring based 
games (cf.\ \cite{boudonetal,dunnrelkdeg,dunnmorawskinordstrom}).

Originally, in the edge colouring game Alice moves first and skipping is
not allowed. However, we will distinguish here six variants as in the
vertex colouring game. Namely, in the 
\emph{$[X,Y]$-edge colouring game} player $X$ has the first move and
player $Y$ may skip moves (in case $Y\in\{A,B\}$) or none of the players
is allowed to skip (in case $Y\in\{-\})$. Its corresponding
game chromatic index is denoted by $\chi_{[X,Y]}'(G)$.

\subsection{Combining the setting: line game-perfect graphs}

A graph $G$ is \emph{line $[X,Y]$-perfect} (or \emph{line game-perfect} for the
$[X,Y]$-edge colouring game) if, for any edge-induced subgraph $H$ of $G$,
\[\chi_{[X,Y]}'(H)=\omega(L(H)).\]

In this paper, we characterise line game-perfect graphs
\begin{itemize}
\item for the games $[B,B]$, $[A,B]$, and $[A,-]$ (Theorem~\ref{thm:lineBB});
\item for game $[B,-]$ (Theorem~\ref{thm:lineB});
\item 
for game $[B,A]$ (Theorem~\ref{thm:lineBA});
\item 
for game $[A,A]$ (Theorem~\ref{thm:lineAA}).
\end{itemize}







\subsection{Main results}
\label{sec:main}


\begin{thm}\label{thm:lineAA}
The following are equivalent for a graph $G$.
\begin{itemize}
\item[(1)] $G$ is line $[A,A]$-perfect.
\item[(2)] 
None of the following configurations
(depicted in Figure~\ref{fig:fAA}) is an edge-induced subgraph of~$G$:
\rd{$P_6$, $C_5$, mini lobster $F_2$,
trigraph $F_3$,} or two 3-caterpillars $F_1\cup F_1$. 
\item[(3)] At most one component of $G$ is a full tree of type $E_1$ 
or a satellite of type $E_2$, 
and every other component is a single galaxy, a double galaxy, a candy, 
a star book, a diamond of flowers, or a tetrahedron of flowers
(described in Sections~\ref{subsec:BA} and~\ref{subsec:AA}).
\end{itemize}
\end{thm}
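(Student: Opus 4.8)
The plan is to establish the equivalences in the cyclic order $(1)\Rightarrow(2)\Rightarrow(3)\Rightarrow(1)$, following the familiar template set by Trotter's and Maffray's theorems but adapting each implication to the game-theoretic setting of the $[A,A]$-edge colouring game. The proof rests on translating edge-colouring statements about $G$ into vertex-colouring statements about the line graph $L(G)$ via the identity $\chi_{[A,A]}'(H)=\chi_{[A,A]}(L(H))$, so that line $[A,A]$-perfectness of $G$ amounts to $[A,A]$-perfectness of $L(G)$ restricted to the induced subgraphs arising as line graphs of edge-induced subgraphs of $G$.

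\paragraph{$(1)\Rightarrow(2)$.} For the forbidden-configuration implication, I would argue by contraposition: for each of the five configurations $P_6$, $C_5$, $F_2$ (mini lobster), $F_3$ (trigraph), and $F_1\cup F_1$ (two $3$-caterpillars), I would exhibit an explicit winning strategy for Bob in the $[A,A]$-edge colouring game played with $\omega(L(H))$ colours, where $H$ is the configuration. Since these configurations have bounded size, each verification reduces to a finite case analysis of Bob's responses; the key is to identify, for each configuration, a target edge that Bob can force to become uncolourable, typically by first colouring a central edge so as to saturate the available colours around some uncoloured edge. If any such $H$ occurs as an edge-induced subgraph of $G$, then $\chi_{[A,A]}'(H)>\omega(L(H))$, contradicting line $[A,A]$-perfectness.

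\paragraph{$(2)\Rightarrow(3)$.} This is the structural heart of the argument and where I expect the main obstacle to lie. Assuming $G$ contains none of the five forbidden edge-induced subgraphs, I must deduce the explicit block/component structure described in (3). I would proceed in stages: first observe that excluding $C_5$ and $P_6$ already forces each block of $G$ into a restricted family (invoking Maffray's Theorem~\ref{thm:maffray} is natural here, since forbidding the relevant configurations should imply line perfectness, so each block is bipartite, $K_4$, or a triangular book); then use the absence of $F_1\cup F_1$ to limit the number of "large" components to at most one, yielding the "at most one component" clause; finally, use the finer forbidden configurations $F_2$ and $F_3$ to pin down the exact shapes of the admissible components (single/double galaxies, candies, star books, diamonds and tetrahedra of flowers). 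The delicate part will be the bookkeeping that matches each excluded configuration to the precise structural restriction it enforces, and ensuring the enumeration of component types in Sections~\ref{subsec:BA} and~\ref{subsec:AA} is exhaustive; cross-referencing the analogous $[B,A]$ analysis should streamline this, since the $[A,A]$ component list extends the $[B,A]$ one.

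\paragraph{$(3)\Rightarrow(1)$.} For the final implication I would verify directly that every graph of the form described in (3) is line $[A,A]$-perfect, i.e.\ that Alice has a winning strategy with $\omega(L(H))$ colours on every edge-induced subgraph $H$. Because (3) is a disjoint-union description with at most one "special" component, I would argue that Alice can play the components essentially independently, exploiting her skipping privilege to synchronise her responses across components and to neutralise Bob's threats within each admissible component type. For each component type the required winning strategy is a bounded, explicit edge-colouring strategy; the clause allowing at most one full tree $E_1$ or satellite $E_2$ reflects that Alice can afford to let Bob "waste" his free moves on at most one such component. I would note that, since every edge-induced subgraph of a graph of type (3) is again of type (3), it suffices to exhibit the winning strategy for $G$ itself rather than for all its subgraphs, which keeps the strategy description manageable.
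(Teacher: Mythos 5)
Your proposal is correct and follows essentially the same route as the paper: the ring closure $(1)\Rightarrow(2)\Rightarrow(3)\Rightarrow(1)$, with explicit Bob strategies on the five forbidden configurations, the Trotter--Maffray block analysis refined by $F_2$, $F_3$ and $F_1\cup F_1$ for the structural implication, and Alice strategies on the permitted component types combined with a heredity check (which the paper carries out via Table~\ref{tab:AAhereditary}) for the converse. Even your component-synchronisation idea --- Alice devoting her special attention to the single $E_1$/$E_2$ component and otherwise responding where Bob last played, using her skip --- is exactly the paper's argument.
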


\pagebreak[3]

\begin{thm}\label{thm:lineBA}
The following are equivalent for a graph $G$.
\begin{itemize}
\item[(1)] $G$ is line $[B,A]$-perfect.
\item[(2)] 
None of the following configurations
(depicted in Figure~\ref{fig:fBA})
is an edge-induced subgraph of~$G$:
\rd{$P_6$, $C_5$, or 3-caterpillar~$F_1$.} 
\item[(3)] Every component of $G$ is a single galaxy, a double galaxy, a candy,
a star book, a diamond of flowers, or a tetrahedron of flowers
(described in Sections~\ref{subsec:B} and~\ref{subsec:BA}).
\end{itemize}
\end{thm}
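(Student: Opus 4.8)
The plan is to prove the three-way equivalence for line $[B,A]$-perfectness by establishing the cycle of implications $(1)\Rightarrow(2)\Rightarrow(3)\Rightarrow(1)$, mirroring the structure used for line perfect graphs in Theorems~\ref{thm:trotterchar} and~\ref{thm:maffray}. For $(1)\Rightarrow(2)$, I would argue the contrapositive: for each of the three forbidden configurations $P_6$, $C_5$, and the $3$-caterpillar $F_1$, I would exhibit an explicit winning strategy for Bob in the $[B,A]$-edge colouring game played with $\omega(L(H))$ colours on that configuration $H$, thereby showing $\chi_{[B,A]}'(H)>\omega(L(H))$ and hence that $H$ violates line $[B,A]$-perfectness. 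Since line $[B,A]$-perfectness is preserved under taking edge-induced subgraphs, containing any such $H$ forces $G$ to fail. The key point is to compute $\omega(L(H))$ (the maximum number of mutually adjacent edges, i.e.\ the largest edge-clique) for each small configuration and then describe Bob's first move and responses explicitly; because Bob moves first in the $[B,A]$ game and only Alice may skip, Bob can often force a vertex to see all available colours on its incident edges.

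For $(2)\Rightarrow(3)$, the plan is to translate the absence of the three forbidden edge-induced subgraphs into the structural conclusion that every component is one of the listed graph types (single galaxy, double galaxy, candy, star book, diamond of flowers, or tetrahedron of flowers). Here I would lean on the line-graph correspondence: forbidding $P_6$, $C_5$, and $F_1$ as edge-induced subgraphs of $G$ is equivalent to forbidding their line graphs as vertex-induced subgraphs of $L(G)$. I expect to combine this with Trotter's characterisation (Theorem~\ref{thm:trotterchar}) to first restrict to line perfect graphs, then use Maffray's block decomposition (Theorem~\ref{thm:maffray}) to reduce to blocks that are bipartite, $K_4$, or triangular books, and finally do a case analysis on how these blocks can be glued together without creating a forbidden configuration. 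This combinatorial case analysis is where the explicit structural families in Sections~\ref{subsec:B} and~\ref{subsec:BA} should emerge.

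For $(3)\Rightarrow(1)$, I would verify directly that each listed structural type $H$ satisfies $\chi_{[B,A]}'(H')=\omega(L(H'))$ for every edge-induced subgraph $H'$, by providing Alice's winning strategy with $\omega(L(H'))$ colours in the $[B,A]$ game. Since the lower bound $\chi_{[B,A]}'(H')\ge\chi'(H')\ge\omega(L(H'))$ is automatic, only the matching upper bound (Alice's strategy) needs work; here the fact that Alice is permitted to skip moves (because $Y=A$) gives her substantial extra freedom, and I would exploit this skipping privilege to let her always respond locally around Bob's last move. It is convenient that these same structural families also appear in Theorem~\ref{thm:lineAA}, so the strategies built here should transfer, and the $[B,A]$ analysis is expected to be cleaner than $[A,A]$ since there is no separate "full tree" or "satellite" exception.

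The main obstacle I anticipate is the middle implication $(2)\Rightarrow(3)$: turning a finite list of forbidden edge-induced subgraphs into a complete and exhaustive structural description requires carefully ruling out all other ways components could be assembled, and it is easy to miss a gluing pattern. In particular, controlling how triangular books, diamonds, and tetrahedra of flowers can coexist within a single component without producing a hidden $P_6$, $C_5$, or $F_1$ will demand a meticulous and somewhat lengthy case distinction, and this is where most of the real work of the theorem lies.
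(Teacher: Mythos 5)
Your proposal is correct in outline and follows the same ring closure $(1)\Rightarrow(2)\Rightarrow(3)\Rightarrow(1)$ as the paper, with the same treatment of $(3)\Rightarrow(1)$ (Alice's strategies for each permitted type, heredity of the types, and component-wise play using her skipping privilege to answer Bob locally). The substantive difference is in the middle implication. You propose to prove $(2)\Rightarrow(3)$ from scratch: restrict to line perfect graphs via Theorem~\ref{thm:trotterchar}, apply Maffray's block decomposition (Theorem~\ref{thm:maffray}), and then do the gluing case analysis for the forbidden list $\{P_6, C_5, F_1\}$. The paper instead deduces $(2)\Rightarrow(3)$ in a few lines from the already-proven Theorem~\ref{thm:lineAA}: since $F_2$, $F_3$ and $F_1\cup F_1$ all contain a 3-caterpillar $F_1$, a graph satisfying (2) satisfies condition (2) of Theorem~\ref{thm:lineAA}, so each component is one of the $[A,A]$-permitted types; and the two exceptional types (full trees of type $E_1$ and satellites of type $E_2$) are excluded because they contain $F_1$. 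The paper's route buys brevity and avoids duplicating the Maffray-based analysis, at the price of depending on the (heavier) Theorem~\ref{thm:lineAA}; your route is self-contained and, if anything, the case analysis is slightly simpler than the $[A,A]$ one, since forbidding $F_1$ outright kills the full-tree and satellite branches immediately. A similar remark applies to $(1)\Rightarrow(2)$: you give Bob explicit strategies on all three configurations, whereas the paper only needs a new strategy for $F_1$ (Lemma~\ref{notBAniceFone}), handling $P_6$ and $C_5$ via the inclusion ${\cal LP}[B,A]\subseteq{\cal LP}[A,A]$ of Observation~\ref{obs:compareclasses}; both are sound, and for $C_5$ no strategy is needed at all since $\chi'(C_5)=3>2=\omega(L(C_5))$.
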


\begin{thm}\label{thm:lineB}
The following are equivalent for a graph $G$.
\begin{itemize}
\item[(1)] $G$ is line $[B,-]$-perfect.
\item[(2)] 
None of the following configurations
(depicted in Figure~\ref{fig:fB})
is an edge-induced subgraph of~$G$:
\rd{$P_5\cup P_2$, $C_4\cup P_2$,
$P_6$, $C_5$, bull, diamond, or 3-caterpillar~$F_1$.} 
\item[(3)] Every component of $G$ is a double star, a vase of flowers,
or an isolated vertex, or $G$ contains exactly one nontrivial component
and this component is a double star, a vase of flowers, a candy,
a shooting star, a double vase, or an amaryllis
(described in Sections~\ref{subsec:BB} and~\ref{subsec:B}).
\end{itemize}
\end{thm}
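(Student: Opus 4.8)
The plan is to route the whole theorem through the line graph and the vertex-colouring results of Section~1.2. The identity $\chi_g'(G)=\chi_g(L(G))$ holds verbatim for each of the six variants (colouring an edge of $H$ \emph{is} colouring the corresponding vertex of $L(H)$, with the first-move and skipping rules unchanged), so $\chi_{[B,-]}'(H)=\chi_{[B,-]}(L(H))$. Since the line graphs $L(H)$ of the edge-induced subgraphs $H$ of $G$ are exactly the vertex-induced subgraphs of $L(G)$ (the remark after Theorem~\ref{thm:maffray}), comparing definitions gives the pivotal equivalence: $G$ is line $[B,-]$-perfect if and only if $L(G)$ is $[B,-]$-perfect. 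I would then prove the cycle (1)$\Rightarrow$(2)$\Rightarrow$(3)$\Rightarrow$(1), using this equivalence together with Theorem~\ref{thm:B} throughout.

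For (1)$\Rightarrow$(2) I argue by contraposition and simply compute line graphs. For each of the seven configurations $H$ in~(2) I identify $L(H)$ among the fifteen graphs of Figure~\ref{fig:forbiddenBperf}: one has $L(C_5)=C_5$, $L(P_6)=P_5$, $L(P_5\cup P_2)=P_4\cup K_1$, $L(C_4\cup P_2)=C_4\cup K_1$, while the diamond and the bull have line graphs $K_1\vee C_4$ (the $4$-wheel) and $K_1\vee P_4$ (the $4$-fan) respectively, and the $3$-caterpillar $F_1$ yields one of the remaining forbidden \emph{line} graphs. (The forbidden graphs that are \emph{not} line graphs, such as the chair and the split $3$-star, both of which contain an induced $K_{1,3}$, can never occur inside a line graph and therefore impose no condition.) Hence each such $H$ already fails to be line $[B,-]$-perfect by Theorem~\ref{thm:B}, and since line $[B,-]$-perfectness is hereditary under edge-induced subgraphs, any $G$ containing one of them violates~(1).

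For (3)$\Rightarrow$(1) I take each structural type, form its line graph, and verify $[B,-]$-perfectness. Here Proposition~\ref{prop:BEfive} and Proposition~\ref{prop:BEsix} carry the load: the line graphs of the richer single-component types (candy, shooting star, double vase, amaryllis) should be expanded cocobis or expanded bulls, or induced subgraphs thereof, hence $[B,-]$-perfect; the line graph of a double star is $K_1\vee(K_p\cup K_q)$ (two cliques glued at the vertex of the central edge), which is $P_4$-free and $C_4$-free and so avoids all fifteen forbidden graphs, and a vase of flowers behaves similarly. The one genuine subtlety is that $[B,-]$-perfectness is \emph{not} closed under disjoint union — witness $P_4\cup K_1$ and $C_4\cup K_1$ among the forbidden graphs — so for the multi-component alternative I must check that a disjoint union of line graphs of double stars, vases of flowers, and isolated vertices still contains none of the fifteen: this works precisely because every summand is $P_4$-free and $C_4$-free, so no $P_4\cup K_1$ or $C_4\cup K_1$ can assemble across components. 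This is exactly why the richer types, whose graphs contain a $P_5$ or a $C_4$ and hence a $P_4$ or $C_4$ in the line graph, must be confined to a single nontrivial component.

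The main obstacle is (2)$\Rightarrow$(3). First, forbidding $C_5$ and $P_6$ forbids every odd cycle of length at least $5$ (an odd cycle of length $\ge 7$ contains $P_6$ as an edge-induced subgraph), so by Theorem~\ref{thm:trotterchar} the graph $G$ is line perfect and Theorem~\ref{thm:maffray} applies: each block is bipartite, a $K_4$, or a triangular book $K_{1,1,n}$. Forbidding the diamond then kills $K_4$ (take five of its six edges) and collapses each book to a single triangle (two pages share the spine and form a diamond), so every block is bipartite or a $K_3$. The remaining, most delicate work is to push the forbidden bull, $P_6$, $P_5\cup P_2$, $C_4\cup P_2$, and $F_1$ through a block-by-block and component-by-component analysis: the path/caterpillar conditions bound how long and how branched a bipartite block may be, the bull condition governs how a triangle may attach to the rest, and the two configurations of the form $(\,\cdot\,)\cup P_2$ enforce the dichotomy between many simple components and one rich component together with isolated vertices. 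Carrying this case analysis through is what produces the explicit catalogue of double stars, vases of flowers, candies, shooting stars, double vases, and amaryllises.
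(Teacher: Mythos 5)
Your reductions for (1)$\Rightarrow$(2) and (3)$\Rightarrow$(1) are correct and essentially the paper's own: the pivotal equivalence ($G$ is line $[B,-]$-perfect iff $L(G)$ is $[B,-]$-perfect) is Observation~\ref{def:linegame}; your line-graph computations ($L(P_6)=P_5$, $L(\mathrm{bull})$ is the 4-fan, $L(\mathrm{diamond})$ is the 4-wheel, $L(F_1)=F_{11}$, etc.) are exactly the content of Lemmas~\ref{lem:onlyPfourAdd}--\ref{lem:onlyFeleven} and Proposition~\ref{prop:lineB}; and your use of Propositions~\ref{prop:BEfive} and~\ref{prop:BEsix} for the candy, shooting star, double vase and amaryllis matches Lemmas~\ref{BniceCandy}--\ref{BniceAmaryllis}. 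One caveat in (3)$\Rightarrow$(1): the inference ``$K_1\vee(K_p\cup K_q)$ is $P_4$-free and $C_4$-free, hence avoids all fifteen forbidden graphs'' is incomplete as written, because $P_4$- and $C_4$-freeness only excludes the seven graphs of the reduced list; the other eight forbidden graphs (chair, split 3-star, double fan, $F_{10}$, $F_{12}$--$F_{15}$) need not contain an induced $P_4$ or $C_4$, and are excluded only because line graphs are claw-free (Theorem~\ref{thm:beineke}). You state that fact in your (1)$\Rightarrow$(2) paragraph, but you must invoke it here too. The paper sidesteps this entirely in Lemma~\ref{lem:AwinsBBB}: the line graphs of double stars and vases of flowers are ear animals, so Theorem~\ref{thm:BBcharE} gives $[B,B]$-perfectness, hence $[B,-]$-perfectness by Observation~\ref{obs:compareclasses}.

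The genuine gap is (2)$\Rightarrow$(3), which is the heart of the theorem and which you only outline. Invoking Theorem~\ref{thm:trotterchar} and Theorem~\ref{thm:maffray} to restrict the blocks, and listing which forbidden configuration should control which structural feature, is a plan rather than a proof: the entire catalogue --- that bipartite blocks containing a $C_4$ must be a $K_{2,m}$ and assemble into candies, that triangles can attach only so as to form vases, double vases or amaryllises, that trees collapse to double stars and shooting stars, and that $P_5\cup P_2$ and $C_4\cup P_2$ force the single-nontrivial-component dichotomy --- is precisely the case analysis you defer, and it is where all the difficulty lies. Note also that the paper does not redo this analysis from Maffray's theorem for this game: it invokes the already-proved Theorem~\ref{thm:lineBA}, so each component of $G$ is known to be a candy, star book, diamond of flowers, tetrahedron of flowers, single galaxy, or double galaxy, and then prunes this short list with the four remaining forbidden configurations (the diamond eliminates diamonds and tetrahedra of flowers and all multi-sheet star books, the bull trims one-sheet star books to vases of flowers or double stars, the inequality $k_1+k_2+k_3\le 2$ derived from $P_5\cup P_2$ handles the galaxies, and $P_5\cup P_2$, $C_4\cup P_2$ yield the dichotomy). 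Your from-scratch route would very likely succeed, since it parallels the paper's proof of Theorem~\ref{thm:lineAA}(2)$\Rightarrow$(3), but it duplicates work that the paper factors through Theorem~\ref{thm:lineBA}; either way, the case analysis must actually be carried out before this counts as a proof.
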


\begin{thm}\label{thm:lineBB}
The following are equivalent for a graph $G$.
\begin{itemize}
\item[(1)] $G$ is line $[B,B]$-perfect.
\item[(2)] $G$ is line $[A,B]$-perfect.
\item[(3)] $G$ is line $[A,-]$-perfect.
\item[(4)] 
None of the following configurations
(depicted in Figure~\ref{fig:fBB})
is an edge-induced subgraph of~$G$:
\rd{$P_5$} or $C_4$. 
\item[(5)] Every component of $G$ is either a vase of flowers or
a double star or an isolated vertex (described in Section~\ref{subsec:BB}).
\end{itemize}
\end{thm}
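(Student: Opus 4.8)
The plan is to prove the chain of implications $(1)\Rightarrow(2)\Rightarrow(3)\Rightarrow(4)\Rightarrow(5)\Rightarrow(1)$. Throughout I will use the translation that, for any graph $H$, the edge colouring game is the vertex colouring game on the line graph, so $\chi_{[X,Y]}'(H)=\chi_{[X,Y]}(L(H))$; together with the fact (noted by Trotter) that the line graphs of the edge-induced subgraphs of $G$ are exactly the vertex-induced subgraphs of $L(G)$, this shows that $G$ is line $[X,Y]$-perfect if and only if $L(G)$ is $[X,Y]$-perfect. I will also use the two monotonicity inequalities $\chi_{[A,-]}'(H)\le\chi_{[A,B]}'(H)\le\chi_{[B,B]}'(H)$, valid for every $H$. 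The right inequality holds because in the $[B,B]$-game Bob may open with a skip, after which the position is precisely the start of the $[A,B]$-game, so any Alice strategy winning $[B,B]$ restricts to one winning $[A,B]$; the left inequality holds because the extra skipping option of Bob in $[A,B]$ can only help him, so an Alice strategy for $[A,B]$ also wins $[A,-]$. Since one always has $\chi_{[X,Y]}'(H)\ge\chi'(H)\ge\omega(L(H))$, these inequalities force $(1)\Rightarrow(2)\Rightarrow(3)$ immediately: line $[B,B]$-perfectness squeezes $\chi_{[A,B]}'(H)$ and then $\chi_{[A,-]}'(H)$ down to $\omega(L(H))$ on every edge-induced subgraph.

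For $(3)\Rightarrow(4)$ I argue by contraposition. Line $[A,-]$-perfectness requires $\chi_{[A,-]}'(H)=\omega(L(H))$ for every edge-induced subgraph $H$, in particular for $H=P_5$ and $H=C_4$, each of which satisfies $\omega(L(H))=2$. So it suffices to verify by a short game analysis that Alice loses the $[A,-]$-edge colouring game with two colours on each of $P_5$ and $C_4$: on $P_5$, whatever edge Alice colours first, Bob can colour a suitably chosen edge two steps away so that a middle edge sees both colours; on $C_4$, after Alice's first move Bob colours the opposite (non-adjacent) edge with the other colour, leaving both remaining edges uncolourable. Hence $\chi_{[A,-]}'(P_5),\chi_{[A,-]}'(C_4)\ge 3>2$, so a graph containing either as an edge-induced subgraph cannot be line $[A,-]$-perfect.

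The core of the argument is $(4)\Rightarrow(5)$, which I expect to be the main obstacle. Assume $G$ has neither $P_5$ nor $C_4$ as an edge-induced subgraph; since an edge-induced $P_5$ (resp.\ $C_4$) is the same as having a path on five vertices (resp.\ a four-cycle) as a subgraph, $G$ contains no such path and no $4$-cycle. As every cycle of length at least $5$ contains a $P_5$, the only cycles in $G$ are triangles. Working one component at a time, I will show: no two triangles can coexist in a component (two sharing a vertex yield a path on five vertices, two sharing an edge yield a $4$-cycle, and two vertex-disjoint triangles are joined by a path that again produces a $P_5$); any vertex attached to a triangle must be a leaf and all such leaves must hang from a single triangle-vertex (otherwise a $P_5$ appears); and a triangle-free component, having no cycles at all, is a tree of diameter at most $3$. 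This case analysis yields exactly the three admissible component types — a triangle with pendant leaves at one vertex (vase of flowers), a tree of diameter at most $3$ (double star), or an isolated vertex.

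Finally, for $(5)\Rightarrow(1)$ I will invoke Theorem~\ref{thm:BBcharE}: since $G$ is line $[B,B]$-perfect if and only if $L(G)$ is $[B,B]$-perfect, and $L(G)$ is the disjoint union of the line graphs of the components of $G$, it suffices to check that the line graph of each admissible component is an ear animal. A direct computation gives $L(\text{double star})=K_1\vee(K_p\cup K_q)$ and $L(\text{vase of flowers})=K_1\vee\bigl(K_1\vee(K_m\cup K_1)\bigr)$, both of which match the ear-animal form $K_1\vee((K_a\vee(K_b\cup K_c))\cup K_{d_1}\cup\cdots\cup K_{d_k})$ (with $k=0$, and $a=0$ resp.\ $a=c=1$), while an isolated vertex contributes nothing to the line graph. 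Theorem~\ref{thm:BBcharE} then gives that $L(G)$ is $[B,B]$-perfect, i.e.\ $G$ is line $[B,B]$-perfect, closing the cycle.
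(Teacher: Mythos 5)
Your proof is correct, and although it runs the same ring closure $(1)\Rightarrow(2)\Rightarrow(3)\Rightarrow(4)\Rightarrow(5)\Rightarrow(1)$ with the same endpoints as the paper (monotonicity of the games for the first two implications, and the ear-animal characterisation of Theorem~\ref{thm:BBcharE} for $(5)\Rightarrow(1)$), you take a genuinely different route through the middle. For $(3)\Rightarrow(4)$ the paper cites Proposition~\ref{prop:lineA}, which rests on the general reduction of Theorem~\ref{thm:grund}, the forbidden-induced-subgraph characterisation of $[A,-]$-perfect graphs (Theorem~\ref{thm:A}), and Whitney's theorem to compute the preimages under $L$; you instead give explicit two-colour winning strategies for Bob on $P_5$ and $C_4$, both of which check out (on $P_5$ Bob answers at distance two so a middle edge sees both colours; on $C_4$ he colours the opposite edge with the other colour). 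For $(4)\Rightarrow(5)$ the paper pipes the hypothesis through Theorem~\ref{thm:lineB} (whose proof in turn depends on Theorem~\ref{thm:lineBA} and ultimately Theorem~\ref{thm:lineAA}) and then prunes the seven permitted $[B,-]$-types down to three; you instead argue from scratch that forbidding $P_5$ and $C_4$ forces all cycles to be triangles, at most one triangle per component with every outside neighbour a pendant leaf at a single triangle vertex (vase of flowers), and triangle-free components to be trees of diameter at most three (double stars or isolated vertices) --- each of these case checks is sound. The trade-off: the paper's proof is shorter on the page and keeps its theorems modular by reusing earlier results, while your version is self-contained, so the $[B,B]/[A,B]/[A,-]$ characterisation depends only on Theorem~\ref{thm:BBcharE} and elementary arguments rather than on the heavier machinery of Theorems~\ref{thm:A}, \ref{thm:grund}, \ref{thm:lineAA}, \ref{thm:lineBA} and~\ref{thm:lineB}. (A cosmetic point: your ear-animal parameters for the double star, $k=0$, $a=0$, differ from the paper's $k=2$, $a=b=c=0$, $d_1=m$, $d_2=n$, but both describe the same graph $K_1\vee(K_m\cup K_n)$, so either parametrisation is fine.)
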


\begin{figure}[htbp]
\begin{center}
\begin{minipage}[b]{0.4\textwidth}
\begin{center}
\includegraphics[scale=0.4]{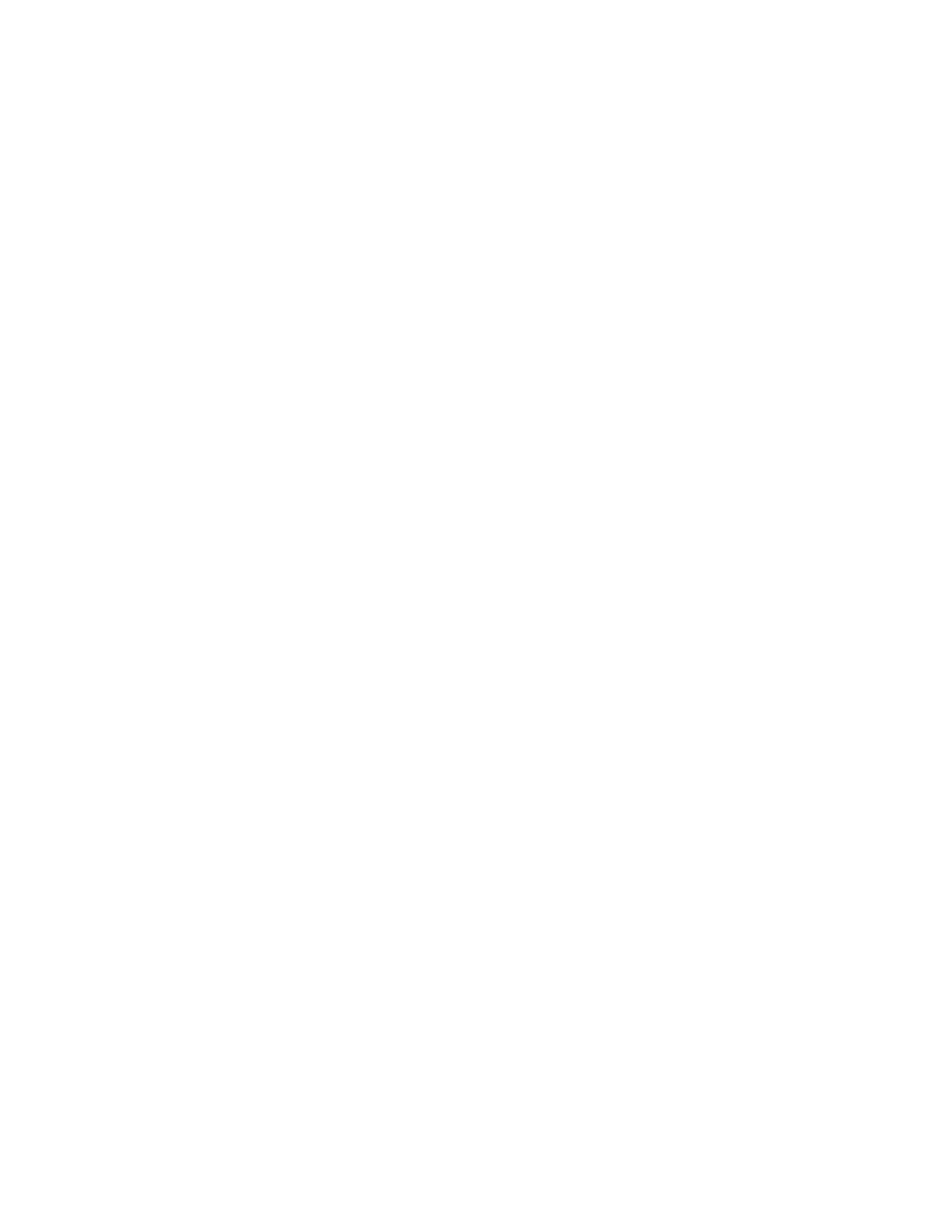}

$P_6$
\end{center}
\end{minipage}
\begin{minipage}[b]{0.4\textwidth}
\begin{center}
\includegraphics[scale=0.4]{C5}

$C_5$
\end{center}
\end{minipage}

\bigskip

\begin{minipage}[b]{0.31\textwidth}
\begin{center}
\includegraphics[scale=0.4]{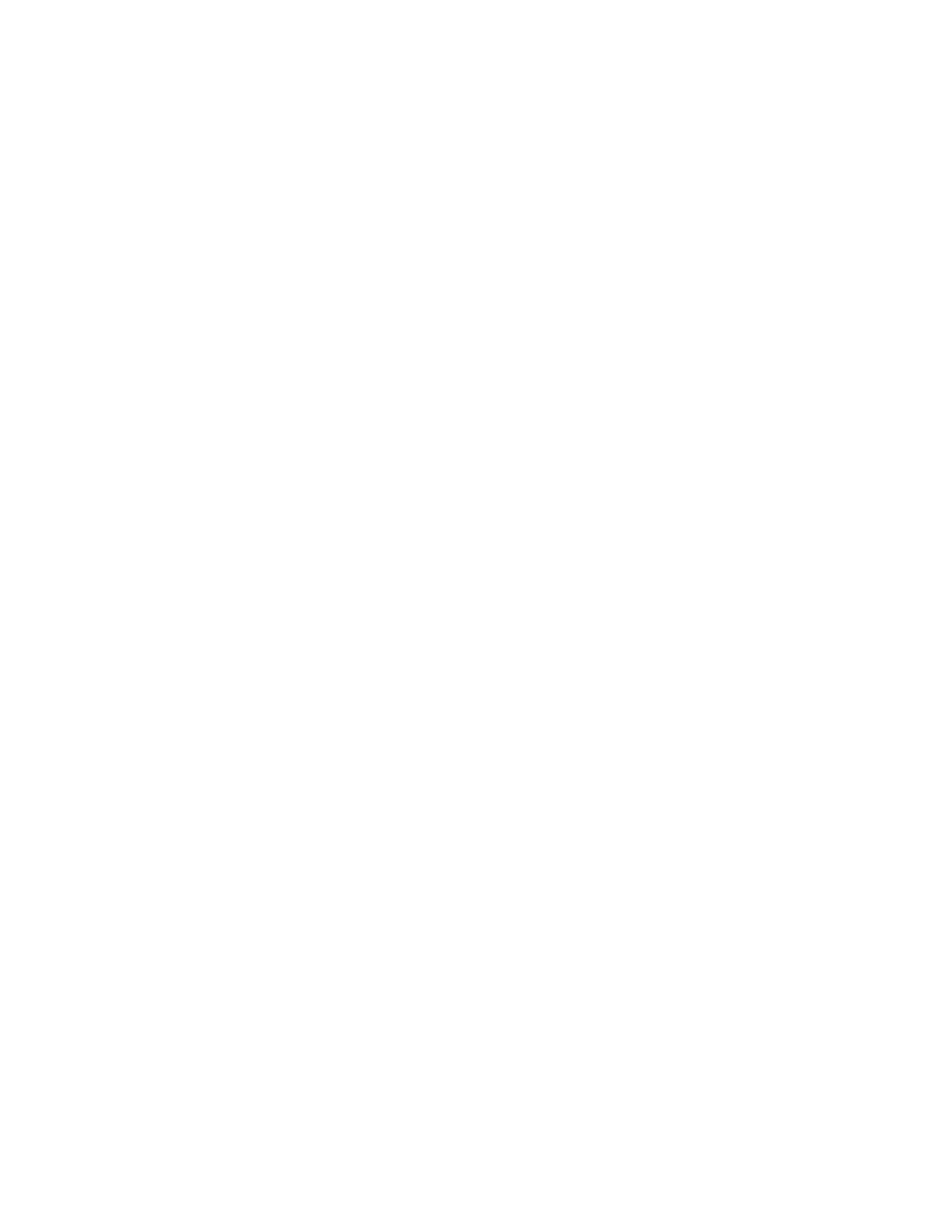}

mini lobster $F_2$
\end{center}
\end{minipage}
\begin{minipage}[b]{0.31\textwidth}
\begin{center}
\includegraphics[scale=0.4]{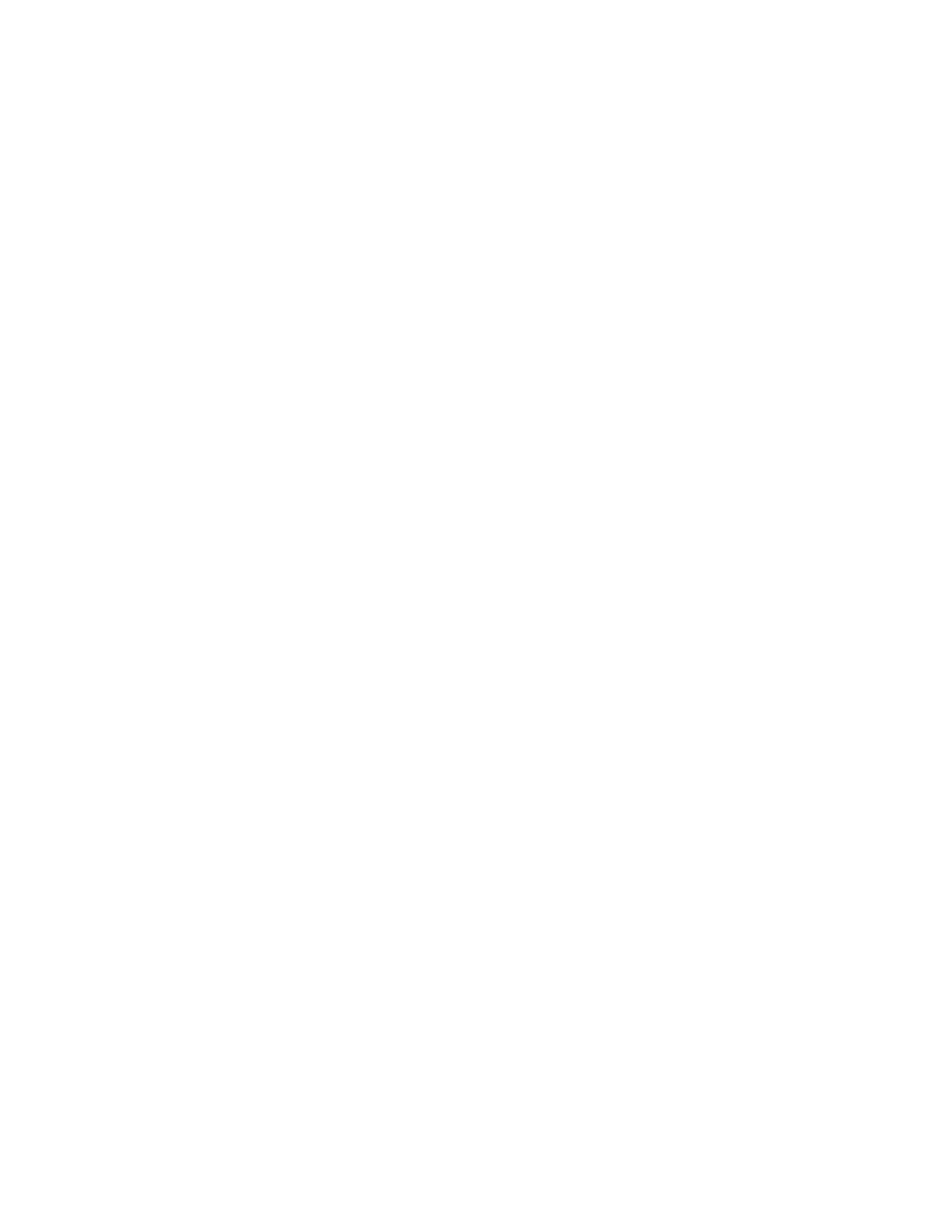}

trigraph $F_3$
\end{center}
\end{minipage}
\begin{minipage}[b]{0.31\textwidth}
\begin{center}
\includegraphics[scale=0.4]{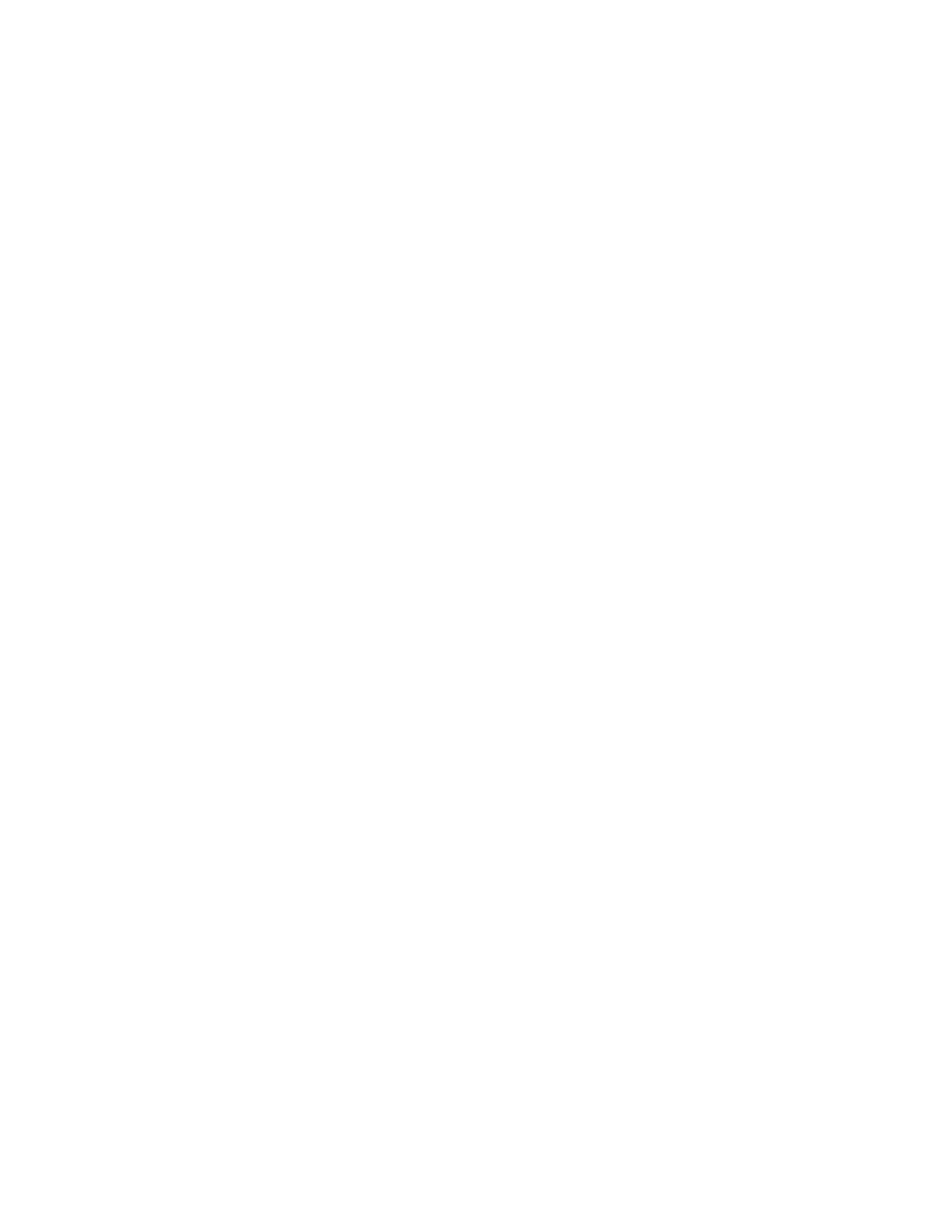}

\includegraphics[scale=0.4]{3-caterpillar}

$F_1\cup F_1$
\end{center}
\end{minipage}
\end{center}
\caption{\label{fig:fAA}Forbidden subgraphs for line $[A,A]$-perfect graphs}
\end{figure}

\begin{figure}[htbp]
\begin{center}
\begin{minipage}[b]{0.4\textwidth}
\begin{center}
\includegraphics[scale=0.4]{P6}

$P_6$
\end{center}
\end{minipage}
\begin{minipage}[b]{0.21\textwidth}
\begin{center}
\includegraphics[scale=0.4]{C5}

$C_5$
\end{center}
\end{minipage}
\begin{minipage}[b]{0.31\textwidth}
\begin{center}
\includegraphics[scale=0.4]{3-caterpillar}

3-caterpillar $F_1$
\end{center}
\end{minipage}
\end{center}
\caption{\label{fig:fBA}Forbidden subgraphs for line $[B,A]$-perfect graphs}
\end{figure}

\begin{figure}[htbp]
\begin{center}
\begin{minipage}[b]{0.4\textwidth}
\begin{center}
\includegraphics[scale=0.4]{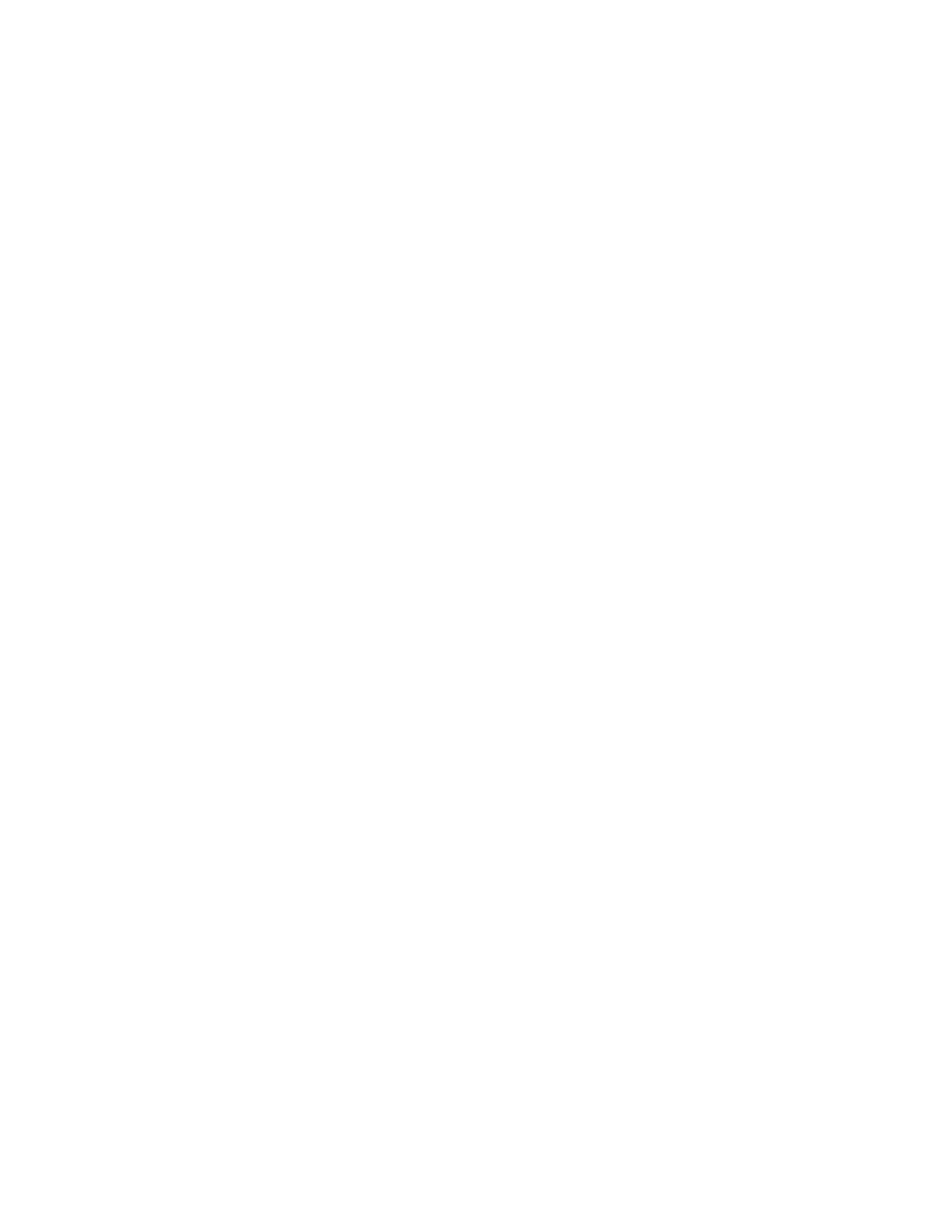}

$P_5\cup P_2$
\end{center}
\end{minipage}
\begin{minipage}[b]{0.4\textwidth}
\begin{center}
\includegraphics[scale=0.4]{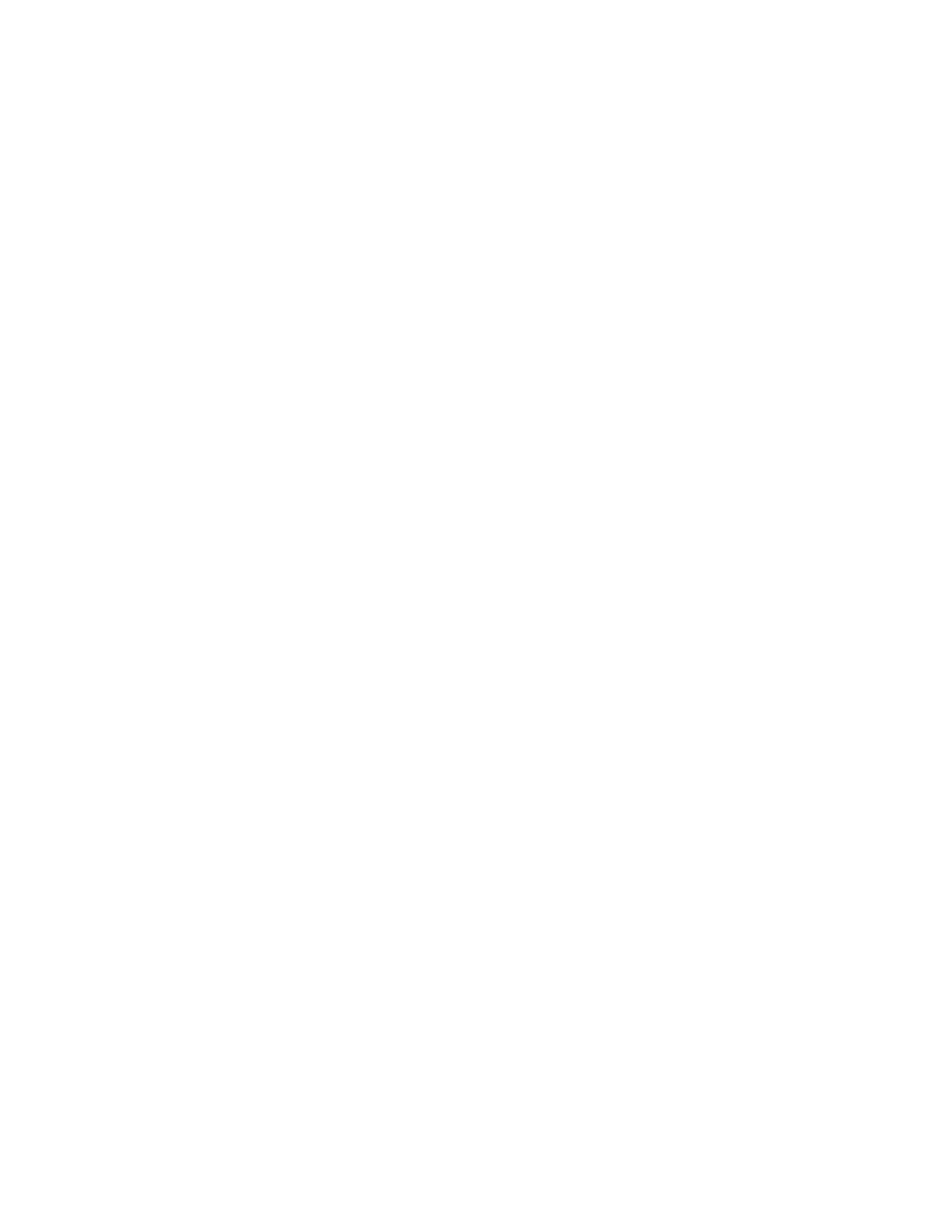}

$C_4\cup P_2$
\end{center}
\end{minipage}

\bigskip

\begin{minipage}[b]{0.4\textwidth}
\begin{center}
\includegraphics[scale=0.4]{P6}

$P_6$
\end{center}
\end{minipage}
\begin{minipage}[b]{0.4\textwidth}
\begin{center}
\includegraphics[scale=0.4]{C5}

$C_5$
\end{center}
\end{minipage}

\bigskip

\begin{minipage}[b]{0.31\textwidth}
\begin{center}
\includegraphics[scale=0.4]{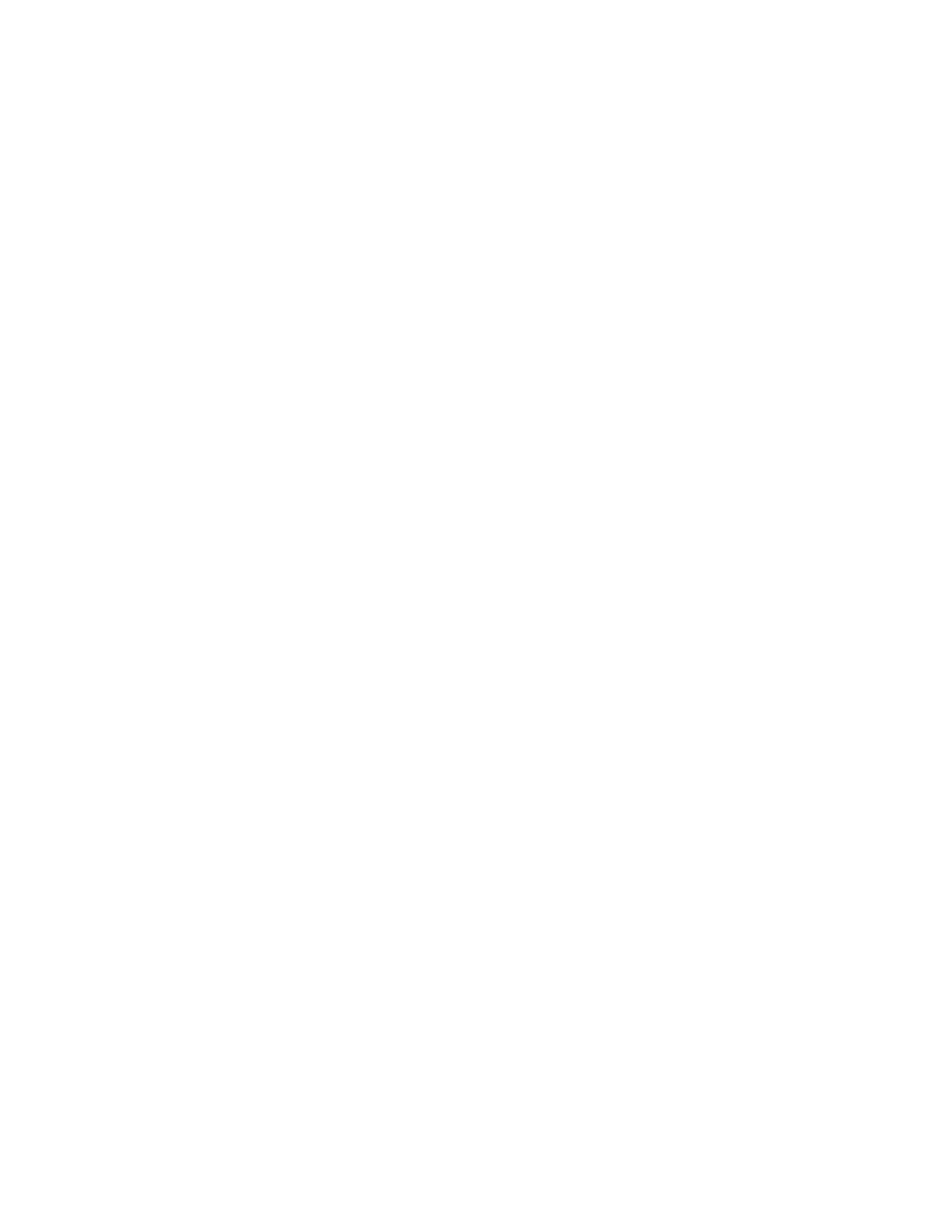}

bull
\end{center}
\end{minipage}
\begin{minipage}[b]{0.31\textwidth}
\begin{center}
\includegraphics[scale=0.4]{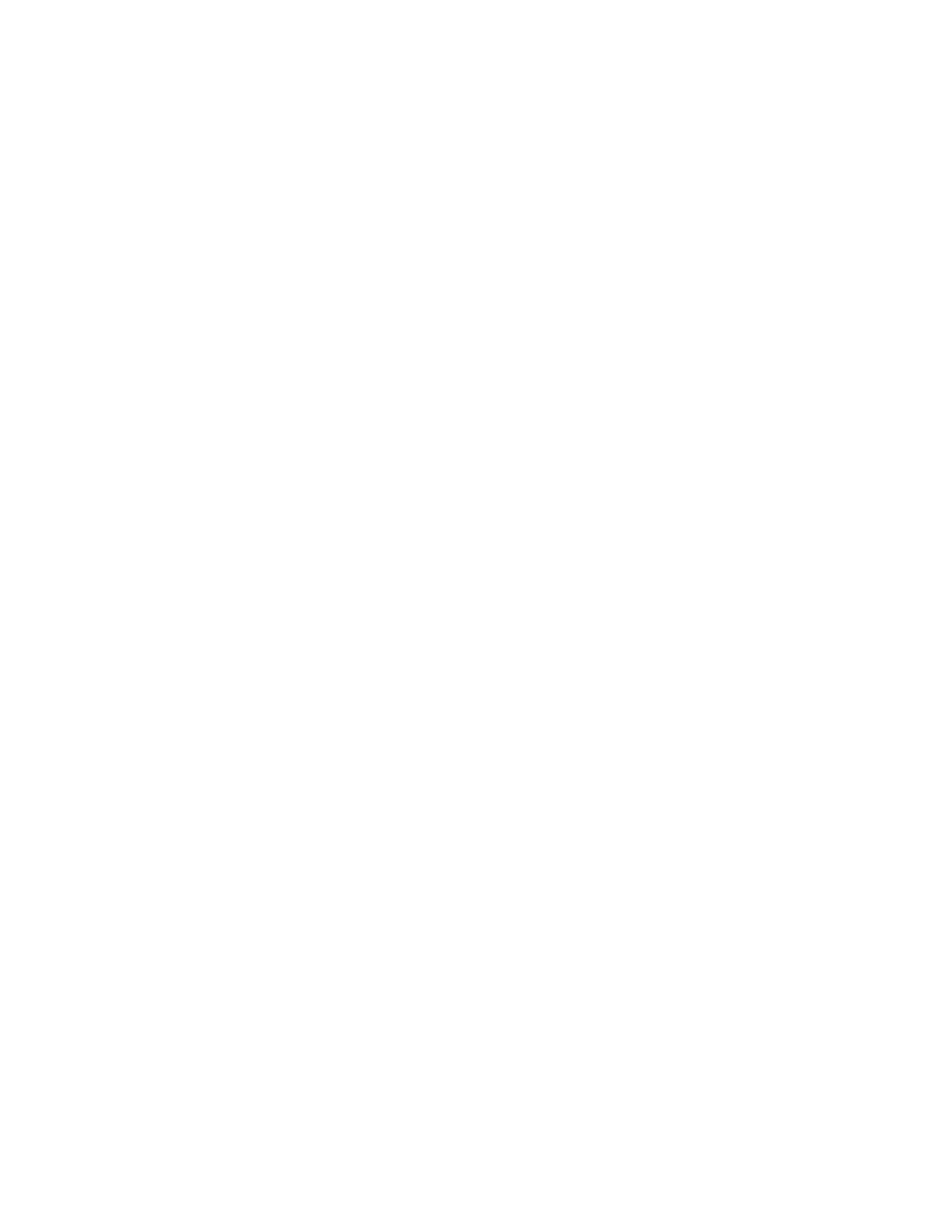}

diamond
\end{center}
\end{minipage}
\begin{minipage}[b]{0.31\textwidth}
\begin{center}
\includegraphics[scale=0.4]{3-caterpillar}

3-caterpillar $F_1$
\end{center}
\end{minipage}
\end{center}
\caption{\label{fig:fB}Forbidden subgraphs for line $[B,-]$-perfect graphs}
\end{figure}


\begin{figure}[htbp]
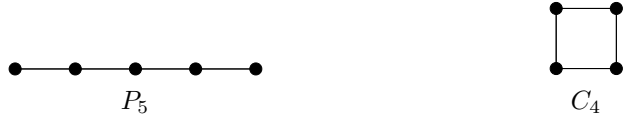

\begin{center}
\begin{minipage}[b]{0.4\textwidth}
\begin{center}
\includegraphics[scale=0.4]{P5}

$P_5$
\end{center}
\end{minipage}
\begin{minipage}[b]{0.4\textwidth}
\begin{center}
\includegraphics[scale=0.4]{C4}

$C_4$
\end{center}
\end{minipage}
\end{center}
\caption{\label{fig:fBB}Forbidden subgraphs for line game-perfect graphs for 
the games $[B,B]$, $[A,B]$, and $[A,-]$}
\end{figure}

\subsection{Idea of the proof}

In order to prove Theorem~\ref{thm:lineAA}, 
first, in Section~\ref{sec:forbidden}, we show that 
Bob has a winning strategy for the $[A,A]$-edge colouring game played
on each of the forbidden configurations,
which proves
the implication (1)$\Longrightarrow$(2); then, in Section~\ref{sec:permitted}, we show that 
Alice has a winning strategy for the $[A,A]$-edge colouring game played on each of
the permitted types from (3)
and, 
in Section~\ref{sec:proofs}, we show that the permitted types are hereditary
(i.e., every { edge-induced} subgraph of such a permitted type is in one of the permitted types),
which together proves
the \mbox{implication~(3)$\Longrightarrow$(1);} finally, in Section~\ref{sec:proofs}, using Theorem~\ref{thm:maffray} 
we prove the structural description (i.e.,
the permitted types are exactly those graphs that do not contain any of
the forbidden configurations as an edge-induced subgraph),
which settles {the} implication (2)$\Longrightarrow$(3).

The proofs of Theorems~\ref{thm:lineBA}, \ref{thm:lineB} and~\ref{thm:lineBB} 
have the same structure,
however the structural implication 
\rd{(i.e., (2)$\Longrightarrow$(3) in Theorem~\ref{thm:lineBA}
and Theorem~\ref{thm:lineB}, respectively, 
(4)$\Longrightarrow$(5) in Theorem~\ref{thm:lineBB})}
can be simplified by using the
structural result from Theorem~\ref{thm:lineAA}.

Furthermore, the other implications in Theorem~\ref{thm:lineB} (respectively,
Theorem~\ref{thm:lineBB}) can be obtained in an easy way by using
the structural results known 
for $[B,-]$-perfect (respectively, $[A,-]$-, $[A,B]$- and $[B,B]$-perfect)
graphs, namely Theorem~\ref{thm:B} and Propositions~\ref{prop:BEfive}
and~\ref{prop:BEsix} 
(respectively, Theorem~\ref{thm:A} and Theorem~\ref{thm:BBcharE}).
The method for this last simplification will be described in Section~\ref{sec:idea}.

\subsection{Structure of this paper}

In Section~\ref{sec:prelim} we give some basic definitions and fix notation.
A general idea to simplify the proofs concerning those games where
a structural characterisation for game-perfectness is known
is given in Section~\ref{sec:idea}.
In Section~\ref{sec:forbidden} we show that the forbidden configurations 
are not line game-perfect.
In Section~\ref{sec:permitted} we describe winning strategies for Alice
on the permitted types 
for the different games.
After these preparations,
Section~\ref{sec:proofs}
is devoted to the proofs
of the four main theorems. 
We conclude with an open question and a related problem
in Section~\ref{sec:final}.

\section{Preliminaries}
\label{sec:prelim}

We start by giving some definitions, easy observations and some results
from the literature that we will use.










\subsection{Notation}

All graphs considered in this paper are simple, i.e., they contain
neither loops nor multiple edges. Let $G$ be a graph. We denote by 

\begin{itemize}
\item
$\IN:=\{0,1,2,3,\ldots\}$ the set of nonnegative integers;
\item
$\Delta(G)$ the maximum degree of $G$;
\item
$\omega(G)$ the clique number of $G$;
\item
$L(G)$ the line graph of $G$;
\item
$\chi'(G)$ the chromatic index of $G$;
\item
$\chi(G)$ the chromatic number of $G$;
\item
$\chi_{[X,Y]}'(G)$ the game chromatic index w.r.t.\ edge game $[X,Y]$;
\item
$\chi_{[X,Y]}(G)$ the game chromatic number w.r.t.\ vertex game $[X,Y]$.
\end{itemize}

Let $m,n\in\IN$. By $P_n$ ($n\ge1$), $C_n$ ($n\ge3$), $K_n$, and $K_{m,n}$,
we denote the path, cycle and complete graph on $n$ vertices, and
the complete bipartite graph with partite sets of $m$ and $n$ vertices, 
respectively.

\begin{defin}
Let $G$ be a graph. An edge of $G$ is called \emph{unsafe} if it is
adjacent to at least $\omega(L(G))$ edges.
\end{defin}









\subsection{Basic observations}

The different vertex colouring games are related as follows:

\begin{obs}[\cite{andresgperfect}]\label{obs:comparechig}
For any graph $G$,
\begin{eqnarray*}
&\omega(G)\le\chi(G)\le\chi_{[A,A]}(G)\le\chi_{[A,-]}(G)\le\chi_{[A,B]}(G)\le\chi_{[B,B]}(G)&\\
&\omega(G)\le\chi(G)\le\chi_{[A,A]}(G)\le\chi_{[B,A]}(G)\le\chi_{[B,-]}(G)\le\chi_{[B,B]}(G)&
\end{eqnarray*}
\end{obs}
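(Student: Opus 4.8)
The plan is to prove Observation~\ref{obs:comparechig} by a sequence of monotonicity arguments, treating each inequality in the chain as a statement about how Alice's (respectively Bob's) task gets harder or easier as we modify the rules of the game. First I would recall that $\omega(G)\le\chi(G)$ is the standard fact that any clique forces at least as many colours as its size, and that $\chi(G)\le\chi_{[A,A]}(G)$ holds because any proper colouring produced by the game is in particular a proper colouring, so the game cannot succeed with fewer than $\chi(G)$ colours. These two are immediate and require no game-theoretic reasoning.

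The substance lies in the remaining inequalities, all of which compare two games that differ in exactly one rule: either who moves first, or whether a given player may skip. My approach is to exhibit, for each such pair, a \emph{strategy-transfer} argument. Concretely, to show $\chi_{[P]}(G)\le\chi_{[Q]}(G)$ for adjacent games $[P]$ and $[Q]$, I would take a winning strategy for Alice in game $[Q]$ with $k=\chi_{[Q]}(G)$ colours and convert it into a winning strategy for Alice in game $[P]$ with the same number of colours. The key observation is that granting Bob an extra option (the first move, or the right to skip) can only help Bob, while granting Alice an extra option can only help Alice; so the partial order on the six games is exactly the Hasse diagram implicit in the two chains. For instance, $\chi_{[A,A]}(G)\le\chi_{[A,-]}(G)$ because in $[A,A]$ Alice has the additional freedom to pass, which she can always decline, thereby simulating the $[A,-]$ game; and $\chi_{[A,-]}(G)\le\chi_{[A,B]}(G)$ because in $[A,B]$ Bob has the extra freedom to pass, which can only make Alice's job harder, so any $k$ that suffices for Alice in $[A,B]$ also suffices in $[A,-]$.

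The main technical point to handle carefully is the treatment of skipping moves in relation to parity and to the "no move possible" terminating condition, since skipping changes whose turn it is without changing the board. I would formalise the simulation by having Alice (in the easier game) follow her $[Q]$-strategy move for move, interpreting any skip that the extra rule would permit the opponent as a genuine possibility already accounted for in the $[Q]$-strategy tree; the winning condition (every edge coloured) is rule-independent, so a play that Alice wins in $[Q]$ maps to a play she wins in $[P]$. The only place where one must check that the simulation is well defined is when a skip would be forced or when the set of legal colouring moves is empty for one player but not the other; here I would argue that since skipping never creates new uncoloured edges and never removes the option of colouring, the reachable positions and their win/loss status transfer faithfully.

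The hard part, conceptually, will be to state the monotonicity cleanly enough that all six comparison inequalities follow as instances of the same two lemmas — "giving Bob more options raises $\chi$" and "giving Alice more options lowers $\chi$" — rather than arguing each of the ten inequalities separately. I expect that once these two monotonicity principles are established rigorously (including the boundary cases around skipping and termination described above), the two displayed chains read off directly from the diagram of rule-refinements, with the lower chain obtained from the upper one by the symmetry that swaps the roles of first-mover and skip-permission between $A$ and $B$. I would present the two principles as the core of the argument and then simply list, for each inequality, which principle and which single rule-change it corresponds to.
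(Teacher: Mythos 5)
Your overall route is the right one, and in fact it is the only route available: the paper does not prove Observation~\ref{obs:comparechig} at all, it imports it from~\cite{andresgperfect}, and the proof there is exactly the kind of strategy-transfer/monotonicity argument you describe. The first two inequalities and the four ``skip-right'' comparisons ($\chi_{[A,A]}\le\chi_{[A,-]}$, $\chi_{[A,-]}\le\chi_{[A,B]}$, $\chi_{[B,A]}\le\chi_{[B,-]}$, $\chi_{[B,-]}\le\chi_{[B,B]}$) are handled correctly by your two principles: the player with the extra skip option can decline it, respectively the opponent's extra skip option only enlarges the set of plays Alice's strategy must already answer. (One cosmetic slip: this observation concerns the \emph{vertex} games, so the winning condition is ``every vertex coloured,'' not ``every edge coloured''; the identical argument gives the edge analogue, Observation~\ref{obs:comparechigL}.)

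The one place where your write-up is genuinely too loose is the treatment of ``the first move'' as just another option granted to a player. Stated as a blanket principle, ``granting Bob the first move can only help Bob'' is \emph{false}: for the vertex game on $P_4$ one has $\chi_{[A,-]}(P_4)=3$ but $\chi_{[B,-]}(P_4)=2$, so forcing Bob to move first (with no skip allowed on either side) actually helps Alice, and indeed no inequality between $[A,-]$ and $[B,-]$, or between $[A,-]$ and $[B,A]$, appears in the two chains. The two first-mover steps that \emph{are} claimed hold only because a skip right is present, and each needs its own explicit simulation: for $\chi_{[A,A]}\le\chi_{[B,A]}$, Alice in the $[A,A]$-game uses her \emph{own} skip right to pass her first turn, after which the position is literally the start of a $[B,A]$-game and she follows that strategy; for $\chi_{[A,B]}\le\chi_{[B,B]}$, Alice in the $[A,B]$-game pretends that Bob opened the $[B,B]$-game with a skip (legal there, so her $[B,B]$-strategy prescribes a response) and plays that response as her forced first move, thereafter mapping real plays to the simulated $[B,B]$-game position by position. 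Also, your remark that the lower chain follows from the upper one ``by symmetry'' does not work — Alice and Bob have asymmetric objectives, so there is no role-swapping symmetry; each chain must be assembled from its own three one-rule comparisons as above. With the two first-mover steps proved by these explicit skip-based simulations instead of by the general principle, your argument is complete and correct.
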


The same holds for the edge colouring games:

\begin{obs}[\cite{andresforest}]\label{obs:comparechigL}
For any graph $G$,
\begin{eqnarray*}
&\omega(L(G))\le\chi'(G)\le\chi_{[A,A]}'(G)\le\chi_{[A,-]}'(G)\le\chi_{[A,B]}'(G)\le\chi_{[B,B]}'(G)&\\
&\omega(L(G))\le\chi'(G)\le\chi_{[A,A]}'(G)\le\chi_{[B,A]}'(G)\le\chi_{[B,-]}'(G)\le\chi_{[B,B]}'(G)&
\end{eqnarray*}
\end{obs}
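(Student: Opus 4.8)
The plan is to derive the statement directly from its vertex analogue, Observation~\ref{obs:comparechig}, via the line graph. The key point is that for every one of the six pairs $(X,Y)$ the $[X,Y]$-edge colouring game on $G$ is, move for move, the $[X,Y]$-vertex colouring game on $L(G)$: the edges of $G$ are the vertices of $L(G)$, two edges are adjacent in $G$ precisely when the corresponding vertices are adjacent in $L(G)$, colouring an edge is colouring a vertex, the proper-colouring constraint is identical, and the rules fixing who moves first and who may skip carry over unchanged. Hence $\chi_{[X,Y]}'(G)=\chi_{[X,Y]}(L(G))$ for all six games, by the same reasoning that yields the displayed identity $\chi_g'(G)=\chi_g(L(G))$, together with $\chi'(G)=\chi(L(G))$, while the leftmost term $\omega(L(G))$ is common to both formulations. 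Substituting these identities into the two chains of Observation~\ref{obs:comparechig}, read for the graph $L(G)$, reproduces the two chains claimed here verbatim.

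Should a self-contained argument be preferred, I would instead prove the chains one link at a time. The two leftmost links are immediate: a clique in $L(G)$ is a set of pairwise adjacent edges, all of which must receive distinct colours in any proper edge colouring, so $\omega(L(G))\le\chi'(G)$; and if Alice wins some $[X,Y]$-edge colouring game with $k$ colours, the fully coloured board at the end is a proper edge colouring using at most $k$ colours, so $\chi'(G)\le\chi_{[X,Y]}'(G)$. Each of the remaining six adjacent comparisons asserts that one game needs at most as many colours as a second, and I would prove it by transferring Alice's winning strategy from the second (harder) game to the first (easier) one using one of three devices: (i) when the easier game differs only by granting Alice an extra licence to skip (the comparisons $\chi_{[A,A]}'\le\chi_{[A,-]}'$ and $\chi_{[B,A]}'\le\chi_{[B,-]}'$), Alice simply never exercises that licence and replays her harder-game strategy, so the two plays coincide; (ii) when the easier game differs only by forbidding Bob to skip (the comparisons $\chi_{[A,-]}'\le\chi_{[A,B]}'$ and $\chi_{[B,-]}'\le\chi_{[B,B]}'$), Alice replays her harder-game strategy, and the plays of the easier game are exactly those plays of the harder game in which Bob happens never to skip; (iii) when the only change is that the first move passes between Alice and Bob, Alice uses an opening skip to identify the two starting positions, as explained next.

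The main obstacle is device (iii), namely the two comparisons $\chi_{[A,A]}'(G)\le\chi_{[B,A]}'(G)$ and $\chi_{[A,B]}'(G)\le\chi_{[B,B]}'(G)$, where Alice's first move cannot merely be suppressed, because a player without the skip licence is forced to move. The resolution rests on a single feature: in each such comparison one of the two games has its first mover also holding the skip licence, namely Alice in $[A,A]$ and Bob in $[B,B]$, so an opening skip by that first mover leaves the board empty with the turn passed to the other player, which is exactly the initial position and turn order of the companion game. Concretely, for $\chi_{[A,A]}'\le\chi_{[B,A]}'$ Alice, moving first in $[A,A]$, skips her opening move to reach the start of $[B,A]$ and then follows her $[B,A]$-winning strategy; for $\chi_{[A,B]}'\le\chi_{[B,B]}'$ Alice takes her $[B,B]$-winning strategy, which in particular defeats every Bob play that opens with a skip, and reads it from just after such an opening skip, obtaining a winning strategy for $[A,B]$. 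Once these two identifications are checked, both chains close, and the line-graph reduction of the first paragraph provides an independent confirmation.
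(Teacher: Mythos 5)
Your proposal is correct; the paper does not prove this observation itself (it cites it from~\cite{andresforest}), and its intended justification is exactly your first paragraph: the game equivalence $\chi_{[X,Y]}'(G)=\chi_{[X,Y]}(L(G))$ (the paper's Observation~\ref{obs:aaa}) applied to the vertex chains of Observation~\ref{obs:comparechig} read for $L(G)$. Your supplementary self-contained strategy-transfer argument, including the opening-skip device for the two comparisons in which the first move changes hands, is also sound.
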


\subsection{Basic definitions and observations}

Recall that the \emph{line graph} $L(G)$ of a graph $G=(V,E)$ is the
graph $(E,E')$ where, for any $e_1,e_2\in E$, 
$e_1e_2$ is an edge in $L(G)$
(i.e., $e_1e_2\in E'$) 
if and only if the edges $e_1$ 
and $e_2$ are adjacent in~$G$. 

\begin{obs}\label{obs:aaa}
For any graph $G$ and any game $[X,Y]$ with $X\in\{A,B\}$ and $Y\in\{A,B,-\}$,
we have
\begin{eqnarray*}
\chi'(G)&=&\chi(L(G)),\\
\chi_{[X,Y]}'(G)&=&\chi_{[X,Y]}(L(G)).
\end{eqnarray*}
\end{obs}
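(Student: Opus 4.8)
The plan is to reduce both identities to the single structural fact built into the definition of the line graph: the map sending an edge $e\in E(G)$ to its corresponding vertex of $L(G)$ is a bijection between $E(G)$ and $V(L(G))$, and by definition two edges $e_1,e_2$ are adjacent in $G$ if and only if the two corresponding vertices are adjacent in $L(G)$. I would first record the classical static equality $\chi'(G)=\chi(L(G))$: a function $c\colon E(G)\to\{1,\ldots,k\}$ is a proper edge colouring of $G$ exactly when the induced function on $V(L(G))$ is a proper vertex colouring of $L(G)$, since in both cases the constraint is that objects which are adjacent in $G$ (equivalently, in $L(G)$) receive distinct colours. Minimising $k$ over all proper colourings on either side gives the first line of the observation.

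For the game identity I would make the two games literally isomorphic. Fix $k\in\IN$ and a game type $[X,Y]$. Using the edge--vertex bijection I would identify every partial edge colouring of $G$ with the corresponding partial vertex colouring of $L(G)$; this identification is a bijection between the position sets of the two games. The key verification is that this bijection carries legal moves to legal moves: colouring an uncoloured edge $e$ with a colour $\gamma$ is admissible in the $[X,Y]$-edge colouring game on $G$ (i.e.\ no edge adjacent to $e$ already carries $\gamma$) if and only if colouring the corresponding uncoloured vertex with $\gamma$ is admissible in the $[X,Y]$-vertex colouring game on $L(G)$, precisely because edge-adjacency in $G$ coincides with vertex-adjacency in $L(G)$. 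Consequently the two games have the same terminal positions, the same player $X$ to move first, the same player $Y$ permitted to skip, and the same winning condition (all edges coloured versus all vertices of $L(G)$ coloured).

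Since the two games are isomorphic as combinatorial games in this sense, any strategy for one player transfers through the bijection to a strategy for the same player in the other game, and it is winning on one side exactly when its image is winning on the other. Hence Alice has a winning strategy for the $[X,Y]$-edge colouring game on $G$ with $k$ colours if and only if she has one for the $[X,Y]$-vertex colouring game on $L(G)$ with $k$ colours. Taking the least such $k$ on each side yields $\chi_{[X,Y]}'(G)=\chi_{[X,Y]}(L(G))$, which is the second line.

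I do not anticipate a genuine obstacle here: the entire content is that the edge colouring game on $G$ is, move for move, the vertex colouring game on $L(G)$, so the statement is essentially a formal consequence of the definitions. The only point that needs care is to phrase the correspondence at the level of game positions and strategies rather than merely at the level of colourings, so that the turn order and the skipping permissions---which are identical for the two games by construction of the $[X,Y]$-edge colouring game---are transported correctly through the bijection.
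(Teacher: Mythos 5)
Your proposal is correct and matches the paper's intent: the paper states this as an unproved Observation, treating it as immediate from the definitional fact that the $[X,Y]$-edge colouring game on $G$ is, move for move, the $[X,Y]$-vertex colouring game on $L(G)$, which is exactly the correspondence you spell out. Your only addition is to make explicit the transfer of positions, legal moves, and strategies through the edge--vertex bijection, which is a faithful formalisation of what the paper takes as obvious.
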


Observation~\ref{obs:aaa} implies the two following
observations, which can be taken as alternative definitions of
line perfect graphs and line game-perfect graphs, respectively.



\begin{obs}[\cite{trotter}]\label{def:lineperfect}
A graph $G$ is \rd{\emph{line perfect} if} $L(G)$ is perfect, i.e., for any
vertex-induced subgraph $H'$ of $L(G)$,
\[\chi(H')=\omega(H').\]
\end{obs}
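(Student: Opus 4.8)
The plan is to derive the stated equivalence directly from Observation~\ref{obs:aaa} together with the correspondence, quoted in the introduction, between the edge-induced subgraphs of $G$ and the vertex-induced subgraphs of $L(G)$. Recall that by the original definition $G$ is line perfect if $\chi'(H)=\omega(L(H))$ holds for every edge-induced subgraph $H$ of $G$, whereas $L(G)$ is perfect if $\chi(H')=\omega(H')$ holds for every vertex-induced subgraph $H'$ of $L(G)$. Since Observation~\ref{obs:aaa} gives $\chi'(H)=\chi(L(H))$, it suffices to show that, as $H$ ranges over the edge-induced subgraphs of $G$, the graph $L(H)$ ranges exactly over the vertex-induced subgraphs of $L(G)$.

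First I would make this correspondence precise. An edge-induced subgraph $H$ of $G$ is specified by a set $F\subseteq E(G)$ of edges, and the vertex set of $H$ consists of exactly the endpoints of the edges in $F$. By the definition of the line graph, the vertices of $L(H)$ are the elements of $F$, and two of them are adjacent in $L(H)$ precisely when they share an endpoint in $H$. Because $H$ retains only the endpoints of its own edges, two edges in $F$ share a vertex in $H$ if and only if they share a vertex in $G$; hence adjacency in $L(H)$ agrees with adjacency of $L(G)$ restricted to the vertex set $F$. Thus $L(H)$ is literally the vertex-induced subgraph $L(G)[F]$. Conversely, every vertex-induced subgraph of $L(G)$ is $L(G)[F]$ for some $F\subseteq E(G)$, and the edge-induced subgraph of $G$ on the edge set $F$ has this graph as its line graph. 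This gives the claimed bijection, under which $L(H)=L(G)[F]$ throughout.

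With the correspondence established, I would finish by a chain of equivalences: $G$ is line perfect iff $\chi'(H)=\omega(L(H))$ for every edge-induced subgraph $H$; by Observation~\ref{obs:aaa} this is $\chi(L(H))=\omega(L(H))$ for all such $H$; by the correspondence this says exactly that $\chi(H')=\omega(H')$ for every vertex-induced subgraph $H'$ of $L(G)$; and that is the definition of $L(G)$ being perfect. The only delicate point, hardly an obstacle, is the bookkeeping needed to verify that an edge-induced subgraph carries only the endpoints of its chosen edges, which is what makes ``sharing a vertex'' invariant and forces the equality $L(H)=L(G)[F]$ rather than a mere isomorphism. Once that is checked, substituting $\chi'(H)=\chi(L(H))$ into the two definitions yields the result immediately.
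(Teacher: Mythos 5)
Your proposal is correct and takes essentially the same route as the paper: the paper treats this statement as an immediate consequence of Observation~\ref{obs:aaa} together with the fact, remarked in the introduction, that the line graphs of the edge-induced subgraphs of $G$ are exactly the vertex-induced subgraphs of $L(G)$. Your only addition is to verify that correspondence explicitly (showing $L(H)$ equals the subgraph of $L(G)$ induced by the chosen edge set $F$), which the paper asserts without proof.
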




\begin{obs}\label{def:linegame}
A graph $G$ is \rd{\emph{line $[X,Y]$-perfect} if} $L(G)$ is $[X,Y]$-perfect, 
i.e., for any
vertex-induced subgraph $H'$ of $L(G)$,
\[\chi_{[X,Y]}(H')=\omega(H').\]
\end{obs}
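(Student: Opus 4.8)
The plan is to reduce the claimed equivalence to two facts that are already available in the excerpt: the numerical identity $\chi'_{[X,Y]}(H)=\chi_{[X,Y]}(L(H))$ from Observation~\ref{obs:aaa}, and the structural remark made in the introduction that the line graphs of the edge-induced subgraphs of $G$ are exactly the vertex-induced subgraphs of $L(G)$. By definition, $G$ is line $[X,Y]$-perfect precisely when $\chi'_{[X,Y]}(H)=\omega(L(H))$ holds for every edge-induced subgraph $H$ of $G$. The goal is to show that this is the same requirement as $\chi_{[X,Y]}(H')=\omega(H')$ for every vertex-induced subgraph $H'$ of $L(G)$, which is exactly the assertion that $L(G)$ is $[X,Y]$-perfect. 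No game-theoretic reasoning should be needed; the whole statement is a translation between the ``edges of $G$'' picture and the ``vertices of $L(G)$'' picture.

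First I would pin down the one structural identity on which everything rests: for any edge-induced subgraph $H$ of $G$ with edge set $F\subseteq E(G)$, one has $L(H)=L(G)[F]$, the subgraph of $L(G)$ induced on the vertex set $F$. This holds because two edges $e_1,e_2\in F$ are adjacent in $H$ if and only if they share an endpoint in $G$, which is precisely the adjacency relation defining $L(G)$; hence the induced adjacencies on $F$ coincide. Reading this identity in both directions yields the bijection asserted in the introduction: every $L(H)$ is an induced subgraph $L(G)[F]$ of $L(G)$, and conversely every vertex-induced subgraph of $L(G)$ is $L(G)[F]$ for some $F\subseteq E(G)$, which equals $L(H)$ for the edge-induced subgraph $H$ of $G$ determined by $F$.

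With this identity in hand, the equivalence is a direct substitution. Applying Observation~\ref{obs:aaa} to each edge-induced subgraph $H$ turns the condition $\chi'_{[X,Y]}(H)=\omega(L(H))$ into $\chi_{[X,Y]}(L(H))=\omega(L(H))$, i.e.\ into $\chi_{[X,Y]}(L(G)[F])=\omega(L(G)[F])$. Quantifying over all edge-induced $H$ (equivalently, over all $F\subseteq E(G)$) and invoking the bijection, this is literally the statement that $\chi_{[X,Y]}(H')=\omega(H')$ holds for every vertex-induced subgraph $H'$ of $L(G)$, which by the definition of $[X,Y]$-perfectness means $L(G)$ is $[X,Y]$-perfect. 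The only point worth a careful word—and hence the one I would treat as the ``main obstacle''—is the identity $L(H)=L(G)[F]$, since it is exactly what licenses passing freely between edge-induced subgraphs of $G$ and vertex-induced subgraphs of $L(G)$; once that is verified, the remainder is bookkeeping with Observation~\ref{obs:aaa} and the two definitions.
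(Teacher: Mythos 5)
Your proposal is correct and follows essentially the same route as the paper, which presents this statement as an immediate consequence of Observation~\ref{obs:aaa} combined with the fact (stated in the introduction) that the line graphs of the edge-induced subgraphs of $G$ are exactly the vertex-induced subgraphs of $L(G)$. Your only addition is to verify explicitly the identity $L(H)=L(G)[F]$ underlying that correspondence, which the paper takes for granted; this is a sound and welcome piece of bookkeeping, not a different argument.
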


Let ${\cal LP}[X,Y]$ be the class of line $[X,Y]$-perfect graphs
and ${\cal LP}$ be the class of line perfect graphs.
Then, by the definition of line perfect graphs
and line game-perfect graphs, Observation~\ref{obs:comparechigL} directly implies
(or, alternatively, by
Observation~\ref{def:lineperfect} and Observation~\ref{def:linegame}, 
Observation~\ref{obs:comparechig} directly implies) the following.

\begin{obs}\label{obs:compareclasses}
\begin{eqnarray*}
&{\cal LP}[B,B]\subseteq{\cal LP}[A,B]\subseteq{\cal LP}[A,-]\subseteq{\cal LP}[A,A]\subseteq{\cal LP}&\\
&{\cal LP}[B,B]\subseteq{\cal LP}[B,-]\subseteq{\cal LP}[B,A]\subseteq{\cal LP}[A,A]\subseteq{\cal LP}&\\
\end{eqnarray*}
\end{obs}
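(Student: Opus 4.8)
The plan is to derive every containment directly from the chain of inequalities in Observation~\ref{obs:comparechigL}, exploiting the principle that passing to a game that is harder for Alice can only shrink the class of line game-perfect graphs. The key point is that each class ${\cal LP}[X,Y]$ is defined by the \emph{equality} $\chi_{[X,Y]}'(H)=\omega(L(H))$ holding for every edge-induced subgraph $H$, while $\omega(L(H))$ is always a \emph{lower} bound for any game chromatic index. Hence a larger game chromatic index imposes a stronger condition and therefore yields a smaller class; this is what produces the reversal of direction relative to the inequality chains.

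Concretely, I would fix two games $[X_1,Y_1]$ and $[X_2,Y_2]$ appearing consecutively in one of the two chains of Observation~\ref{obs:comparechigL}, so that $\chi_{[X_1,Y_1]}'(H)\le\chi_{[X_2,Y_2]}'(H)$ for every graph $H$. Take $G\in{\cal LP}[X_2,Y_2]$ and let $H$ be an arbitrary edge-induced subgraph of $G$. Then Observation~\ref{obs:comparechigL} gives
\[
\omega(L(H))\le\chi_{[X_1,Y_1]}'(H)\le\chi_{[X_2,Y_2]}'(H)=\omega(L(H)),
\]
the last equality because $G\in{\cal LP}[X_2,Y_2]$. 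All terms collapse, so $\chi_{[X_1,Y_1]}'(H)=\omega(L(H))$; since $H$ was arbitrary, $G\in{\cal LP}[X_1,Y_1]$. Running this for every consecutive pair establishes all inclusions ${\cal LP}[X_2,Y_2]\subseteq{\cal LP}[X_1,Y_1]$ along both chains, whose extremes are $[B,B]$ and $[A,A]$ as displayed.

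For the two final inclusions ${\cal LP}[A,A]\subseteq{\cal LP}$ I would run the identical sandwich with the ordinary chromatic index: for $G\in{\cal LP}[A,A]$ and any edge-induced subgraph $H$, the bounds $\omega(L(H))\le\chi'(H)\le\chi_{[A,A]}'(H)=\omega(L(H))$ force $\chi'(H)=\omega(L(H))$, which is precisely line perfectness, so $G\in{\cal LP}$. Alternatively, the whole argument transports to $L(G)$ via Observation~\ref{obs:aaa}, where it can instead be carried out using the vertex-game inequalities of Observation~\ref{obs:comparechig} together with the reformulations in Observation~\ref{def:lineperfect} and Observation~\ref{def:linegame}; this is the second derivation alluded to in the statement. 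I expect no genuine obstacle, as the result is a purely formal consequence of the monotonicity of the game chromatic index; the only point demanding care is the inclusion's reversal relative to the inequality chains, dictated by $\omega(L(H))$ serving as a uniform lower bound.
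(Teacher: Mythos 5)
Your proposal is correct and takes essentially the same route as the paper, which states that the inclusions follow ``directly'' from the inequality chains of Observation~\ref{obs:comparechigL} together with the definition of line game-perfectness (or, alternatively, from Observation~\ref{obs:comparechig} via the line-graph reformulations in Observations~\ref{def:lineperfect} and~\ref{def:linegame}) --- exactly the two derivations you give. Your sandwich argument simply spells out the details that the paper leaves implicit, including the direction reversal caused by $\omega(L(H))$ being a uniform lower bound.
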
 

In particular, Observation~\ref{obs:compareclasses} says that
every line $[X,Y]$-perfect graph is line perfect.

Using 
Theorem~\ref{thm:trotter} and 
Observation~\ref{obs:compareclasses} 
we get the following.

\begin{cor}
The classes of line $[X,Y]$-perfect graphs 
are subclasses of the class of perfect graphs.
\end{cor}

\begin{defin}\label{def:linenice}
A graph $G$ is \emph{line $[X,Y]$-nice} if
\[\chi_{[X,Y]}'(G)=\omega(L(G)),\]
i.e., if Alice has a winning strategy with $\omega(L(G))$ colours
for the $[X,Y]$-edge colouring game played on~$G$.
\end{defin}

The definition of line game-perfect graphs and Definition~\ref{def:linenice}
have an obvious relation, given in 
Observation~\ref{obs:niceperfect}.

\begin{obs}\label{obs:niceperfect}
A graph $G$ is line $[X,Y]$-perfect if and only if each of its
{edge-induced} subgraphs is {line} $[X,Y]$-nice.
\end{obs}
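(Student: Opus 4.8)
The plan is to prove Observation~\ref{obs:niceperfect} directly from the two relevant definitions, using the fact that the relation ``is an edge-induced subgraph of'' is transitive and reflexive. Recall that, by the definition of line $[X,Y]$-perfect graphs, $G$ is line $[X,Y]$-perfect if and only if $\chi_{[X,Y]}'(H)=\omega(L(H))$ holds for every edge-induced subgraph $H$ of $G$; and, by Definition~\ref{def:linenice}, a graph $H$ is line $[X,Y]$-nice precisely when $\chi_{[X,Y]}'(H)=\omega(L(H))$. So the two conditions are really just two ways of packaging the same family of equalities, and the whole content of the statement is the observation that ranging over the edge-induced subgraphs of $G$ and then ``niceness-testing'' each one recovers exactly the defining condition of line $[X,Y]$-perfectness.

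Concretely, I would argue both implications of the biconditional. For the forward direction, suppose $G$ is line $[X,Y]$-perfect and let $H$ be an arbitrary edge-induced subgraph of $G$. By definition, $\chi_{[X,Y]}'(H)=\omega(L(H))$, which is exactly the statement that $H$ is line $[X,Y]$-nice; since $H$ was arbitrary, every edge-induced subgraph of $G$ is line $[X,Y]$-nice. For the converse, suppose every edge-induced subgraph of $G$ is line $[X,Y]$-nice. To verify that $G$ is line $[X,Y]$-perfect I must check $\chi_{[X,Y]}'(H)=\omega(L(H))$ for each edge-induced subgraph $H$ of $G$; but each such $H$ is line $[X,Y]$-nice by hypothesis, which by Definition~\ref{def:linenice} gives precisely this equality. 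Hence $G$ is line $[X,Y]$-perfect.

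There is essentially no obstacle here: the statement is a bookkeeping unwinding of definitions, and the only point that deserves an explicit word is that $G$ is itself an edge-induced subgraph of $G$, so the niceness condition is genuinely being quantified over a family that includes $G$, matching the quantifier in the definition of perfectness. The one mildly subtle convention to pin down is which edge-induced subgraphs are admitted (in particular whether $G$ counts as a subgraph of itself), but since the definition of line $[X,Y]$-perfect quantifies over ``any edge-induced subgraph $H$ of $G$'' and Definition~\ref{def:linenice} applies to an arbitrary graph, the two quantifier ranges coincide and the equivalence is immediate. I would keep the proof to two or three sentences, explicitly invoking Definition~\ref{def:linenice} and the definition of line $[X,Y]$-perfectness, and not belabour it further.
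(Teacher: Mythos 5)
Your proof is correct and matches the paper's treatment: the paper states this as an Observation with no written proof, calling it an ``obvious relation'' between the definition of line $[X,Y]$-perfectness and Definition~\ref{def:linenice}, and your argument is exactly the definition-unwinding the paper has in mind. Both directions are, as you say, the same family of equalities $\chi_{[X,Y]}'(H)=\omega(L(H))$ quantified over the same range of edge-induced subgraphs, so nothing more is needed.
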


\pagebreak[3]

\subsection{Characterisations of line graphs}

The following well-known theorem will be used in our proofs.

\begin{thm}[\cite{whitney}]\label{thm:whitney}
Two connected nonempty graphs $G_1$ and $G_2$ are isomorphic if and only if their
line graphs $L(G_1)$ and $L(G_2)$ are isomorphic, with the single
exception of the two graphs $K_3$ and $K_{1,3}$, which have the
same line graph 
\[L(K_3)=L(K_{1,3})=K_3.\]
\end{thm}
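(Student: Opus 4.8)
The plan is to separate the two directions, with all the substance in the converse. The forward implication is immediate: an isomorphism $\phi\colon G_1\to G_2$ maps edges to edges bijectively and preserves the relation ``shares an endpoint,'' hence induces an isomorphism $L(G_1)\to L(G_2)$. For the converse I would show that a connected nonempty graph $G$ can be reconstructed, up to isomorphism, from the abstract graph $L(G)$, the only ambiguity being the pair $K_3,K_{1,3}$. Granting this, any isomorphism $L(G_1)\cong L(G_2)$ transports one reconstruction onto the other and yields $G_1\cong G_2$ (or places us in the exceptional pair).

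The reconstruction uses the \emph{star cliques} of $G$: for each vertex $v$, let $C_v$ be the set of edges of $G$ incident with $v$, a clique of $L(G)$. Since $G$ is simple, two distinct edges meet in at most one vertex, so every edge of $L(G)$ lies in exactly one $C_v$, while every vertex of $L(G)$ (an edge $uv$ of $G$) lies in the two cliques $C_u,C_v$. Thus the $C_v$ form a \emph{Krausz partition}: a partition of the edge set of $L(G)$ into cliques with each vertex in at most two parts. From any such partition one recovers a graph directly: take one vertex per clique, add a pendant vertex for every vertex of $L(G)$ that lies in only one clique (accounting for the degree-$1$ vertices of $G$), and join two cliques precisely when they share a vertex of $L(G)$. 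The task therefore reduces to recovering the star cliques---or at least a graph isomorphic to $G$---from the abstract structure of $L(G)$.

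The main device is a classification of the triangles of $L(G)$. Each triangle comes either from three edges of $G$ through a common vertex (a \emph{star triangle}, lying in some $C_v$) or from the three edges of a triangle of $G$ (a \emph{triangle triangle}). For a triangle triangle on $\{ab,bc,ca\}$ every other edge of $G$ is adjacent to $0$ or $2$ of its members, whereas for a star triangle through $v$ there usually exists an external edge adjacent to an odd number of its members; this \emph{even}/\emph{odd} dichotomy lets one flag the star cliques intrinsically and so pin down the partition. I would then argue that the only connected graph for which this procedure is genuinely ambiguous is $L(G)=K_3$: here the single triangle has no external vertices to witness oddness, so it can be read either as the star triangle of $K_{1,3}$ or as the triangle triangle of $K_3$, giving exactly the stated exception. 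Off this case, an isomorphism $L(G_1)\to L(G_2)$ respects the triangle classification, hence carries star cliques to star cliques, and the induced bijection together with the shared-vertex adjacencies is the desired isomorphism $G_1\cong G_2$.

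The hard part will be the boundary behaviour of small and dense graphs. Star cliques $C_v$ with $\deg(v)\le2$ need not be maximal---a degree-$2$ vertex inside a triangle of $G$ yields a $C_v$ contained in a triangle triangle---so the partition cannot simply be read off from the maximal cliques. Worse, the Krausz partition itself need not be unique: for $G=K_4$, where $L(K_4)$ is the octahedron, there are two distinct Krausz partitions, yet both reconstruct to $K_4$. The careful step is therefore to confirm that, although such symmetric configurations admit several partitions, all of them yield isomorphic root graphs, so that $K_3$ remains the unique connected line graph with two non-isomorphic roots; short paths, short cycles, small stars, and the small complete graphs must each be checked by hand, and one must also verify that pendant edges are reconstructed correctly.
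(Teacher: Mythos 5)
First, a point of reference: the paper does not prove Theorem~\ref{thm:whitney} at all. It is quoted as a classical result of Whitney (1932), with a citation, and is used downstream purely as a black box (to derive Corollary~\ref{cor:whitney} and the ``only iso-free graph whose line graph is\dots'' lemmata of Section~\ref{sec:forbidden}). So there is no in-paper argument to compare yours against; your proposal has to be judged on its own merits, against what a complete proof of Whitney's theorem requires.

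On those terms, your strategy is the standard reconstruction route (Krausz partitions into star cliques, plus the even/odd classification of triangles of $L(G)$), and that route is known to succeed. The forward implication and the recovery of a root graph from a \emph{given} Krausz partition are correct and routine. The genuine gap is that the entire content of the theorem lies in the one step you defer: the claim that for every connected line graph $H\neq K_3$, \emph{all} Krausz partitions of $H$ reconstruct to isomorphic roots. Your parity argument is only claimed to work ``usually,'' and your own examples show the flagging of star cliques by parity really does fail beyond $K_3$: in $L(K_4)$ (the octahedron) and in $L(\mathrm{paw})$ (the diamond) every triangle is even, and each of these graphs carries two distinct Krausz partitions. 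Declaring that such cases ``must each be checked by hand'' is a to-do list, not a proof. To close the gap you would need (i) a precise lemma --- for instance, that if $G$ is connected with at least five vertices then the star cliques of $G$ are recoverable from the abstract graph $L(G)$, equivalently the Krausz partition is unique, proved via the parity dichotomy together with an analysis of exactly when even star triangles can occur; and (ii) the finite verification over connected graphs on at most four vertices, which is where one sees that the diamond and the octahedron are unambiguous despite their two partitions, and that $K_3$, with its two partitions yielding the non-isomorphic roots $K_3$ and $K_{1,3}$, is the unique exception. Until (i) and (ii) are actually carried out, your proposal establishes the easy half of the equivalence and a correct plan for the hard half, but not the theorem itself.
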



\begin{cor}\label{cor:whitney}
For any graph $G$,
\[\rd{\omega(L(G))=\left\{\begin{array}{ll}
3&\text{if}\ \Delta(G)=2\ \text{and}\ G\ \text{contains\ a\ component}\ K_3\\
0&\text{if}\ G\ \text{is empty}\\
\Delta(G)&\text{otherwise} 
\end{array}
\right.}\]
\end{cor}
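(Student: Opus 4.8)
The plan is to prove Corollary~\ref{cor:whitney} by relating the clique number $\omega(L(G))$ to the local structure of $G$, using Theorem~\ref{thm:whitney} only to resolve the single anomalous case. The key observation is that a clique in $L(G)$ corresponds to a set of pairwise adjacent edges in~$G$, so I first classify maximal sets of pairwise adjacent edges. First I would note that any set of edges all sharing a common vertex $v$ forms a clique in $L(G)$ of size $\deg(v)$; taking the maximum over all vertices gives a clique of size $\Delta(G)$, so $\omega(L(G))\ge\Delta(G)$ whenever $G$ is nonempty. This handles the lower bound uniformly.

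For the upper bound, the main case analysis is on whether a clique of pairwise-adjacent edges is a \emph{star} (all edges through one vertex) or not. The crucial combinatorial fact is that if three edges are pairwise adjacent but do \emph{not} share a common vertex, then they must form a triangle $K_3$, and moreover any fourth edge adjacent to all three of these would force a multi-edge or a larger structure that is impossible in a simple graph. So I would show: a maximal clique in $L(G)$ is either a star (giving size $\le\Delta(G)$) or a triangle arising from a $K_3$-subgraph (giving size exactly $3$). This is precisely where Theorem~\ref{thm:whitney} enters, since the dichotomy ``star versus triangle'' is exactly the $K_{1,3}$-versus-$K_3$ exception in Whitney's theorem. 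Combining the two cases yields $\omega(L(G))=\max(\Delta(G),t)$, where $t=3$ if $G$ contains a $K_3$ and $t=0$ otherwise.

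It then remains to reconcile this with the four-way split in the statement. When $\Delta(G)\ge3$, any $K_3$-triangle contributes only a clique of size $3\le\Delta(G)$, so the star bound dominates and $\omega(L(G))=\Delta(G)$; this is the first case. When $\Delta(G)=2$, a component that is a triangle $K_3$ gives $\omega(L(G))=3>2=\Delta(G)$, while any $\Delta(G)=2$ graph without a triangle component has all its maximal cliques as stars of size at most~$2$, giving $\omega(L(G))=2=\Delta(G)$ (the ``otherwise'' case together with $\Delta(G)\le1$). Finally, when $G$ is empty there are no edges, hence no vertices in $L(G)$ and $\omega(L(G))=0$.

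The step I expect to be the main obstacle is establishing the dichotomy rigorously: that three pairwise-adjacent edges either share a vertex or form a triangle, and that in the triangle case no fourth edge can be added to enlarge the clique beyond size~$3$ in a \emph{simple} graph. The subtlety is checking that a triangle clique cannot be extended: a fourth edge adjacent to all three edges of a triangle on $\{x,y,z\}$ would have to meet each of $xy$, $yz$, $zx$, which in a simple graph forces it to be one of those three edges already. Once this non-extendability is secured, the case split is routine; I would present the classification as the heart of the argument and treat the four-way formula as a direct bookkeeping consequence.
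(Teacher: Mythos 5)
Your proposal is correct, and its skeleton -- every clique of $L(G)$ is either a star of edges through one vertex or the three edges of a triangle, followed by the four-way bookkeeping -- is exactly the skeleton of the paper's proof. The difference is in how that dichotomy is justified. The paper disposes of it in one line by citing Whitney's theorem: since $L(K_{1,n})=K_n$, any connected graph whose line graph is a clique must be $K_{1,n}$ or (only for $n=3$) $K_3$, so cliques in $L(G)$ originate only from edge-induced stars or a $K_3$. You instead prove the dichotomy from scratch: two distinct edges of a simple graph meet in at most one vertex, three pairwise adjacent edges either share a vertex or form a triangle, and a triangle clique cannot be extended by a fourth edge because such an edge would need an endpoint in $\{x,y\}\cap\{y,z\}\cap\{z,x\}=\emptyset$. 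Your route is self-contained and makes explicit the non-extendability point that the paper's terse argument glosses over (the paper never says why the $K_3$ clique stops at size $3$; it is implicit in Whitney applied to $K_4$-as-a-clique having no preimage); in your version Whitney is really only an analogy, not a logical ingredient, and your parenthetical claim that this ``is precisely where Theorem~\ref{thm:whitney} enters'' could simply be dropped. What the paper's route buys is brevity and reuse of machinery already stated (Theorem~\ref{thm:whitney} is needed elsewhere, e.g.\ for Lemmas~\ref{lem:onlyPfour}--\ref{lem:onlyFeleven}); what yours buys is independence from Whitney and a cleaner account of why $\max(\Delta(G),t)$ with $t\in\{0,3\}$ is the right closed form before splitting into the four stated cases.
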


\begin{proof}[of Corollary~\ref{cor:whitney}]
Let $G$ be nonempty. By Theorem~\ref{thm:whitney}, cliques $K_n$ with $n\ge1$ in $L(G)$ originate only from
edge-induced stars $K_{1,n}$ or the $K_3$ in~$G$.
Thus, either a star of maximum degree $\Delta(G)$ leads to the value of
$\omega(L(G))$ or the three mutually adjacent edges in a $K_3$. 
\end{proof}

Using Observation~\ref{obs:aaa} and Corollary~\ref{cor:whitney}, we may
reformulate 
Definition~\ref{def:linenice}
for a graph $G$ with maximum degree $\Delta(G)\ge3$ as follows.
%

\begin{obs}
A graph $G$ with $\Delta(G)\ge3$ is \emph{line $[X,Y]$-nice} if
\[\chi_{[X,Y]}'(G)=\Delta(G).\]
\end{obs}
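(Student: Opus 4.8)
The plan is to derive this immediately from Corollary~\ref{cor:whitney} together with the original Definition~\ref{def:linenice}. Recall that Definition~\ref{def:linenice} declares $G$ to be line $[X,Y]$-nice precisely when $\chi_{[X,Y]}'(G)=\omega(L(G))$. So the entire content of this observation is the claim that, under the hypothesis $\Delta(G)\ge3$, the clique number $\omega(L(G))$ may be replaced by the maximum degree $\Delta(G)$ in that defining equation.

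First I would invoke Corollary~\ref{cor:whitney}. Its very first case states that $\omega(L(G))=\Delta(G)$ whenever $\Delta(G)\ge3$. Substituting this identity into the right-hand side of the equation in Definition~\ref{def:linenice} turns the condition $\chi_{[X,Y]}'(G)=\omega(L(G))$ into the condition $\chi_{[X,Y]}'(G)=\Delta(G)$, which is exactly the reformulation asserted. No game-theoretic analysis of the colouring game itself is needed, since the game chromatic index $\chi_{[X,Y]}'(G)$ appears unchanged on the left-hand side; only the combinatorial quantity on the right is being rewritten.

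There is essentially no obstacle here: the substance of the statement is precisely the graph-theoretic fact established in Corollary~\ref{cor:whitney}, and the hypothesis $\Delta(G)\ge3$ is exactly the case distinction singled out there. The only point worth a brief remark is that the hypothesis $\Delta(G)\ge3$ is what rules out the exceptional behaviour of Whitney's theorem (Theorem~\ref{thm:whitney}), where a component $K_3$ and a star $K_{1,3}$ share the same line graph; for $\Delta(G)\ge3$ the maximum-degree star dominates, so the triangle contributes nothing extra and the clean equality $\omega(L(G))=\Delta(G)$ holds. With that identity in hand, the observation is a one-line consequence of the definition.
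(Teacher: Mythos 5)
Your proposal is correct and matches the paper's own justification: the paper introduces this observation with the words ``Using Observation~\ref{obs:aaa} and Corollary~\ref{cor:whitney}, we may reformulate Definition~\ref{def:linenice}'', i.e.\ it, too, simply substitutes the identity $\omega(L(G))=\Delta(G)$ (valid for $\Delta(G)\ge3$ by Corollary~\ref{cor:whitney}) into the defining equation $\chi_{[X,Y]}'(G)=\omega(L(G))$. Your remark that the hypothesis $\Delta(G)\ge3$ is exactly what neutralises the $K_3$/$K_{1,3}$ exception of Whitney's theorem is also the right explanation of why the case distinction in Corollary~\ref{cor:whitney} is needed.
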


The precondition ``nonempty'' in Theorem~\ref{thm:whitney} is essential
as the line graphs of an isolated vertex $K_1$ and an empty graph $K_0$
are both the empty graph. Therefore, considering line graphs, it is
convenient to exclude isolated vertices, which motivates the following
definition.

\begin{defin}
A graph is \emph{iso-free} if it has no isolated vertices.
\end{defin}

\cite{beineke} gave a characterisation of line graphs
by forbidden induced subgraphs, which will be used in our proofs.

\begin{thm}[\cite{beineke}]\label{thm:beineke}
A graph $G$ is a line graph if and only if it contains none
of the nine graphs $N_1,\ldots,N_9$ from Figure~\ref{fig:lineforbid} 
as an induced subgraph. 
\end{thm}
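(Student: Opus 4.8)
The plan is to prove the two directions separately, with the forward implication being routine and the converse carrying almost all of the weight. For the forward direction (a line graph avoids all nine $N_i$), I would first use the fact, already recorded in the introduction, that the class of line graphs is hereditary under taking vertex-induced subgraphs: every vertex-induced subgraph of $L(G)$ is the line graph of a suitable edge-induced subgraph of $G$, so any induced subgraph of a line graph is itself a line graph. It then suffices to check, one graph at a time, that none of $N_1,\ldots,N_9$ is a line graph. Each such check is a finite verification on a small graph: if a graph on $n$ vertices were some $L(H)$, then $H$ would have $n$ edges with a forced adjacency pattern, and one sees directly (by attempting to reconstruct the root edges, or via the clique-cover criterion below) that no root graph exists.

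For the converse, the natural tool is the Krausz clique-partition criterion: a graph $G$ is a line graph if and only if its edges can be partitioned into complete subgraphs so that every vertex lies in at most two of them. I would either invoke this classical criterion or establish it as a preliminary lemma, since it reduces the problem to constructing such a partition. The construction is driven locally by the absence of the claw $N_1=K_{1,3}$: three pairwise non-adjacent neighbours of a vertex $v$ would, together with $v$, induce a claw, so $G[N(v)]$ has independence number at most two, and the goal is to cover the closed neighbourhood of $v$ by at most two cliques through $v$. Claw-freeness alone does not suffice --- the complement of $G[N(v)]$ is triangle-free but need not be bipartite --- and this is precisely where the remaining eight forbidden graphs enter: each encodes a local obstruction (an odd hole in a neighbourhood complement, two cliques meeting in more than one vertex, a vertex belonging to three cliques, and so on) to a consistent two-clique cover, and excluding them forces a well-defined assignment of each edge to a unique clique.

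The main obstacle, and the heart of the argument, is the \emph{global} consistency of these local covers. Producing, at each vertex, an unordered pair of cliques that covers its closed neighbourhood is not hard; the difficulty is proving that the pairs chosen at the two endpoints of an edge agree, so that they fuse into one family of cliques partitioning $E(G)$ while respecting the two-per-vertex bound. This demands a careful case analysis: for every way in which the covers at the endpoints of an edge could be incompatible, one exhibits one of $N_2,\ldots,N_9$ as an induced subgraph, thereby deriving a contradiction. Once the Krausz partition is in hand, I would reconstruct the root graph $H$ by taking one vertex of $H$ for each clique of the partition (together with an auxiliary vertex for every $G$-vertex lying in only one clique) and one edge of $H$ for each vertex $v$ of $G$, joining the at most two cliques that contain $v$; then $L(H)\cong G$, which completes the proof. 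I expect this gluing step to be by far the longest and most delicate part, essentially an exhaustive bookkeeping of the small forbidden configurations.
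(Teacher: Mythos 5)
You should first be aware that the paper itself contains no proof of this statement: Theorem~\ref{thm:beineke} is Beineke's classical characterisation, imported by citation and used as a black box (its only role here is in the proof of Theorem~\ref{thm:grund}, to argue that a line graph cannot contain any $N_i$ and hence that ${\cal F}_g$ may be replaced by ${\rm red}({\cal F}_g)$). So there is no in-paper argument to compare yours against, and your proposal must stand on its own. Its skeleton is the standard one: the forward direction is indeed routine, since the class of line graphs is hereditary (an induced subgraph of $L(G)$ is the line graph of an edge-induced subgraph of $G$, as the paper itself remarks), and one then checks finitely that none of $N_1,\ldots,N_9$ has a root graph; the converse via Krausz's clique-partition criterion, with claw-freeness bounding the independence number of each neighbourhood and the remaining eight graphs killing the residual obstructions, is exactly the architecture of the known proofs, and your reconstruction of the root graph from a Krausz partition (one vertex per clique, a pendant vertex for each $G$-vertex in only one clique, one edge per $G$-vertex) is correct.

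The genuine gap is that the converse, as written, is a plan rather than a proof. The statement ``for every way in which the covers at the endpoints of an edge could be incompatible, one exhibits one of $N_2,\ldots,N_9$ as an induced subgraph'' is precisely the content of Beineke's theorem, not an argument for it: until those cases are actually enumerated and each incompatibility is matched to a concrete forbidden subgraph, nothing guarantees that these particular nine graphs suffice (as opposed to ten, or a different list). The same applies to the parenthetical claim that each $N_i$ ``encodes a local obstruction (an odd hole in a neighbourhood complement, two cliques meeting in more than one vertex, \ldots)''; for instance, if the complement of $G[N(v)]$ contains an odd cycle of length at least $7$, one must exhibit which $N_i$ arises inside $G$, and such verifications are nontrivial and case-specific (only the length-$5$ case visibly yields the $5$-wheel $N_9$). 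Since you explicitly defer this ``exhaustive bookkeeping,'' the heart of the theorem --- the part that cannot be reconstructed mechanically from your outline --- is missing. The approach would work if completed, but what is on the page does not yet constitute a proof.
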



\begin{figure}[htbp]
\begin{center}
\begin{minipage}[b]{0.32\textwidth}
\begin{center}
\includegraphics[scale=0.4]{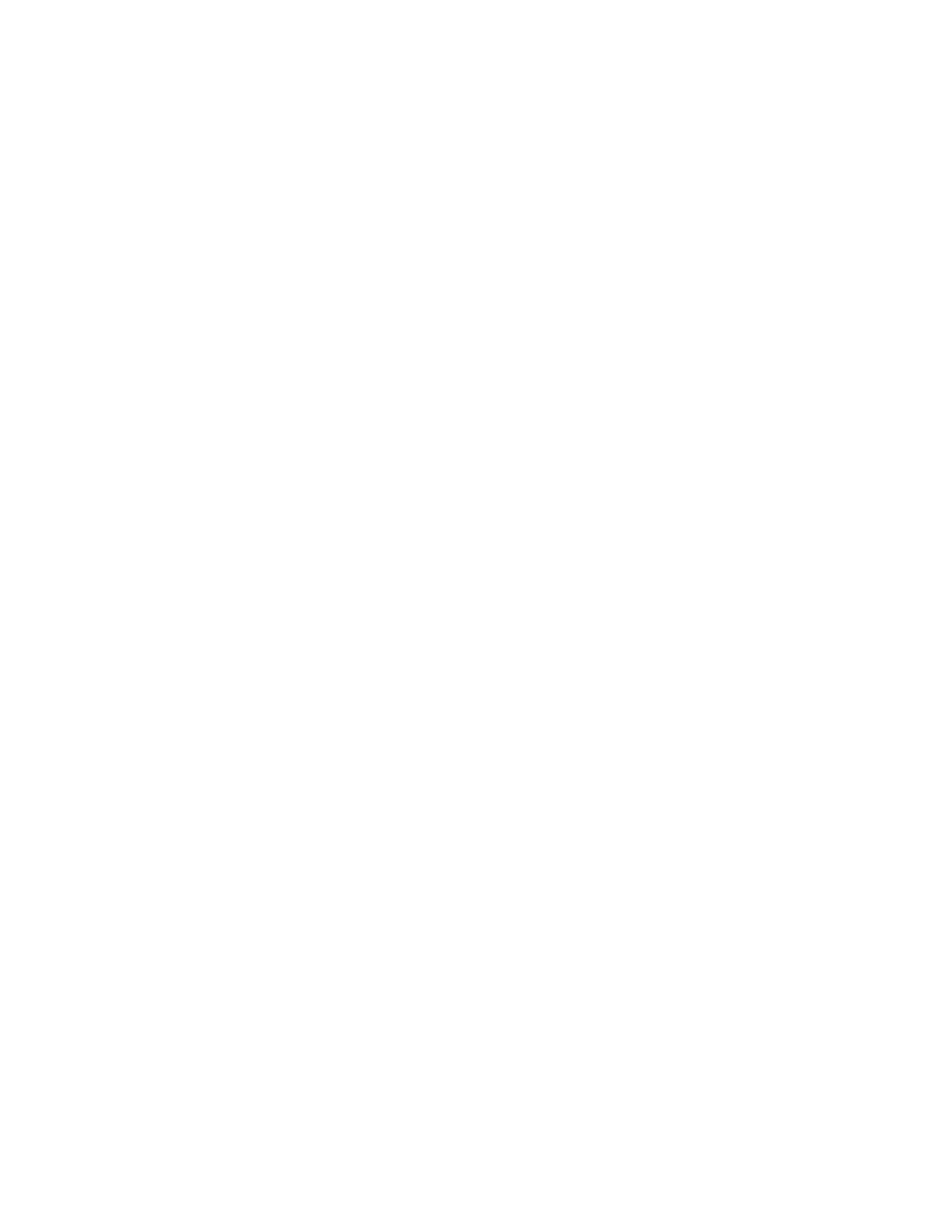}

$N_1$: claw
\end{center}
\end{minipage}
\begin{minipage}[b]{0.32\textwidth}
\begin{center}
\includegraphics[scale=0.4]{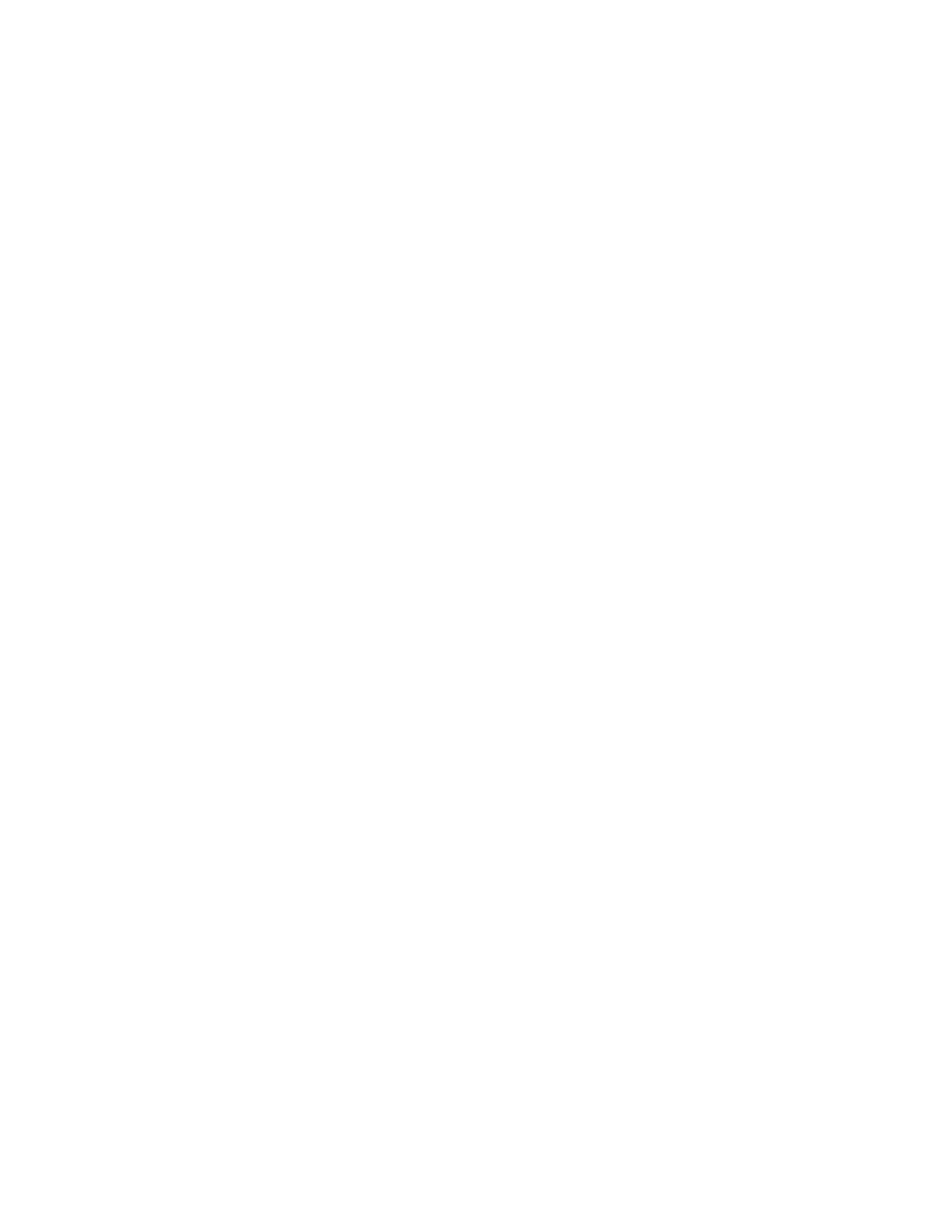}

$N_2$: $\overline{P_2\cup P_3}$
\end{center}
\end{minipage}
\begin{minipage}[b]{0.32\textwidth}
\begin{center}
\includegraphics[scale=0.4]{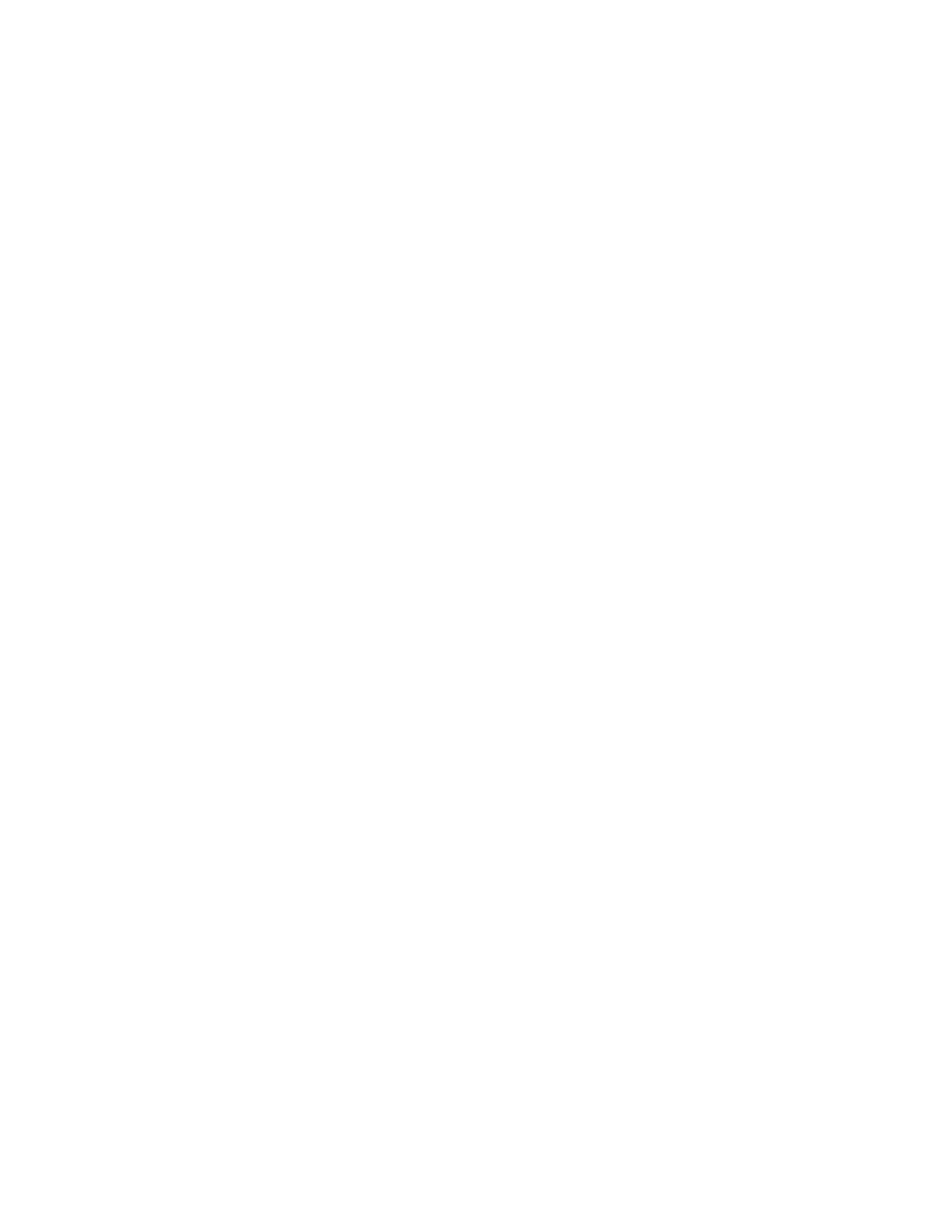}

$N_3$: $K_5-e$
\end{center}
\end{minipage}

\bigskip

\begin{minipage}[b]{0.32\textwidth}
\begin{center}
\includegraphics[scale=0.4]{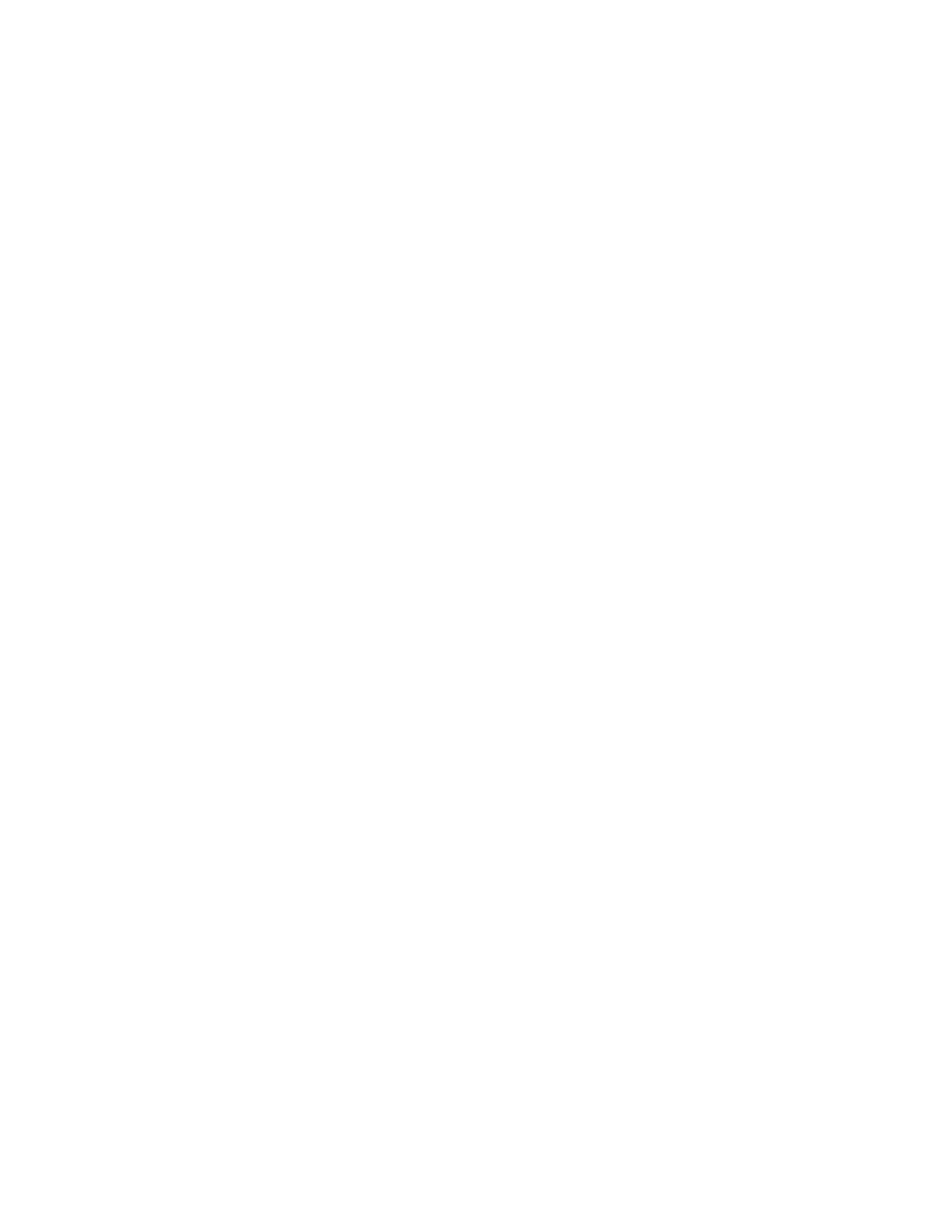}

$N_4$:
\end{center}
\end{minipage}
\begin{minipage}[b]{0.32\textwidth}
\begin{center}
\includegraphics[scale=0.4]{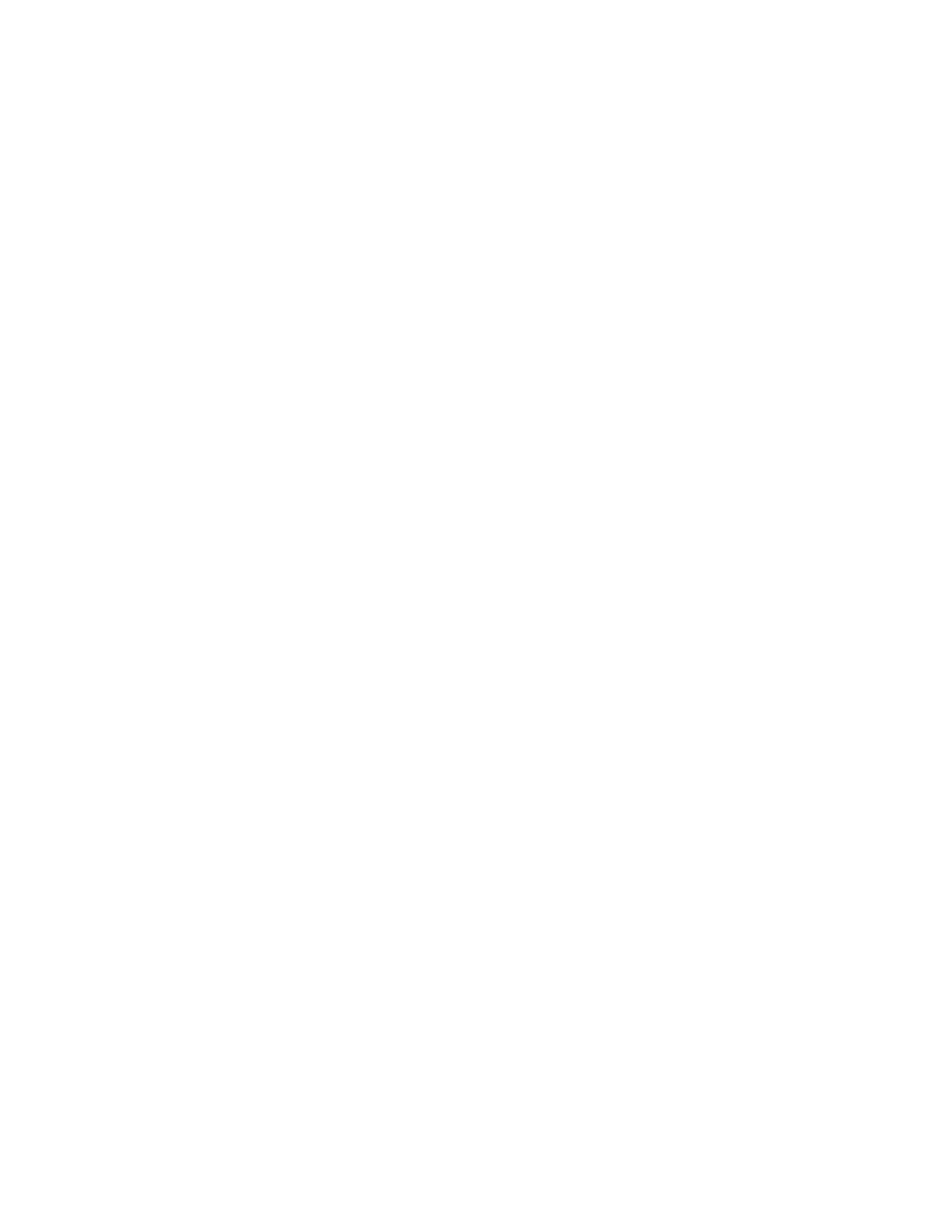}

$N5$:
\end{center}
\end{minipage}
\begin{minipage}[b]{0.32\textwidth}
\begin{center}
\includegraphics[scale=0.4]{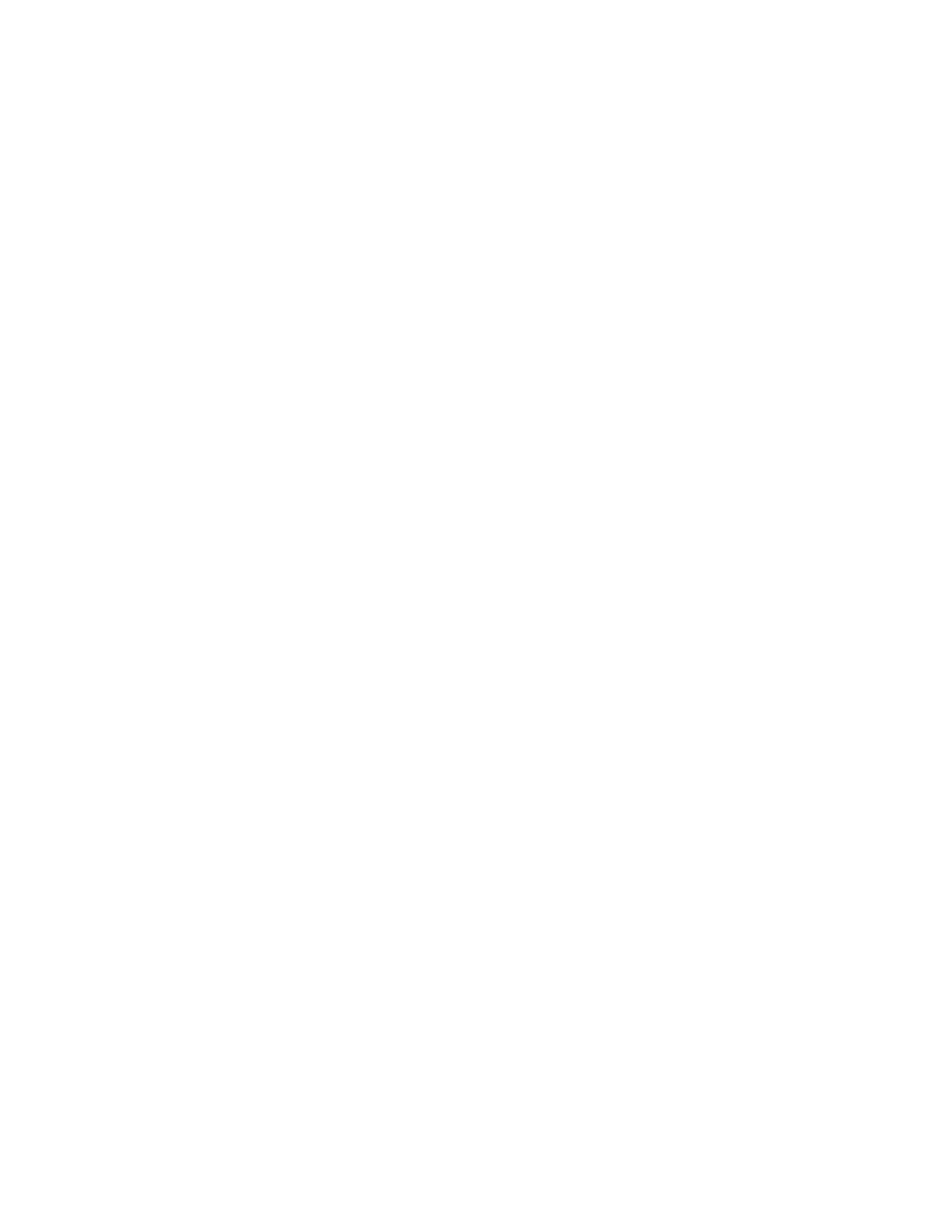}

$N_6$:
\end{center}
\end{minipage}

\bigskip

\begin{minipage}[b]{0.32\textwidth}
\begin{center}
\includegraphics[scale=0.4]{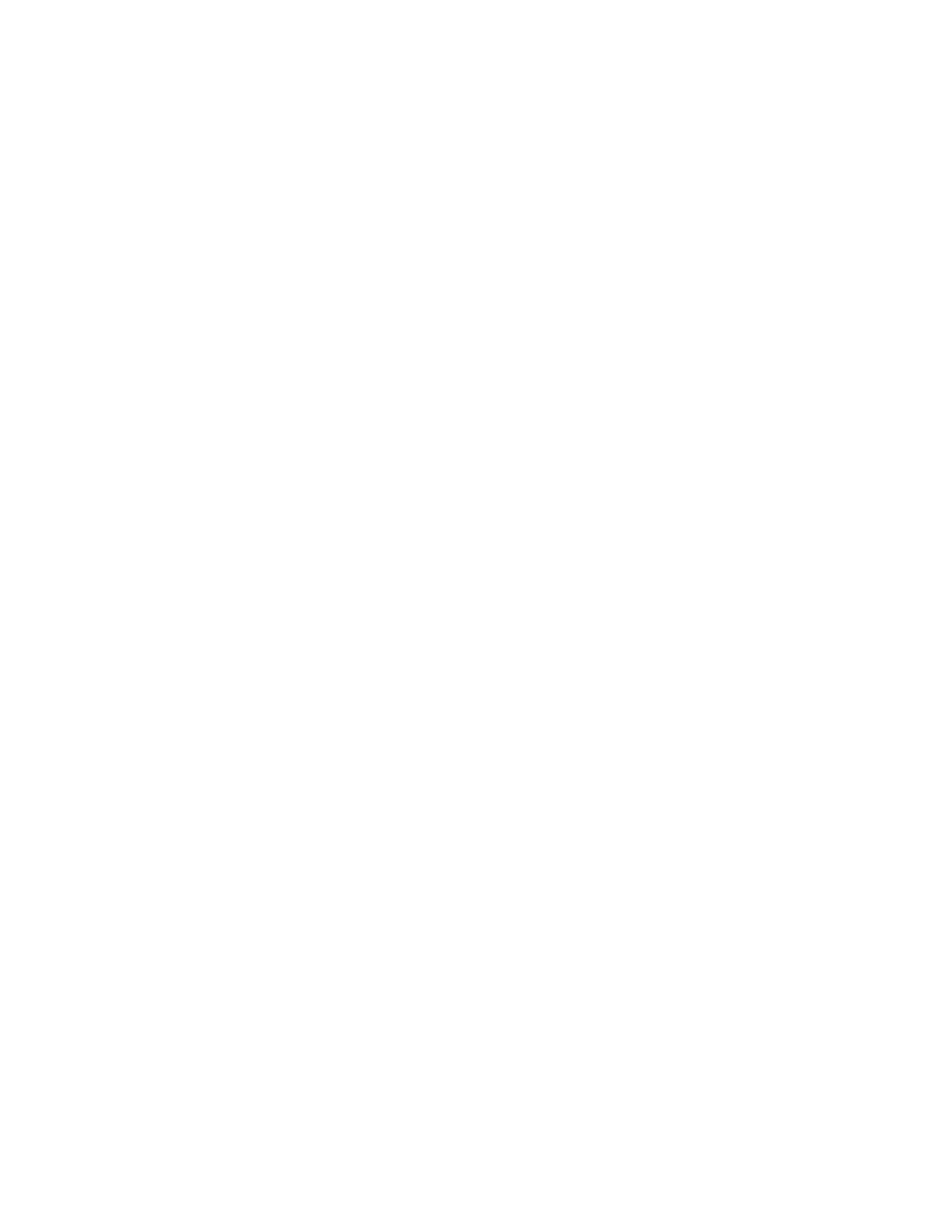}

$N_7$:
\end{center}
\end{minipage}
\begin{minipage}[b]{0.32\textwidth}
\begin{center}
\includegraphics[scale=0.4]{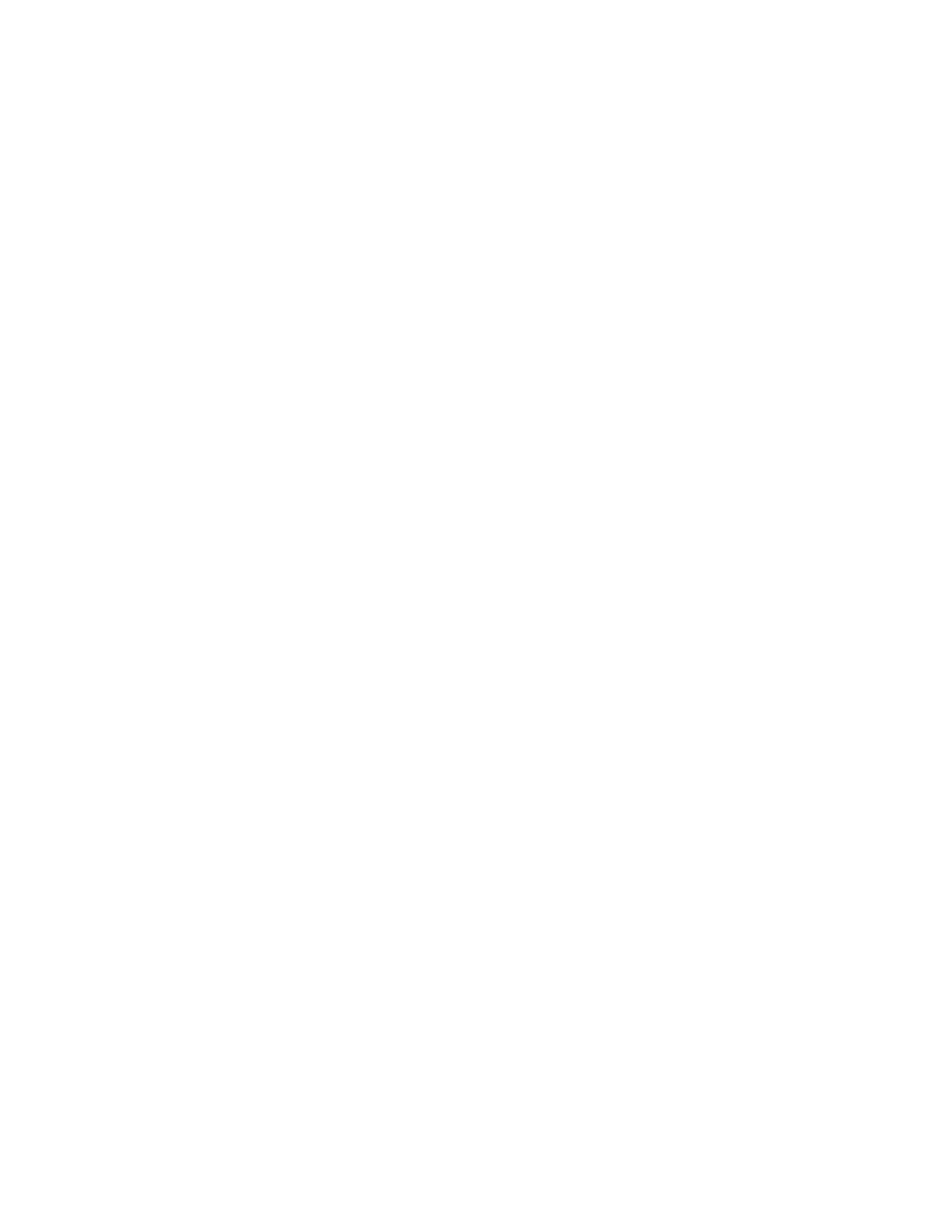}

$N_8$: triangular 3-ladder
\end{center}
\end{minipage}
\begin{minipage}[b]{0.32\textwidth}
\begin{center}
\includegraphics[scale=0.4]{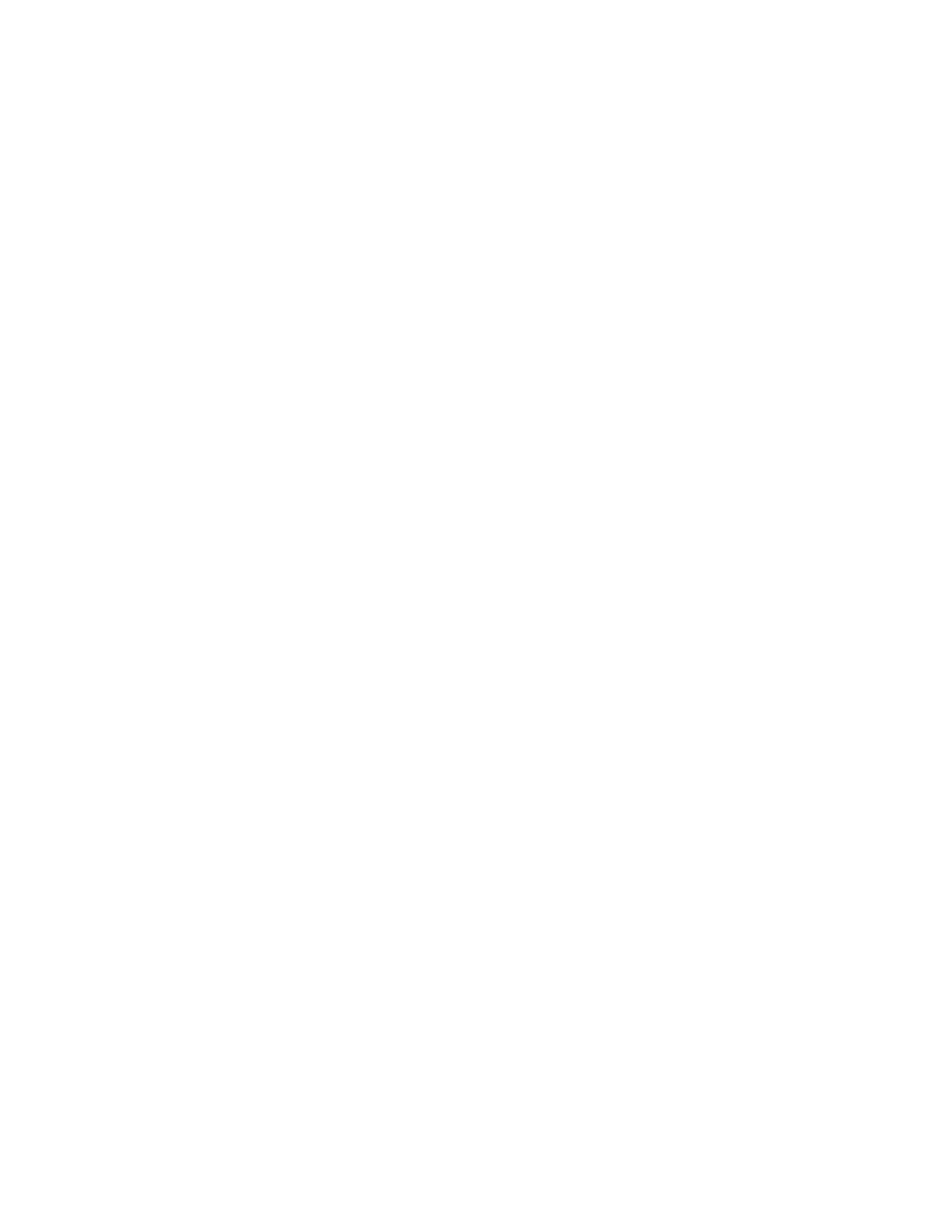}

$N_9$: 5-wheel
\end{center}
\end{minipage}

\end{center}
\caption{\label{fig:lineforbid}The nine forbidden induced subgraphs for line graphs}
\end{figure}

\section{{\rd{A plan for}} characterising line game-perfect graphs}
\label{sec:idea}

Let $g=[X,Y]$ with $(X,Y)\in\{A,B\}\times\{A,B,-\}$.

\begin{itemize}
\item
For some games $g$, we have a characterisation of game-perfect graphs
by a list ${\cal F}_g$ of forbidden induced subgraphs.
\item
Let red$({\cal F}_g)$ be 
\rd{the maximal subset of ${\cal F}_g$ such that no graph
in red$({\cal F}_g)$ contains an induced $N_1,\ldots,N_9$.}
\item
Let $L^{-1}({\cal F}_g)$ the set of all iso-free graphs $H$ 
such that $L(H)\in{\cal F}_g$. 
\item
Let $L^{-1}({\rm red}({\cal F}_g))$ (which we call the \emph{reduced list}) be the set of all iso-free graphs $H$ 
such that $L(H)\in{\rm red}({\cal F}_g)$. 
\end{itemize}


Then we have following fundamental theorem (which
gives us the basic idea how to characterise game-perfect line graphs).


\begin{thm}\label{thm:grund}
The following are equivalent for a graph $G$ and a game $g=[X,Y]$ with
$X\in\{A,B\}$ and $Y\in\{A,B,-\}$.
\begin{itemize}
\item[(1)] $G$ is line $g$-perfect.
\item[(2)] No graph in $L^{-1}({\cal F}_g)$ is an edge-induced subgraph of $G$.
\item[(3)] No graph in $L^{-1}({\rm red}({\cal F}_g))$ is an edge-induced subgraph of $G$.
\end{itemize}
\end{thm}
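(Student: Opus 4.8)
The plan is to prove the two equivalences (1)$\Leftrightarrow$(2) and (2)$\Leftrightarrow$(3) separately, using as the central engine the correspondence recorded in the introduction: for any graph $G$, the induced subgraphs of $L(G)$ are exactly the line graphs of the edge-induced subgraphs of $G$. Concretely, for a vertex set $S\subseteq E(G)$ of $L(G)$, writing $H_S$ for the edge-induced subgraph of $G$ with edge set $S$, two edges of $S$ are adjacent in $H_S$ iff they are adjacent in $G$, so $L(H_S)=L(G)[S]$; moreover every edge-induced subgraph is iso-free. These two facts are all I would need from the structure of line graphs for the first equivalence.

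First I would establish (1)$\Leftrightarrow$(2). By Observation~\ref{def:linegame}, $G$ is line $g$-perfect iff $L(G)$ is $g$-perfect, which by the defining property of the list ${\cal F}_g$ holds iff $L(G)$ contains no member of ${\cal F}_g$ as an induced subgraph. It then suffices to show that $L(G)$ contains some $F\in{\cal F}_g$ as an induced subgraph if and only if $G$ contains some $H\in L^{-1}({\cal F}_g)$ as an edge-induced subgraph. For the forward direction, if $L(G)[S]\cong F$ for some $S\subseteq E(G)$, then $H_S$ is an iso-free edge-induced subgraph of $G$ with $L(H_S)=L(G)[S]\cong F\in{\cal F}_g$, whence $H_S\in L^{-1}({\cal F}_g)$. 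For the converse, if $H\in L^{-1}({\cal F}_g)$ is an edge-induced subgraph of $G$ with edge set $S$, then $L(G)[S]=L(H)\cong F$ for some $F\in{\cal F}_g$, so $L(G)$ contains $F$ as an induced subgraph. The Whitney exception $L(K_3)=L(K_{1,3})$ causes no difficulty here, since $L^{-1}$ is defined as the full set of iso-free preimages.

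Next I would prove (2)$\Leftrightarrow$(3). Because ${\rm red}({\cal F}_g)\subseteq{\cal F}_g$ we have $L^{-1}({\rm red}({\cal F}_g))\subseteq L^{-1}({\cal F}_g)$, so (2)$\Rightarrow$(3) is immediate. The reverse implication follows once I show that these two sets are in fact equal. Let $H\in L^{-1}({\cal F}_g)$, so that $L(H)\in{\cal F}_g$. Being a line graph, $L(H)$ contains none of $N_1,\ldots,N_9$ as an induced subgraph by Theorem~\ref{thm:beineke}; hence $L(H)\in{\rm red}({\cal F}_g)$ by the definition of the reduced list, and therefore $H\in L^{-1}({\rm red}({\cal F}_g))$. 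Thus $L^{-1}({\cal F}_g)=L^{-1}({\rm red}({\cal F}_g))$, so conditions (2) and (3) are literally the same condition, completing the chain of equivalences.

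I do not expect a serious obstacle: the argument reduces to careful bookkeeping with the line-graph/edge-induced-subgraph correspondence, together with a single invocation of Beineke's theorem. The step demanding the most care is verifying the identity $L(H_S)=L(G)[S]$ and the iso-freeness of edge-induced subgraphs, so that membership in $L^{-1}(\cdot)$ is matched on the nose; and confirming that the $K_3$--$K_{1,3}$ ambiguity in Whitney's theorem is harmless precisely because $L^{-1}$ collects all iso-free preimages rather than a single graph.
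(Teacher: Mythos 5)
Your proposal is correct and follows essentially the same route as the paper: both reduce (1)$\Leftrightarrow$(2) to the equivalence ``$G$ is line $g$-perfect iff $L(G)$ is $g$-perfect'' combined with the forbidden-induced-subgraph characterisation ${\cal F}_g$ and the correspondence between edge-induced subgraphs of $G$ and induced subgraphs of $L(G)$, and both settle (2)$\Leftrightarrow$(3) by invoking Beineke's theorem to show $L^{-1}({\cal F}_g)=L^{-1}({\rm red}({\cal F}_g))$. The only difference is that you spell out the bookkeeping ($L(H_S)=L(G)[S]$, iso-freeness, the harmlessness of the Whitney exception) that the paper leaves implicit.
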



\begin{proof}
(1)$\Longrightarrow$(2): 
Let $G$ be a line $g$-perfect graph.
Then $U:=L(G)$ is $g$-game-perfect.
By the characterisation of (vertex) $g$-perfectness 
(e.g., by Theorem~\ref{thm:A} or Theorem~\ref{thm:B}) $U$ does
not contain a graph from the list ${\cal F}_g$ as an induced subgraph.
That means that $G$ 
does not contain a graph from the
list $L^{-1}({\cal F}_g)$ as an {edge-induced} subgraph.

(2)$\Longrightarrow$(1): 
Let \rd{$G$ not} contain any graph from the list $L^{-1}({\cal F}_g)$ as
an {edge-induced} subgraph. Then $U:=L(G)$ does not contain
any graph from the list ${\cal F}_g$ as an induced subgraph.
By the characterisation of $g$-perfect graphs
(e.g., by Theorem~\ref{thm:A} or Theorem~\ref{thm:B}), this means that
$U$ is $g$-game-perfect. By definition this means that $G$ is
{line $g$-perfect}.

(2)$\Longrightarrow$(3): holds trivially. 

(3)$\Longrightarrow$(2): 
Let $H\in L^{-1}({\cal F}_g)$.
Then there is a graph $F\in{\cal F}_g$ with $F=L(H)$. Since
$F$ is a line graph, by Theorem~\ref{thm:beineke} $F$ does not contain any of the
forbidden configurations $N_1,\ldots,N_9$ as an induced subgraph.
Thus $F\in{\rm red}({\cal F}_g)$. Thus 
\[H\in L^{-1}({\rm red}({\cal F}_g)).\]
%
\end{proof}

\paragraph{General strategy to characterise $g$-game line perfect graphs.}
We do the following steps
\begin{enumerate}
\item Determine ${\cal F}_g$ (known from literature for 4 games):
\item Determine red$({\cal F}_g)$.
\item Determine $L^{-1}({\rm red}({\cal F}_g))$.
\item Characterise the class of graph which do not contain any member
of $L^{-1}({\rm red}({\cal F}_g))$
as an edge-induced subgraph.
\end{enumerate}

The above idea is used for the proofs of Theorem~\ref{thm:lineBB}
and Theorem~\ref{thm:lineB}. 
In Theorem~\ref{thm:lineBA} 
and Theorem~\ref{thm:lineAA}
we use different methods since, for the games
$[B,A]$ and $[A,A]$, no explicit characterisation
of the game-perfect graphs is known.












\section{The forbidden 
subgraphs}
\label{sec:forbidden}

\subsection{Forbidden 
subgraphs for game $[A,-]$}
\label{subsec:forbiddenA}

The following \rd{basic lemma is} implied by Theorem~\ref{thm:whitney}.

\begin{lem}\label{lem:onlyPfour}
\rd{We have:
\begin{itemize}
\item
$P_5$ is the only iso-free graph whose line graph is~$P_4$.
%
\item
$C_4$ is the only iso-free graph whose line graph is~$C_4$.
\end{itemize}}
\end{lem}

From the \rd{lemma} above we conclude the following.

\begin{prop}\label{prop:lineA}
A graph is line $[A,-]$-perfect if and only if it contains no $P_5$ or $C_4$ as
an edge-induced subgraph.
\end{prop}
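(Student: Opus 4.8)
The plan is to invoke the general machinery set up in Theorem~\ref{thm:grund} with the game $g=[A,-]$, for which a characterisation of (vertex) game-perfectness by forbidden induced subgraphs is already available. Concretely, Theorem~\ref{thm:A} tells us that $\mathcal{F}_{[A,-]}$ consists of the seven graphs of Figure~\ref{fig:forbiddenAperf} ($P_4$, $C_4$, the triangle star, the $\Xi$-graph, two split $3$-stars, the mixed graph, and two double fans). By the equivalence (1)$\Longleftrightarrow$(3) of Theorem~\ref{thm:grund}, it then suffices to compute the reduced preimage list $L^{-1}(\mathrm{red}(\mathcal{F}_{[A,-]}))$ and read off that it equals $\{P_5,C_4\}$; this is exactly what the statement of the proposition asserts after translating ``no graph in the list occurs as an edge-induced subgraph''.

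First I would determine $\mathrm{red}(\mathcal{F}_{[A,-]})$. By Beineke's characterisation (Theorem~\ref{thm:beineke}), a member of $\mathcal{F}_{[A,-]}$ lies in the reduced list precisely when it contains none of the nine obstructions $N_1,\dots,N_9$, i.e.\ precisely when it is itself a line graph. Two of the seven survive: $P_4=L(P_5)$ and $C_4=L(C_4)$ are line graphs. The remaining five should each be shown to contain one of $N_1,\dots,N_9$ as an induced subgraph; for the star- and fan-type graphs an induced claw $N_1$ is the quickest witness, obtained by exhibiting a vertex with three pairwise non-adjacent neighbours. For the three disconnected configurations (two split $3$-stars, the mixed graph, and two double fans) it is enough to locate such an obstruction inside a single component, since an induced subgraph of a component is induced in the whole graph; thus it really suffices to inspect the four building blocks (triangle star, $\Xi$-graph, split $3$-star, double fan). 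This yields $\mathrm{red}(\mathcal{F}_{[A,-]})=\{P_4,C_4\}$.

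It then remains to pass through $L^{-1}$. By Lemma~\ref{lem:onlyPfour} the unique iso-free graph with line graph $P_4$ is $P_5$, and by Lemma~\ref{lem:onlyCfour} the unique iso-free graph with line graph $C_4$ is $C_4$; hence $L^{-1}(\mathrm{red}(\mathcal{F}_{[A,-]}))=\{P_5,C_4\}$. Substituting this into statement (3) of Theorem~\ref{thm:grund} gives that $G$ is line $[A,-]$-perfect if and only if $G$ contains neither $P_5$ nor $C_4$ as an edge-induced subgraph, which is the claim.

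The main obstacle is the verification in the second step, namely that each of the five non-path, non-cycle forbidden graphs fails to be a line graph. This is the only genuinely case-based part: one must correctly read the structure of the triangle star, the $\Xi$-graph, the split $3$-star, and the double fan from Figure~\ref{fig:forbiddenAperf} and pin down a concrete Beineke obstruction in each. I expect the claw to handle the star-like graphs immediately, while the double fan may instead require a different $N_i$ if its two apices fail to produce three mutually non-adjacent neighbours; checking which of $N_1,\dots,N_9$ applies there is the most delicate point, though still a finite inspection rather than a conceptual difficulty.
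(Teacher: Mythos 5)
Your proposal is correct and follows the paper's own proof essentially verbatim: both invoke Theorem~\ref{thm:grund} together with Theorem~\ref{thm:A}, reduce ${\cal F}_{[A,-]}$ to $\{P_4,C_4\}$ by observing that the claw $N_1$ is induced in the split 3-star and the double fan (which covers the three disconnected members as well as the triangle star and $\Xi$-graph), and then apply Lemma~\ref{lem:onlyPfour} and Lemma~\ref{lem:onlyCfour} to conclude $L^{-1}({\rm red}({\cal F}_{[A,-]}))=\{P_5,C_4\}$. The only divergence is your hedge about the double fan possibly needing a different obstruction $N_i$; the paper asserts outright that the claw suffices there too.
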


\begin{proof}
Let $G$ be a graph.
By Theorem~\ref{thm:grund}, the graph $G$ is line $[A,-]$-perfect if and only if 
no graph in the reduced list
$L^{-1}({\rm red}({\cal F}_{[A,-]}))$
is an {edge-induced} subgraph of~$G$. Thus, it is sufficient to prove
that
\[L^{-1}({\rm red}({\cal F}_{[A,-]}))=\{P_5,C_4\}.\]

By 
Theorem~\ref{thm:A}
we know
that ${\cal F}_{[A,-]}$
consists of the seven graphs depicted in Figure~\ref{fig:forbiddenAperf}.

Since $N_1$ is an induced subgraph of the split 3-star and of the double
fan, it is an induced subgraph of the triangle star, the $\Xi$-graph,
the two split 3-stars, the two double fans, and the mixed graph. Thus we have
\[{\rm red}({\cal F}_{[A,-]})=\{P_4,C_4\}\]
Since, by Lemma~\ref{lem:onlyPfour}, the only iso-free graph whose line graph 
is $P_4$ is the $P_5$,
\rd{and the}
only iso-free 
graph whose line graph is $C_4$ is the $C_4$, we have
\[L^{-1}({\rm red}({\cal F}_{[A,-]}))=\{P_5,C_4\}.\]
\end{proof}

\subsection{Forbidden 
subgraphs for game $[B,-]$}
\label{subsec:forbiddenB}

%
%
%
%
%
%
%


\rd{We obtain the following characterisation of line $[B,-]$-perfect graphs.}

\begin{prop}\label{prop:lineB}
A graph is line \rd{$[B,-]$-perfect if} and only if it
contains no $P_5\cup P_2$, $C_4\cup P_2$, $P_6$, $C_5$, bull, diamond,
or 3-caterpillar $F_1$ as
an edge-induced subgraph.
\end{prop}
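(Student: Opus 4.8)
The plan is to follow the general strategy codified in Theorem~\ref{thm:grund} and to mirror the proof of Proposition~\ref{prop:lineA}. By Theorem~\ref{thm:grund}, the graph $G$ is line $[B,-]$-perfect if and only if no graph in $L^{-1}({\rm red}({\cal F}_{[B,-]}))$ is an edge-induced subgraph of $G$. Hence it suffices to prove that
\[L^{-1}({\rm red}({\cal F}_{[B,-]}))=\{P_5\cup P_2,\,C_4\cup P_2,\,P_6,\,C_5,\,\text{bull},\,\text{diamond},\,F_1\}.\]
First I would invoke Theorem~\ref{thm:B} to record that ${\cal F}_{[B,-]}$ consists of the fifteen graphs depicted in Figure~\ref{fig:forbiddenBperf}: the chair, $P_4\cup K_1$, $C_4\cup K_1$, the split 3-star, $P_5$, $C_5$, the double fan, the 4-fan, the 4-wheel, and $F_{10}$ through $F_{15}$.

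The heart of the proof is the reduction step, the computation of ${\rm red}({\cal F}_{[B,-]})$. By Beineke's theorem (Theorem~\ref{thm:beineke}), a member of ${\cal F}_{[B,-]}$ belongs to the reduced list exactly when it is itself a line graph, i.e.\ when it contains none of $N_1,\ldots,N_9$ as an induced subgraph. I would argue that the claw $N_1$ is an induced subgraph of eight of the fifteen graphs, namely the chair, the split 3-star, the double fan, and $F_{10},F_{12},F_{13},F_{14},F_{15}$, in each case by exhibiting a vertex of degree at least $3$ with three pairwise nonadjacent neighbours. The remaining seven graphs contain no $N_i$ and are therefore line graphs, so
\[{\rm red}({\cal F}_{[B,-]})=\{P_4\cup K_1,\,C_4\cup K_1,\,P_5,\,C_5,\,\text{4-fan},\,\text{4-wheel},\,F_{11}\}.\]

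Finally, applying Lemmata~\ref{lem:onlyPfourAdd}, \ref{lem:onlyCfourAdd}, \ref{lem:onlyPfive}, \ref{lem:onlyCfive}, \ref{lem:onlyfan}, \ref{lem:onlywheel}, and~\ref{lem:onlyFeleven} in turn identifies the unique iso-free preimage under $L$ of each member of the reduced list, yielding $P_5\cup P_2$, $C_4\cup P_2$, $P_6$, $C_5$, the bull, the diamond, and $F_1$, respectively. This is exactly the claimed set, which completes the argument. I expect the main obstacle to be the case analysis in the reduction step: one must verify by inspection of Figure~\ref{fig:forbiddenBperf} both that each of the eight discarded graphs genuinely contains an induced claw and that each of the seven retained graphs avoids all nine Beineke configurations, so that the reduced list is neither too large nor too small.
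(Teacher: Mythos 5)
Your proposal is correct and follows essentially the same route as the paper's own proof: apply Theorem~\ref{thm:grund}, use Theorem~\ref{thm:B} to list ${\cal F}_{[B,-]}$, discard the eight members containing the claw $N_1$ (the chair, split 3-star, double fan, $F_{10}$, $F_{12}$, $F_{13}$, $F_{14}$, $F_{15}$) to obtain ${\rm red}({\cal F}_{[B,-]})$, and then invoke Lemmata~\ref{lem:onlyPfourAdd}--\ref{lem:onlyFeleven} to identify the unique iso-free preimages. The extra verification you flag as the main obstacle---that the seven retained graphs avoid all nine Beineke configurations---is in fact immediate from those same lemmata, since each retained graph is exhibited there as the line graph of some graph and is therefore a line graph, hence $N_i$-free by Theorem~\ref{thm:beineke}.
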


\begin{proof}
Let $G$ be a graph .
By Theorem~\ref{thm:grund}, $G$ is line $[B,-]$-perfect if and only if 
no graph in the reduced list
$L^{-1}({\rm red}({\cal F}_{[B,-]}))$
is an edge-induced subgraph of~$G$. 
Thus, to finish the proof it is sufficient to show that the reduced list $L^{-1}({\rm red}({\cal F}_{[B,-]}))$
consists of the forbidden graphs mentioned in the proposition.

By 
Theorem~\ref{thm:B}
we know
that ${\cal F}_{[B,-]}$
consists of the 15 graphs depicted in Figure~\ref{fig:forbiddenBperf}.


Since $N_1$ is an induced subgraph of the chair, the split 3-star,
the double fan, the $F_{10}$, 
the $F_{12}$,
the $F_{13}$,
the $F_{14}$, and
the $F_{15}$,
we have
\[{\rm red}({\cal F}_{[B,-]})=\{P_4\cup K_1,C_4\cup K_1,P_5,C_5,\text{4-fan},
\text{4-wheel},F_{11}\}.\]

\rd{Furthermore, the following observations are implied by Theorem~\ref{thm:whitney}.

\begin{itemize}
\item
$P_5\cup P_2$ is the only iso-free graph whose line graph is $P_4\cup K_1$.
\item
$C_4\cup P_2$ is the only iso-free graph whose line graph is $C_4\cup K_1$.
\item
$P_6$ is the only iso-free graph whose line graph is $P_5$.
\item
$C_5$ is the only iso-free graph whose line graph is $C_5$.
\item
The bull is the only iso-free graph whose line graph is the 4-fan.
\item
The diamond is the only iso-free graph whose line graph is the 4-wheel.
\item
The 3-caterpillar $F_1$ is the only iso-free graph whose line graph is the 
graph $F_{11}$ 
depicted in Figure~\ref{fig:forbiddenBperf}.
\end{itemize}

By these observations}
we conclude that
\[L^{-1}({\rm red}({\cal F}_{[B,-]}))=\{P_5\cup P_2,C_4\cup P_2,P_6,C_5,\text{bull},\text{diamond},\text{3-caterpillar}\}.\]
\end{proof}

\subsection{Forbidden 
subgraphs for game $[B,A]$}
\label{subsec:forbiddenBA}


The next lemma is an auxiliary result that will be used to simplify some
strategies in the forthcoming 
Lemma~\ref{notBAniceFone},
Lemma~\ref{notAAniceFtwo},
and
Lemma~\ref{notAAnicetwoFone}.

\begin{figure}[htbp]
\begin{center}
\includegraphics[scale=0.4]{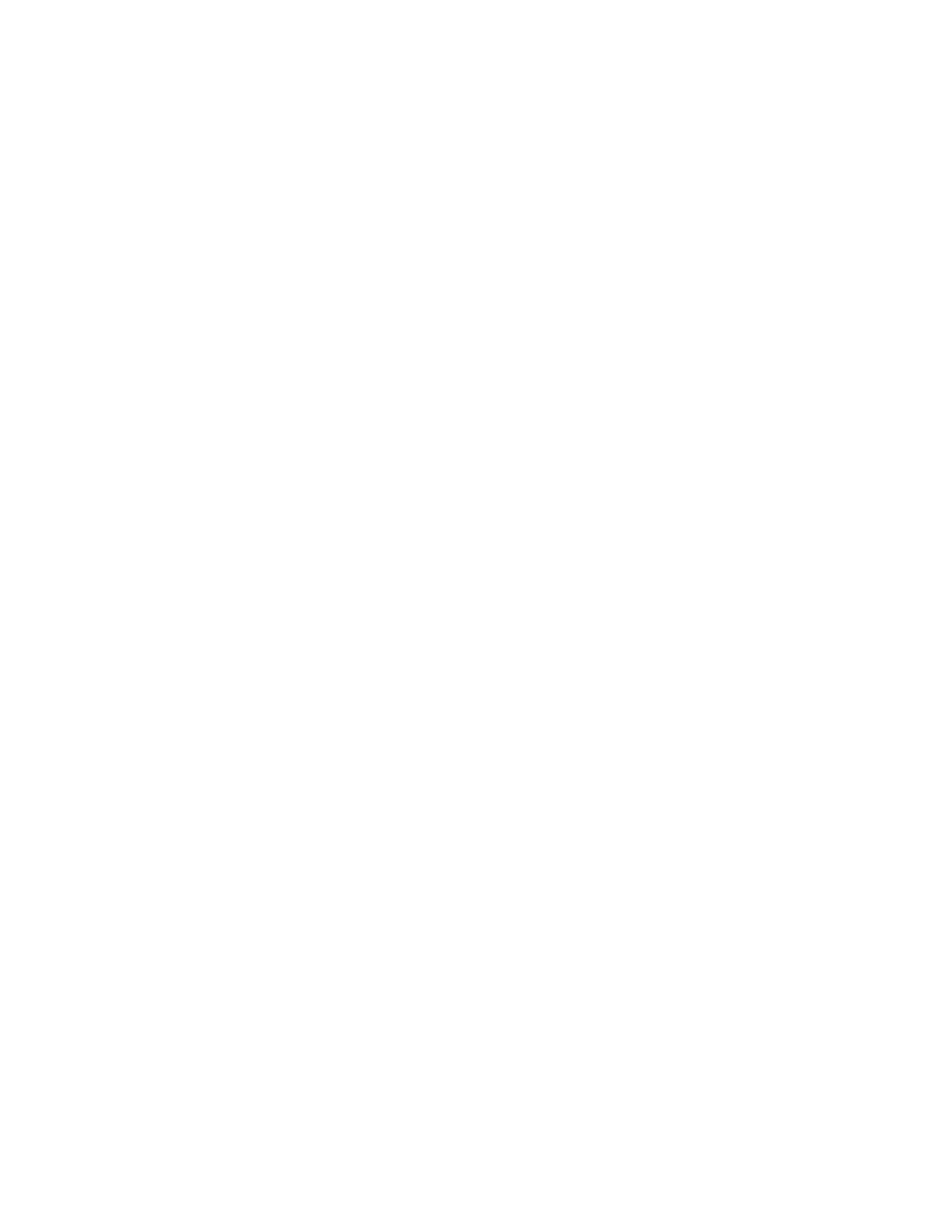}
\end{center}
\caption{\label{fig:cFone}The precoloured configuration $F_1^1$: 
\rd{the middle edge is precoloured by colour~1.}}
\end{figure}

\begin{lem}\label{lem:auxFoneone}
Bob has a winning strategy with 3 colours in the $[X,A]$-edge colouring
game on the precoloured graph $F_1^1$ 
\rd{where the middle edge is precoloured by colour~1}
(depicted in Figure~\ref{fig:cFone})
if it is Alice's turn.
\end{lem}

\begin{proof}
Assume the colour set is $\{1,2,3\}$.
By symmetry, Alice has four possible types of moves: she can colour $e_1$ with
colour~2, she can colour $f_{1,1}$ with colour~2 or with colour~1, or she
can miss her turn.
\begin{itemize}
\item If Alice colours $e_1$ with colour~2, Bob colours $f_{2,2}$ with colour~3.
Then there is no feasible colour for~$e_2$ any more.
\item If Alice colours $f_{1,1}$ with colour~2, Bob colours $f_{1,2}$ with colour~3.  
Then there is no feasible colour for~$e_1$ any more.
\item If Alice colours $f_{1,1}$ with colour~1 or skips, then Bob colours
$f_{2,1}$ with colour~2. If Bob will be able to colour 
$f_{2,2}$ or $e_1$ with colour~3 in his next move, he will win,
since there would be no feasible colour for $e_2$ in both cases. 
In order to avoid both
threats simultaneously, Alice has only one possibility: 
she must colour $e_2$ with
colour~3.
But then 
Bob colours
$f_{1,2}$ with colour~2, 
so that there is no feasible colour for~$e_1$ any more.
\end{itemize}
Thus, in any case, Bob will win.
\end{proof}

\begin{lem}\label{lem:BAnicecaterpillar}\label{notBAniceFone}
The 3-caterpillar is not line $[B,A]$-nice.
\end{lem}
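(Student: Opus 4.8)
The plan is to exhibit an explicit winning strategy for Bob in the $[B,A]$-edge colouring game on $F_1$ played with $\omega(L(F_1))$ colours; by Definition~\ref{def:linenice} this is exactly what it means for $F_1$ to fail to be line $[B,A]$-nice. First I would record that the 3-caterpillar has maximum degree $3$, so by Corollary~\ref{cor:whitney} we have $\omega(L(F_1))=3$. Hence the relevant game is played with the colour set $\{1,2,3\}$, and it suffices to show that Bob can force an uncolourable edge, i.e.\ prevent Alice from colouring all edges.

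The point of the auxiliary Lemma~\ref{lem:auxFoneone} is that it does almost all of the work in advance. Since in the $[B,A]$-game Bob has the first move, I would let Bob play the single edge whose colouring turns the entirely uncoloured $F_1$ into the precoloured configuration $F_1^1$ of Figure~\ref{fig:cFone} (by the reconstruction behind Lemma~\ref{lem:auxFoneone}, this is the pendant edge at the central vertex of the spine, coloured say with colour~$1$). After this opening move it is Alice's turn, and the remaining uncoloured edges together with the one edge Bob has just coloured constitute precisely the configuration $F_1^1$ with Alice to move.

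Now I would simply invoke Lemma~\ref{lem:auxFoneone} with $X=B$: in $F_1^1$ with Alice to move, Bob has a winning strategy with $3$ colours, and the lemma already accounts for Alice's privilege of skipping (the parameter $Y=A$ permits Alice to miss a turn, and that option is among the cases analysed there). Concatenating Bob's opening move with the strategy supplied by Lemma~\ref{lem:auxFoneone} yields a complete winning strategy for Bob in the $[B,A]$-game on $F_1$. Consequently Alice has no winning strategy with $3$ colours, so $\chi_{[B,A]}'(F_1)\ge 4 > 3 = \omega(L(F_1))$, and therefore $F_1$ is not line $[B,A]$-nice.

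The only genuine step — and hence the one place to be careful — is the reduction itself: verifying that Bob's opening move produces exactly the precolouring $F_1^1$ rather than some inequivalent one, i.e.\ matching the edge Bob colours (and its colour, up to the symmetry of $F_1^1$) against Figure~\ref{fig:cFone}. Once this identification is fixed the statement is an immediate corollary of Lemma~\ref{lem:auxFoneone}, so no further combinatorial obstacle remains.
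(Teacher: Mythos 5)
Your proposal is correct and follows essentially the same route as the paper: Bob's opening move colours the central leaf edge $e_0$ with colour~1, producing exactly the precoloured configuration $F_1^1$ of Figure~\ref{fig:cFone} with Alice to move, after which Lemma~\ref{lem:auxFoneone} finishes the argument. Your additional verification that $\omega(L(F_1))=3$ via Corollary~\ref{cor:whitney} is a harmless (and correct) explicit check that the paper leaves implicit.
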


\begin{proof}
Bob's winning strategy with 3 colours on the 3-caterpillar is to colour the 
central leaf edge $e_0$ (see Figure~\ref{fig:auxFone}) with colour 1 in his first move. 
Then we have the situation from Figure~\ref{fig:cFone}. Thus Bob wins by Lemma~\ref{lem:auxFoneone}.
\end{proof}

\subsection{Forbidden 
subgraphs for game $[A,A]$}
\label{subsec:forbiddenAA}

The first two of the following lemmata are trivial. We list them
for the sake of completeness.

\begin{lem}\label{lem:BAnicePsix}\label{notAAnicePsix}
The graph $P_6$ is not line $[A,A]$-nice.
\end{lem}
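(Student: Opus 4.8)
The plan is to reduce the statement to a small combinatorial game and then describe an explicit winning strategy for Bob. First I would compute $\omega(L(P_6))$: since $\Delta(P_6)=2$ and $P_6$ has no component isomorphic to $K_3$, Corollary~\ref{cor:whitney} gives $\omega(L(P_6))=2$. Hence it suffices to show $\chi'_{[A,A]}(P_6)>2$, i.e.\ that Bob wins the $[A,A]$-edge colouring game on $P_6$ played with only the two colours $\{1,2\}$. Recall that in game $[A,A]$ Alice moves first and only Alice is permitted to skip.

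I would label the edges of $P_6$ consecutively as $e_1,e_2,e_3,e_4,e_5$, so that $e_i$ and $e_j$ are adjacent exactly when $|i-j|=1$. The three internal edges $e_2,e_3,e_4$ each have two neighbours, and such an edge becomes permanently uncolourable the moment its two neighbours receive different colours while it is still uncoloured; once this happens, that edge can never be coloured, and Bob wins as soon as the game ends. Bob's strategy therefore aims exactly at producing such a sandwiched edge, and it splits on Alice's first move. If Alice opens by colouring some edge $e_i$ with a colour, Bob wins in a single move by colouring the edge two steps away with the opposite colour, so that their common neighbour sits between the two colours: he answers $e_1$ by $e_3$ and $e_3$ by $e_1$ (blocking $e_2$), answers $e_2$ by $e_4$ and $e_4$ by $e_2$ (blocking $e_3$), and answers $e_5$ by $e_3$ (blocking $e_4$); each of these responses is a legal move since the edge Bob colours has only uncoloured neighbours.

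The only case requiring more than one Bob move is when Alice skips her opening turn: here Bob colours the central edge $e_3$ with colour $1$, after which he simultaneously threatens to colour $e_1$ with colour $2$ (which would block $e_2$) and to colour $e_5$ with colour $2$ (which would block $e_4$). I would then argue that these two threats live on disjoint ends of the path, so a single Alice move — whether colouring $e_2$ (forced to colour $2$), colouring $e_1$ with colour $1$, skipping, or any move on the opposite side — can neutralise at most one of them; Bob then executes the other and wins. The main point to verify carefully is precisely this bookkeeping in the skip-case, namely that no single defensive move protects both threatened edges at once; since the two threats are spatially separated and Alice has only one action per turn, I expect no genuine obstacle, which is consistent with the lemma being flagged as trivial.
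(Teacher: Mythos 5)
Your proposal is correct and follows essentially the same strategy as the paper: if Alice colours first, Bob answers with the edge at distance one (two steps along the path) in the other colour to kill the sandwiched edge, and if Alice skips, Bob colours the central edge and wins via the two-sided threat on the leaf edges, which Alice cannot parry with a single move. The only difference is presentational — you make the double-threat bookkeeping explicit, where the paper compresses it into the observation that Alice has touched at most one edge, so one untouched side always remains.
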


\begin{proof}
We describe a winning strategy with 2 colours for Bob in the $[A,A]$-edge colouring game
on the path $P_6$.

If Alice colours an edge in her first move, then Bob colours an edge
at distance 1 with the other colour, and the mutual adjacent edge of these
two edges cannot be coloured with any of the two colours.

Otherwise, if Alice misses her first turn, then
Bob's winning strategy with 2 colours on the $P_6$ is to colour the central
edge $e$ in colour 1 in his first move and one of the leaf edges with colour 2 in
his second move. Bob can colour at least one such leaf edge $f$, since
Alice has coloured at most one edge.
Then the mutual adjacent edge of the two edges $e$ and $f$ cannot be
coloured, and Bob wins. 

Thus the $P_6$ is not line $[A,A]$-nice.
\end{proof}


\begin{lem}\label{lem:AAniceCfive}\label{notAAniceCfive}
The graph $C_5$ is not line $[A,A]$-nice.
\end{lem}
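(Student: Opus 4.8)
The plan is to observe that $C_5$ simply cannot be properly edge-coloured with $\omega(L(C_5))$ colours at all, so that Alice loses regardless of how the game unfolds.

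First I would determine $\omega(L(C_5))$. Since $C_5$ has maximum degree $2$ and contains no component $K_3$, Corollary~\ref{cor:whitney} gives $\omega(L(C_5))=\Delta(C_5)=2$; equivalently, $L(C_5)=C_5$ is triangle-free, so its clique number is $2$. By Definition~\ref{def:linenice}, $C_5$ is line $[A,A]$-nice precisely when Alice has a winning strategy in the $[A,A]$-edge colouring game on $C_5$ with $2$ colours.

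Next I would recall that $C_5$ is an odd cycle, hence $\chi'(C_5)=\Delta(C_5)+1=3$; there is no proper $2$-edge-colouring of $C_5$ whatsoever. Invoking the comparison chain of Observation~\ref{obs:comparechigL}, namely $\omega(L(C_5))\le\chi'(C_5)\le\chi_{[A,A]}'(C_5)$, I obtain $\chi_{[A,A]}'(C_5)\ge\chi'(C_5)=3>2=\omega(L(C_5))$. Therefore $\chi_{[A,A]}'(C_5)\neq\omega(L(C_5))$, and $C_5$ is not line $[A,A]$-nice.

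If a purely game-theoretic argument is preferred (paralleling the proof for $P_6$ in Lemma~\ref{notAAnicePsix}), one can equivalently note that, since no proper $2$-edge-colouring of the five edges of $C_5$ exists, every play of the game necessarily terminates with at least one uncoloured edge, so Bob wins no matter what either player does. There is essentially no obstacle here: the result is immediate from the fact that $2=\omega(L(C_5))$ colours are too few to edge-colour the odd cycle $C_5$ completely, which is exactly why the lemma is flagged as trivial.
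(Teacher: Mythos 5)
Your proof is correct and follows essentially the same route as the paper: the paper simply notes that $C_5$ is not line perfect, which is exactly your observation that $\chi'(C_5)=3>2=\omega(L(C_5))$, and then concludes via the inequality $\omega(L(C_5))\le\chi'(C_5)\le\chi_{[A,A]}'(C_5)$ that no game strategy is needed at all. Your version just spells out explicitly what the paper compresses into one line.
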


\begin{proof}
The $C_5$ (is the smallest graph that) is not line perfect, 
thus it is not line $[A,A]$-nice. 
\end{proof}

\begin{figure}[htbp]
\begin{center}
\includegraphics[scale=0.4]{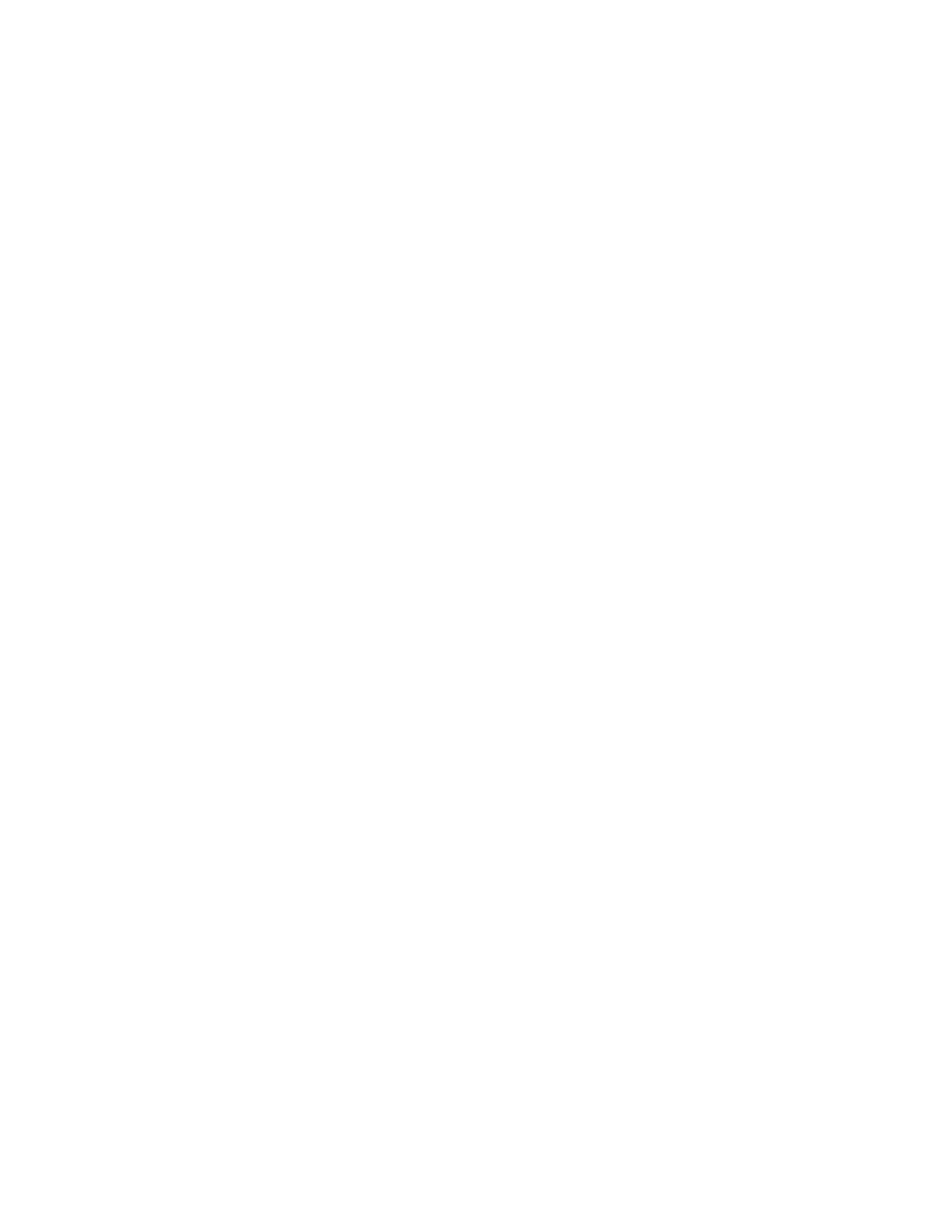}
\end{center}
\caption{\label{fig:auxiliary}The mini lobster $F_2$}
\end{figure}

\begin{lem}\label{lem:AAniceFtwo}\label{notAAniceFtwo}
The mini lobster $F_2$ is not line $[A,A]$-nice.
\end{lem}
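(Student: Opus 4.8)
The plan is to imitate the proof of Lemma~\ref{notBAniceFone}, reducing the game on $F_2$ to the precoloured configuration $F_1^1$ and then invoking Lemma~\ref{lem:auxFoneone}; the one genuinely new difficulty is that in the $[A,A]$ game Alice, not Bob, has the first move. First I would record that $\Delta(F_2)=3$, so that by Corollary~\ref{cor:whitney} the relevant number of colours is $\omega(L(F_2))=3$; it therefore suffices to exhibit a winning strategy for Bob in the $[A,A]$-edge colouring game on $F_2$ with colour set $\{1,2,3\}$. I would then isolate inside $F_2$ the embedded $3$-caterpillar on the edges $e_0,e_1,e_2,f_{1,1},f_{1,2},f_{2,1},f_{2,2}$ (the \emph{core}) together with the extra pendant edge(s) of the lobster (the \emph{limb}), the essential point being that the limb meets the core away from the decision edges $e_1,e_2$, attaching only at the vertex incident to the central leaf edge $e_0$. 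Consequently, once $e_0$ is coloured, no colour subsequently placed on a limb edge can constrain a core edge, so any move Alice makes on the limb is, from the point of view of the core, indistinguishable from skipping a turn.

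Bob's strategy is to force the core into the configuration $F_1^1$ with Alice to move, and then to follow the winning strategy of Lemma~\ref{lem:auxFoneone} on the core, treating every limb move of Alice as a skip (legitimate since $Y=A$ permits Alice to skip). The argument splits on Alice's first move. If Alice colours a limb edge, Bob colours $e_0$ with a feasible colour, reaching $F_1^1$ with Alice to move. If Alice colours $e_0$ itself, Bob answers with a waiting move on the limb; again the core is exactly $F_1^1$ and it is Alice's turn, the limb having supplied Bob the tempo that a bare $3$-caterpillar does not provide. This is precisely the feature distinguishing $F_2$ from $F_1$, which explains why a single $3$-caterpillar is line $[A,A]$-nice while the mini lobster is not. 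In both of these branches Lemma~\ref{lem:auxFoneone} finishes the proof immediately.

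The remaining, and hardest, case is when Alice's first move colours a core edge other than $e_0$. Here I would still have Bob colour $e_0$ (with a colour distinct from any already forced), so that the core carries the precolouring of $F_1^1$ plus one extra coloured edge, again with Alice to move. The crux is to verify that this additional constraint never rescues Alice: running through the few possibilities for the extra coloured core edge, up to the symmetries of the core, I expect each to reduce to, or strictly strengthen, one of the branches already handled inside the proof of Lemma~\ref{lem:auxFoneone}, so that Bob still manufactures an uncolourable edge among $e_1,e_2$. Carrying out this case analysis in full — rather than the clean limb and $e_0$ cases — is where the real work lies, and it is the step I would write out in detail.
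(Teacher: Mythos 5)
Your cases where Alice first colours the limb edge $f_0$ or the central edge $e_0$ match the paper's proof exactly, as does your observation that, once $e_0$ is coloured, any move on the limb is skip-equivalent from the core's point of view. But your Case C is not merely unfinished --- the move you propose for Bob there is a \emph{losing} move. Suppose Alice's first move is the spine edge $e_1$, say with colour~1. Your Bob replies by colouring $e_0$, which, being adjacent to $e_1$, must receive colour 2 or 3, say colour~2. Now Alice colours $e_2$ with colour~3 and wins: the three mutually adjacent edges $e_0,e_1,e_2$ are all coloured, and every remaining edge ($f_0$ and the four leaf edges $f_{i,j}$) has at most two coloured neighbours, hence always admits one of the three colours no matter how the game continues. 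The flaw in your reduction heuristic is tempo: in Lemma~\ref{lem:auxFoneone}, the branch ``Alice colours $e_1$ with colour~2'' is immediately followed by a Bob move ($f_{2,2}$ with colour~3) that kills $e_2$; in your Case C the analogous position, with two colours visible at $e_2$, arises with \emph{Alice} to move, and she simply rescues $e_2$ herself. An extra coloured edge is not automatically a ``strengthening'' for Bob --- getting the unsafe spine edges coloured is precisely what Alice wants, so a position of $F_1^1$ plus an Alice-chosen coloured edge need not be won for Bob at all.

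The paper handles this case by a completely different reply: if Alice first colours $e_2$ (or, symmetrically, a leaf edge $f_{1,1}$), Bob answers with $f_{1,1}$ (respectively $e_2$) in a different colour, creating the double threat of playing colour~3 on $f_{1,2}$ or on $e_0$, either of which kills $e_1$; Alice's only parry is to colour $e_1$ with colour~3, and then Bob colours $f_0$ with colour~1, killing $e_0$. Note that here $f_0$ is not a waiting move but the edge that delivers the final blow to $e_0$ --- a second reason the limb cannot be abstracted away as skip-equivalent before $e_0$ is coloured, and in fact even in the leaf-edge subcase of your Case C (where Bob's reply $e_0$ does happen to win) Bob is forced into lines outside the game tree of Lemma~\ref{lem:auxFoneone}, some of which again use $f_0$ as a tempo move. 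Finally, you omit the case that Alice skips her first turn; that one is harmless, since Bob colours $e_0$ and reaches $F_1^1$ as in your first case, but it does need to be listed in an $[A,A]$ game.
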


\begin{proof}
By exhausting all possible first moves of Alice
we show that Bob has a winning strategy with 3 colours
for the $[A,A]$-edge colouring game played on~$F_2$.
We use the edge labels from Figure~\ref{fig:auxiliary}. 
By symmetry, it is sufficient to consider the cases that
Alice colours 
\rd{$f_0$, $e_0$, $f_{1,1}$, or $e_2$} 
in her first move, or
skips her first turn.
\begin{itemize}
\item If Alice skips or colours $f_0$, then Bob colours $e_0$ and thus creates
a configuration $F_1^1$. 
In the same way, if Alice colours $e_0$, then Bob colours $f_0$ and
creates a configuration $F_1^1$. 
In both cases, by Lemma~\ref{lem:auxFoneone}, Bob will win.
(Note that in case Alice colours $f_0$ in a forthcoming move, this may be considered
as skipping her turn in the situation of Lemma~\ref{lem:auxFoneone}.)
\item If Alice colours $f_{1,1}$ (w.l.o.g.\ with colour~1), then Bob colours
$e_2$ with colour~2. On the other hand, if Alice colours $e_2$ (w.l.o.g.\ with
colour 2), then Bob colours $f_{1,1}$ with colour~1.
In both cases, if Bob will be able to colour $f_{1,2}$ or $e_0$ with
colour~3 in his next move, then he would win as there would be no feasible
colour for $e_1$ any more. The only possibility for Alice 
to avoid both threats simultaneously is to colour $e_1$ with colour~3.
But then Bob colours $f_0$ with colour~1, so that there is no feasible colour
for $e_0$ any more.
\end{itemize}
In any case, Bob wins.
\end{proof}

\begin{figure}[htbp]
\begin{center}
\includegraphics[scale=0.4]{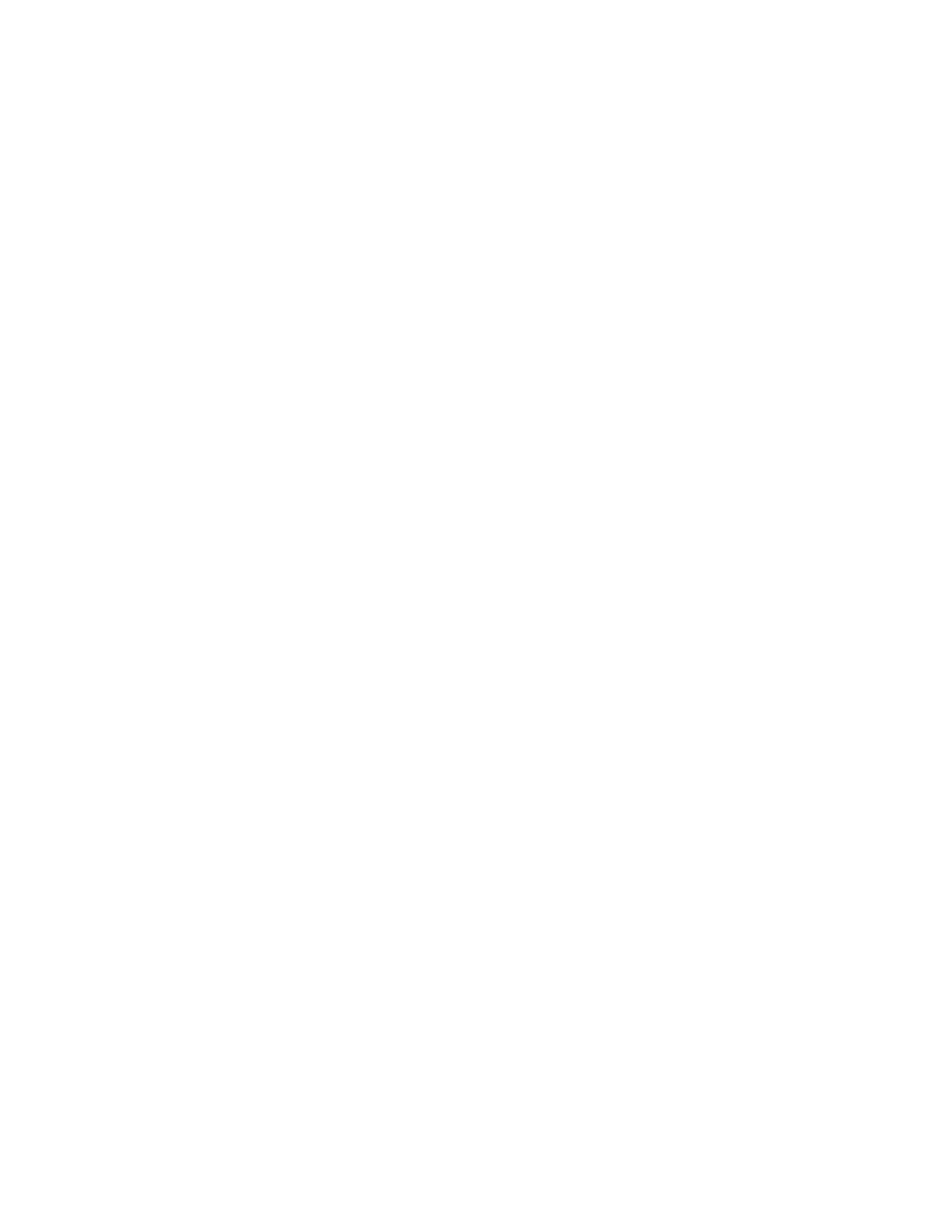}
\end{center}
\caption{\label{fig:cFthree}The precoloured configuration $F_3^1$}
\end{figure}

The next lemma is an auxiliary result that will be used to simplify some
strategies in the forthcoming Lemma~\ref{notAAniceFthree}.

\begin{lem}\label{lem:auxFthree}
Bob has a winning strategy with 4 colours in the $[X,A]$-edge colouring
game on the precoloured graph $F_3^1$ (depicted in Figure~\ref{fig:cFthree})
if it is Alice's turn.
\end{lem}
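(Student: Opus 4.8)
The plan is to prove this by the same exhaustive case analysis used for Lemma~\ref{lem:auxFoneone}, now with the colour set $\{1,2,3,4\}$ and one additional branch in the configuration. Writing $e_0$ for the precoloured leaf edge at the central vertex (w.l.o.g.\ of colour~$1$) and letting $e_1,e_2,e_3$ denote the interior edges incident with that same central vertex, each carrying its own bundle of leaf edges $f_{i,\cdot}$, I would first exploit the symmetry among the three branches and among the colours $2,3,4$ to reduce Alice's first move to a few representative types: colouring some interior edge $e_i$ (w.l.o.g.\ with colour~$2$), colouring some leaf edge $f_{i,j}$ either with the central colour~$1$ or with a non-central colour (w.l.o.g.\ colour~$2$), and skipping her turn. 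For each type the objective is to drive one designated interior edge to the point where all four colours already occur among its neighbours, so that it can no longer be coloured and Bob wins.

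The engine of the argument, exactly as in the three-colour case, is the manufacture of a \emph{multiple threat}: Bob plays so that at least two different uncoloured edges each, if coloured by him on his next move, would rob a common interior edge of its last admissible colour. Since even with the skip option Alice can annihilate only one such threat per move, she is forced to spend her move colouring the endangered interior edge itself, whereupon Bob realises a second, independent threat on another interior edge. When Alice plays conservatively---colouring a leaf edge or skipping---Bob should answer on the central vertex so as to reproduce a smaller, fully precoloured sub-configuration on which a direct saturation of a single branch already guarantees a Bob win; a skip by Alice, and likewise a later move by Alice on an edge irrelevant to the active branch, is absorbed using the re-interpretation device employed in the proof of Lemma~\ref{notAAniceFtwo}, namely by treating such a move as a skip in the reduced position.

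The step I expect to be the main obstacle is the branch in which Alice colours an interior edge $e_i$ with a ``helpful'' colour, because with four colours there is more slack around the central vertex than in the three-colour setting, and a careless reply collapses Bob's two prospective threats into a single one that Alice can parry. Concretely, the delicate point is to verify that, after Alice's move, Bob can still choose a reply edge and colour so that two distinct neighbours of some as-yet-uncoloured interior edge can be completed to four distinct colours on subsequent moves without the required colours conflicting at their other endpoints. This amounts to a finite but careful bookkeeping of which colours remain available at the central vertex and at the leaf bundles; once this single branch is checked, the symmetry reductions keep the number of remaining cases small, and each of them closes by the same saturate-and-threaten template.
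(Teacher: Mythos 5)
Your proposal founders on a point that precedes any strategic analysis: you have misidentified the graph $F_3^1$. The trigraph $F_3$ is not a tree; it is a triangle $v_0v_1v_2$ with two pendant edges at each of its three vertices, and $F_3^1$ is obtained from it by precolouring the triangle edge $e_0=v_1v_2$ and one pendant edge at $v_0$ with two \emph{distinct} colours (in the figure the coloured triangle edge is drawn broken into two coloured halves, which is exactly how the configuration arises in the proof of Lemma~\ref{notAAniceFthree}). The uncoloured edges are therefore: the two remaining triangle edges $e_1=v_0v_1$ and $e_2=v_0v_2$, one pendant $f_0$ at $v_0$, and two pendants $f_{i,1},f_{i,2}$ at each of $v_1,v_2$. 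Your configuration --- a star-like tree with a single precoloured central leaf of colour~$1$ and \emph{three} uncoloured interior edges, each carrying a clean leaf bundle --- is a different graph: it has one branch too many, it lacks the uncoloured central pendant $f_0$, and, crucially, it lacks the second precoloured colour. In the true $F_3^1$ each of $e_1,e_2$ already sees both precoloured colours, so only two colours (say $3$ and $4$) are ever available to these two mutually adjacent edges; this is the engine of Bob's win (for instance, if Alice skips, Bob colours $f_0$ with colour~$3$, after which $e_1$ and $e_2$ both need colour~$4$ but are adjacent). None of this machinery exists in your graph, and your symmetry reduction ``among the colours $2,3,4$'' is simply false for $F_3^1$, where colour~$2$ is already on the board.

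Beyond the structural mismatch, the proposal is not a complete proof even of the statement about your own graph: the branch you yourself single out as ``the main obstacle'' is deferred to unspecified ``bookkeeping'', whereas the entire substance of such a lemma is the explicit exhibition of Bob's replies. For comparison, the paper's proof is short precisely because of the structure you discarded: up to symmetry Alice has only six first moves (skip; $f_{1,1}$ with colour~$1$ or~$3$; $e_2$ with colour~$3$; $f_0$ with colour~$2$ or~$3$), and Bob answers either by colouring $f_0$ with colour~$3$ (leaving only colour~$4$ for the adjacent pair $e_1,e_2$), or by colouring $f_{1,1}$ with colour~$4$ (making $e_1$ uncolourable at once), or, in the single case where Alice plays $f_0$ with colour~$2$, by a double threat that forces Alice to spend her move on $e_1$, after which Bob kills $e_2$. (Incidentally, in that last case Bob's final move must place colour~$3$, not colour~$1$, on $f_{2,1}$; the paper's text has a slip there, since with colour~$1$ the edge $e_2$ could still receive colour~$3$.) Your saturate-and-threaten template is in the same spirit as the paper's argument, but it is aimed at the wrong graph and its decisive case is never carried out, so it does not establish the lemma.
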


\begin{proof}
Assume the colour set is $\{1,2,3,4\}$.
By symmetry, Alice has six possible types of moves: she can miss her turn, colour $f_{1,1}$ with
colour~1 or with colour~3, she can colour $e_2$ with colour~3, or she can colour $f_{0}$ with colour~2 or with colour~3.
\begin{itemize}
\item If Alice misses her turn or colours $f_{1,1}$ (with colour~1 or colour~3), then Bob colours $f_0$ with
colour~3. After that only colour~4 is feasible for $e_1$ and $e_2$, which can be used only for one the these edges.
\item If Alice colours $e_2$ or $f_0$ with colour~3, Bob colours $f_{1,1}$ with colour~4. Then there is no
feasible colour for~$e_1$.
\item If Alice colours $f_{0}$ with colour~2, Bob colours
$f_{1,1}$ with colour~3. If Bob will be able to colour 
$f_{1,2}$ or $e_2$ with colour~4 in his next move, he will win,
since there would be no feasible colour for $e_1$ in both cases. 
In order to avoid both
threats simultaneously, Alice has only one possibility: 
she must colour $e_1$ with
colour~4.
But then 
Bob colours
$f_{2,1}$ with \rd{colour~3}, 
so that there is no feasible colour for~$e_2$ any more.
\end{itemize}
Thus, in any case, Bob will win.
\end{proof}

\begin{figure}[htbp]
\begin{center}
\includegraphics[scale=0.4]{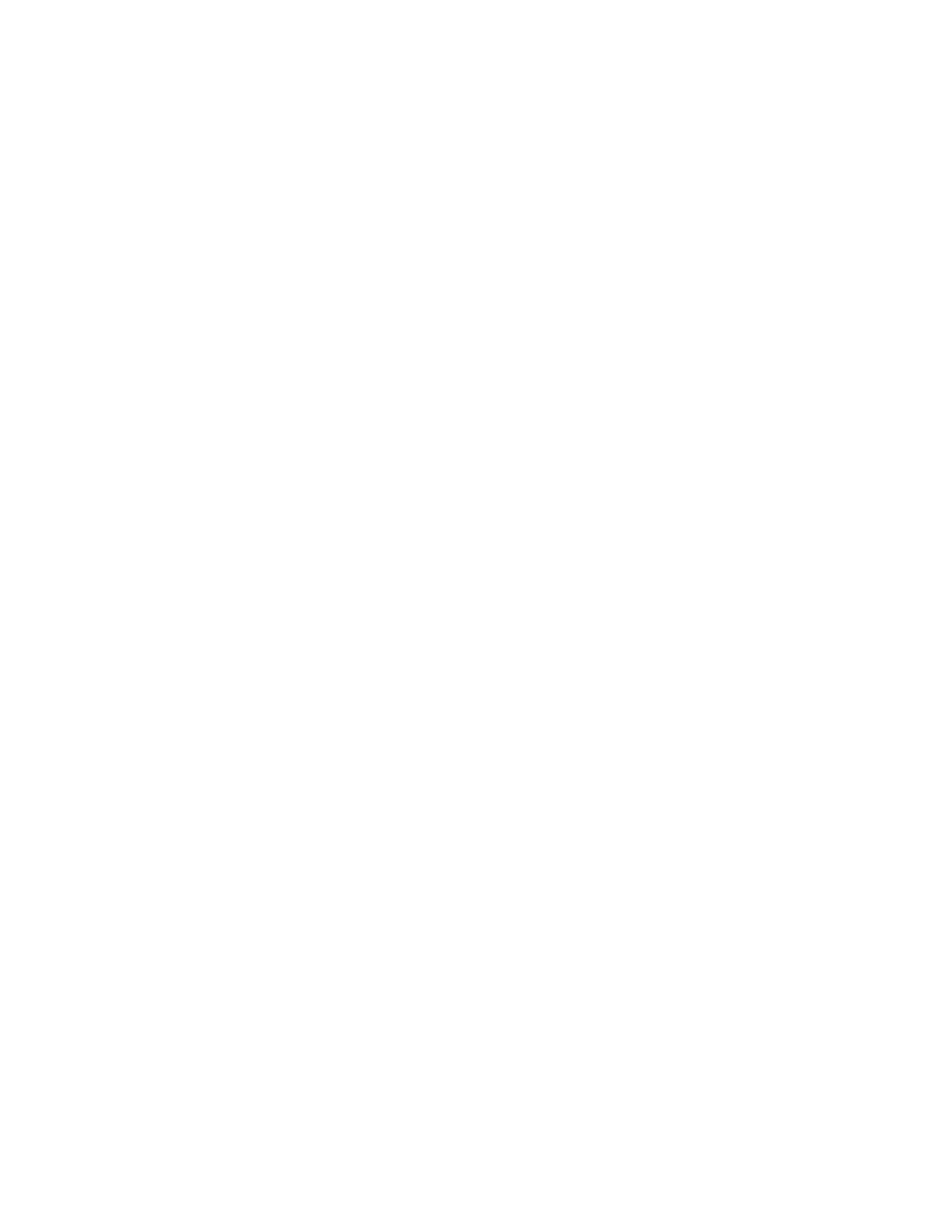}
\end{center}
\caption{\label{fig:auxiliaryT}The trigraph $F_3$}
\end{figure}

\begin{lem}\label{lem:AAniceFthree}\label{notAAniceFthree}
The trigraph $F_3$ is not line $[A,A]$-nice.
\end{lem}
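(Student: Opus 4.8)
The plan is to show directly that Bob has a winning strategy on $F_3$ with the number of colours prescribed by line-niceness. Since $F_3$ has maximum degree $4$, Corollary~\ref{cor:whitney} gives $\omega(L(F_3))=4$; hence line $[A,A]$-niceness would mean Alice wins the $[A,A]$-edge colouring game on $F_3$ with the colour set $\{1,2,3,4\}$, and I will instead exhibit a winning strategy for Bob. The engine of the argument is the auxiliary Lemma~\ref{lem:auxFthree}: whenever the game reaches the precoloured configuration $F_3^1$ of Figure~\ref{fig:cFthree} with Alice to move, Bob wins. So the whole task reduces to showing that, after Alice's first move, Bob can either win outright or force the position to be a copy of $F_3^1$ (up to the symmetry of $F_3$ and a relabelling of the three unused colours) with Alice on turn.

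First I would use the automorphisms of $F_3$ together with the interchangeability of the as-yet-unused colours to cut Alice's possible first moves down to a handful of representatives, named by the edge labels of Figure~\ref{fig:auxiliaryT}. This mirrors exactly the case split in the proof of Lemma~\ref{lem:AAniceFtwo} for the mini lobster. The central case is when Alice either skips her first move or colours a \emph{central} edge such as $f_0$: here Bob colours the complementary central edge so that the resulting precoloured position is precisely $F_3^1$ with Alice to move, and Lemma~\ref{lem:auxFthree} finishes. As in the $F_2$ argument, I would also note that if Alice later colours such a central edge in a position already reduced to $F_3^1$, this merely counts as a skipped turn in the auxiliary game, so Lemma~\ref{lem:auxFthree} still applies.

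For the remaining first moves, where Alice colours one of the peripheral \emph{fan} edges such as $f_{1,1}$, $e_1$, or $e_2$, Bob answers with a move creating two simultaneous threats: two still-uncoloured edges, each already seeing three distinct colours among its neighbours, that both need the same single remaining colour. Alice can neutralise at most one threat, and Bob colours into the other, leaving an uncolourable edge. This is the same double-threat mechanism used throughout Lemmas~\ref{lem:auxFoneone}, \ref{lem:AAniceFtwo}, and~\ref{lem:auxFthree}, so these cases should go through by direct inspection.

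The step I expect to be the main obstacle is the bookkeeping that legitimises the reduction to $F_3^1$: I must check that, whatever colour Alice actually plays on a central edge, Bob's reply together with a permutation of the three unused colours really does reproduce the fixed precolouring of $F_3^1$, and that no peripheral first move of Alice sneaks past both the reduction and the direct double-threat analysis. Verifying this requires tracking which colours are forbidden at each threatened edge after Bob's response, exactly as in the case distinctions of Lemma~\ref{lem:auxFthree}; once the colour constraints are matched up, the win follows.
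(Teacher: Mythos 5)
Your reduction works for the first moves in which Alice actually colours an edge: by the symmetry of $F_3$ every such move is equivalent to colouring $e_0$ or $f_{0,1}$, Bob answers by colouring the other of these two edges with a second colour, and since $e_0$ and $f_{0,1}$ are non-adjacent and jointly adjacent to all remaining edges, the resulting position is exactly $F_3^1$ with Alice to move; this is precisely what the paper does. The genuine gap is the skip case. The configuration $F_3^1$ contains \emph{two} precoloured edges carrying two distinct colours, so after Alice skips her first turn, Bob's single reply cannot produce it; your claim that Bob ``colours the complementary central edge so that the resulting precoloured position is precisely $F_3^1$'' is false in that branch, and the trick of treating a later move of Alice as a skip (borrowed from the $F_2$ argument, where the auxiliary configuration $F_1^1$ needs only \emph{one} precoloured edge) does not repair it. After Bob's reply to the skip (say $f_{0,1}$ receives colour~1), Alice has a full range of second moves, and the paper needs a seven-case analysis at this point. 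Most of those subcases are double threats of the kind you describe, but one of them is not: if Alice colours $e_0$ with the \emph{same} colour~1, there is no immediate double threat. The paper wins this branch by observing that colour~1 is now forbidden on every uncoloured edge, that the uncoloured edges form a 3-caterpillar $F_1$, and that the remaining game is therefore the $[B,A]$-edge colouring game on $F_1$ with three colours, which Bob wins by Lemma~\ref{notBAniceFone}. This reuse of the 3-caterpillar lemma is an essential ingredient that your proposal lacks entirely.

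A smaller point: your case split into ``central'' and ``peripheral'' edges does not correspond to the symmetry of $F_3$. The graph has no edge $f_0$ (that label belongs to $F_2$ and to the precoloured configuration $F_3^1$); its edge set consists of three mutually equivalent triangle edges $e_0,e_1,e_2$ and six mutually equivalent pendant edges $f_{i,j}$, so the ``peripheral'' first moves $f_{1,1}$, $e_1$, $e_2$ are automorphic images of the ``central'' ones, and your second case is vacuous. This mislabelling hides where the real work lies: the only first move not handled by the reduction to Lemma~\ref{lem:auxFthree} is the skip, and that is exactly the branch where the extra analysis, including the appeal to Lemma~\ref{notBAniceFone}, is unavoidable.
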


\begin{proof}
We describe a winning strategy for Bob on $F_3$ with colour set $\{1,2,3,4\}$.
We use the edge labels from Figure~\ref{fig:auxiliaryT}.
Due to symmetry, Alice has three possibilities for her first move: She can
colour $e_0$ or $f_{0,1}$ with colour~1 or miss her turn.

If Alice colours $e_0$ with colour~1, then Bob colours $f_{0,1}$ with colour~2.
And, vice-versa, if Alice colours $f_{0,1}$ with colour~1, then Bob colours
$e_0$ with colour~2. In both cases (by considering the coloured edge~$e_0$ broken
in two parts) we get the precoloured configuration $F_3^1$ from Figure~\ref{fig:cFthree}.
By Lemma~\ref{lem:auxFthree}, Bob wins.

If Alice misses her turn, then Bob colours $f_{0,1}$ with colour~1.
Now, by symmetry, Alice has seven possibilities for her second move:
she may miss her turn, colour $e_0$ with colour~1 or with colour~2, colour $f_{0,2}$ with colour~2,
colour $e_1$ with colour~2, or colour $f_{1,1}$ with colour~2 or with colour~1. We make a case distinction
according to Alice's second move.
\begin{itemize}
\item 
If Alice misses her turn, then Bob colours $e_0$ with colour~2. Again,
we have a situation as in configuration~$F_3^1$, and, by Lemma~\ref{lem:auxFthree},
Bob wins.
\item
If Alice colours $e_0$ with colour~1, then this colour is forbidden for every uncoloured edge.
The uncoloured edges form a 3-caterpillar $F_1$ and the game is reduced to the $[B,A]$-edge colouring
game with three colours (2,3,4) on~$F_1$. 
Therefore, Bob wins by Lemma~\ref{notBAniceFone}.
\item
If Alice colours $e_0$ with colour~2, Bob colours $f_{0,2}$ with colour~3. And, vice-versa, if
Alice colours $f_{0,2}$ with colour~2, Bob colours $e_0$ with colour~3. In both cases
the only feasible colour remaining for $e_1$ and $e_2$ is colour~4, which cannot be assigned to both edges.
\item
If Alice colours $e_1$ with colour~2, then Bob colours $f_{2,1}$ with colour~3. If Bob will be able to
colour $f_{0,2}$ or $f_{2,2}$ with colour~4 in his next move, he will win as $e_2$ has no feasible colour.
The only possible move of Alice to avoid both threats is to colour $e_2$ with colour~4.
But then Bob colours $f_{1,1}$ with colour~1 and there is no feasible colour for $e_0$ any more.
\item
If Alice colours $f_{1,1}$ with colour~2, then Bob colours $f_{1,2}$ with colour~3. If Bob will be able to
colour $e_0$ or $f_{0,2}$ with colour~4 in his next move, he will win as $e_1$ has no feasible colour.
The only possible move of Alice to avoid both threats is to colour $e_1$ with colour~4.
But then Bob colours $f_{2,1}$ with colour~1 and there is no feasible colour for $e_0$ any more.
\item
If Alice colours $f_{1,1}$ with colour~1, then Bob colours $e_1$ with colour~2. If Bob will be able to
colour $f_{2,1}$ or $f_{2,2}$ with colour~3 in his next move, he will win as the only feasible colour
for the edges $e_0$ and $e_2$ will be colour~4, which can not be used for both edges.
The only possible moves of Alice to avoid both threats are to colour $e_0$ or $e_2$ with colour~3.
By symmetry, we may assume that Alice colours $e_0$ with colour~3.
But then Bob colours $f_{2,1}$ with colour~4 and there is no feasible colour for $e_2$ any more.
\end{itemize} 

Thus, in any case, Bob wins.
\end{proof}

\begin{figure}[htbp]
\begin{center}
\includegraphics[scale=0.4]{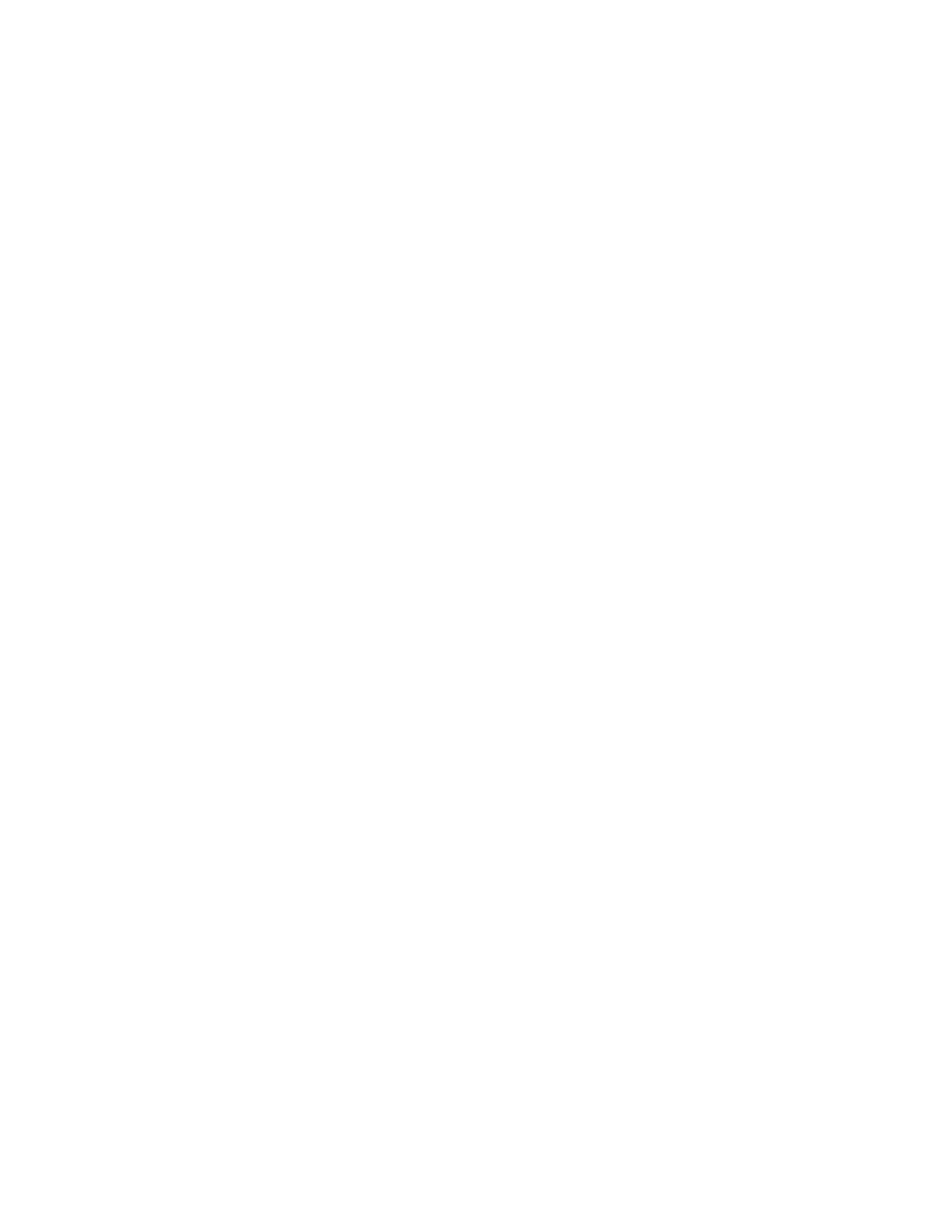}
\end{center}
\caption{\label{fig:auxFone}The 3-caterpillar $F_1$}
\end{figure}

\begin{lem}\label{lem:AAniceFoneFone}\label{notAAnicetwoFone}
The graph $F_1\cup F_1$ is not line $[A,A]$-nice.
\end{lem}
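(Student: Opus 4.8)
The plan is to produce an explicit winning strategy for Bob with $3$ colours. Since the caterpillar $F_1$ has maximum degree $3$ (see Figure~\ref{fig:auxFone}), we have $\Delta(F_1\cup F_1)=3$, so by Corollary~\ref{cor:whitney} $\omega(L(F_1\cup F_1))=3$. As the edge colouring game is finite and admits no draws, exhibiting a winning strategy for Bob with $3$ colours shows that Alice has no winning strategy with $\omega(L(F_1\cup F_1))$ colours, i.e.\ that $F_1\cup F_1$ is not line $[A,A]$-nice. The conceptual point is that the second copy compensates for the disadvantage that in the $[A,A]$-game Alice moves first: it lets Bob obtain, in effect, the first move on one caterpillar, reduce it to the precoloured configuration $F_1^1$, and then invoke Lemma~\ref{lem:auxFoneone}.

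Concretely, write the two copies as $G_1$ and $G_2$. After Alice's first move she has coloured an edge in at most one copy (or skipped), so at least one copy, call it $G^\ast$, is still entirely uncoloured. Bob then colours the central leaf edge $e_0$ of $G^\ast$ with colour~$1$; the uncoloured part of $G^\ast$ is now exactly the precoloured configuration $F_1^1$ of Figure~\ref{fig:cFone}, and it is again Alice's turn. From here I would have Bob play only in $G^\ast$, following his winning strategy from Lemma~\ref{lem:auxFoneone} for the $[X,A]$-game on $F_1^1$ with Alice to move.

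The key step is to verify that this single-copy strategy can be faithfully \emph{simulated} inside the two-copy game. I would run a virtual $[A,A]$-game on the $F_1^1$ sitting in $G^\ast$ and translate each real Alice move into a virtual Alice action: a move by Alice in $G^\ast$ is copied verbatim, while a move by Alice in the other copy, or a skip, is treated as a \emph{skip} by Alice in the virtual game (which is legal, since in the $[X,A]$- and $[A,A]$-games Alice is allowed to skip). Bob answers in $G^\ast$ with the move prescribed by his Lemma~\ref{lem:auxFoneone} strategy, which is always an ordinary colouring move (Bob never skips). Because every $G^\ast$-move is recorded in the virtual game and moves in the other copy leave $G^\ast$ untouched, the real and virtual states of $G^\ast$ remain identical, and the one-Alice-move/one-Bob-move parity is preserved; hence the virtual game unfolds precisely as in Lemma~\ref{lem:auxFoneone}, in which Bob forces some edge of $G^\ast$ to lose all feasible colours.

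The main obstacle is the worry that Alice could ``stall'' by repeatedly playing in the other copy rather than defending $G^\ast$; under the translation above this corresponds to a run of Alice-skips in the virtual game, so I must check that the strategy of Lemma~\ref{lem:auxFoneone} is robust against skipping. It is: once Bob has set up his double threat by colouring $f_{2,1}$ with colour~$2$, the only Alice reply avoiding immediate loss is to colour $e_2$ with colour~$3$, and any other action—including a skip, equivalently a move in the other copy—lets Bob execute one of the two threats and leave $e_2$ without a feasible colour. Since colouring further edges can never restore a feasible colour to an already-blocked edge, the blocked edge of $G^\ast$ stays uncoloured until the game ends. Therefore Bob wins, and $F_1\cup F_1$ is not line $[A,A]$-nice.
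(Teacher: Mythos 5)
Your proposal is correct and follows essentially the same route as the paper: Bob selects a copy untouched by Alice's first move, colours its central leaf edge $e_0$ to create the precoloured configuration $F_1^1$, and then wins by Lemma~\ref{lem:auxFoneone}, with Alice's moves in the other copy treated as legal skips in the $[X,A]$-game. Your explicit verification of the simulation and of robustness against stalling merely spells out what the paper's terser proof leaves implicit.
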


\begin{proof} 
We describe a winning strategy for Bob with 3 colours in the
$[A,A]$-edge colouring game played on a graph $F_1\cup F_1$ 
consisting of two components that are 3-caterpillars.
No matter what Alice does in her first move, Bob is able
to choose a component where every edge is uncoloured.
Bob colours the edge $e_0$ (see Figure~\ref{fig:auxFone}) in
this component and then, by playing always in this component,
he has a winning strategy by Lemma~\ref{lem:auxFoneone}.
\end{proof}











\section{The permitted types}
\label{sec:permitted}

\subsection{Permitted for game $[B,B]$}
\label{subsec:BB}

We start with some definitions. 

\begin{defin}[vase of flowers]
Let $n\in\IN$.
A \emph{vase of $n$ flowers} is a graph consisting of a $C_3$
and $n$ vertices of degree 1 that are adjacent with the same
vertex of the $C_3$, i.e., the graph has the vertex set
\[\{w_1,w_2,w_3,v_1,v_2,\ldots,v_n\}\]
and the edge set
\[\{w_1w_2,w_1w_3,w_2w_3\}\cup\{w_1v_i\mid 1\le i\le n\}.\]
A \emph{vase of flowers} is a vase of $n$ flowers for some $n\in\IN$.
A vase of 1 flower is often called a \emph{paw}, and a vase of
2 flowers is sometimes called a \emph{cricket}.
\end{defin}

\begin{defin}[double star]
Let $m,n\in\IN$.
An \emph{$(m,n)$-double star} is a graph formed by an $m$-star and
an $n$-star by connecting the \rd{centres} of the stars by an additional edge,
i.e., the graph has the vertex set
\[\{w_1,w_2,u_1,u_2,\ldots,u_m,v_1,v_2,\ldots,v_n\}\]
an the edge set
\[\{w_1w_2\}\cup\{w_1u_i\mid 1\le i\le m\}\cup\{w_2v_i\mid 1\le i\le n\}.\]
A \emph{double star} is an $(m,n)$-double star for some $m,n\in\IN$.
The $(0,0)$-double star is the complete graph $K_2$.
\end{defin}

\begin{figure}[htbp]
\begin{center}
\begin{minipage}[b]{0.4\textwidth}
\begin{center}
\includegraphics[scale=0.4]{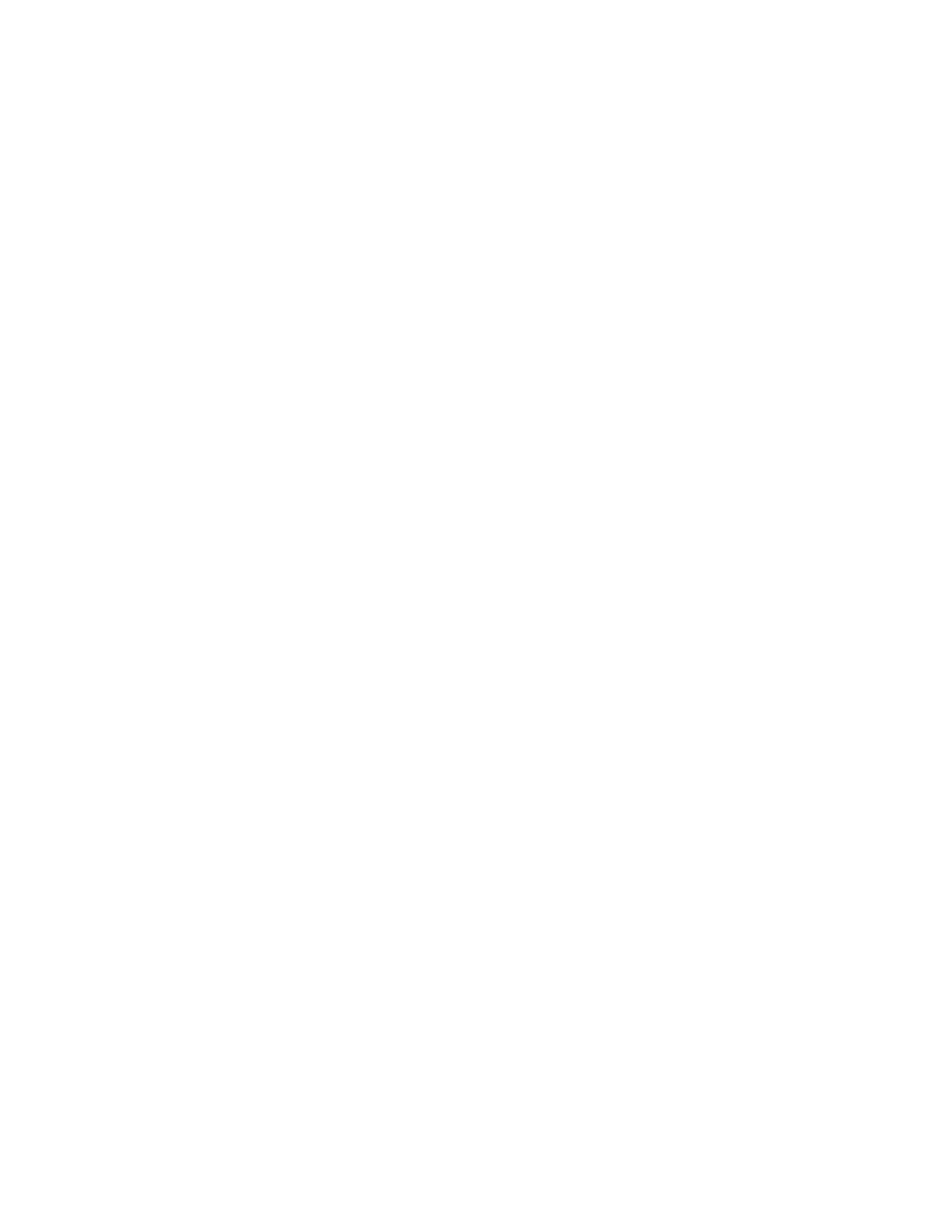}

vase of 5 flowers
\end{center}
\end{minipage}
\begin{minipage}[b]{0.4\textwidth}
\begin{center}
\includegraphics[scale=0.4]{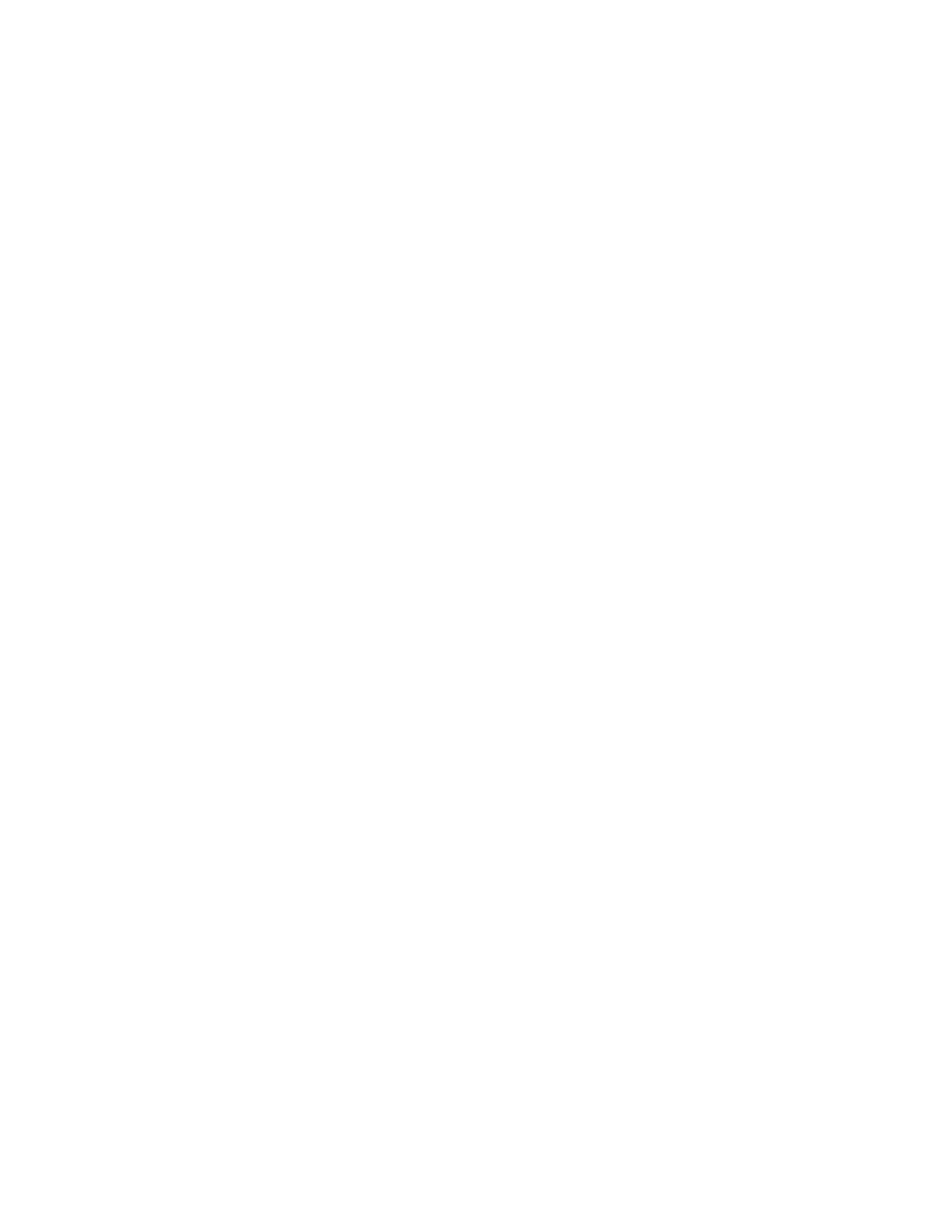}

$(4,3)$-double star
\end{center}
\end{minipage}
\end{center}
\caption{\label{deffigA}A vase of flowers and a double star}
\end{figure}

In order to prove the theorem, we start with some lemmata.


\begin{lem}\label{lem:AwinsBBB}
Graphs whose components are vases of flowers, double stars, or isolated vertices
are line $[B,B]$-perfect.
\end{lem}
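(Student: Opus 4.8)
The plan is to avoid any direct game analysis and instead pass to the line graph, where the structural characterisation of $[B,B]$-perfect graphs is available. By Observation~\ref{def:linegame}, a graph $G$ is line $[B,B]$-perfect if and only if $L(G)$ is $[B,B]$-perfect, and by Theorem~\ref{thm:BBcharE} this holds if and only if every component of $L(G)$ is an ear animal. Since the line graph of a disjoint union is the disjoint union of the line graphs (edges in different components of $G$ are never adjacent), and since isolated vertices of $G$ contribute nothing to $L(G)$, it suffices to show separately that the line graph of a single vase of flowers and the line graph of a single double star are ear animals.

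First I would compute the line graph of a vase of $n$ flowers. Labelling the triangle edges $t_1=w_1w_2$, $t_2=w_1w_3$, $t_3=w_2w_3$ and the flower edges $f_i=w_1v_i$, the edges $t_1,t_2,f_1,\ldots,f_n$ all meet in $w_1$ and thus form a clique, while $t_3$ is adjacent only to $t_1$ and $t_2$. Hence both $t_1$ and $t_2$ are universal vertices; peeling them off in turn gives $L(\text{vase of }n\text{ flowers})=K_1\vee(K_1\vee(K_n\cup K_1))$, where the inner $K_1$ is the lone vertex $t_3$. This is exactly the ear-animal template of the definition with $k=0$ and parameters $a=1$, $b=n$, $c=1$. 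As sanity checks, the empty vase yields $L(K_3)=K_3$ and the paw yields the diamond, both ear animals.

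Next I would compute the line graph of an $(m,n)$-double star. Writing $e=w_1w_2$ for the bridge, $g_i=w_1u_i$, and $h_j=w_2v_j$, the bridge $e$ is adjacent to every other edge, the $g_i$ form a clique through $w_1$, the $h_j$ form a clique through $w_2$, and no $g_i$ meets any $h_j$. Thus $e$ is universal and deleting it leaves $K_m\cup K_n$, so $L((m,n)\text{-double star})=K_1\vee(K_m\cup K_n)$. This matches the ear-animal template with the inner join trivial, i.e.\ $a=0$, $b=m$, $c=n$, and $k=0$.

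Combining the two computations, every component of $L(G)$ is an ear animal, whence $L(G)$ is $[B,B]$-perfect by Theorem~\ref{thm:BBcharE}, and therefore $G$ is line $[B,B]$-perfect. The only delicate point is the bookkeeping needed to fit each line graph into the ear-animal template through its degenerate parameter choices (notably the trivial join $K_0\vee(\cdot)$ in the double-star case); the adjacency computations themselves are routine, and no genuine game-theoretic reasoning is required, since all the difficulty has been absorbed into Theorem~\ref{thm:BBcharE}.
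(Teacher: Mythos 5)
Your proposal is correct and follows essentially the same route as the paper's proof: pass to the line graph, observe that each component of $L(G)$ is an ear animal, and invoke Theorem~\ref{thm:BBcharE}. The only cosmetic difference is in the parameter bookkeeping — the paper realises the vase's line graph with $a=b=1$, $c=n$ and the double star's with $k=2$, $a=b=c=0$, $d_1=m$, $d_2=n$, while you use the equivalent choices $(a,b,c)=(1,n,1)$ and $(a,b,c)=(0,m,n)$ with $k=0$ — which describe the same graphs.
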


\begin{proof}
Let $G$ be a graph whose components are vases of flowers, double stars, or
isolated vertices. 
The line graph of a vase of $n$ flowers is an ear animal with $k=0$, $a=b=1$, and $c=n$. 
The line graph of an $(m,n)$-double star is an ear animal with $k=2$, $a=b=c=0$, 
$d_1=m$, and $d_2=n$.
The line graph of an isolated vertex is the empty graph $K_0$ and can thus be ignored.
Thus, the line graph $L(G)$ of $G$ is a graph each of whose components is an ear animal.
Therefore, by Theorem~\ref{thm:BBcharE}, $L(G)$~is $[B,B]$-perfect.
By the definition of line game-perfectness this means that
$G$ is line $[B,B]$-perfect.
%
%
%
%
\end{proof}

\subsection{Permitted for game $[B,-]$}
\label{subsec:B}

We start with some definitions. 

\begin{defin}[candy]
Let $m,n_1,n_2\in\IN$ and $m\ge1$. 
An \emph{$(m,n_1,n_2)$-candy} is a graph consisting of a $K_{2,m}$
and $n_1$ vertices of degree 1 that are adjacent to a
vertex $v_1$ of degree $m$ of the $K_{2,m}$ and $n_2$ vertices of degree 1
that are adjacent to the vertex $v_2$ at distance 2 from $v_1$,
i.e., the graph has the vertex set
\[\{v_1,v_2\}\cup\{w_i\mid 1\le i\le m\}\cup\{x_j\mid 1\le j\le n_1\}
\cup\{y_j\mid 1\le j\le n_2\}\]
and the edge set
\[\{v_1w_i,w_iv_2\mid 1\le i\le m\}\cup\{x_jv_1\mid 1\le j\le n_1\}
\cup\{v_2y_j\mid 1\le j\le n_2\}.\]
A \emph{candy} is an $(m,n_1,n_2)$-candy 
and an \emph{empty candy} is a $(1,n_1,n_2)$-candy for some $m,n_1,n_2\in\IN$.
\end{defin}



\begin{defin}[shooting star]
Let $m,n\in\IN$. 
An \emph{$(m,n)$-shooting star} is a graph formed by a central vertex
$v$ with $n$ adjacent vertices of degree 1 and a pending $P_3$ and
another adjacent vertex $w$ that is adjacent to $m$ vertices of degree~1,
i.e., the graph has the vertex set
\[\{v,w,a,b\}\cup\{x_i\mid 1\le i\le m\}\cup\{y_j\mid 1\le j\le n\}\]
and the edge set
\[\{wv,va,ab\}\cup\{wx_i\mid 1\le i\le m\}\cup\{vy_j\mid 1\le j\le n\}.\]
A \emph{shooting star} is an $(m,n)$-shooting star for some $m,n\in\IN$. 
\end{defin}

\begin{defin}[double vase]
Let $n\in\IN$. 
A \emph{double vase of $n$ flowers} is a graph formed by a central vertex
$v$ with $n$ adjacent vertices of degree 1 and two pending triangles,
i.e., the graph has the vertex set
\[\{v,x_1,x_2,y_1,y_2\}\cup\{w_j\mid 1\le j\le n\}\]
and the edge set
\[\{vx_1,x_1x_2,x_2v,vy_1,y_1y_2,y_2v\}\cup\{vw_j\mid 1\le j\le n\}.\]
A \emph{double vase} is a double vase of $n$ flowers for some $n\in\IN$;
if $n=0$, it is a 2-windmill.
\end{defin}

\begin{defin}[amaryllis]
Let $m,n\in\IN$. 
An \emph{$(m,n)$-amaryllis} is a graph formed by a central vertex
$v$ with $n$ adjacent vertices of degree 1 and a pending triangle and
another adjacent vertex $w$ that is adjacent to $m$ vertices of degree~1,
i.e., the graph has the vertex set
\[\{v,w,c_1,c_2\}\cup\{x_i\mid 1\le i\le m\}\cup\{y_j\mid 1\le j\le n\}\]
and the edge set
\[\{wv,vc_1,c_1c_2,c_2v\}\cup\{wx_i\mid 1\le i\le m\}\cup\{vy_j\mid 1\le j\le n\}.\]
An \emph{amaryllis} is an $(m,n)$-amaryllis for some $m,n\in\IN$;
if $m=0$, it is a vase of $n+1$ flowers.
\end{defin}

\begin{figure}[htbp]
\begin{center}
\begin{minipage}[b]{0.48\textwidth}
\begin{center}
\includegraphics[scale=0.4]{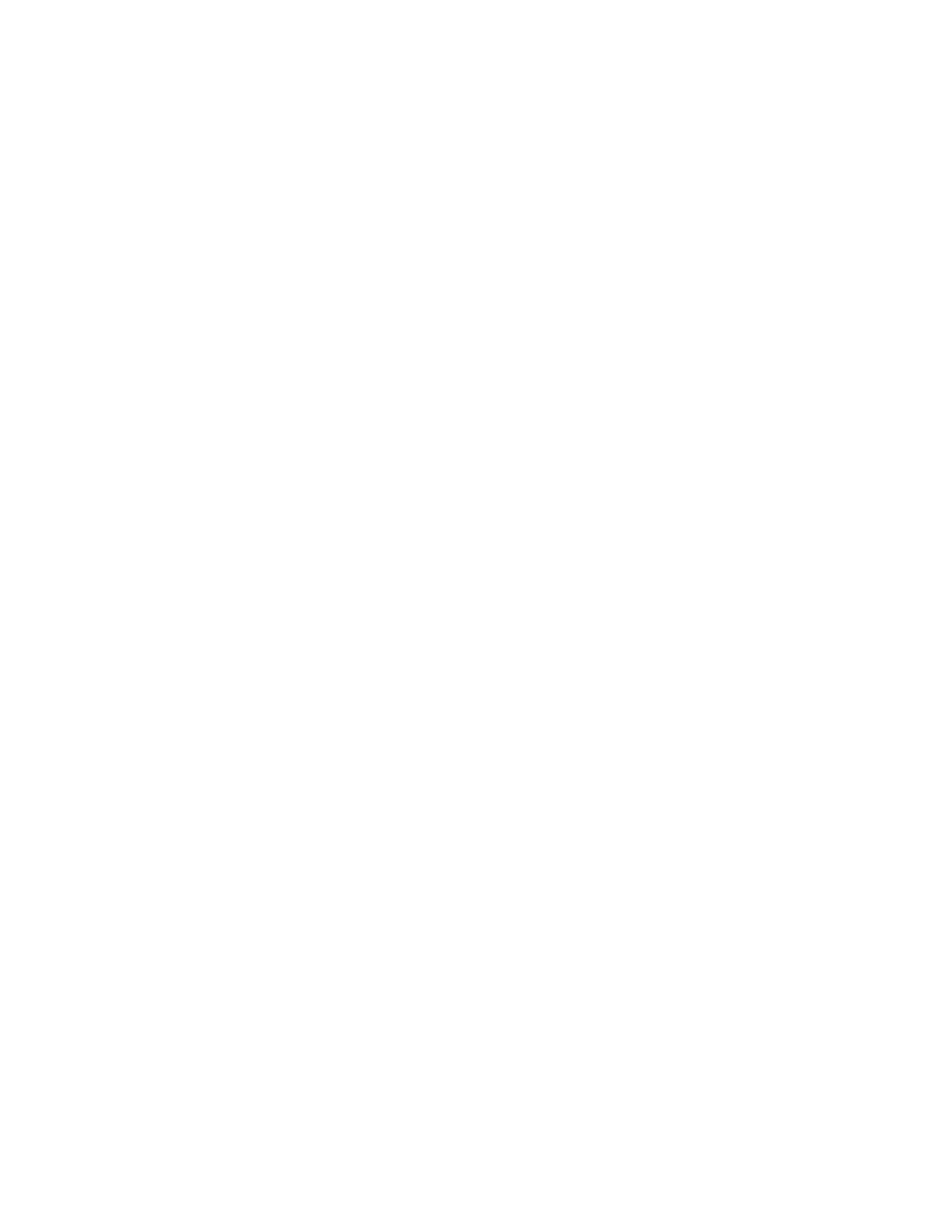}

$(4,2,3)$-candy
\end{center}
\end{minipage}
\begin{minipage}[b]{0.48\textwidth}
\begin{center}
\includegraphics[scale=0.4]{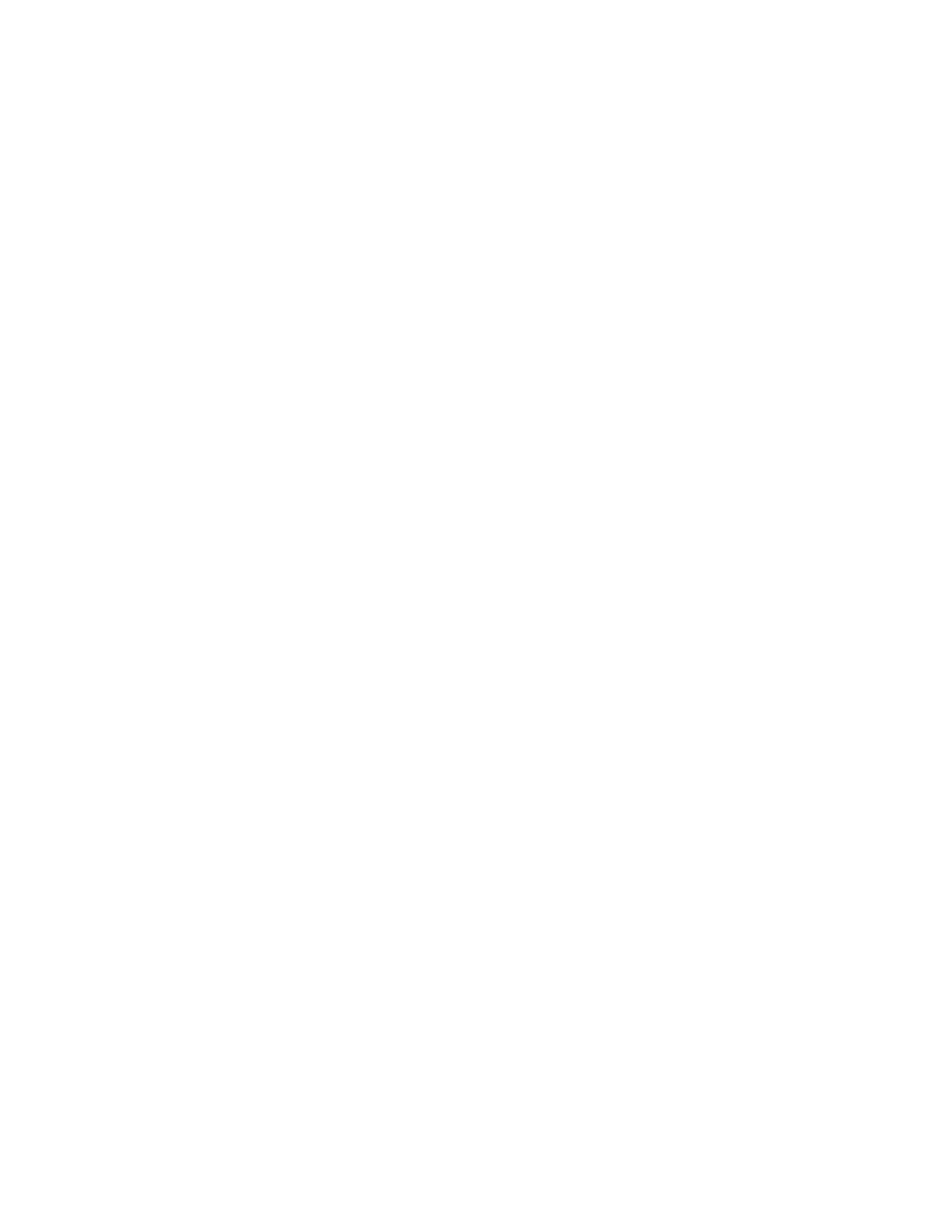}

double vase of 5 flowers
\end{center}
\end{minipage}
\end{center}

\begin{center}
\begin{minipage}[b]{0.48\textwidth}
\begin{center}
\includegraphics[scale=0.4]{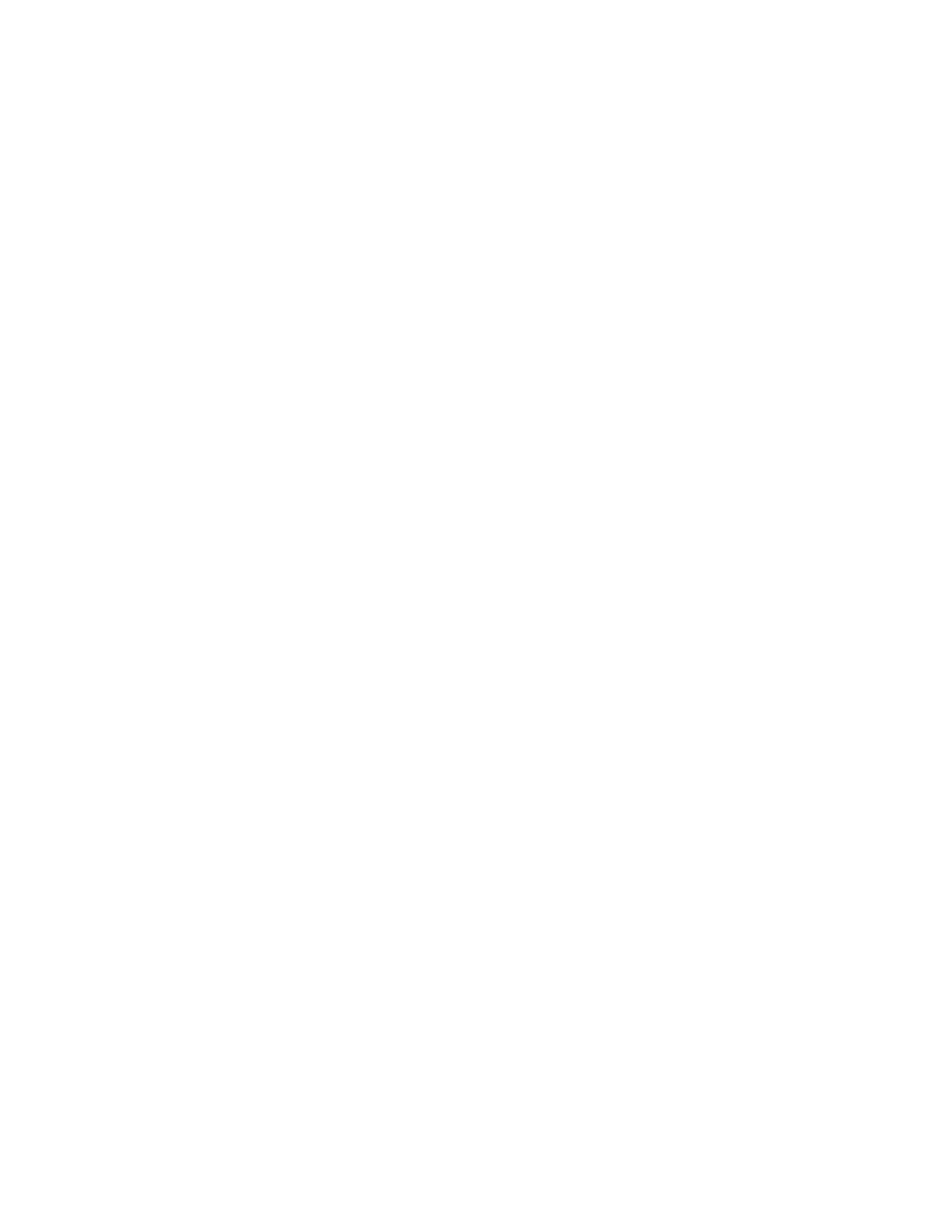}

$(5,3)$-shooting star
\end{center}
\end{minipage}
\begin{minipage}[b]{0.48\textwidth}
\begin{center}
\includegraphics[scale=0.4]{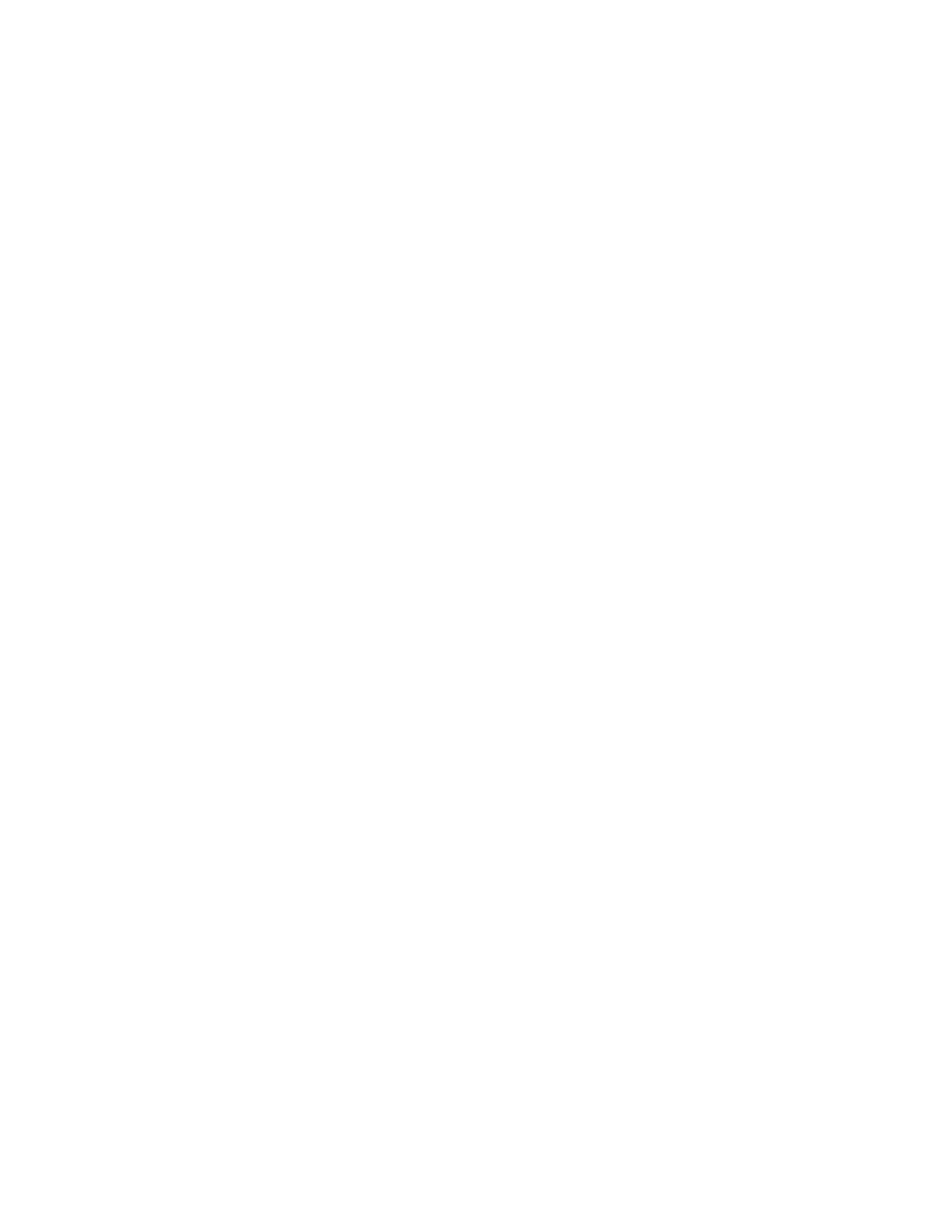}

$(5,3)$-amaryllis
\end{center}
\end{minipage}
\end{center}
\caption{\label{deffigB}A candy, a shooting star, a double vase, and an amaryllis.}
\end{figure}

We prove first that Alice wins the $[B,-]$-colouring game 
with $\omega(L(G))$ colours on the
configurations $G$ needed for Theorem~\ref{thm:lineB}.
In the proofs we refer to the notation given above.

A component of a graph is \emph{nontrivial} if it contains an edge.


\begin{lem}\label{lem:AwinsBcandy}\label{BniceCandy}
Graphs whose single nontrivial component is a candy
are line $[B,-]$-nice.
\end{lem}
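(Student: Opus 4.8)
The plan is to avoid constructing an explicit strategy for Alice and instead reduce the claim to the known structural result of Proposition~\ref{prop:BEfive}. By Observation~\ref{obs:aaa}, the $[B,-]$-edge colouring game on a candy $C$ is equivalent to the $[B,-]$-vertex colouring game on its line graph $L(C)$, and being line $[B,-]$-nice (Definition~\ref{def:linenice}) amounts to the equality $\chi_{[B,-]}(L(C))=\omega(L(C))$. Since the trivial (isolated-vertex) components of $G$ carry no edges, they contribute nothing to the line graph, so $L(G)=L(C)$ and it suffices to treat the candy $C$ itself. Hence it is enough to show that $L(C)$ is $[B,-]$-perfect, because $[B,-]$-perfectness implies in particular $\chi_{[B,-]}(L(C))=\omega(L(C))$.

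First I would compute $L(C)$ directly from the vertex and edge labels in the definition of an $(m,n_1,n_2)$-candy. The edges incident with $v_1$, namely $v_1w_1,\ldots,v_1w_m$ and $v_1x_1,\ldots,v_1x_{n_1}$, form a clique $A$ of size $m+n_1$ in $L(C)$; the edges incident with $v_2$, namely $v_2w_1,\ldots,v_2w_m$ and $v_2y_1,\ldots,v_2y_{n_2}$, form a clique $B$ of size $m+n_2$. The only adjacencies across $A$ and $B$ arise from edges sharing some $w_i$: for each $i$ the vertex $v_1w_i\in A$ is adjacent to $v_2w_i\in B$, and no other pair across $A$ and $B$ is adjacent. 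Thus $L(C)$ is the union of the two cliques $A$ and $B$ joined by a matching of size $m$ between the designated vertices $v_1w_i$ and $v_2w_i$, while the $n_1$ vertices $v_1x_j$ and the $n_2$ vertices $v_2y_j$ receive no cross edges.

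Next I would match this description against the definition of an expanded cocobi (Figure~\ref{fig:expandedcocobi}). Setting $k=m$ (which is at least $1$ by the definition of a candy), $a=n_1$, $d=n_2$, and $b_1=\cdots=b_m=1$, $c_1=\cdots=c_m=1$ should reproduce $L(C)$ exactly: each complete join between a size-$b_i$ block and a size-$c_i$ block degenerates, for $b_i=c_i=1$, into a single edge, and these $m$ single edges are precisely the matching edges $v_1w_i\,v_2w_i$, whereas the parameters $a=n_1$ and $d=n_2$ account for the cross-edge-free vertices $v_1x_j$ and $v_2y_j$ sitting inside the two cliques. All parameter constraints $k\ge1$, $a,d\ge0$, $b_i,c_i\ge1$ are satisfied. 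Consequently $L(C)$ is an expanded cocobi, and Proposition~\ref{prop:BEfive} gives that $L(C)$ is $[B,-]$-perfect, whence $\chi_{[B,-]}(L(C))=\omega(L(C))$, which is exactly the assertion that $C$ is line $[B,-]$-nice.

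The only genuine obstacle is the bookkeeping in the middle step: one must read Figure~\ref{fig:expandedcocobi} carefully to confirm that an expanded cocobi really is two cliques with complete joins only between the paired blocks (and with the $a$-part and $d$-part receiving no cross edges), so that the specialisation $b_i=c_i=1$ collapses every join to a single matching edge as required. Once the figure is interpreted correctly, the identification is a direct structural check, and no separate small-case analysis is needed; for instance $(m,n_1,n_2)=(2,0,0)$ gives $L(C)=C_4$ and $(m,n_1,n_2)=(1,0,0)$ gives $L(C)=K_2$, both of which are covered by the same parameter choice.
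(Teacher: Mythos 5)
Your proposal is correct and follows essentially the same route as the paper: both identify the line graph of an $(m,n_1,n_2)$-candy as an expanded cocobi with parameters $k=m$, $a=n_1$, $d=n_2$, $b_1=\cdots=b_k=c_1=\cdots=c_k=1$, and then invoke Proposition~\ref{prop:BEfive} to conclude $[B,-]$-perfectness of the line graph, hence line $[B,-]$-niceness. The only difference is that you spell out the structure of $L(C)$ (two cliques joined by a matching) and the degenerate parameter check explicitly, which the paper leaves as an unstated verification.
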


\begin{proof}
Let $G$ be a graph that consists of a $(m,n_1,n_2)$-candy ($m\ge1$) and possibly some isolated
vertices. Then the line graph $L(G)$ of $G$ is an expanded cocobi with
$a=n_1$, $d=n_2$, $k=m$, $b_1,\ldots,b_k=1$, and $c_1,\ldots,c_k=1$.
Therefore, by Proposition~\ref{prop:BEfive}, the line graph $L(G)$ is
$[B,-]$-perfect. Thus $G$ is line $[B,-]$-perfect.
\end{proof}

We will use Lemma~\ref{BniceCandy} not only for the proof of
Theorem~\ref{thm:lineB}, but also for the proofs of
Theorem~\ref{thm:lineBA} and Theorem~\ref{thm:lineAA}.
Note that the proof given above relies on the results by \cite{lock} and \cite{andreslock}.

\begin{lem}\label{lem:AwinsBshootingstar}\label{BniceShootingStar}
Graphs whose single nontrivial component is a shooting star
are line $[B,-]$-nice.
\end{lem}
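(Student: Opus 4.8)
The plan is to show that the line graph of a shooting star is $[B,-]$-perfect, and then invoke the definition of line game-perfectness directly, exactly as was done for candies in Lemma~\ref{lem:AwinsBcandy}. So the first step is to compute $L(G)$ where $G$ is an $(m,n)$-shooting star together with possibly some isolated vertices. The isolated vertices contribute nothing to the line graph (their line graph is the empty graph $K_0$), so I only need to understand the line graph of the shooting star itself. Using the vertex and edge labels from the definition, the edges of the shooting star are $wv$, $va$, $ab$, the $m$ edges $wx_i$, and the $n$ edges $vy_j$. I would work out the adjacencies of these edges to identify which of the two explicitly $[B,-]$-perfect families from the excerpt---the expanded cocobi (Figure~\ref{fig:expandedcocobi}) or the expanded bull (Figure~\ref{fig:expandedbull})---the line graph matches.

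I expect the shooting star's line graph to be an \emph{expanded bull}. The intuition is that the central vertex $v$ has high degree (it meets $wv$, $va$, and the $n$ leaf edges $vy_j$), so all those edges are mutually adjacent and form one large clique; likewise the edges at $w$ (namely $wv$ and the $m$ edges $wx_i$) form a clique, and the pending path $v,a,b$ contributes a short tail via the edge $ab$, which is adjacent only to $va$. The bull shape---two triangles sharing structure with two pendant-like extremities---should arise because the edge $ab$ plays the role of one horn while the $w$-side clique plays the role of the other, with $v$'s big clique as the body. Concretely I would aim to match the parameters of the expanded bull (with $a,b,d\ge1$ and $c\ge0$ as in the figure) to the values $m$, $n$, and the lengths coming from the two pending structures. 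The key identification is to read off, from the figure, which clique-join parameter corresponds to the leaf edges at $v$, which to the leaf edges at $w$, and which to the single edge $ab$.

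Once the isomorphism $L(G)\cong(\text{expanded bull})$ is established for the right parameter values, the proof concludes in one line: by Proposition~\ref{prop:BEsix} every expanded bull is $[B,-]$-perfect, hence $L(G)$ is $[B,-]$-perfect, and therefore $G$ is line $[B,-]$-perfect, which in particular makes it line $[B,-]$-nice by Observation~\ref{obs:niceperfect}. (As in Lemma~\ref{lem:AwinsBcandy}, proving line $[B,-]$-perfectness is stronger than the claimed line $[B,-]$-niceness, but it is the natural thing the line-graph computation delivers.)

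The main obstacle will be the parameter bookkeeping in the isomorphism, not any hard idea: I must verify that the degenerate cases are handled, in particular that the shooting star genuinely realizes an expanded bull only when the relevant parameters are positive. Since the expanded bull in Figure~\ref{fig:expandedbull} requires $a,b,d\ge1$, I need to confirm that the pending $P_3$ and the edge $wv$ force these cliques to be nonempty regardless of $m$ and $n$; the parameter $c\ge0$ should absorb whichever of the leaf-edge families is allowed to vanish. If some small case (say $m=0$ or $n=0$) fails to be an expanded bull, I would check whether it instead collapses to a graph already covered---for instance a vase of flowers or a candy---so that the conclusion still follows from an earlier lemma or from Proposition~\ref{prop:BEfive}. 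Verifying this matching against the figure is the only delicate point; everything else is immediate from the cited propositions.
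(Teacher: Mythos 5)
Your proposal takes essentially the same route as the paper's proof: the paper simply states that $L(G)$ is an expanded bull with $a=m$, $b=d=1$, $c=n$, and invokes Proposition~\ref{prop:BEsix} to get $[B,-]$-perfectness of $L(G)$ and hence line $[B,-]$-perfectness of $G$, which is exactly your plan. Your attention to the degenerate case $m=0$ (where $a=0$ violates the expanded-bull requirement $a\ge1$) is in fact more careful than the paper, which glosses over it; there the shooting star collapses to a double star, already covered by Lemma~\ref{lem:AwinsBBB}.
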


\begin{proof}
Let $G$ be a graph that consists of a $(m,n)$-shooting star 
and possibly some isolated
vertices. Then the line graph $L(G)$ of $G$ is an expanded bull with
$a=m$, $b=d=1$, and $c=n$.
Therefore, by Proposition~\ref{prop:BEsix}, the line graph $L(G)$ is
$[B,-]$-perfect. Thus $G$ is line $[B,-]$-perfect.
\end{proof}

\begin{lem}\label{lem:AwinsBdoublevase}\label{BniceDoubleVase}
Graphs whose single nontrivial component is a double vase
are line $[B,-]$-nice.
\end{lem}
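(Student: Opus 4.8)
The plan is to follow exactly the same route as in the proof of Lemma~\ref{BniceShootingStar}: compute the line graph of the double vase, recognise it as an expanded bull, and then invoke Proposition~\ref{prop:BEsix}. So let $G$ consist of a double vase of $n$ flowers together with possibly some isolated vertices; I would show that $L(G)$ is an expanded bull and conclude that $L(G)$ is $[B,-]$-perfect, whence $G$ is line $[B,-]$-perfect and in particular line $[B,-]$-nice.

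First I would describe $L(G)$ explicitly using the notation of the definition. The $n+4$ edges incident with the central vertex $v$, namely $vx_1,vx_2,vy_1,vy_2$ and $vw_1,\dots,vw_n$, are mutually adjacent and hence form a clique $K$ of size $n+4$ in $L(G)$. The remaining two edges are the closing edges of the two triangles: $x_1x_2$ is adjacent only to $vx_1$ and $vx_2$, and $y_1y_2$ is adjacent only to $vy_1$ and $vy_2$. Thus $L(G)$ is the clique $K$ together with two extra vertices, each joined to a $2$-element subset of $K$, where these two subsets are disjoint and neither extra vertex is joined to anything else.

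Next I would match this with the expanded-bull parametrisation. Taking the two triangle bases $B=\{vx_1,vx_2\}$ and $D=\{vy_1,vy_2\}$ as the size-$2$ base cliques, the leaf edges $C=\{vw_1,\dots,vw_n\}$ as the apex clique, the vertex $x_1x_2$ as the horn attached to $B$, and $y_1y_2$ as the single-vertex horn attached to $D$, one reads off the parameters $a=1$, $b=2$, $c=n$, $d=2$, which satisfy the required $a,b,d\ge 1$ and $c\ge 0$. A short check confirms every join: $B\cup C\cup D$ is a clique (all its members meet at $v$); the horn $x_1x_2$ meets $B$ but, sharing no vertex with $v$, is joined to neither $C$ nor $D$ nor the other horn; and symmetrically for $y_1y_2$. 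Hence $L(G)$ is precisely an expanded bull, and Proposition~\ref{prop:BEsix} gives that it is $[B,-]$-perfect.

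I expect the only delicate point to be pinning down the exact shape of an expanded bull from the figure, since here both horns are forced to be single vertices while the two bases have size $2$. To guard against a mis-reading I would cross-check the encoding against the already-established Lemma~\ref{BniceShootingStar}: an $(m,n)$-shooting star yields the clique $\{wv,va\}\cup\{vy_j\}$ with the horn $\{wx_i\}$ hanging off $wv$ and the single vertex $ab$ hanging off $va$, i.e.\ parameters $a=m,b=1,c=n,d=1$, exactly as stated there. Since the double-vase computation uses the very same correspondence, the identification of $L(G)$ as an expanded bull with $(a,b,c,d)=(1,2,n,2)$ is reliable, and no genuine game-strategy argument is needed.
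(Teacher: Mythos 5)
Your proposal is correct and matches the paper's own proof exactly: the paper likewise observes that $L(G)$ is an expanded bull with $a=1$, $b=d=2$, $c=n$ and concludes via Proposition~\ref{prop:BEsix}. Your explicit verification of the clique joins and the cross-check against Lemma~\ref{BniceShootingStar} are just more detailed renderings of the same one-line identification.
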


\begin{proof}
Let $G$ be a graph that consists of a double vase of $n$ flowers
and possibly some isolated
vertices. Then the line graph $L(G)$ of $G$ is an expanded bull with
$a=1$, $b=d=2$, and $c=n$.
Therefore, by Proposition~\ref{prop:BEsix}, the line graph $L(G)$ is
$[B,-]$-perfect. Thus $G$ is line $[B,-]$-perfect.
\end{proof}

\begin{lem}\label{lem:AwinsBamaryllis}\label{BniceAmaryllis}
Graphs whose single nontrivial component is an amaryllis
are $[B,-]$-nice.
\end{lem}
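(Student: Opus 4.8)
The plan is to follow the pattern of Lemmata~\ref{BniceShootingStar} and~\ref{BniceDoubleVase}: identify the line graph of an amaryllis as an expanded bull and then apply Proposition~\ref{prop:BEsix}. Let $G$ consist of an $(m,n)$-amaryllis together with possibly some isolated vertices. I would split into the two cases $m\ge1$ and $m=0$, the latter being degenerate.

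Assume first that $m\ge1$. I would read off $L(G)$ directly from the edge set of the amaryllis. The edges at $v$, namely $wv$, $vc_1$, $vc_2$ and the $vy_j$, form one clique; the edges at $w$, namely $wv$ and the $wx_i$, form a second clique meeting the first only in the vertex $wv$; and the remaining edge $c_1c_2$ is adjacent in $L(G)$ to exactly $vc_1$ and $vc_2$, and to nothing else. Comparing this with Figure~\ref{fig:expandedbull}, I claim that $L(G)$ is an expanded bull in which the $n$ vertices $vy_j$ form the central clique ($c=n$), the single vertex $wv$ carries the pendant clique $\{wx_1,\ldots,wx_m\}$ as one horn ($b=1$ and $a=m$), and the pair $\{vc_1,vc_2\}$ together with its apex $c_1c_2$ forms the other horn ($d=2$, its forced $K_1$-pendant being $c_1c_2$). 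Since these parameters satisfy $a,b,d\ge1$ and $c\ge0$, Proposition~\ref{prop:BEsix} yields that $L(G)$ is $[B,-]$-perfect, so $G$ is line $[B,-]$-perfect and in particular line $[B,-]$-nice.

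For the remaining case $m=0$, the amaryllis is by definition a vase of $n+1$ flowers, so $G$ is covered by Lemma~\ref{lem:AwinsBBB} and is line $[B,B]$-perfect; by Observation~\ref{obs:compareclasses} it is then line $[B,-]$-perfect, hence line $[B,-]$-nice.

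The only nonroutine step is the structural identification in the second paragraph. I expect the main obstacle to be checking that $L(G)$ exhibits no adjacencies beyond those of the expanded bull --- in particular that $c_1c_2$ attaches only to the horn $\{vc_1,vc_2\}$ and not to the central clique or the other horn, and that the two large cliques overlap solely in $wv$ --- but each of these is a finite verification against the amaryllis edge set.
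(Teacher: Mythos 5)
Your proposal is correct and takes essentially the same route as the paper: the paper's proof likewise identifies $L(G)$ as an expanded bull with $a=m$, $b=1$, $c=n$, $d=2$ and invokes Proposition~\ref{prop:BEsix}. Your separate handling of the degenerate case $m=0$ (falling back on Lemma~\ref{lem:AwinsBBB} and Observation~\ref{obs:compareclasses}) is in fact slightly more careful than the paper, which states the parameters without remarking that the expanded-bull definition requires $a\ge1$.
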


\begin{proof}
Let $G$ be a graph that consists of an $(m,n)$-amaryllis 
and possibly some isolated
vertices. Then the line graph $L(G)$ of $G$ is an expanded bull with
$a=m$, $b=1$, $c=n$, and $d=2$.
Therefore, by Proposition~\ref{prop:BEsix}, the line graph $L(G)$ is
$[B,-]$-perfect. Thus $G$ is line $[B,-]$-perfect.
\end{proof}

\subsection{Permitted for game $[B,A]$}
\label{subsec:BA}

\begin{defin}[star book]\label{def:starbook}
Let $m,n_1,n_2\in\IN$ and $m\ge1$. 
An \emph{$(m,n_1,n_2)$-star book} is a graph consisting of a $K_{2,m}$
and $n_1$ vertices of degree 1 that are adjacent to a
vertex $v_1$ of degree $m$ of the $K_{2,m}$ and $n_2$ vertices of degree 1
that are adjacent to the vertex $v_2$ at distance 2 from $v_1$,
furthermore there is an additional edge $v_1v_2$,
i.e., the graph has the vertex set
\[\{v_1,v_2\}\cup\{w_i\mid 1\le i\le m\}\cup\{x_j\mid 1\le j\le n_1\}
\cup\{y_j\mid 1\le j\le n_2\}\]
and the edge set
\[\{v_1v_2\}\cup\{v_1w_i,w_iv_2\mid 1\le i\le m\}\cup\{x_jv_1\mid 1\le j\le n_1\}
\cup\{v_2y_j\mid 1\le j\le n_2\}.\]
A \emph{star book} is an $(m,n_1,n_2)$-star book 
for some $m,n_1,n_2\in\IN$.
Furthermore, a \emph{star book with $m$ book sheets} 
is an $(m,n_1,n_2)$-star book for some $n_1,n_2\in\IN$. 
\end{defin}

Thus, a star book is like a candy, but with an additional edge $v_1v_2$.

\begin{defin}[diamond of flowers]\label{def:diamondofflowers}
Let $n\in\IN$.
A \emph{diamond of $n$ flowers} is 
constructed from a diamond by attaching $n$ leaves
to a vertex $v$ of degree 2, i.e., the graph has the vertex set
\[\{v,u_1,u_2,w\}\cup\{x_j\mid 1\le j\le n\}\]
and the edge set
\[\{vu_1,vu_2,u_1u_2,wu_1,wu_2\}\cup\{vx_j\mid 1\le j\le n\}.\]
A \emph{diamond of flowers} is a diamond of $n$ flowers for some $n\in\IN$.
\end{defin}

\begin{defin}[tetrahedron of flowers]\label{def:tetrahedronofflowers}
Let $n\in\IN$.
A \emph{tetrahedron of $n$ flowers} is 
constructed from a $K_4$ by attaching $n$ leaves
to one of its vertices, i.e., the graph has the vertex set
\[\{v,u_1,u_2,u_3\}\cup\{x_j\mid 1\le j\le n\}\]
and the edge set
\[\{vu_1,vu_2,vu_3,u_1u_2,u_1u_3,u_2u_3\}\cup\{vx_j\mid 1\le j\le n\}.\]
A \emph{tetrahedron of flowers} is a tetrahedron of $n$ flowers for some $n\in\IN$.
\end{defin}

\begin{defin}[single galaxy]\label{def:singlegalaxy}
Let $k,\ell\in\IN$. A \emph{$(k,\ell)$-single galaxy} 
consists of a central vertex $v$ and $k$ pending triangles and $\ell$ pending $P_3$, 
i.e., the graph
has the vertex set
\[\{v\}\cup\{c_s,d_s\mid 1\le s\le k\}\cup\{x_{t},y_{t}\mid 1\le t\le \ell\}\]
and the edge set
\[\{vc_s,vd_s,c_sd_s\mid 1\le s\le k\}
\cup\{vx_t,x_ty_t\mid 1\le t\le \ell\}.\]
A \emph{single galaxy} is a $(k,\ell)$-single galaxy for some $k,\ell\in\IN$.
\end{defin}

Note that a $(0,0)$-single galaxy is an isolated vertex.

\begin{defin}[double galaxy]\label{def:doublegalaxy}
Let $k,\ell,m,n\in\IN$. A \emph{$(k,\ell,m,n)$-double galaxy} 
consists of a central vertex $v$ and $k$ pending triangles, $\ell$ pending~$P_3$,
a pending $(m+1)$-star, and $n$ pending~$P_2$, i.e., the graph
has the vertex set
\begin{eqnarray*}
&&\{v,z\}\cup\{c_s,d_s\mid 1\le s\le k\}\cup\{x_{t},y_{t}\mid 1\le t\le \ell\}\\
&\cup&\{u_i\mid 1\le i\le m\}\cup\{w_j\mid 1\le j\le n\}
\end{eqnarray*}
and the edge set
\begin{eqnarray*}
&&\{vz\}\cup\{vc_s,vd_s,c_sd_s\mid 1\le s\le k\}
\cup\{vx_t,x_ty_t\mid 1\le t\le \ell\}\\
&\cup&\{zu_i\mid 1\le i\le m\}\cup\{vw_j\mid 1\le j\le n\}
\end{eqnarray*}
A \emph{double galaxy} is a $(k,\ell,m,n)$-double galaxy for some $k,\ell,m,n\in\IN$.
\end{defin}

Note that a $(0,0,m,n)$-double galaxy is a double star.

\begin{figure}[htbp]
\begin{center}
\begin{minipage}[b]{0.48\textwidth}
\begin{center}
\includegraphics[scale=0.4]{Xcandyexample}

$(4,2,3)$-candy
\end{center}
\end{minipage}
\begin{minipage}[b]{0.48\textwidth}
\begin{center}
\includegraphics[scale=0.4]{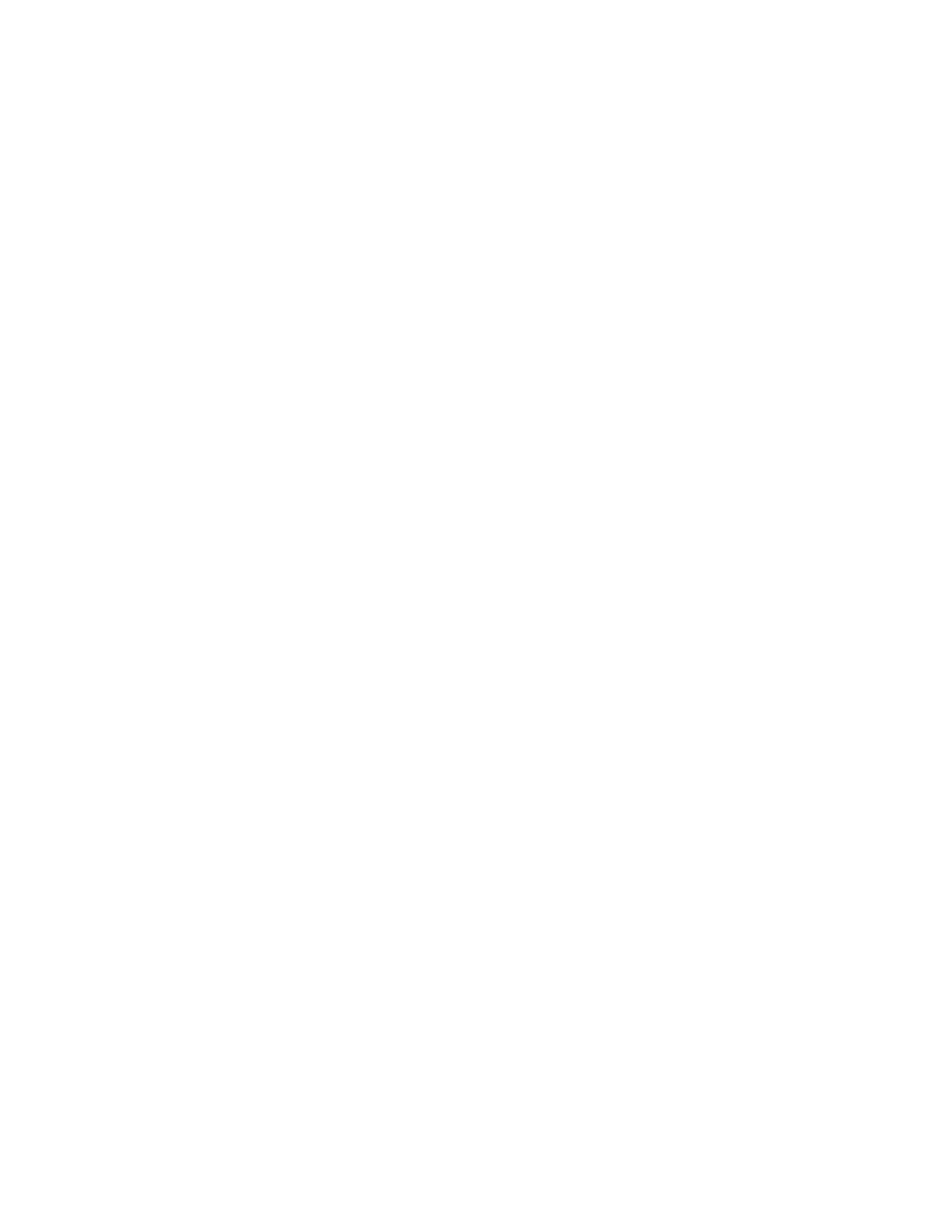}

$(4,2,3)$-star book
\end{center}
\end{minipage}
\end{center}

\begin{center}
\begin{minipage}[b]{0.48\textwidth}
\begin{center}
\includegraphics[scale=0.4]{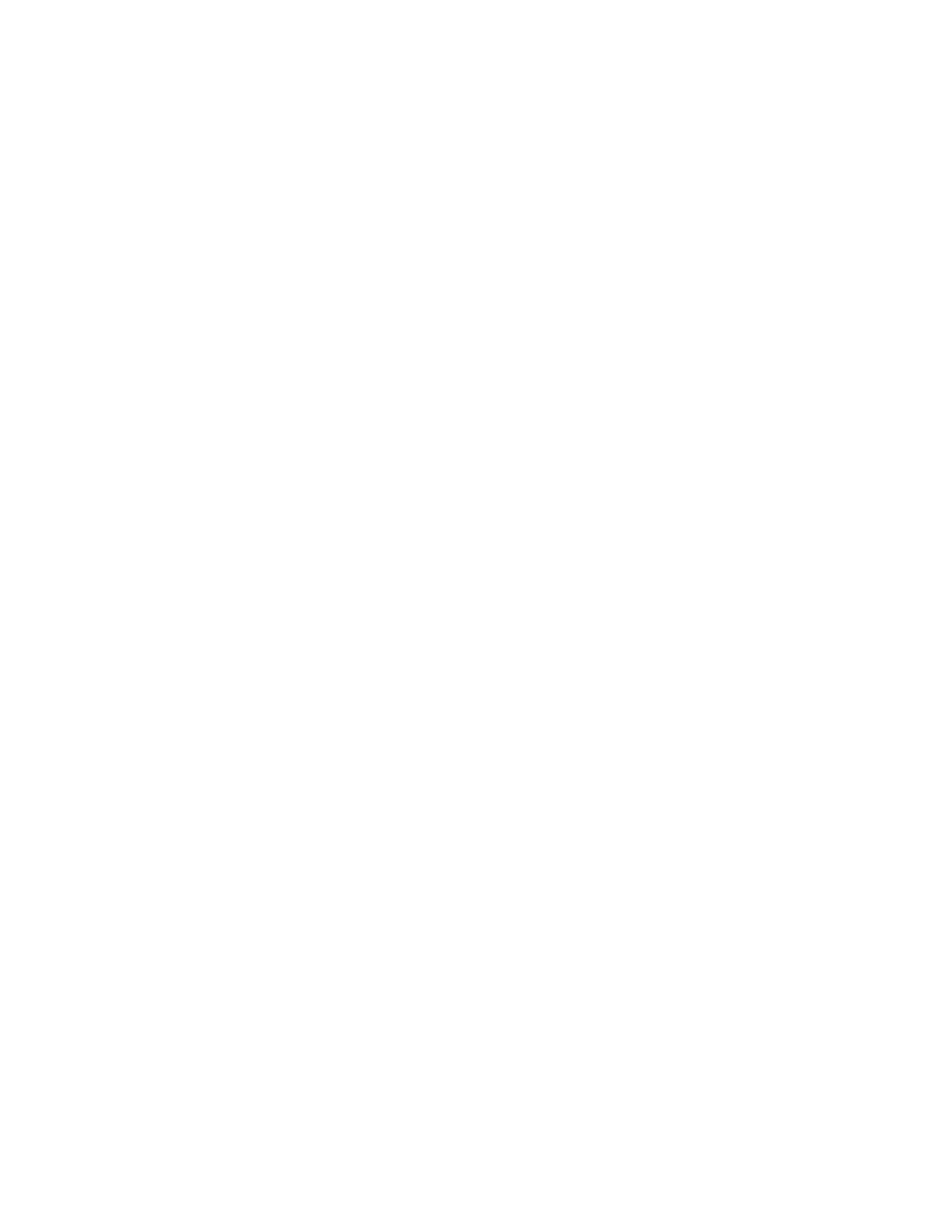}

diamond of 5 flowers
\end{center}
\end{minipage}
\begin{minipage}[b]{0.48\textwidth}
\begin{center}
\includegraphics[scale=0.4]{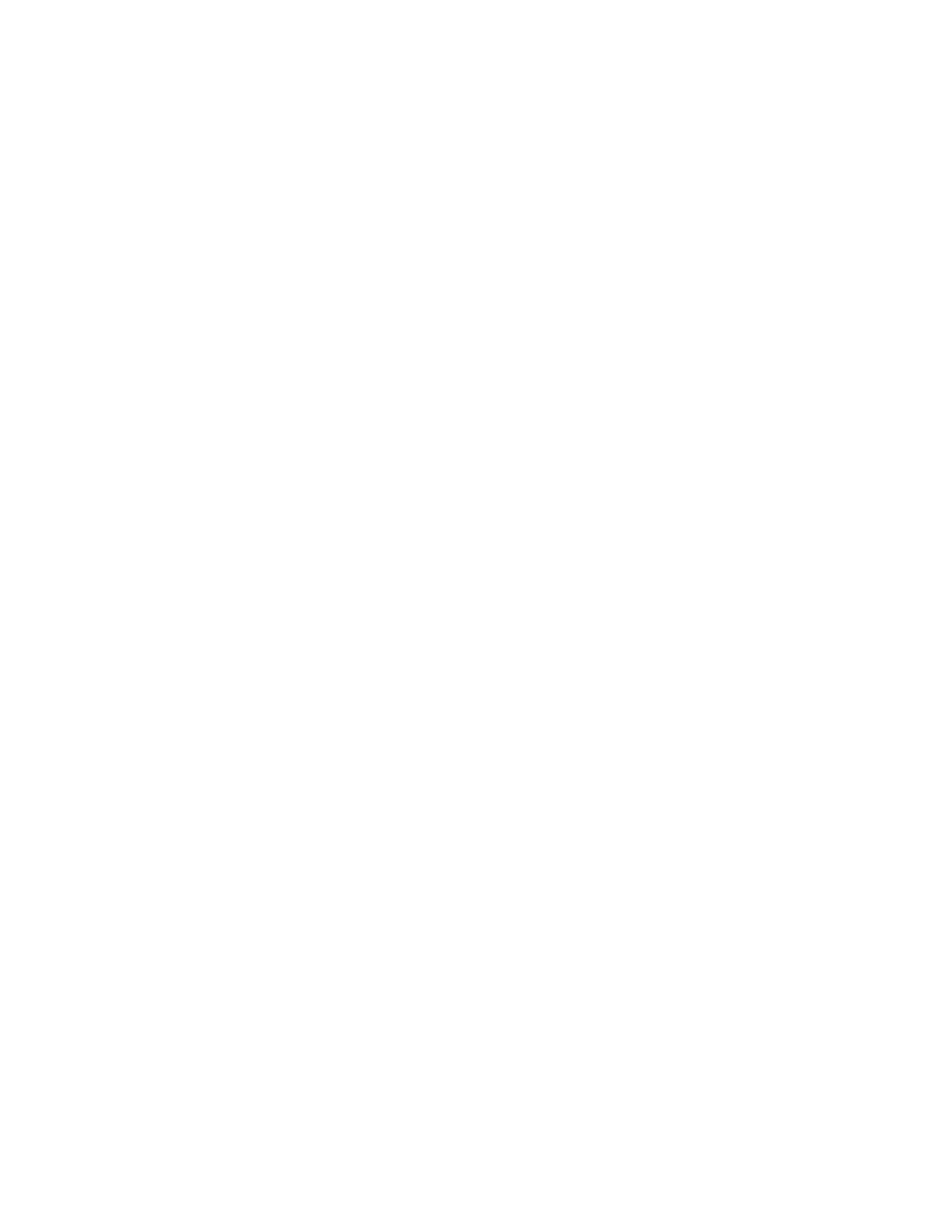}

tetrahedron of 5 flowers
\end{center}
\end{minipage}
\end{center}

\begin{center}
\begin{minipage}[b]{0.48\textwidth}
\begin{center}
\includegraphics[scale=0.4]{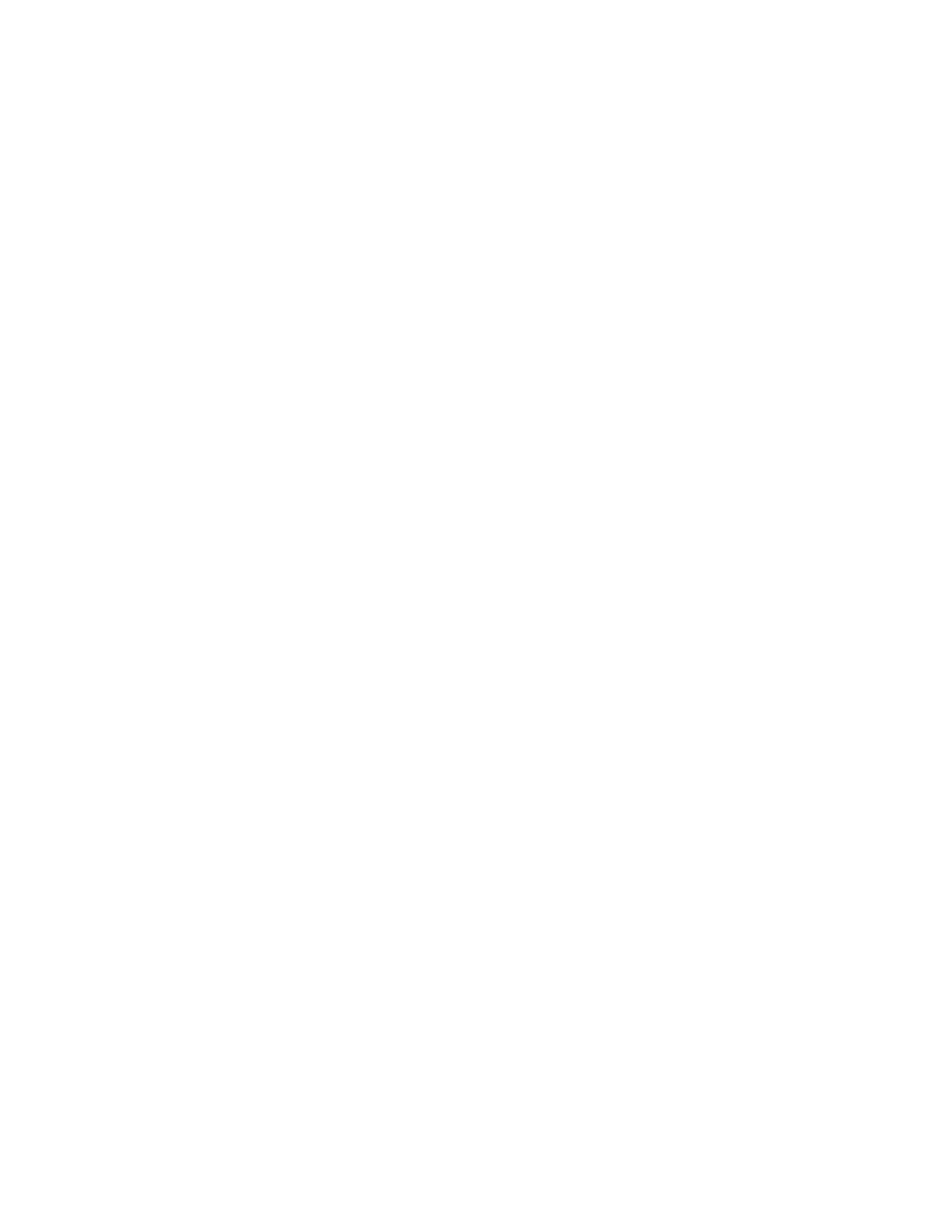}

$(2,3)$-single galaxy
\end{center}
\end{minipage}
\begin{minipage}[b]{0.48\textwidth}
\begin{center}
\includegraphics[scale=0.4]{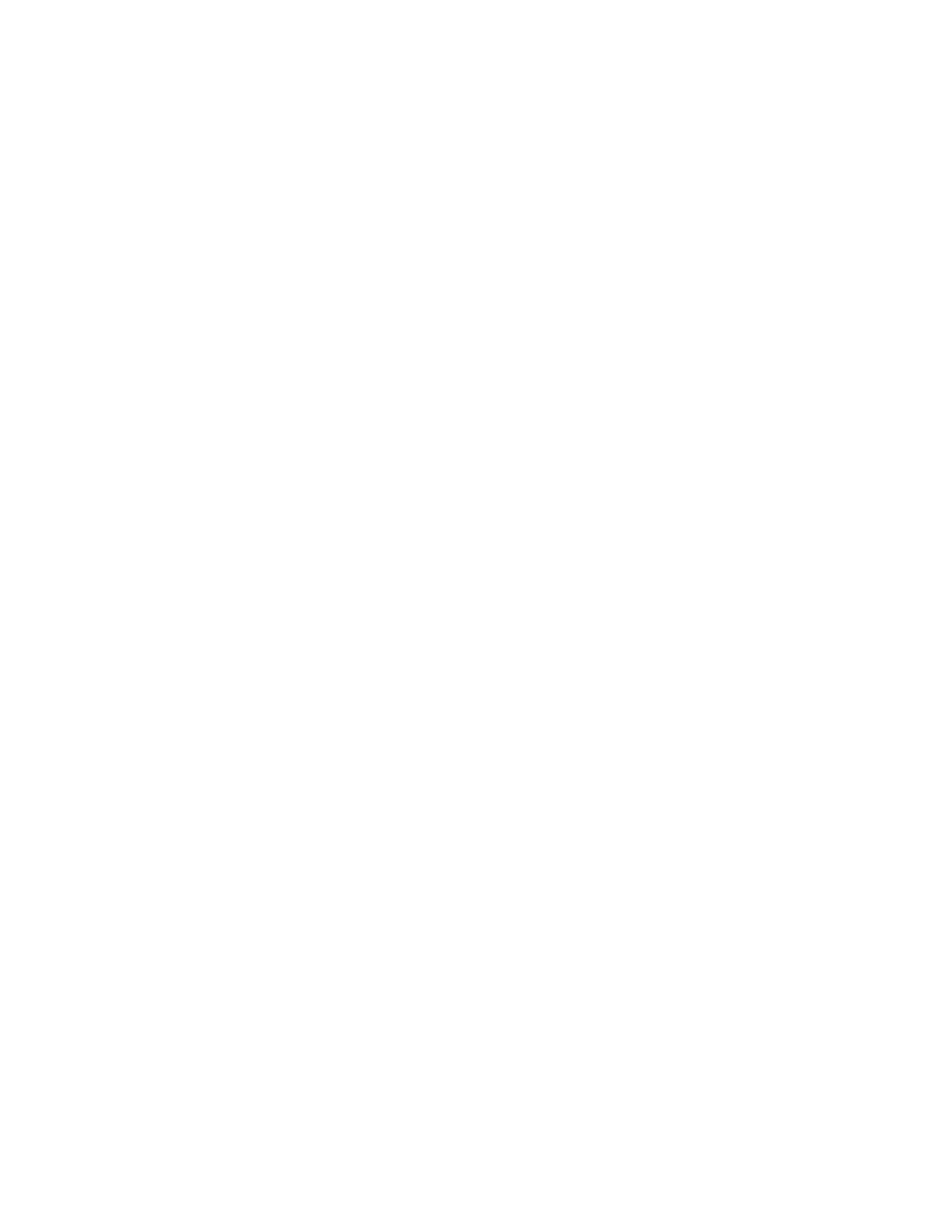}

$(2,3,4,5)$-double galaxy
\end{center}
\end{minipage}
\end{center}
\caption{\label{deffigBA}A candy, a star book, a diamond of flowers,
a tetrahedron of flowers, a single galaxy and a double galaxy.}
\end{figure}


\begin{lem}\label{lem:AwinsBAstarbook}\label{BAniceStarBook}
A star book is line $[B,A]$-nice.
\end{lem}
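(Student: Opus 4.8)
The plan is to prove that a star book is line $[B,A]$-nice by exhibiting an explicit winning strategy for Alice in the $[B,A]$-edge colouring game played with $\omega(L(G))$ colours. First I would compute $\omega(L(G))$ for an $(m,n_1,n_2)$-star book. The vertex $v_1$ has degree $m+n_1+1$ (it meets the $m$ spokes $v_1w_i$, the $n_1$ leaf edges $x_jv_1$, and the central edge $v_1v_2$), and similarly $v_2$ has degree $m+n_2+1$; the $w_i$ have degree $2$. Hence $\Delta(G)=m+\max(n_1,n_2)+1$, and since $\Delta(G)\ge 3$ (as $m\ge1$), Corollary~\ref{cor:whitney} gives $\omega(L(G))=\Delta(G)$. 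So Alice must win with $k:=m+\max(n_1,n_2)+1$ colours, and the earlier observation lets me phrase the goal as $\chi'_{[B,A]}(G)=\Delta(G)$.

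The heart of the argument is Alice's strategy. Since this is game $[B,A]$, Bob moves first and \emph{Alice} is the player allowed to skip. The key structural observation is that, apart from the spokes and the central edge, every edge is a pendant (leaf) edge attached at either $v_1$ or $v_2$; a leaf edge at $v_1$ is adjacent only to the other edges meeting $v_1$, and there are at most $\Delta(G)$ such edges, so a leaf edge can be blocked only if \emph{all} $k$ colours already appear among the edges at its endpoint of high degree. The dangerous edges are therefore the spokes $v_1w_i$ and $w_iv_2$ and the central edge $v_1v_2$, because these lie in two cliques of $L(G)$ at once. I would let Alice play a threat-responding strategy: whenever Bob colours an edge, Alice inspects whether some uncoloured edge has become \emph{unsafe} (adjacent to $\omega(L(G))$ edges whose colours threaten to exhaust the palette) and, if so, colours that edge with a still-available colour; if no edge is in immediate danger she skips. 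Because $k=\Delta(G)$, at every vertex the number of incident edges equals the palette size only at $v_1$ or $v_2$, so I would argue that a counting invariant is maintained: the number of distinct colours used at $v_1$ (resp.\ $v_2$) never exceeds the number of coloured edges there by more than what one extra move can repair, and Alice's single permitted reactive move per round suffices to keep every uncoloured leaf edge extendable.

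The main obstacle I anticipate is the interaction at the two hubs through the shared spokes: colouring a spoke $v_1w_i$ consumes a colour at $v_1$ \emph{and} forbids a colour at the degree-$2$ vertex $w_i$, which in turn constrains $w_iv_2$ and hence $v_2$. I would handle this by treating the spokes as the only edges creating cross-talk between the two hubs and showing that, with $k=\Delta(G)\ge m+2$ colours, there is always enough slack: each $w_i$ sees at most two coloured edges, so its two incident edges can always be completed provided the two hubs are not simultaneously saturated, and Alice's skip option (crucially available in game $[B,A]$) lets her avoid wasting moves and instead spend her move exactly when a hub is about to saturate. Rather than reprove this from scratch, I would prefer the slicker route already used elsewhere in the paper: identify $L(G)$ of a star book with one of the explicitly $[B,-]$-perfect configurations from Propositions~\ref{prop:BEfive} and~\ref{prop:BEsix}, or with an ear animal as in Theorem~\ref{thm:BBcharE}. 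The cleanest proof would show that the line graph of an $(m,n_1,n_2)$-star book is (isomorphic to) an expanded cocobi or an ear animal with suitable parameters, and then invoke the known $[B,-]$- or $[B,B]$-perfectness together with the game comparison $\chi'_{[B,A]}\le\chi'_{[B,-]}$ from Observation~\ref{obs:comparechigL}, reducing the whole statement to a parameter-matching computation; the difficulty then collapses to verifying that matching, which is the step I would write out in full.
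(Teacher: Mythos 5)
Your fallback plan --- the ``slicker route'' of identifying $L(G)$ for a star book with an expanded cocobi, an expanded bull, or an ear animal and then invoking $\chi'_{[B,A]}(G)\le\chi'_{[B,-]}(G)$ --- cannot work, and this is not a matter of a missing computation. All three of those classes consist of $[B,-]$-perfect graphs (Propositions~\ref{prop:BEfive} and~\ref{prop:BEsix}, Theorem~\ref{thm:BBcharE}), but the line graph of a nontrivial star book is not $[B,-]$-perfect: for $m\ge 2$ the star book contains a diamond as an edge-induced subgraph, and the $(1,1,1)$-star book \emph{is} the bull. Concretely, $L(\mathrm{bull})$ is the $4$-fan (Lemma~\ref{lem:onlyfan}), which belongs to the minimal forbidden list ${\cal F}_{[B,-]}$ of Theorem~\ref{thm:B}; minimality forces $\chi_{[B,-]}(\text{$4$-fan})>\omega(\text{$4$-fan})=3$, i.e.\ $\chi'_{[B,-]}(\mathrm{bull})>\omega(L(\mathrm{bull}))$. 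So the star book is in general not even line $[B,-]$-nice, no parameter matching with those structures exists, and the bound $\chi'_{[B,A]}\le\chi'_{[B,-]}$ is simply too weak to give the lemma. Your primary route (the threat-responding strategy with a ``counting invariant'') is not carried out and, as stated, does not engage with the actual difficulty, namely the cross-talk you yourself identify.

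The idea you are missing is that Alice's skip privilege in game $[B,A]$ must be used, and used surgically: the central edge $v_1v_2$ is adjacent to \emph{every} other edge of the star book, so whatever colour it receives is dead for the rest of the graph. The paper's proof exploits this by having Alice play, with $c-1$ colours, her $[B,-]$ winning strategy for the candy $S-v_1v_2$ (Lemma~\ref{BniceCandy}, where the reduction to expanded cocobis legitimately applies, since $S - v_1v_2$ has no triangle), and by having her \emph{skip} her turn exactly when Bob colours $v_1v_2$; this skip preserves the move order that the candy strategy assumes (Bob moving first, Alice responding), which is precisely what fails in the $[B,-]$ game where Alice has no skip. So the reduction to the known $[B,-]$-perfect structures happens one level down, after deleting the universal edge and one colour --- not for the star book itself. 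A small additional point: your claim $\Delta(G)\ge 3$ fails for the $(1,0,0)$-star book (the triangle, where $\omega(L(G))=3$ but $\Delta(G)=2$); the paper disposes of this case separately.
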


\begin{proof}
A triangle is line $[B,A]$-nice, trivially.
We describe a winning strategy for Alice in the $[B,A]$-edge colouring game
played on a star book $S$ that is not a triangle with
\[c=\max\{m+n_1+1,m+n_2+1\}\]
colours. We use the vertex labels from Definition~\ref{def:starbook}.

Whenever Bob does not colour the universal edge $v_1v_2$, then Alice
follows her winning strategy for the candy $S-v_1v_2$ with $c-1$ colours, 
which exists by Lemma~\ref{BniceCandy}.
If Bob colours $v_1v_2$, then Alice misses her turn.
After that she uses again her strategy for the candy $S-v_1v_2$.
This strategy is feasible since the colour of $v_1v_2$ cannot be used
on any other edge, since $v_1v_2$ is adjacent to every other edge.
\end{proof}

We remark that the same strategy shows that a star book is line $[A,-]$-nice.
Namely, Alice should colour $v_1v_2$ in her first move in the edge colouring
game $[A,-]$.
However, it is not \rd{line} $[A,-]$-perfect (except in the trivial case
when it is a vase of flowers) since it contains a $P_5$ or a $C_4$,
which are forbidden configurations for the edge colouring game $[A,-]$.


\begin{lem}\label{lem:AwinsBAdiamond}\label{BAniceDiamondOfFlowers}
A diamond of flowers is line $[B,A]$-nice.
\end{lem}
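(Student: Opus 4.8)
The plan is to exhibit, for a diamond $G$ of $n$ flowers, a winning strategy for Alice with $c:=\omega(L(G))$ colours in the $[B,A]$-edge colouring game. I use the labels of Definition~\ref{def:diamondofflowers} and abbreviate the edges as $a_i:=vu_i$, $c_i:=wu_i$ ($i=1,2$), $b:=u_1u_2$, and $\ell_j:=vx_j$ ($1\le j\le n$). First I would record that $\deg_G(v)=n+2$, $\deg_G(u_1)=\deg_G(u_2)=3$ and $\deg_G(w)=2$, so that $\Delta(G)=\max\{n+2,3\}$ and hence $c=\Delta(G)$ by Corollary~\ref{cor:whitney}. Counting adjacencies shows that $a_1,a_2$ are each adjacent to $n+3$ edges, $b$ to $4$ edges, $c_1,c_2$ to $3$ edges, and each leaf to $n+1$ edges; thus for $n\ge3$ the only \emph{unsafe} edges are $a_1$ and $a_2$.

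For $n\ge3$ the argument is short and does not even use skipping. Alice simply colours $a_1$ on her first move and $a_2$ on her second move (colouring $a_2$ first if Bob has already coloured $a_1$). These moves are always legal, since before Alice secures $a_i$ at most three of its neighbours are coloured and $3<c$. Once $a_1,a_2$ are coloured, every still-uncoloured edge is \emph{safe}: $b$ now has $4<c$ neighbours, each $c_i$ has $3<c$, and each leaf has $n+1<c$. A safe edge can never be blocked, so the game cannot end before all edges are coloured, and Alice wins whatever colours she gave $a_1$ and $a_2$.

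The cases $n\in\{0,1,2\}$ are fixed finite graphs, and I would settle them directly; this is where Alice's right to skip (the feature distinguishing $[B,A]$ from $[B,-]$) becomes essential. The base case $n=0$ is the diamond, whose line graph is the $4$-wheel $W_4$ with hub $b$ and rim $4$-cycle $a_1a_2c_2c_1$. Here a proper $3$-colouring must colour the hub with one colour and properly $2$-colour the rim with the other two, i.e.\ it must give equal colours to the opposite rim pairs $\{a_1,c_2\}$ and $\{a_2,c_1\}$. Alice's strategy is to preserve this bipartition: whenever Bob colours a rim edge she colours the opposite rim edge with the same colour, and whenever Bob colours the hub $b$ she skips, thereby forcing Bob to touch the rim first; the hub is then coloured only once the rim already forces its colour. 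One checks that this keeps every uncoloured vertex of $W_4$ colourable, so Alice wins with $3$ colours. The cases $n=1,2$ add a few leaf edges to this picture and are handled by the same bipartition-preserving idea together with a colour choice for $a_1,a_2$ that keeps the (now tight) edge $b$ colourable.

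I expect the main obstacle to be exactly these small cases, and within them the diamond $n=0$, since it is a forbidden configuration for the game $[B,-]$: here Alice provably cannot win without skipping, and the delicate point is to verify that the opposite-corner/skip strategy neutralises the wheel threat that would otherwise block the hub $b$ or a rim edge. The structural fact that keeps this manageable is that $c_1$ and $c_2$ are adjacent (they share $w$), which is precisely what prevents Bob from dooming $a_1$ and $a_2$ simultaneously in a single move.
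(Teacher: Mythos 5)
Your argument for $n\ge3$ is sound: with $c=n+2\ge5$ colours the only unsafe edges are $a_1=vu_1$ and $a_2=vu_2$, Alice can always legally secure both within her first two moves, and afterwards every uncoloured edge has fewer than $c$ neighbours and so can never be blocked. Your $n=0$ strategy is also correct, and for a clean reason you could state explicitly: in the diamond, the opposite rim pairs $\{a_1,c_2\}$ and $\{a_2,c_1\}$ consist of edges with \emph{identical} neighbourhoods in $L(G)$, so whenever Bob legally colours a rim edge, the same colour is automatically legal on its opposite partner; combined with skipping on the hub this keeps every uncoloured edge colourable. Note that this is a genuinely different route from the paper, which treats all $n$ uniformly: after Bob's first move, Alice colours a non-adjacent partner edge ($u_1u_2$ paired with a leaf $vx_j$, or $vu_i$ paired with $wu_{3-i}$) with the same colour; this pair dominates all other edges, so that colour is dead, and the rest of the game is a $[B,-]$-game with $c-1$ colours on a candy or a shooting star, won by Lemmas~\ref{BniceCandy} and~\ref{BniceShootingStar}.

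The genuine gap is at $n\in\{1,2\}$, which you only assert are ``handled by the same bipartition-preserving idea''. For $n=2$ (four colours) that idea, taken literally, fails: the pairing must be $\{a_1,c_2\}$, $\{a_2,c_1\}$, $\{b,\text{leaf}\}$, leaving one leaf unpaired, and Bob can exploit it. Concretely: Bob colours $\ell_1$ with colour $1$; Alice pairs $b$ with it (colour $1$); Bob colours the unpaired leaf $\ell_2$ with colour $2$; Alice has no pairing reply and skips; Bob colours $c_2$ with colour $2$, legal since $c_2$ is adjacent only to $c_1,b,a_2$. Now the bipartition-preserving reply --- colour $a_1$ with $2$ --- is illegal, because $a_1$ is adjacent to $\ell_2$. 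Alice does still win from this position (e.g.\ colour $a_1$ with $3$; then $a_2$ and $c_1$ are non-adjacent and each still sees only three colours), but establishing that requires a fallback rule and a modified invariant that your proposal does not supply; the obstruction is precisely that unpaired leaves let Bob import fresh colours onto $c_1,c_2$, where the copying move is unavailable. The simplest repair is to invoke the paper's first-move pairing and reduction for $n=1,2$ as well (it works for every $n$), or else to write out the finite case analysis for these two graphs in full rather than asserting it.
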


\begin{proof}
Let $D_n$ be a diamond of $n$ flowers with the vertex labels from
Definition~\ref{def:diamondofflowers}.
We describe a winning strategy for Alice with $c:=\max\{3,n+2\}$ colours
for the $[B,A]$-edge colouring game played on $D_n$. Consider Bob's first move.
\begin{itemize}
\item If Bob colours $u_1u_2$, then, if $n\ge1$, Alice colours $e=vx_1$
with the same colour, and if $n=0$, she misses her turn. And vice-versa,
if Bob colours a star edge $e=vx_j$, then Alice colours $u_1u_2$ with the
same colour. In all cases this colour may not be used for any other edge.
Therefore, after that, Alice may follow her winning strategy with $c-1$ 
colours for
the $[B,-]$-edge colouring game played on the candy $D_n-\{u_1u_2,e\}$,
which exists by Lemma~\ref{BniceCandy}.
\item If Bob colours $vu_i$ for some $i\in\{1,2\}$, then Alice colours
$wu_{3-i}$ with the same colour, and vice-versa. In any case, this colour
may not be used for any other edge. Thus, after that,
Alice may follow her winning strategy for the $[B,-]$-edge colouring
game with $c-1$ colours played on the shooting star $D_n-\{vu_i,wu_{3-i}\}$,
which exists by Lemma~\ref{BniceShootingStar}. 
\end{itemize}
Thus, in any case, Alice wins.
%
\end{proof}


The next lemma is very similar to the preceeding one.

\begin{lem}\label{lem:AwinsBAtetra}\label{BAniceTetrahedronOfFlowers}
A tetrahedron of flowers is line $[B,A]$-nice.
\end{lem}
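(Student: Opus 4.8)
The plan is to mirror the strategy used for Lemma~\ref{BAniceDiamondOfFlowers}, exploiting the fact that the six edges of the underlying $K_4$ split into the three perfect matchings $\{vu_1,u_2u_3\}$, $\{vu_2,u_1u_3\}$, $\{vu_3,u_1u_2\}$, each consisting of two disjoint (``opposite'') edges. Using the labels of Definition~\ref{def:tetrahedronofflowers}, I would first record that $\Delta(T_n)=\deg(v)=n+3$, so by Corollary~\ref{cor:whitney} the target is $c:=\omega(L(T_n))=n+3$ colours, and then exhibit a winning strategy for Alice with $c$ colours in the $[B,A]$-edge colouring game on a tetrahedron $T_n$ of $n$ flowers. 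Since Bob moves first and cannot skip, it suffices to give Alice a reply to each of Bob's opening moves; by symmetry these fall into three types: colouring a spoke $vu_i$, colouring a triangle edge $u_iu_j$, or (only if $n\ge1$) colouring a leaf edge $vx_j$.

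For the first two types Alice answers by colouring the opposite edge of the $K_4$ with Bob's colour $\alpha$: if Bob colours a spoke she colours the triangle edge disjoint from it, and if Bob colours a triangle edge she colours the disjoint spoke. This reply is legal because opposite edges are non-adjacent, and these two equally coloured edges are jointly adjacent to every other edge of $T_n$, so $\alpha$ can never be reused; the uncoloured edges then form a $(2,n,0)$-candy (the two remaining $v$--$u$ edges and two remaining $u$--$u$ edges give a $K_{2,2}$, with the $n$ leaves hanging at $v$). The third type I would handle by pairing the leaf with a triangle edge: if Bob colours $vx_1$ with $\alpha$, Alice colours, say, $u_1u_2$ with $\alpha$; again the pair dominates all other edges, so $\alpha$ dies, and the uncoloured edges form a $(2,n-1,0)$-star book (the vertices $v$ and $u_3$ together with $u_1,u_2$ give a $K_{2,2}$, the surviving spoke $vu_3$ supplies the extra join edge of the star book, and $n-1$ leaves remain at $v$).

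After Alice's reply it is Bob's turn, exactly one colour has been spent and is globally forbidden, so the residual game is the corresponding game on the candy, respectively the star book, played with the remaining $c-1$ colours. A short degree count gives $\omega(L(\text{candy}))=\omega(L(\text{star book}))=n+2=c-1$, so $c-1$ is precisely the right number of colours there. For the candy I would invoke Lemma~\ref{BniceCandy} (a candy is line $[B,-]$-nice) and note that Alice's skip-free $[B,-]$ strategy is a fortiori winning in the more permissive $[B,A]$-game (equivalently, $\chi_{[B,A]}'\le\chi_{[B,-]}'$ by Observation~\ref{obs:comparechigL}); for the star book I would use Lemma~\ref{BAniceStarBook} directly. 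In each case Alice colours all remaining edges, so together with the already coloured pair the whole of $T_n$ is coloured and Alice wins.

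The only substantive verifications are the two structural claims — that the two equally coloured edges are adjacent to every remaining edge (so the spent colour is dead) and that the residual graph is exactly a candy or a star book with the stated parameters — together with the matching $c-1=n+2$ of clique numbers; all of these reduce to reading off incidences from Definition~\ref{def:tetrahedronofflowers}, so I expect no genuine obstacle. The one point demanding care is that the leaf case yields a star book rather than a shooting star (as in the diamond case): since a star book is only known to be $[B,A]$-nice and not $[B,-]$-nice, the argument must route through Lemma~\ref{BAniceStarBook} and cannot be collapsed into the candy/shooting-star reductions of Lemma~\ref{BAniceDiamondOfFlowers}.
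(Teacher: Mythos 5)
Your proposal is correct and takes essentially the same approach as the paper's own proof: Alice answers any first move of Bob on a $K_4$-edge by colouring the disjoint (opposite) edge of the same perfect matching with the same colour, killing that colour and reducing to the $[B,-]$-strategy on the resulting $(2,n,0)$-candy via Lemma~\ref{BniceCandy}, while a leaf-edge opening is answered by $u_1u_2$ and routed through the star book and Lemma~\ref{BAniceStarBook}. Your extra bookkeeping (the clique-number count $c-1=n+2$ and the appeal to Observation~\ref{obs:comparechigL} to use a $[B,-]$-strategy inside a $[B,A]$-game) is exactly what the paper leaves implicit.
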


\begin{proof}
Let $T_n$ be a tetrahedron of $n$ flowers with the vertex labels from
Definition~\ref{def:tetrahedronofflowers}.
We describe a winning strategy for Alice with $c:=n+3$ colours
for the $[B,A]$-edge colouring game played on $T_n$. Consider 
again Bob's first move.
\begin{itemize}
\item If Bob colours $u_1u_2$, then Alice colours $vu_3$ with the same colour
and vice-versa.
In all cases this colour may not be used for any other edge.
Therefore, after that, Alice may follow her winning strategy with \mbox{$c-1$} 
colours for
the $[B,-]$-edge colouring game played on the candy \mbox{$T_n-\{u_1u_2,vu_3\}$},
which exists by Lemma~\ref{BniceCandy}.
\item If Bob colours $vu_i$ for some $i\in\{1,2\}$, then Alice colours
$u_3u_{3-i}$ with the same colour, and vice-versa. In any case, this colour
may not be used for any other edge. Thus, after that,
Alice may follow her winning strategy for the $[B,-]$-edge colouring
game with \mbox{$c-1$} colours played on the candy $T_n-\{vu_i,u_3u_{3-i}\}$,
which exists by Lemma~\ref{BniceCandy}.
\item If Bob colours a star edge $vx_i$ for some $i\in\IN$ with $1\le i\le n$,
then Alice colours $u_1u_2$ with the same colour. This colour may not
be used for any other edge. Thus, after that,
Alice may follow her winning strategy for the $[B,A]$-edge colouring game
with $c-1$ colours played on the star book $T_n-\{vx_i,u_1u_2\}$,
which exists by Lemma~\ref{BAniceStarBook}. 
\end{itemize}
Thus, in any case, Alice wins.
%
%
\end{proof}



\rd{A $P_3$ is \emph{pending} at a vertex $v$ of a graph $G$
if the $P_3$ is an induced subgraph of $G$, 
and $v$ is a vertex of degree 1 in the $P_3$,
and, if $v$ is deleted, the two other vertices of the $P_3$
are disconnected with the vertices of $G-v$ that are not in the~$P_3$.
Analoguously, a triangle is \emph{pending} at a vertex $v$ of a graph~$G$
if the triangle is an induced subgraph of~$G$, 
and $v$ is one of the three vertices of the triangle,
and, if $v$ is deleted, the two other vertices of the triangle
are disconnected with the vertices of $G-v$ that are not in the triangle.}

For a (single or double) galaxy with the notation of
Definition~\ref{def:singlegalaxy} resp.\ Definition~\ref{def:doublegalaxy},
we call a $P_3$ pending at $v$ resp.\ a triangle (pending at $v$) a
\emph{pending object}. The \rd{(one or two)} edges \rd{of the pending object}
incident with $v$ are called \emph{star edges},
the other edge of \rd{the} pending object is called \emph{matching edge}.

\begin{lem}\label{lem:AwinsBAsgal}\label{BAniceSingleGalaxy}
A single galaxy is line $[B,A]$-nice.
\end{lem}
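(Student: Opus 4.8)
The plan is to give Alice an explicit winning strategy in the $[B,A]$-edge colouring game, phrased entirely in the language of \emph{pending objects}, \emph{star edges} and \emph{matching edges} introduced before the lemma. First I would fix the number of colours. For a $(k,\ell)$-single galaxy (Definition~\ref{def:singlegalaxy}) the central vertex $v$ has degree $2k+\ell$, and apart from the trivial small cases I would dispose of separately (the isolated vertex, the $P_3$, and the single triangle, where the claim is immediate), Corollary~\ref{cor:whitney} gives $\omega(L(G))=2k+\ell=:c$, which is exactly the number of star edges. So I must exhibit a strategy for Alice with $c$ colours, recalling that in game $[B,A]$ Bob moves first and may \emph{never} skip, while Alice may skip at will; this asymmetry is what lets Alice answer every one of Bob's moves.

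The next step is to reduce the problem to the star edges. Each matching edge has at most two neighbours in the line graph, so with $c\ge 3$ colours a matching edge can always be coloured; hence Bob can only win by blocking a star edge. A star edge $e$ (incident with $v$) is blocked precisely when all $c$ colours appear among its neighbours, i.e.\ when the other $c-1$ star edges already carry $c-1$ distinct colours and the unique matching partner of $e$ carries the one missing colour. Since the $c$ star edges form a clique of size $c$, any complete colouring uses each colour exactly once on them; thus the danger is only that a matching edge receives a colour that is not (yet) ``backed up'' on a star edge of another object. This motivates the invariant I would have Alice maintain after each of her moves: \emph{the colour of every coloured matching edge also occurs on a star edge belonging to a different pending object}. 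I would show that as long as this invariant holds no star edge can ever be blocked, because for a triangle the two colours left over for its star pair by the clique of star edges then necessarily avoid the colour of its matching edge.

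Alice's responses then are: when Bob colours a matching edge with a colour not yet present on a star edge of another object, Alice immediately replicates that colour on an uncoloured star edge of a different object; when Bob colours a star edge of a triangle, Alice acts inside that same triangle (colouring its matching edge with an already-backed colour, or otherwise securing its pair) so as never to leave a triangle's two star edges as the last uncoloured pair with an unbacked matching colour; the $P_3$ objects are handled immediately since their matching edge has degree one. \textbf{The main obstacle} is proving that the required backing move is always \emph{available} — that an uncoloured star edge in another object exists exactly when a fresh matching colour appears. This is where the triangles are delicate: two star edges share a single matching edge, and if Bob could strand an untouched triangle after every other object's star edges are coloured, the position would be lost. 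I expect the heart of the proof to be a counting/parity argument on the $2k+\ell$ star edges — the $2k$ triangle star edges are consumed only in pairs and only once their triangle is secured, each newly threatened object costs Alice at most one backing star edge, and Bob's inability to skip means he cannot gain the extra tempo needed to reach the stranding configuration — so that the supply of star edges always suffices and the invariant can be kept throughout, giving Alice the win.
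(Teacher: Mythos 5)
Your overall skeleton matches the paper's proof: the same colour count $c=2k+\ell$, the reduction of all danger to the star edges, and the invariant that every coloured matching edge's colour must be duplicated on a star edge of a \emph{different} pending object. But the proposal stops exactly where the proof has to be done. You name the availability of the backing move as ``the main obstacle'' and then only \emph{expect} a counting/parity argument to close it; no such argument is given, and the one concrete mechanism you do specify --- answering Bob's colouring of a triangle's star edge by playing \emph{inside that same triangle}, and giving no response at all for $P_3$-objects --- loses. Take a $(2,1)$-single galaxy with triangles $T_1,T_2$ and one pending $P_3$, so $c=5$. Bob colours a star edge of $T_2$ (colour~1); your Alice replies inside $T_2$ (say on its other star edge, colour~2, or on $m_{T_2}$). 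Bob colours the star edge of the $P_3$ (colour~3); by your prescription no reply is needed. If necessary Bob also finishes the star edges of $T_2$. Now all star edges outside $T_1$ carry distinct colours, $T_1$ is completely untouched, and it is Bob's move: he colours the matching edge of $T_1$ with a fresh colour. Both star edges of $T_1$ now see $c-1$ distinct colours, so the single remaining colour is the only feasible one for two mutually adjacent edges, and no backing move exists --- the only uncoloured star edges are $T_1$'s own, which are adjacent to the edge Bob just coloured. This is precisely the stranding configuration you flagged, and local replies cannot prevent it, because no instruction of yours ever colours the matching edge of an object Bob has not touched. (A smaller secondary issue: your list of trivial cases misses, e.g., the $P_5$, i.e.\ the $(0,2)$-single galaxy, where $c=2$ and your ``$c\ge3$'' step fails.)

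The missing idea --- and the actual content of the paper's proof --- is that Alice's reply to a star-edge move must itself be a \emph{cross-object} move. The paper numbers the pending objects cyclically $O_1,\dots,O_{k+\ell}$ and pairs the matching edge of $O_j$ with the star edges of $O_{j+1}$: when Bob colours the matching edge of $O_j$, Alice answers on a star edge of $O_{j+1}$ (with the same colour if possible); when Bob colours the \emph{first} star edge of $O_j$, Alice colours the matching edge of $O_{j-1}$ with the \emph{same} colour, which is automatically feasible (star-edge colours are pairwise distinct) and automatically backed. This pairing maintains the invariant that the matching edge of $O_j$ can be uncoloured only while \emph{all} star edges of $O_{j+1}$ are still uncoloured --- which is exactly the availability statement you left open, and which kills the stranding attack: by the time Bob turns to the matching edge of an untouched triangle, Alice has already coloured it. So your invariant is the right one, but it cannot be maintained by within-object replies plus an unspecified counting argument; the cyclic pairing is what makes the strategy work, and without it the proof has a genuine gap.
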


\begin{proof}
Let $G$ be a single galaxy with the notation from Definition~\ref{def:singlegalaxy}.

If the number $k+\ell$ of pending objects of $G$ is at most 2, then
$G$ is an isolated vertex, a $P_3$, a triangle, a $P_5$, an amaryllis, or
a double vase, thus, in any case, line $[B,-]$-nice, which implies by
{Observation~\ref{obs:comparechigL}} 
that $G$ is line $[B,A]$-nice.

Otherwise, the maximum degree $\Delta(G)$ of $G$ is at least 3, and Alice has
the following winning strategy with $\Delta(G)$ colours in the
$[B,A]$-edge colouring game played on~$G$. She {may arbitrarily number} the pending objects
$O_1,O_2,\ldots,O_{k+\ell}$ and \rd{perform} the following pairing strategy.

\begin{itemize}
\item
If Bob colours the matching edge of the pending object $O_j$, then
Alice colours a star edge of the pending object $O_{j+1\mod{k+\ell}}$
with the same colour, if possible. If it is not possible, she uses
a new colour for such a star edge.
\item
If Bob colours the first star edge of the pending object $O_j$ (a triangle or a $P_3$), then
Alice colours the matching edge of the pending object $O_{j-1\mod{k+\ell}}$
with the same colour.

\item
If Bob colours the second star edge of the pending object (a triangle) $O_j$, 
then Alice misses her turn.

\end{itemize}

Note that by this strategy, colouring a star edge with the same colour
in the first case is only not possible if the colour has been already used
for a star edge and a matching edge. 
Note further that after Alice's moves,
a star edge is either adjacent to an uncoloured matching edge or to a matching edge coloured
in a colour of another star edge.
Furthermore, by the pairing strategy, whenever Bob colours a matching edge
with a new colour, then there is a nonadjacent uncoloured star edge left
that can be coloured with the same colour.
Thus every unsafe edge (the star edges)
can be coloured feasibly.

\end{proof}



\begin{lem}\label{lem:AwinsBAdgal}\label{BAniceDoubleGalaxy}
A double galaxy is line $[B,A]$-nice.
\end{lem}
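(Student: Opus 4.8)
The plan is to give Alice an explicit winning strategy with $\omega(L(G))$ colours that extends the cyclic pairing used for the single galaxy (Lemma~\ref{BAniceSingleGalaxy}), so I first reduce to the generic case. Write $G$ for a $(k,\ell,m,n)$-double galaxy in the notation of Definition~\ref{def:doublegalaxy}. If $k=\ell=0$ then $G$ is a double star, which is line $[B,B]$-perfect by Lemma~\ref{lem:AwinsBBB} and hence line $[B,A]$-nice by Observation~\ref{obs:comparechigL}; if $m=0$ the vertex $z$ is a leaf, so $G$ is a single galaxy with $n+1$ extra pending $P_2$'s, which are harmless and fold into the single-galaxy argument. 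I may therefore assume $k+\ell\ge1$ and $m\ge1$. Here $\deg(v)=2k+\ell+n+1$ and $\deg(z)=m+1$; in the principal subcase $\deg(v)\ge\deg(z)$ we have $\Delta(G)=\deg(v)$ and, by Corollary~\ref{cor:whitney}, $\omega(L(G))=\Delta(G)$ as soon as $\Delta(G)\ge3$ (the cases $\Delta(G)\le2$ being short paths). The complementary subcase $\deg(z)>\deg(v)$, where $z$ carries the maximum degree, will be treated symmetrically.

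A degree count classifies the edges: the only \emph{unsafe} edges are the star edges $vc_s,vd_s,vx_t$ of the pending triangles and $P_3$'s together with the bridge $vz$, while the matching edges $c_sd_s$ and $x_ty_t$, the leaf edges $zu_i$, and the pending edges $vw_j$ are all adjacent to fewer than $\Delta(G)$ edges and are safe. Alice's strategy would run the single-galaxy pairing on the triangles and $P_3$'s verbatim (answering a matching edge of $O_j$ by a star edge of $O_{j+1}$, a first star edge of $O_j$ by the matching edge of $O_{j-1}$, and skipping on the second star edge of a triangle), while superimposing a rule that keeps $vz$ colourable. The extra responses are: do nothing special if Bob colours $vz$; and, whenever Bob colours a leaf edge $zu_i$ with a colour not yet present at $v$, either colour $vz$ at once with a colour forbidden nowhere, or recycle that colour onto a still-free pending edge $vw_j$, using the $vw_j$ as spare colour sinks.

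The crux is exactly this coupling between the bridge $vz$ and the leaf-star at $z$. Since $vz$ is adjacent to the $\deg(v)-1$ remaining edges at $v$ and to all $m$ leaf edges $zu_i$, and $(\deg(v)-1)+m\ge\Delta(G)$, Bob could in principle block $vz$ by forcing a near-rainbow on the other edges at $v$ while giving the $zu_i$ pairwise distinct fresh colours. The heart of the proof is therefore to establish, and then maintain against every type of Bob move, the invariant that every colour appearing on a $z$-leaf already appears on an edge incident with $v$ (equivalently, that some colour stays reserved for $vz$), and to check that this invariant is compatible with the pairing invariant protecting the star edges at $v$ --- in particular that Bob can never threaten an unsafe star edge at $v$ and the bridge $vz$ on one and the same move. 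I expect the main work to be the case analysis over Bob's move types (safe versus unsafe, at $v$ versus at $z$, with a fresh versus a repeated colour), exploiting Alice's skip option in $[B,A]$ to absorb harmless moves; once the two invariants are in place, the bookkeeping for the $vw_j$ colour sinks and for the degenerate high-degree-$z$ subcase is routine.
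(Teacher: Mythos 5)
Your reduction of the degenerate cases and your identification of the unsafe edges (the star edges at $v$ and the bridge $vz$) agree with the paper, but the strategy you propose fails: your Alice protects $vz$ only \emph{reactively}, when Bob plays at $z$, whereas Bob can prepare the kill entirely on the $v$-side. Take the $(0,3,3,0)$-double galaxy: three pending $P_3$'s at $v$, $m=3$ leaves at $z$, $n=0$, so $\Delta(G)=\deg(v)=\deg(z)=4$ and the game uses $4$ colours. Bob colours $vx_1,vx_2,vx_3$ with colours $1,2,3$; your ``verbatim'' pairing makes Alice answer each move on a matching edge $x_ty_t$ with the same colour, and none of these answers is adjacent to $vz$. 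Then Bob colours $zu_1$ with colour~$4$. Now all four colours appear on edges adjacent to $vz$; of your two prescribed responses, ``colour $vz$ at once'' is impossible, and ``recycle onto a still-free pending edge $vw_j$'' is impossible because $n=0$ (and for $n>0$ Bob first uses up the $vw_j$'s with fresh colours, which your Alice answers by skipping). So $vz$ can never be coloured and Bob wins. The same attack works on every double galaxy with $\deg(v)=\Delta(G)$ and $m\ge1$: Bob rainbow-colours the $\deg(v)-1$ edges at $v$ other than $vz$ --- your Alice never interferes, since she answers star-edge moves on matching edges and skips on pending edges and second triangle edges --- and then spends a single move at $z$. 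Your invariant (``every colour on a $z$-leaf already appears at $v$'') is the right target, but at that point no move of Alice can restore it, so the remaining ``case analysis'' you defer cannot be completed with these responses; the strategy itself must change.

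What is missing is the \emph{proactive} colouring of $vz$, which is exactly where the paper's proof goes beyond the single-galaxy pairing. In the paper, Alice colours $vz$ herself at the first of three triggers: (i) the first time Bob colours a matching edge (she reuses that colour on $vz$ if possible, otherwise a new one); (ii) the moment Bob colours the first star edge of the \emph{last} pending object all of whose star edges were still uncoloured (she then gives $vz$ a new colour); (iii) any move of Bob on a leaf $zu_i$. With (i) and (ii) present one checks that whenever Alice is called to colour $vz$, fewer than $\Delta(G)$ colours occur on its neighbourhood, because before Bob's triggering move some star edge at $v$ was still uncoloured and no leaf at $z$ was coloured; trigger (iii) alone, which is all your proposal contains, arrives too late. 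Two smaller points: for $m=0$ the graph is not literally a single galaxy (single galaxies have no pending $P_2$'s), though the pairing argument does extend; and the subcase $\deg(z)>\deg(v)$ is not ``symmetric'' but strictly easier, since then every star edge at $v$ is adjacent to fewer than $\Delta(G)$ edges and $vz$ is the only unsafe edge.
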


\begin{proof}
Let $G$ be a double galaxy with the notation from Definition~\ref{def:doublegalaxy}.

If the number $k+\ell$ of pending objects of $G$ is at most 1, then
$G$ is a double star, a shooting star or an amaryllis, 
thus, in any case, line $[B,-]$-nice, which implies by
{Observation~\ref{obs:comparechigL}} 
that $G$ is line $[B,A]$-nice.


Otherwise, the degree of $v$ is at least 3, and Alice 
uses an extension of the strategy for a single galaxy.
Note that the maximum degree of $G$ is
\[\Delta(G)=\max\{m+1,2k+\ell+n+1\}.\]
We describe a winning strategy for Alice with $\Delta(G)$ colours
in the $[B,A]$-edge colouring game played on~$G$.


The only unsafe edges are the star edges
of pending objects and the edge~$vz$.
Alice {may arbitrarily number} the pending objects
$O_1,O_2,\ldots,O_{k+\ell}$ and performs basically
the same pairing strategy as in the proof of Lemma~\ref{BAniceSingleGalaxy}
with only small extensions, as described in the following.

\begin{itemize}
\item
If Bob colours the matching edge of the pending object $O_j$, then,
if this was the first such move and the edge $vz$ is still uncoloured,
Alice colours $vz$ with the same colour (if possible, or a new colour otherwise); 
otherwise,
Alice colours a star edge of the pending object $O_{j+1\mod{k+\ell}}$
with the same colour, if possible. If it is not possible, she uses
a new colour for such a star edge.

\item
If Bob colours the first star edge of the pending object $O_j$ 
and there is still a pending object with only uncoloured star edges,
then
Alice colours the matching edge of the pending object $O_{j-1\mod{k+\ell}}$
with the same colour. If the matching edge is already coloured, then Alice
misses her turn. 
\item
If Bob colours the first star edge of the pending object $O_j$ 
and there is no pending object with only uncoloured star edges left,
then Alice colours $vz$
with a new colour (if $vz$ is still uncoloured) or misses her turn (if $vz$
is already coloured).
\item
If Bob colours the edge $vz$, an edge $vx_j$ or the second star edge of the pending object (a triangle) $O_j$, 
then Alice misses her turn.
\item
If Bob colours an edge $zu_i$, then Alice colours $vz$ (if $vz$ is still uncoloured)
or misses her turn (otherwise).
\end{itemize}

This strategy has the same properties as the strategy for the single galaxy
in the proof of Lemma~\ref{BAniceSingleGalaxy}, and, in addition, it guarantees
that the edge $vz$ is coloured before it is in danger to be infeasible
for any colour.
%
%
%
%
%
\end{proof}

\subsection{Permitted for game $[A,A]$}
\label{subsec:AA}

\begin{defin}[full tree]\label{def:fulltree}
Let $n,m_1,m_2\in\IN$.
An \emph{$(n,m_1,m_2)$-full tree} is based on a path $P_3$, 
where there are $m_1$
(respectively, $n$, $m_2$) \rd{leaves} attached its three vertices,
i.e., the graph has the vertex set
\[\{w_1,v,w_2\}\cup\{x_i\mid 1\le i\le m_1\}\cup\{y_j\mid 1\le j\le n\}\cup
\{z_i\mid 1\le i\le m_2\}\]
and the edge set
\[\{w_1v,vw_2\}\cup\{w_1x_i\mid 1\le i\le m_1\}\cup\{vy_j\mid 1\le j\le n\}
\cup\{w_2z_i\mid 1\le i\le m_2\}.\]
A \emph{{full tree}} 
is an $(n,m_1,m_2)$-full tree for some 
{$n,m_1,m_2\in\IN$}.
\end{defin}

Note that an $(n,m_1,1)$-full tree is a shooting star;
{a $(0,m_1,m_2)$-full tree is an empty candy;}
an $(n,m_1,0)$-full tree is a double star;
and an $(n,0,0)$-full tree is a star. 
{These trivial configurations are excluded in the next definition.}

\begin{defin}[full tree of type $E_1$]\label{def:eeone}
{A \emph{full tree of type $E_1$} is a full tree that is neither a shooting star nor a candy
nor a double star.}
\end{defin}

{By Definition~\ref{def:eeone}, a full tree of type $E_1$ does not belong to the permitted configurations
for game $[B,A]$.}

\begin{defin}[satellite]\label{def:satellite}
Let $m_1,m_2\in\IN$.
An \emph{$(m_1,m_2)$-satellite} is based on a triangle $K_3$, 
where there are $m_1$
(respectively, $m_2$) \rd{leaves} attached to two of its three vertices
and exactly one leaf is attached to its third vertex,
i.e., the graph has the vertex set
\[\{w_0,w_1,w_2,y\}
\cup\{z_{1,i}\mid 1\le i\le m_1\}
\cup\{z_{2,i}\mid 1\le i\le m_2\}\]
and the edge set
\[\{w_0w_1,w_0w_2,w_1w_2,w_0y\}
\cup\{w_1z_{1,i}\mid 1\le i\le m_1\}
\cup\{w_2z_{2,i}\mid 1\le i\le m_2\}.\]
A \emph{{satellite}} 
is an $(m_1,m_2)$-satellite for some $m_1,m_2\in\IN$.
\end{defin}

{Note that an $(m_1,0)$-satellite is a special star book (with one book sheet).
Such a trivial configuration is excluded in the next definition.}

\begin{defin}[satellite of type $E_2$]\label{def:eetwo}
{A \emph{satellite of type $E_2$} is a satellite that is not a star book.}
\end{defin}

{By Definition~\ref{def:eetwo}, a satellite of type $E_2$ does not belong to the permitted configurations
for game $[B,A]$.}

\begin{figure}[htbp]
\begin{center}
\begin{minipage}[b]{0.48\textwidth}
\begin{center}
\includegraphics[scale=0.4]{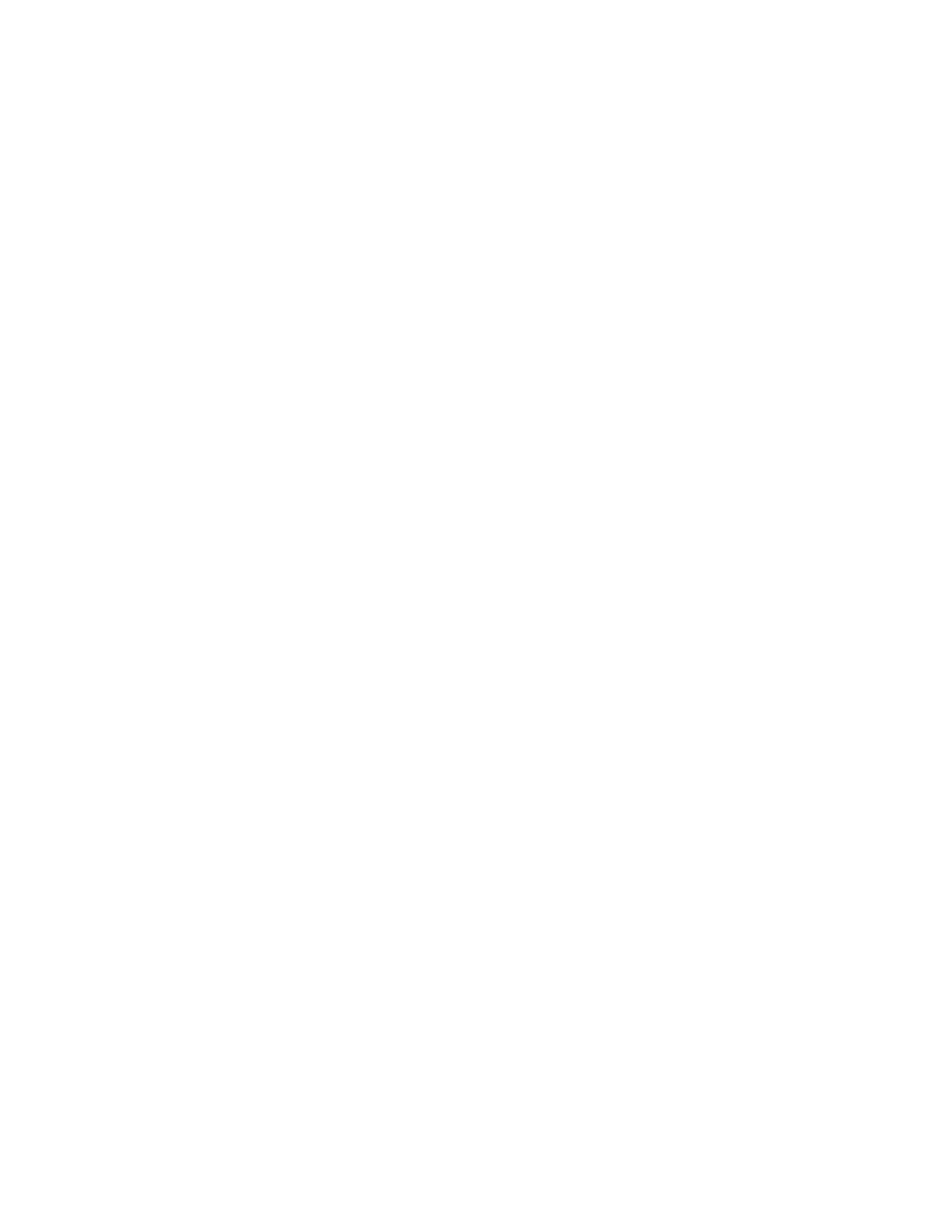}

$(3,4,5)$-full tree
\end{center}
\end{minipage}
\begin{minipage}[b]{0.48\textwidth}
\begin{center}
\includegraphics[scale=0.4]{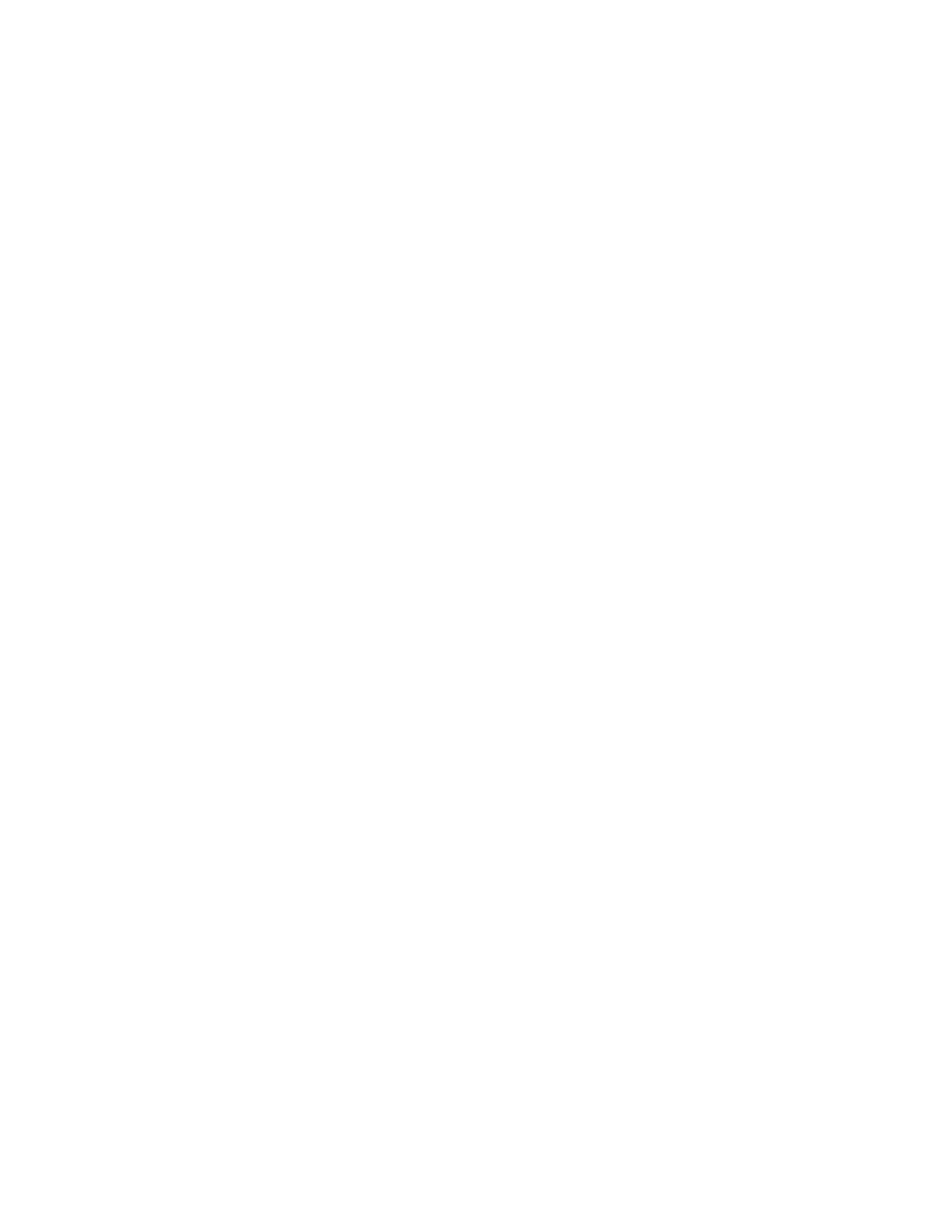}

$(4,5)$-satellite
\end{center}
\end{minipage}
\end{center}
\caption{\label{deffigAA}A full tree and a satellite}
\end{figure}


\begin{lem}\label{AAniceFullTree}
A {full tree} is line $[A,A]$-nice.
\end{lem}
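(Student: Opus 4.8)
The plan is to show that Alice has a winning strategy with $\Delta(G)$ colours on any full tree $G$ of type $E_1$, since the non-$E_1$ full trees (shooting stars, candies, double stars, stars) are already known to be line $[B,A]$-nice (hence line $[A,A]$-nice by Observation~\ref{obs:comparechigL}) by Lemmata~\ref{BniceShootingStar}, \ref{BAniceStarBook}, \ref{BniceCandy}, and the trivial star case. So I would first dispose of these small cases by citing the earlier results, then concentrate on a genuine full tree with both $m_1\ge1$ and $m_2\ge1$ and $n$ arbitrary. For such a graph the maximum degree is $\Delta(G)=\max\{m_1+1,\,n+2,\,m_2+1\}$, attained at one of the three spine vertices $w_1,v,w_2$ of the underlying $P_3$. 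Since a full tree is a tree, its line graph is line perfect, so $\omega(L(G))=\Delta(G)$ by Corollary~\ref{cor:whitney}; thus I must exhibit an Alice strategy using exactly $\Delta(G)$ colours.

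\textbf{Key structural observation.} The only unsafe edges (those adjacent to $\Delta(G)$ others) are the two spine edges $w_1v$ and $vw_2$, together with the leaf edges at whichever spine vertex realises the maximum degree. I would organise Alice's strategy around protecting these edges. The crucial feature of a full tree is that the two spine edges $w_1v,vw_2$ form a matched pair: they are mutually adjacent (both meet $v$), and together they are the ``bottleneck.'' My plan is a \emph{pairing strategy} in which Alice responds to Bob's move on one spine edge by immediately colouring the other spine edge with the same colour (feasible because giving both spine edges a common colour never conflicts, and it removes that colour from play at $v$ but uses it consistently). After the spine is monochromatically settled, the graph decomposes: the leaves at $w_1$, the leaves at $v$, and the leaves at $w_2$ become three independent stars whose uncoloured star edges can each always be completed, since a leaf edge at a vertex of degree $d\le\Delta(G)$ sees at most $d-1$ other coloured edges and one reserved spine colour, leaving a free colour.

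\textbf{The main obstacle} is the case analysis on Bob's \emph{first} move before the spine is fixed, especially when Bob colours a high-degree leaf edge rather than a spine edge, and the bookkeeping that guarantees no unsafe edge is ever starved. I would handle this by the same device used in Lemma~\ref{lem:auxFoneone} and the galaxy lemmata: whenever Bob plays a leaf edge at the maximum-degree vertex, Alice responds on a spine edge (or skips, using the skip privilege of game $[A,A]$) so as to keep one of the two spine edges ``in reserve'' and to keep a spare colour available at each of $w_1,v,w_2$. The key invariant to maintain is that after each of Alice's moves, every still-uncoloured unsafe edge is adjacent to at most $\Delta(G)-1$ distinctly-coloured edges, equivalently at least one colour remains feasible for it; this invariant, together with the fact that the two spine edges get a common colour, forces Alice's win by an inductive count. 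I expect the routine verification that the invariant survives each of Bob's possible replies to be the longest part, but conceptually it reduces to the spine-pairing plus the per-vertex counting argument, and the skip option in $[A,A]$ makes it strictly easier than the corresponding $[B,A]$ strategies already established.
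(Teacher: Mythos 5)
There is a genuine error at the heart of your strategy. Your pairing idea asks Alice to answer a move on one spine edge by colouring the \emph{other} spine edge \emph{with the same colour}, so that ``the spine is monochromatically settled.'' But, as you yourself note, the two spine edges $w_1v$ and $vw_2$ are adjacent (they share the vertex $v$), so in a proper edge colouring they can \emph{never} receive a common colour; the move you prescribe is illegal, and everything downstream of it (the decomposition into three independent stars with ``one reserved spine colour'') collapses. A second, smaller error is your ``key structural observation'': leaf edges are never unsafe, not even at the spine vertex realising $\Delta(G)$. A leaf edge at $w_1$ is adjacent to exactly $m_1$ edges ($m_1-1$ sibling leaves plus $w_1v$), and $m_1<m_1+1\le\Delta(G)$; similarly for leaves at $v$ and $w_2$. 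So the only possibly unsafe edges are the two spine edges themselves.

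Once that is seen, the paper's argument is almost immediate and needs no pairing or invariant at all: in game $[A,A]$ \emph{Alice moves first}, so she simply colours $vw_1$ in her first move and $vw_2$ in her second. Between these moves Bob plays only once, so when Alice comes to colour $vw_2$ it is adjacent to at most two coloured edges; since (outside the trivial cases) $\Delta(G)\ge3$, a feasible colour exists. After both spine edges are coloured, every uncoloured edge is safe and the game finishes in Alice's favour. The degenerate cases (at most one unsafe edge, or $G=P_5$) are handled directly, the latter via Lemma~\ref{BniceShootingStar}. Your reduction of non-$E_1$ full trees to earlier lemmata is fine, but the case you isolate as the real work is exactly where your proposed strategy fails.
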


\begin{proof}
Let $G$ be a full tree with the vertex labels from Definition~\ref{def:fulltree}.
Note that the only possibly unsafe edges are $vw_1$ and $vw_2$.
We describe a winning strategy for Alice with $\Delta(G)$ colours
in the $[A,A]$-edge colouring game played on~$G$.

If there is at most one unsafe edge, then Alice colours this edge (if any)
in her first move and wins trivially. In the following, we assume that there
are two unsafe edges.

If $G$ is the path $P_5$, which is a trivial shooting star, 
then Alice {misses her first turn and} has a trivial winning strategy 
{for the $[B,-]$-edge colouring game} by Lemma~\ref{BniceShootingStar}.

Otherwise, $\Delta(G)\ge3$, and so the number of colours in the game is
at least~3. Therefore, Alice may ensure that the 
%
two unsafe edges 
$vw_1$ and $vw_2$ are coloured after her first two moves, which is possible
with three colours since Bob has only one move \rd{in between}.

Thus, in any case, Alice wins. 
\end{proof}


\begin{lem}\label{AAniceSatellite}
A satellite is line $[A,A]$-nice.
\end{lem}

\begin{proof}
Let $G$ be a satellite with the vertex labels from Definition~\ref{def:satellite}.
Note that the only possibly unsafe edges are the triangle edges $w_0w_1$,
$w_0w_2$ and $w_1w_2$.
We describe a winning strategy for Alice with $c=\Delta(G)$ colours
in the $[A,A]$-edge colouring game played on~$G$.

Alice misses her first turn and then reacts as follows on Bob's first move.
\begin{itemize}
\item If Bob colours $w_1w_2$, then Alice colours $w_0y$ with the same
colour, and vice-versa. 
Now this colour may not be used any more.
Note that the graph $G-\{w_1w_2,w_0y\}$ is an empty candy and we have
\[\Delta(G-\{w_1w_2,w_0y\})=\Delta(G)-1=c-1.\]
Therefore, if Alice follows her winning strategy for the
$[B,-]$-edge colouring game with $c-1$ colours played on the empty candy
$G-\{w_1w_2,w_0y\}$, which exists by Lemma~\ref{BniceCandy}, then Alice will win.
\item Let $s\in\{1,2\}$. If Bob colours $w_0w_s$, then, if $m_{3-s}\ge1$, Alice colours 
the star edge $e:=w_{3-s}z_{3-s,1}$
with the same colour,
and if $m_{3-s}=0$, Alice misses her turn. And vice-versa, if Bob colours a
star edge $e:=w_{3-s}z_{3-s,i}$ for some $i\in\IN$ with $1\le i\le m_{3-s}$, then
Alice colours $w_0w_s$ with the same colour.
Now this colour may not be used any more.
Note that the graph 
\[G':=\left\{\begin{array}{ll}
G-\{w_0w_s,e\}&\text{if}\ m_{3-s}\ge1\\
G-\{w_0w_s\}&\text{if}\ m_{3-s}=0
\end{array}\right.\] 
is a shooting star and we have
\[\Delta(G')=\Delta(G)-1=c-1.\]
Therefore, if Alice follows her winning strategy for the
$[B,-]$-edge colouring game with $c-1$ colours played on the shooting star~$G'$, 
which exists by Lemma~\ref{BniceShootingStar}, then Alice will win.
\end{itemize}

Thus, in any case, Alice has a winning strategy.
\end{proof}

We remark that, since, in her strategy, Alice misses her first turn,
the proof of Lemma~\ref{AAniceSatellite}, indeed, shows
that a satellite is line $[B,A]$-nice. However, in general, a satellite is not
line $[B,A]$-perfect, since it may contain a 3-caterpillar $F_1$ as an edge-induced subgraph
(which can be seen by deleting the edge $w_1w_2$).


















\section{Proof of the structural characterisations}
\label{sec:proofs}

\subsection{Proof of Theorem~\ref{thm:lineAA}}

%

\rd{A basic concept in the proof of Theorem~\ref{thm:lineAA} is the concept
of a block of a graph.
Recall that a graph $G$ is \emph{2-connected} if it has at least 3 vertices and,
for any vertex $v$, if $v$ is deleted, the remaining graph $G-v$ is still connected.
A vertex~$v$ of a graph~$G$ is an \emph{articulation point} of~$G$
if deleting $v$ increases the number of components, i.e., $G-v$ has strictly more
components than~$G$.
A \emph{block} of a graph~$G$ is a maximal subgraph of~$G$ that 
is connected and contains
no articulation points.
Thus, a block is either a maximal 2-connected subgraph of~$G$ or a~$K_2$.
The edges of different blocks do not overlap, but several blocks may
share the same articulation point.}

\begin{proof}[of Theorem~\ref{thm:lineAA}]
We prove the equivalence by a ring closure. 
\begin{description}
\item[(1)$\Longrightarrow$(2)] 
We have to prove that $P_6,C_5, F_2,F_3$ {and $F_1\cup F_1$} are not line $[A,A]$-perfect.
It is sufficient to prove that they are not line $[A,A]$-nice.
This was proved in Lemma~\ref{notAAnicePsix} for path $P_6$, 
in Lemma~\ref{notAAniceFtwo} for the mini lobster $F_2$, and
in Lemma~\ref{notAAniceFthree} for the trigraph $F_3$.
The $C_5$ is not line perfect, thus it is not line $[A,A]$-perfect
(see Lemma~\ref{notAAniceCfive}). 
$F_1$ is not line $[B,A]$-nice by Lemma~\ref{notBAniceFone}, 
so $F_1\cup F_1$ is not line $[A,A]$-nice
(see Lemma~\ref{notAAnicetwoFone}).
%
%
%
%
%

\item[(2)$\Longrightarrow$(3)] 
Let $G$ be a graph that contains no $P_6$, $C_5, F_2,F_3,F_1\cup F_1$ as 
edge-induced subgraphs and let $H$ be a component of $G$.

Since $H$ does not contain $C_5$ \rd{or $P_6$}, the component $H$ is line perfect by
Theorem~\ref{thm:trotterchar}. 
Thus, by Theorem~\ref{thm:maffray}, every block of $H$ is either bipartite or
a $K_4$ or a triangular book $K_{1,1,m}$ for some $m\ge1$. 
Since $H$ contains no $P_6$ and no $C_5$,
the only possible cycles are triangles and 4-cycles.

We first observe that, 
among the blocks that $H$ contains,
there is at most 
one block which is
one of the following configurations:
$K_{1,1,m}$ with $m\ge2$ (a \emph{nontrivial} triangular book), 
\rd{a bipartite} 
block containing at least one 4-cycle, 
or~$K_4$.
Assume to the contrary \rd{that} $H$ contains two such (not necessarily different) configurations. 
Then 
{there are}
3 edges from each
of the two configurations {which}, together with the edges on a shortest path connecting
the two configurations, form {an edge-induced} $P_k$ with $k\ge7$, which contains a $P_6$,
contradicting~(2).

We observe further that if $H$ contains a block that is a triangle (i.e.,\ a \emph{trivial}
triangular book $K_{1,1,1}$), then $H$ cannot contain any of the configurations
$K_{1,1,m}$ with $m\ge2$, $K_4$, or a bipartite block containing at least one 4-cycle.
Assume to the contrary \rd{that} $H$ contains one of them. Then 
{there are}
3 edges from such
a configuration and 2 edges from the triangle {which}, together with the edges 
on a shortest path connecting
the configuration with the triangle, form {an edge-induced} $P_k$ with $k\ge6$, which contains a $P_6$,
contradicting~(2).

%


\begin{enumerate}[{Case }1:] 
\item $H$ contains a $K_4$.

Then there is a vertex $v$ \rd{of the $K_4$}, so that every edge of $H$ that is not
part of the $K_4$ is adjacent to $v$, since, otherwise, if outside of the
$K_4$ there is an
edge that is not adjacent to $v$ and an edge that is adjacent to $v$,
by using these two edges and 3 edges of the $K_4$ there is a
path or a cycle of lenght at least 5, thus $H$ contains 
{an edge-induced $P_6$ or $C_5$}, which
contradicts~(2). Thus $H$ is a {\bf tetrahedron of flowers}.


\item $H$ contains a nontrivial triangular book $K_{1,1,m}$ with $m\ge2$.

Let $v_1,v_2$ be the two vertices of degree $m+1$ and $w_1,\ldots,w_m$ the
vertices of degree 2 of the $K_{1,1,m}$. Let $e$ be an edge that does not
belong to the $K_{1,1,m}$.

\begin{figure}[htbp]
\begin{center}
\begin{minipage}[b]{0.3\textwidth}
\begin{center}
\includegraphics[scale=0.4]{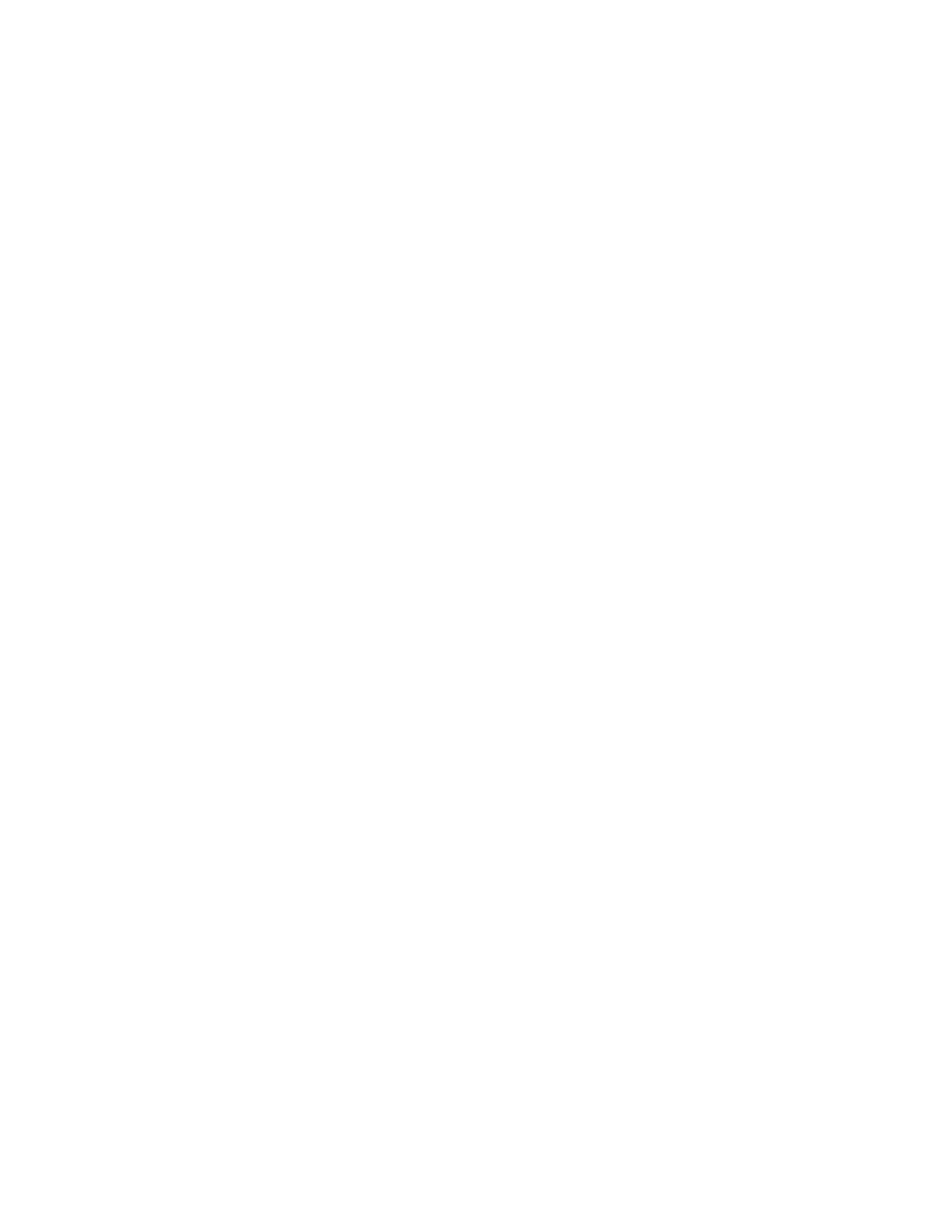}

(a)
\end{center}
\end{minipage}
\begin{minipage}[b]{0.3\textwidth}
\begin{center}
\includegraphics[scale=0.4]{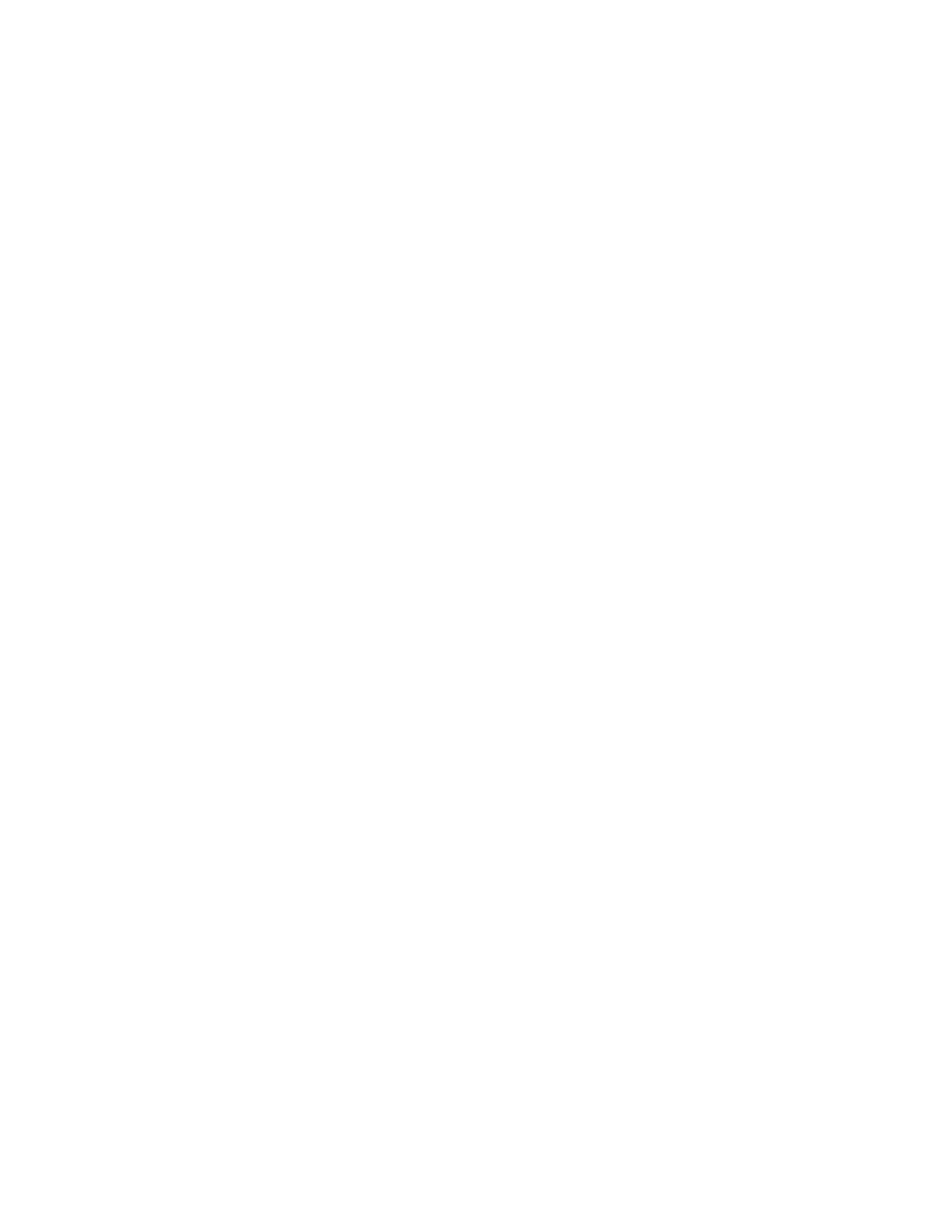}

(b)
\end{center}
\end{minipage}
\begin{minipage}[b]{0.35\textwidth}
\begin{center}
\includegraphics[scale=0.4]{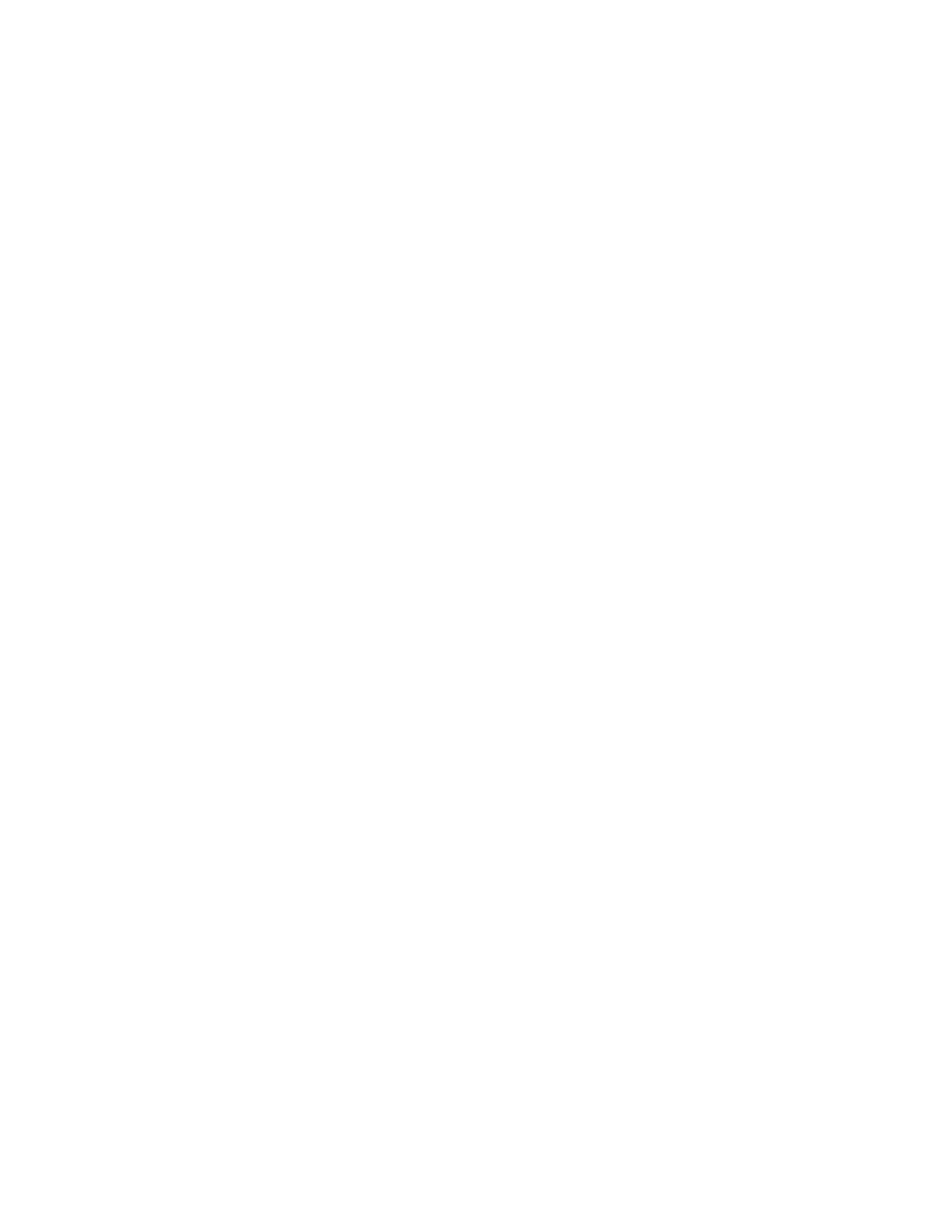}

(c)
\end{center}
\end{minipage}

\begin{minipage}[b]{0.3\textwidth}
\begin{center}
\includegraphics[scale=0.4]{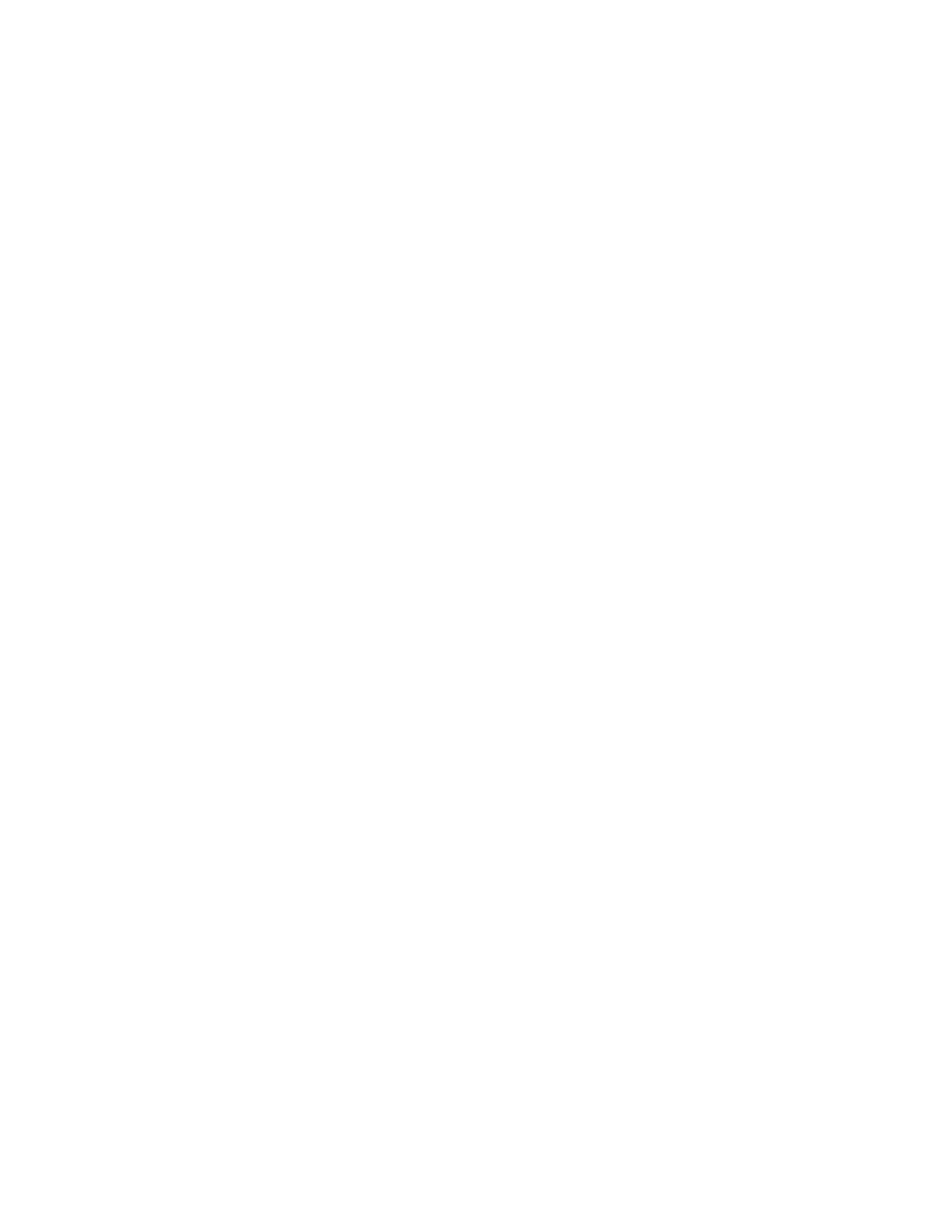}

(d)
\end{center}
\end{minipage}
\begin{minipage}[b]{0.5\textwidth}
\begin{center}
\includegraphics[scale=0.4]{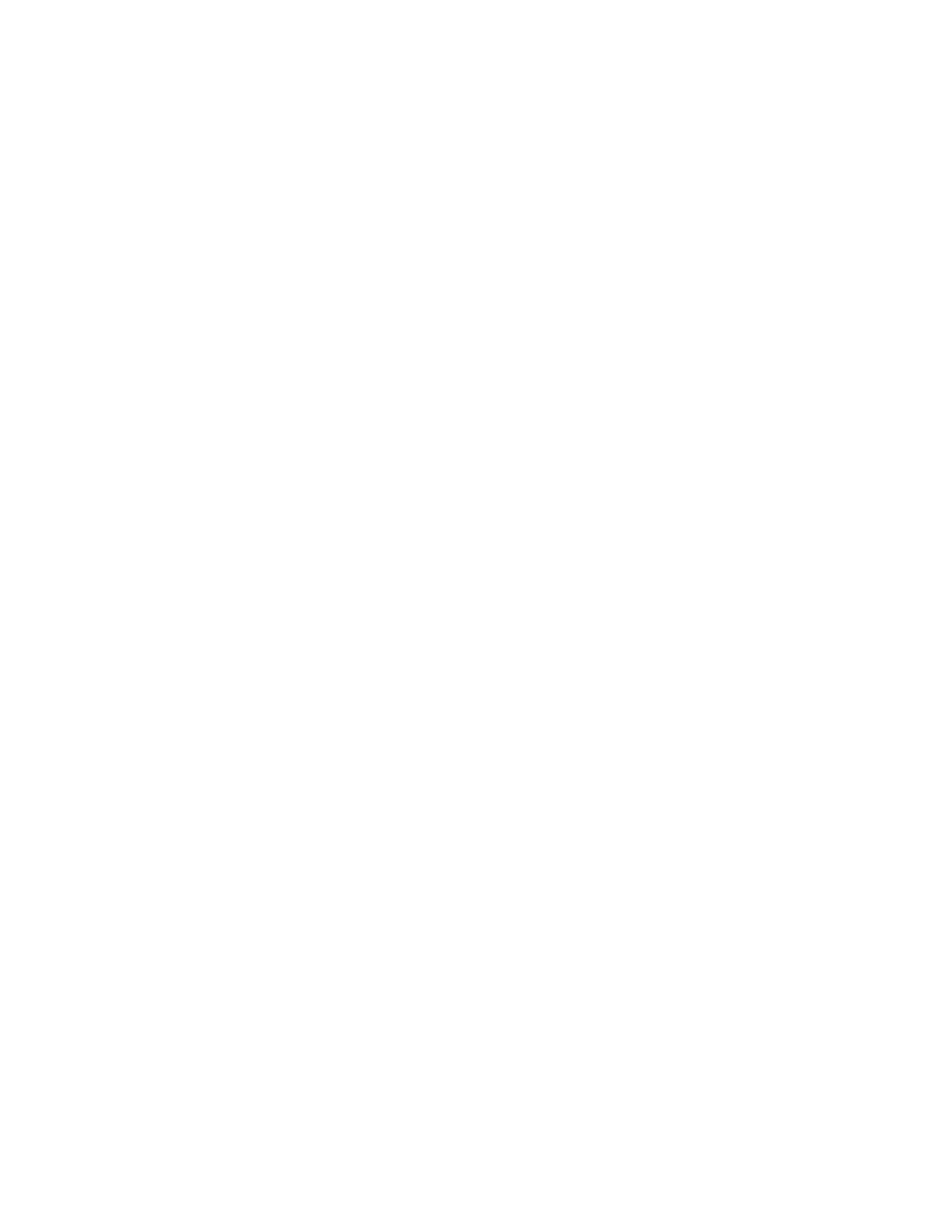}

(e)
\end{center}
\end{minipage}
\end{center}
\caption{\label{fig:proofAAcopy}Creating forbidden configurations in Case~2.}
\end{figure}

If $e$ connects two vertices $w_i$ and $w_j$ with
$i\neq j$, then, in case $m=2$, we have a $K_4$ and are thus in Case 1, which
was already discussed above, or, in case $m\ge3$, the graph contains 
{an edge-induced} $C_5$ (see
Figure~\ref{fig:proofAAcopy}~(a)), which is forbidden by~(2).

Now consider the case that $e$ is incident with some vertex $w_i$ and another
vertex $x$ that is not part of the $K_{1,1,m}$. If $m\ge3$, then $e$ together
with 4 edges of the $K_{1,1,m}$ form a $P_6$
(see Figure~\ref{fig:proofAAcopy}~(b)), which is forbidden by~(2).
If $m=2$, then there is neither an additional edge incident with $v_1$ {or} $v_2$
other than the edges of the $K_{1,1,m}$ nor an edge different from~$e$ incident
with $x$ since, otherwise, $H$ contains 
{an edge-induced} $P_6$
(see Figures~\ref{fig:proofAAcopy}~(c) and~(d)), which is forbidden by~(2).
Thus, in this case, $H$ is a {\bf diamond of flowers}.

If $e$ is incident with $v_1$ or $v_2$, then there is no edge adjacent to $e$
that is not part of the $K_{1,1,m}$ since, otherwise, $H$ contains a $P_6$
(see Figure~\ref{fig:proofAAcopy}~(e)), which is forbidden by~(2).
Thus, in this case, $H$ is a {\bf star book}.



\item $H$ is bipartite and contains a block with a $C_4$.

First note that the union of the 4-cycles in the block must form a $K_{2,m}$
with $m\ge2$. This is because it is the only possibility to combine
several 4-cycles in a block without creating cycles $C_{2k+6}$ with $k\in\IN$,
which are forbidden since its subgraph, the $P_6$, is forbidden by~(2).

Note that in the contradictions of Case~2 in Figure~\ref{fig:proofAAcopy} the
edge between $v_1$ and $v_2$ was \rd{not used}, which means they are still valid even 
if $v_1v_2$ is absent, which is the case $K_{2,m}$ here. Only for $m=2$ where there appeared a $K_4$, there would appear here a diamond (which is not bipartite). 
Thus, by the same
proof as in Case 2, 
$H$ is a {\bf candy}.




\item $H$ contains a block that is a triangle.

Consider one fixed triangle with vertices $v_1,v_2,v_3$.
Note that no $P_4$ may be pending at $v_i$ since,
otherwise, the three edges of the $P_4$ and two {of the} edges of {the} triangle would
form {an edge-induced} $P_6$, which is forbidden by~(2). 
Similarly, no $P_3$ resp.\ no other triangle
may be pending at $v_i$ when $P_2$ is pending at $v_j$ with $j\neq i$.

In the following, we disinguish \rd{between} the cases \rd{that} the number $a$
of vertices $v_1,v_2,v_3$ that have at least one pending $P_2$ is
2, 3 or at most 1.


\begin{enumerate}[{Subcase 4.}1:]
\item 
$a=2$.

\rd{The case that the block containing $v_1,v_2,v_3$ is a $K_4$ or a diamond
was already discussed in Case~1 or Case~2, respectively.
Thus we are left with the case that the block containing $v_1,v_2,v_3$
is a triangle.
Then
the component $H$ is a \textbf{star book} with exactly one triangle 
(and two of $v_1,v_2,v_3$ have at least one pending $P_2$).}

\item 
$a=3$.

Suppose now at least one $P_2$ is pending at every $v_i$. 
Since the trigraph $F_3$, in which every $v_i$ has two pending $P_2$s, 
is forbidden, at least one of $v_1,v_2,v_3$ 
has at most one pending $P_2$, which means that 
$H$ is a \textbf{satellite}. 

\item 
$a\le1$.

Suppose exactly one of $v_1,v_2,v_3$ has at least one pending $P_3$. 
Note that $H$ may contain more than one block which is a triangle. 
As discussed above, all these triangle blocks share exactly one vertex 
and we denote this by $v$. At most one pending star with at least 2 star edges 
may be pending at $v$ since,
otherwise, if there are two of them, two of the star edges of each pending star and
the edges of the pending stars that are incident with $v$ together with
{two of the edges} of a triangle would form 
{an edge-induced} mini lobster $F_2$, 
which is forbidden by~(2).
Thus, $H$ is a {\bf double galaxy} or a {\bf single galaxy}. 
\end{enumerate}

%


\item $H$ is a tree.

As the $P_6$ is forbidden, $H$ has diameter at most 4. If $H$ has diameter at most 3,
then $H$ is an isolated vertex (which is a {\bf single galaxy}) or
a double star (which is a {\bf double galaxy}). If $H$
has diameter 4, $H$ can be depicted by configuration $E$
in Figure~\ref{fig:E}. 
\begin{figure}[htbp]
\begin{center}
\includegraphics[scale=0.4]{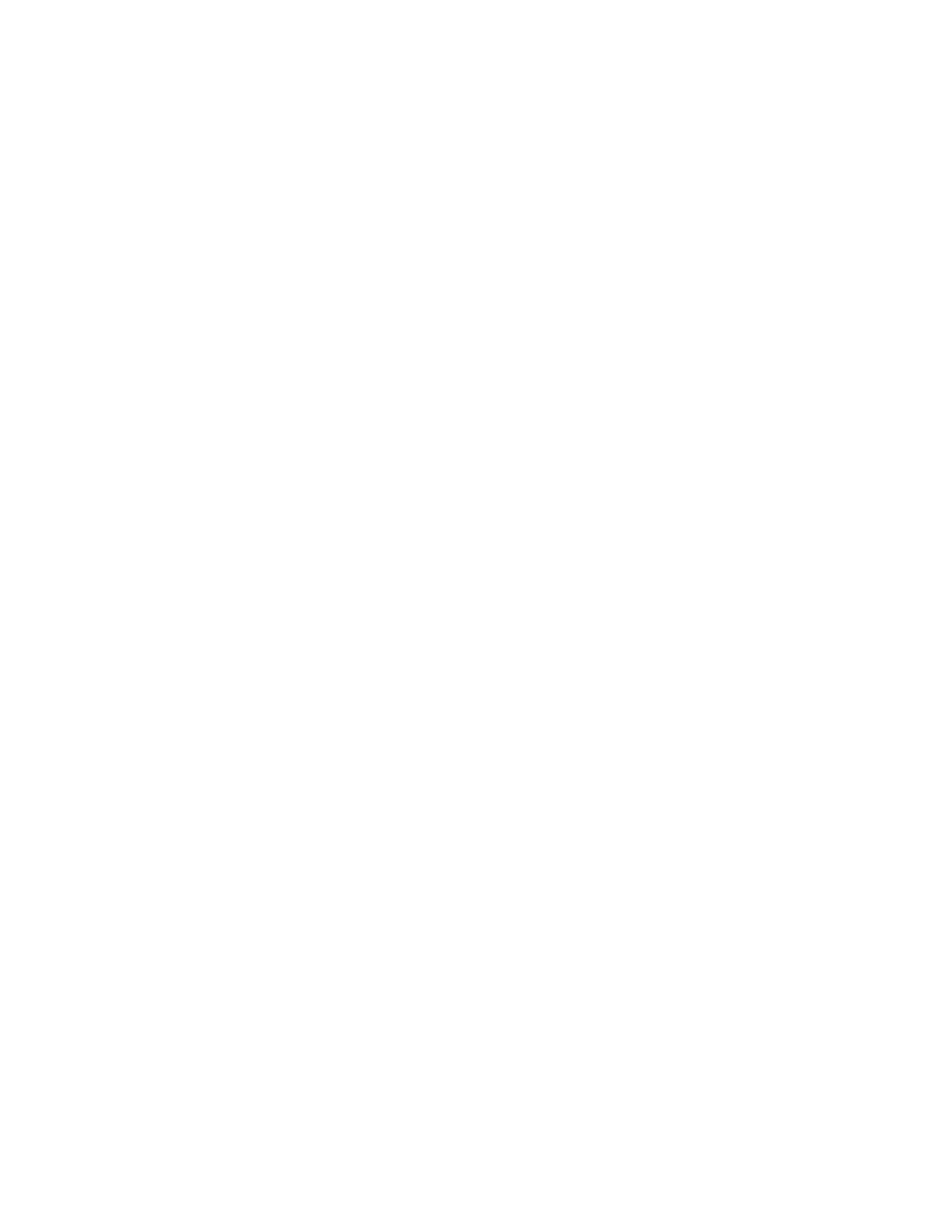}
\end{center}
\caption{\label{fig:E}A generic tree $E$ of diameter 4}
\end{figure}
Since the {mini lobster} $F_2$ is $[A,A]$-forbidden we conclude 
that, if $m\geq 3$ in configuration $E$, at most two of the stars 
{pending at the central vertex}
have more than one leaf. 
If there are exactly two such stars,
{then the other stars are simply pending $P_2$s, again 
since the mini lobster $F_2$ is forbidden, thus} 
$H$ is a \textbf{full tree}; otherwise, i.e., 
when there is at most one such star, 
$H$ is a \textbf{double galaxy} 
(or a \textbf{single galaxy})
with no triangles.
\end{enumerate}

%

Thus, in all cases we get a permitted configuration.
{Furthermore, since $F_1\cup F_1$ is forbidden, at most one of the
components may be a full tree of type~$E_1$ or a satellite of type~$E_2$ (as
both of the latter configurations contain a 3-caterpillar $F_1$ by definiton).
Therefore},
(3) holds.

\item[(3)$\Longrightarrow$(1)] 

The permitted configurations \rd{for each component} are line $[A,A]$-nice:
we have proved this 
for the candy in Lemma~\ref{BniceCandy},
for the star book in Lemma~\ref{BAniceStarBook},
for the single galaxy in Lemma~\ref{BAniceSingleGalaxy},
for the double galaxy in Lemma~\ref{BAniceDoubleGalaxy},
for the diamond of flowers in Lemma~\ref{BAniceDiamondOfFlowers},
for the tetrahedron of flowers in Lemma~\ref{BAniceTetrahedronOfFlowers},
for the full tree $E_1$ in Lemma~\ref{AAniceFullTree}, and
for the satellite in Lemma~\ref{AAniceSatellite}.
{With the exception of a full tree of type~$E_1$ and a satellite of type~$E_2$,
the permitted configurations are even line $[B,A]$-nice.}

{Let $G$ be a graph whose components are of the permitted types and where at
most one component is a \emph{special component}, namely a full tree of type~$E_1$
or a satellite of type~$E_2$.
Then, in her first move, Alice plays according to her strategy for the special component
(if there is a special component) or misses her turn (if there is no special component).
After that she reacts always in the component where Bob has played in his
previous move according to her strategy for the $[B,A]$-edge colouring game
(respectively, $[A,A]$-edge colouring game for the special component)
or she misses her turn (if the component where Bob has played in his previous move
is completely coloured).
By the \rd{lemmas} mentioned above, Alice will win. Thus $G$ is $[A,A]$-nice.}


In 
\rd{Lemma~\ref{lem:addhered} we will show}
that the permitted types
are hereditary. Thus the permitted types are line $[A,A]$-perfect.
{From this we conclude that $G$ is line $[A,A]$-perfect},
which proves (1).
\end{description}~\end{proof}

\begin{lem}\label{lem:addhered}
\rd{The permitted types for game $[A,A]$ are hereditary.}
\end{lem}

\begin{proof}
\rd{See Table~\ref{tab:AAhereditary} and its caption.}
\end{proof}

\begin{table}[htbp]
\begin{center}
\begin{tabular}{|l|l|}
\hline
Structure&Structure after deleting an edge\\
\hline
\hline
candy&candy\\
&two stars (which are galaxies)\\
\hline
star book&star book\\
&candy\\
&double star (which is a double galaxy)\\
\hline
diamond of flowers&diamond of flowers\\
&double galaxy\\
&star book\\
&candy\\
\hline
tetrahedron of flowers&tetrahedron of flowers\\
&diamond of flowers\\
&star book\\
\hline
single galaxy&double galaxy\\
&single galaxy \& $P_2$ (which is a double galaxy)\\
&single galaxy\\
\hline
double galaxy&double galaxy\\
&double galaxy \& star (which is a galaxy)\\
&single galaxy \& star (which is a galaxy)\\
\hline
{\bf satellite}&star book\\
&double galaxy\\
&{\bf full tree $E_1$}\\
&{\bf satellite}\\
\hline
{\bf full tree $E_1$}&{\bf full tree}\\
&double star \& star (which are galaxies)\\
\hline
\end{tabular}
\end{center}
\caption{\label{tab:AAhereditary}The permitted types for the edge-game $[A,A]$ are
hereditary. Note that, in particular, by deleting an edge, there will remain
at \rd{most} one satellite or full tree. \rd{Furthermore, in some cases
additionally there will be isolated vertices, which we do not mention
explicitly.}}
\end{table}

\rd{In order to illustrate the proof we depict four
characteristic situations for the candy when one edge is deleted
in Figure~\ref{fig:candyhered}.}

\begin{figure}[htbp]
\begin{center}
\begin{tabular}{ccc}
\includegraphics[scale=0.3]{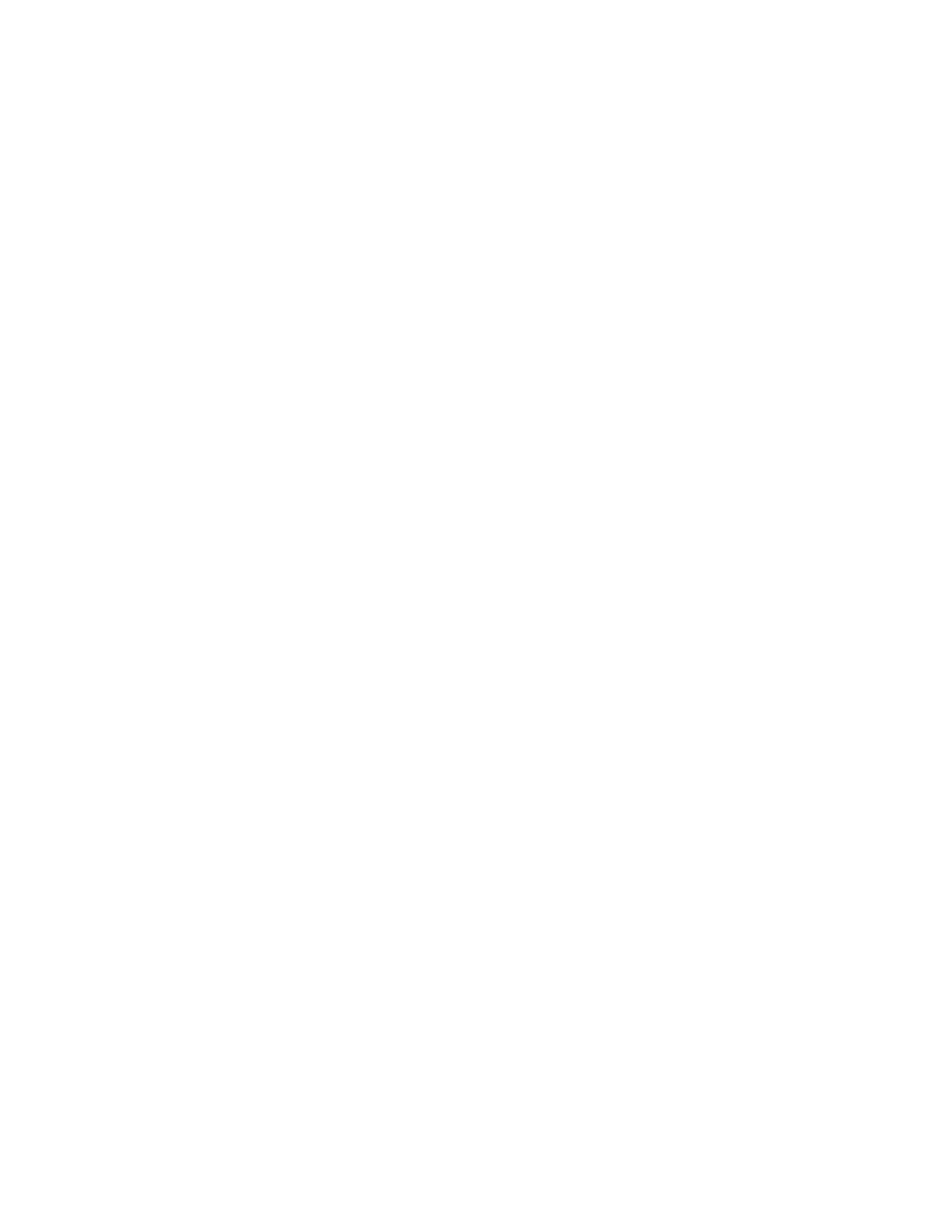}&&\includegraphics[scale=0.3]{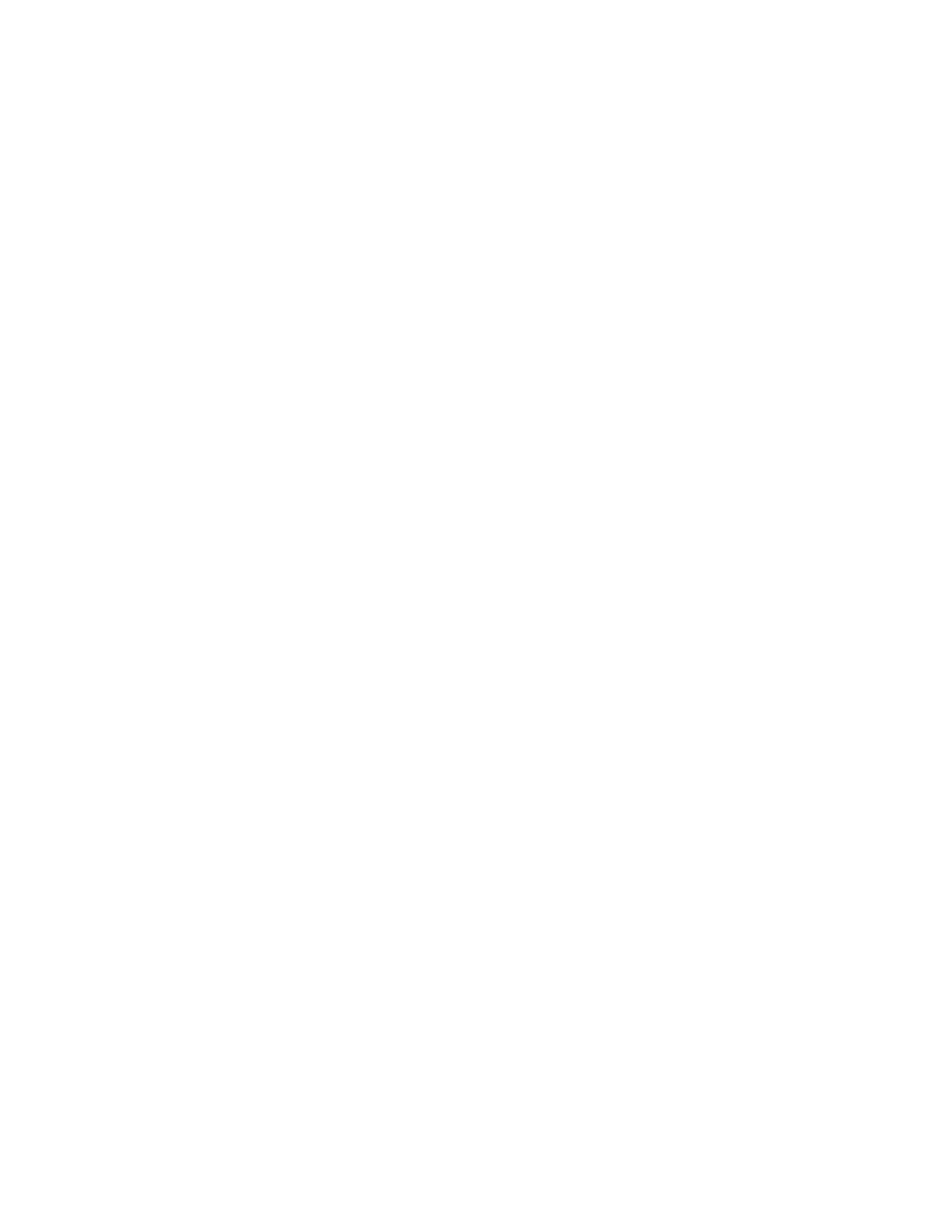}\\
candy&$\longrightarrow$&candy (+isolated vertex)\\[3ex]
\includegraphics[scale=0.3]{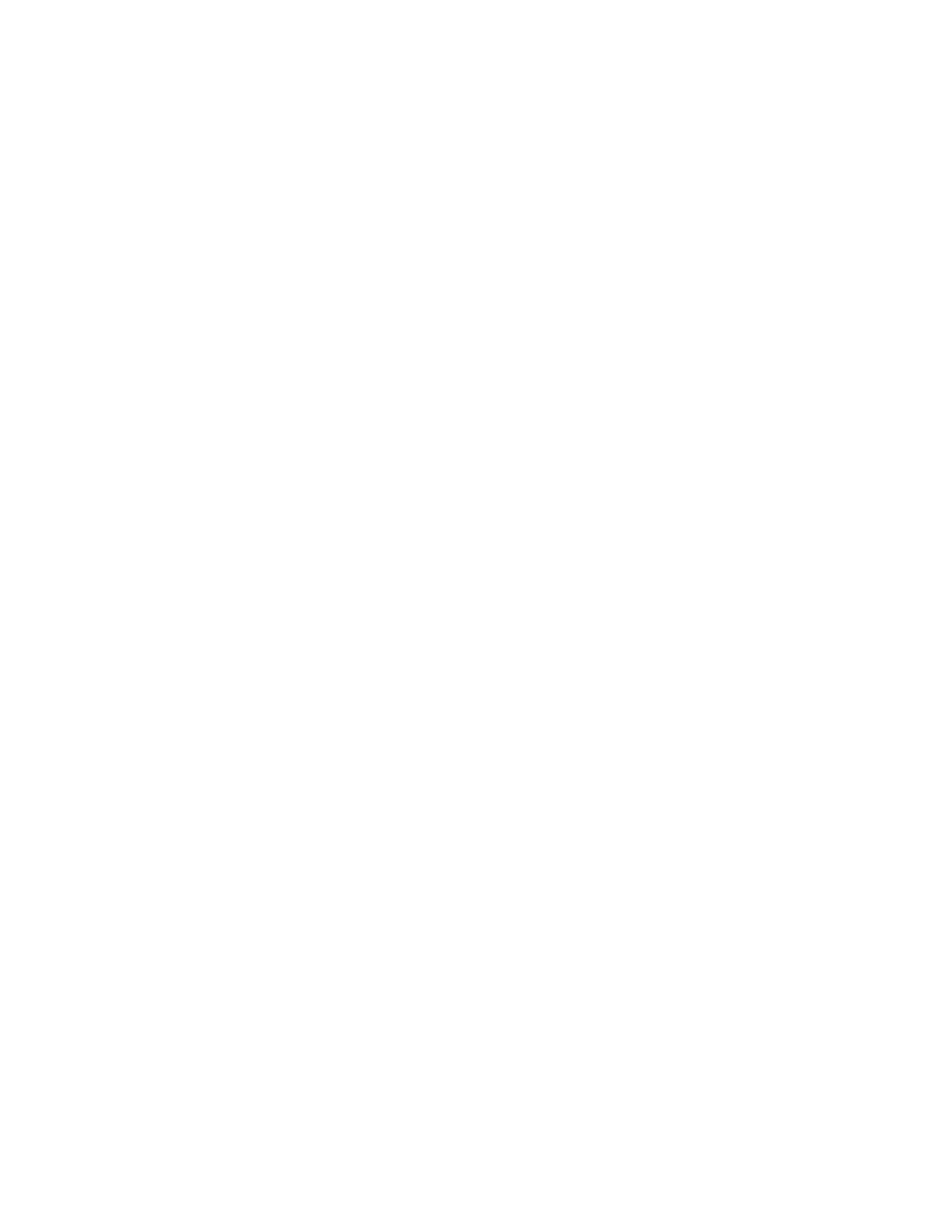}&&\includegraphics[scale=0.3]{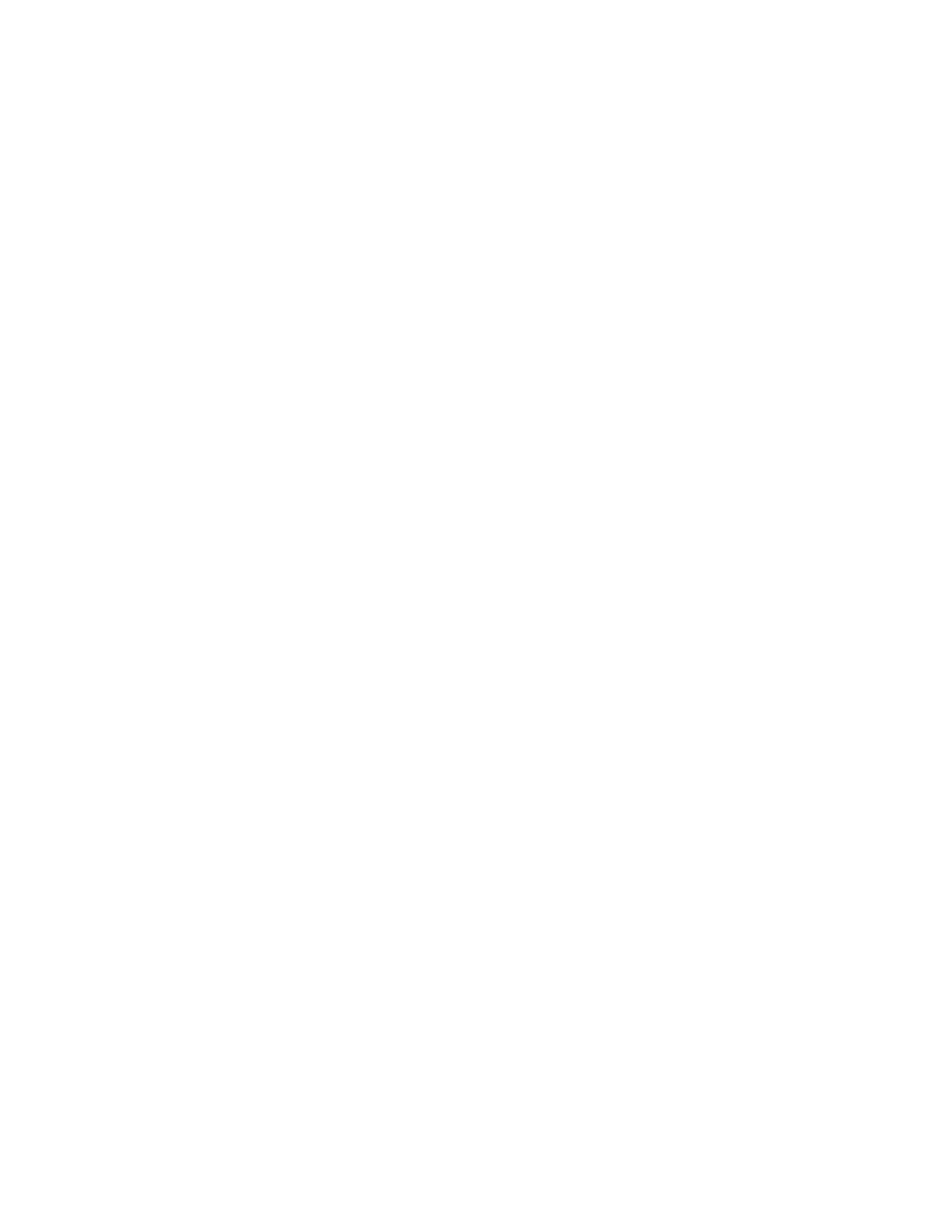}\\
candy&$\longrightarrow$&candy\\[3ex]
\includegraphics[scale=0.3]{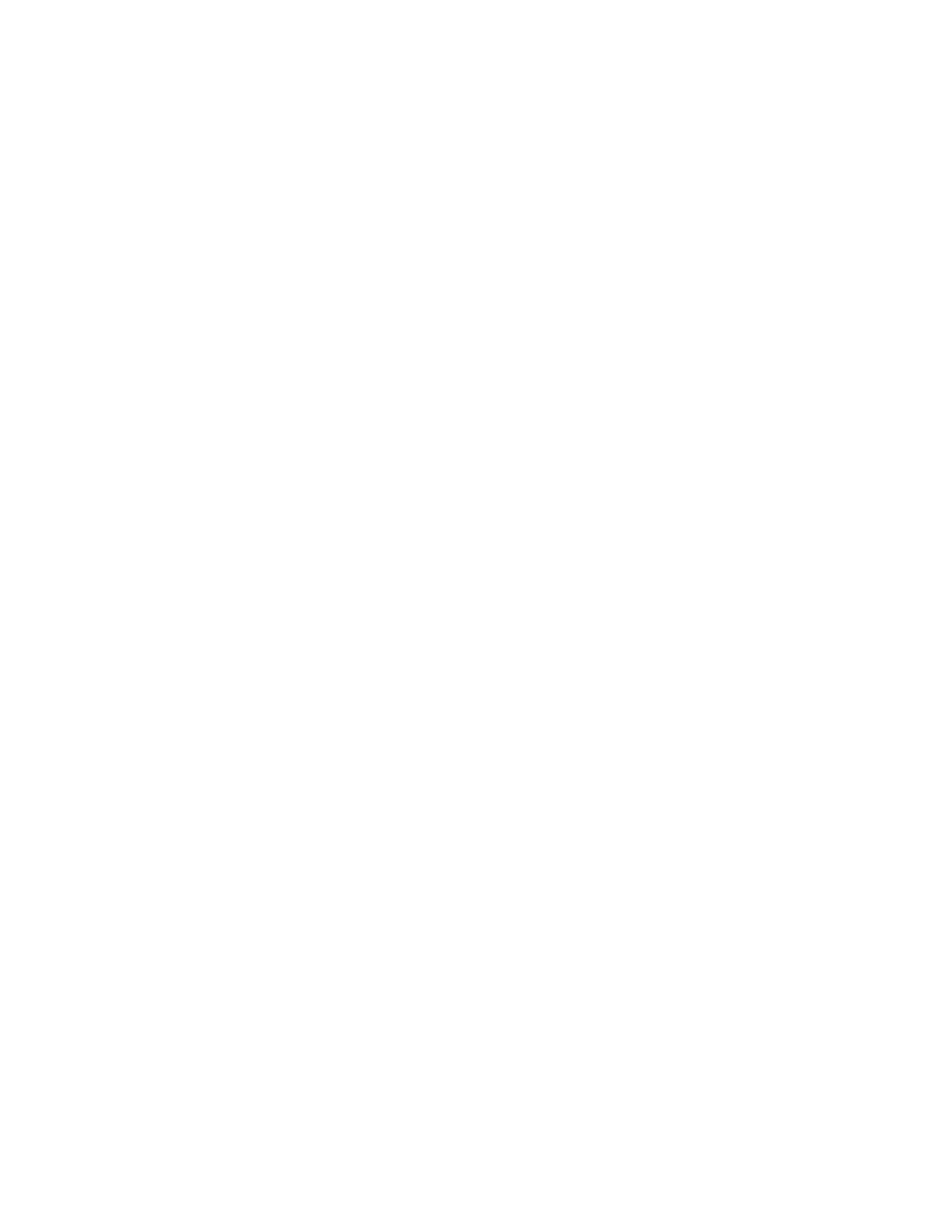}&&\includegraphics[scale=0.3]{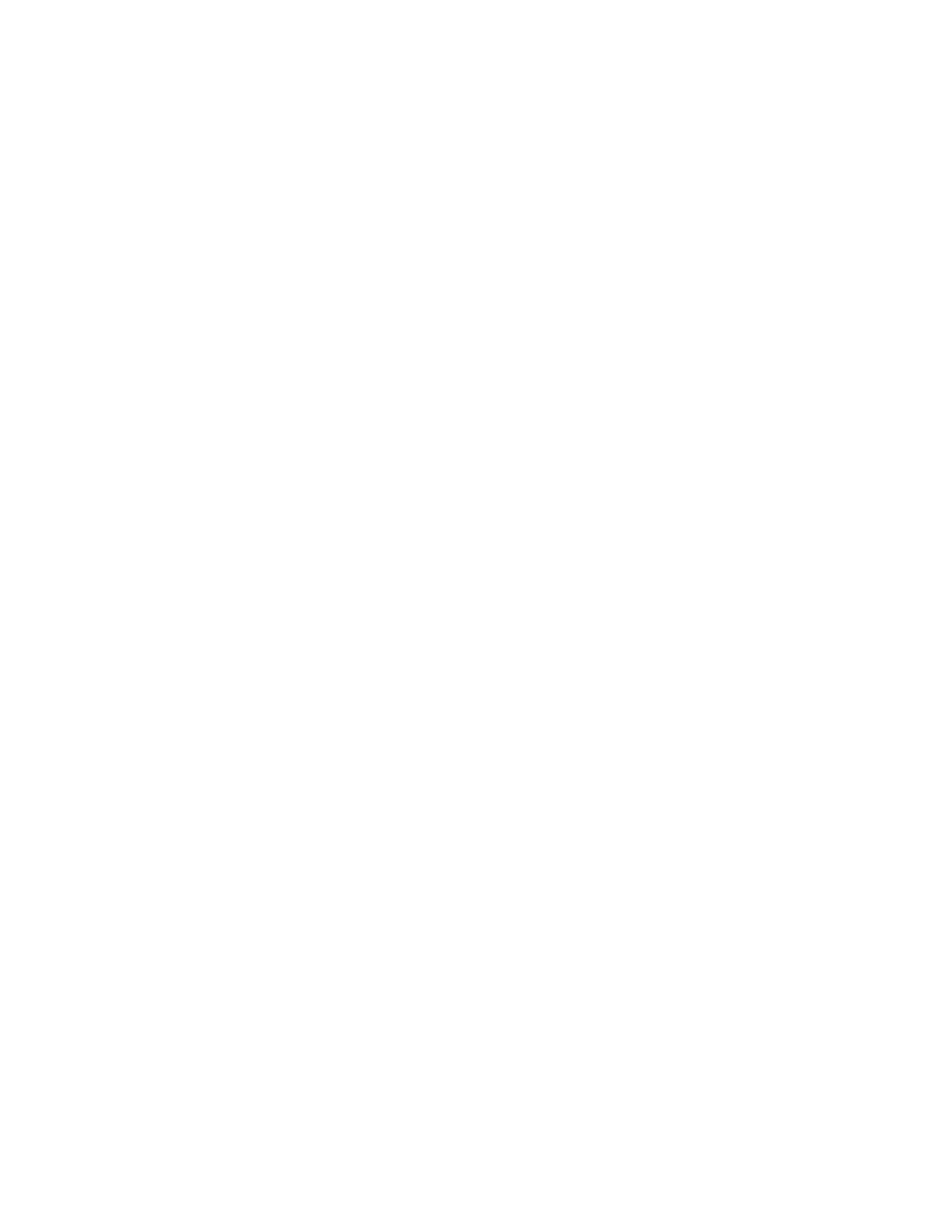}\\
candy&$\longrightarrow$&(empty) candy\\[3ex]
\includegraphics[scale=0.3]{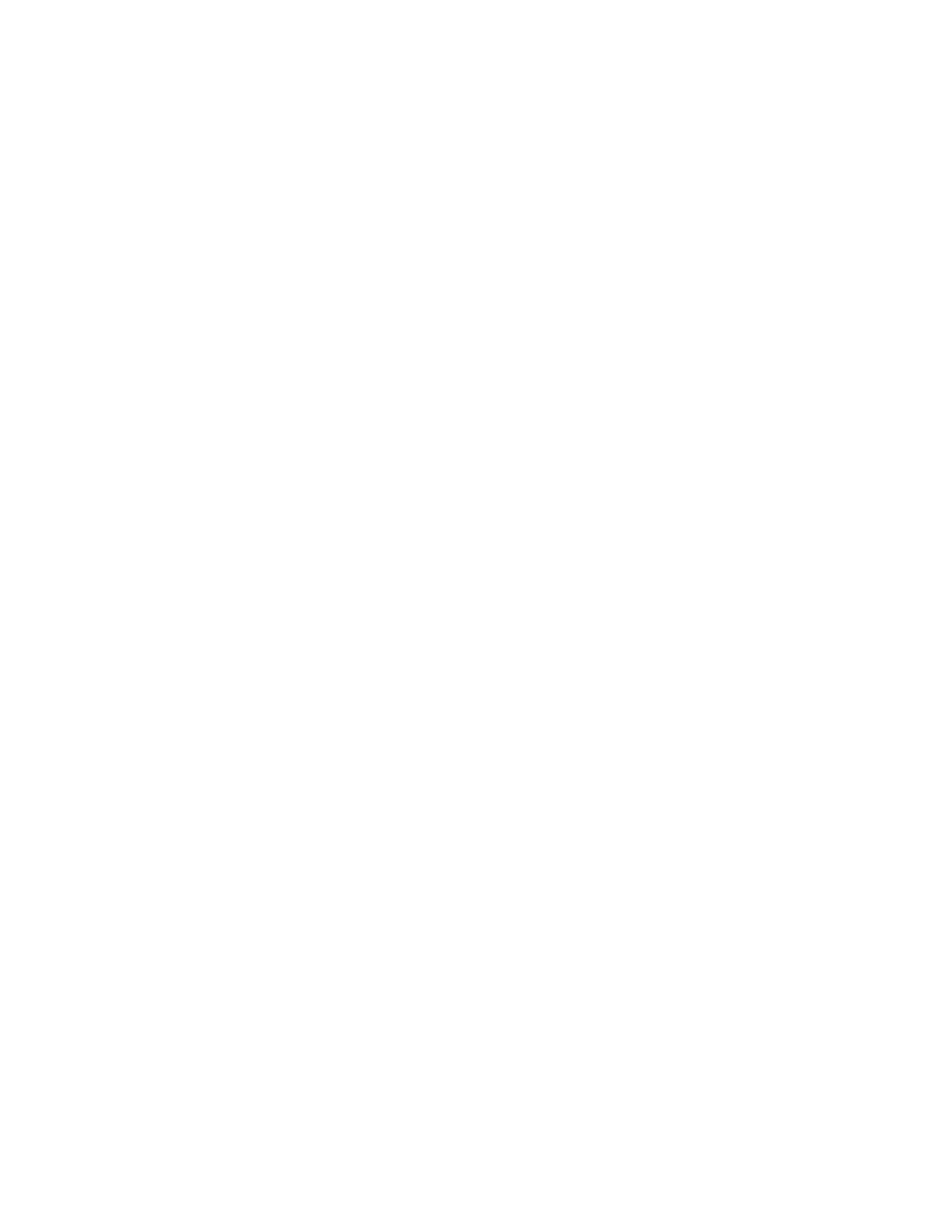}&&\includegraphics[scale=0.3]{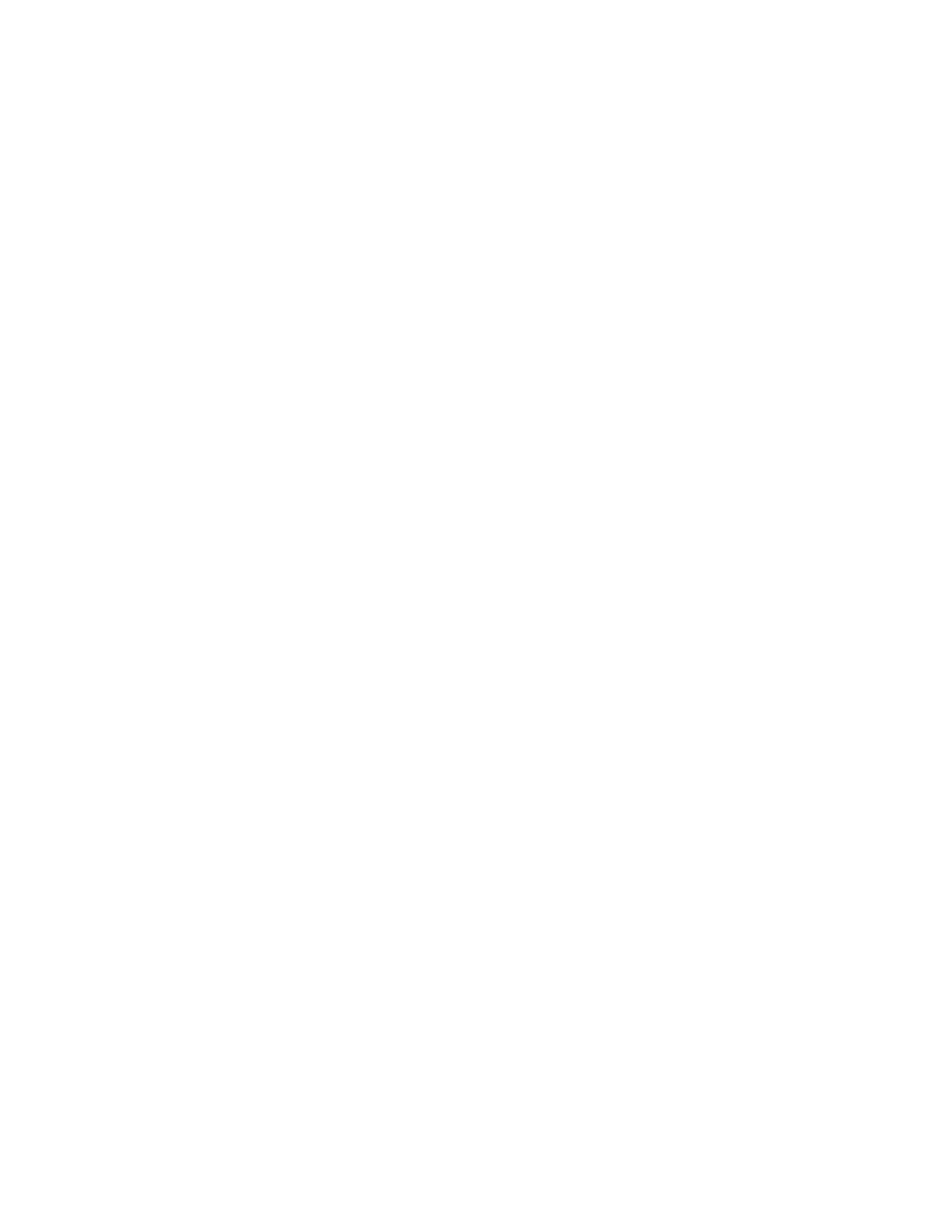}\\
(empty) candy&$\longrightarrow$&two stars
\end{tabular}
\end{center}
\caption{\label{fig:candyhered}\rd{Examples for deleting an edge in a candy}}
\end{figure}


\subsection{Proof of Theorem~\ref{thm:lineBA}}

\begin{proof}[of Theorem~\ref{thm:lineBA}]
We prove the equivalence by a ring closure. 
\begin{description}
\item[(1)$\Longrightarrow$(2)] 
We have to prove that a $P_6$, a $C_5$ and a 3-caterpillar $F_1$ are not 
line $[B,A]$-perfect.
Since, by Lemma~\ref{notAAnicePsix}
and Lemma~\ref{notAAniceCfive}, $P_6$ and $C_5$ are not line $[A,A]$-perfect,
by Observation~\ref{obs:compareclasses} they are not line $[B,A]$-perfect.
Thus, it is sufficient to prove that $F_1$ is not line $[B,A]$-nice.
This was proved in 
Lemma~\ref{notBAniceFone}.

\item[(2)$\Longrightarrow$(3)] 
Let $G$ be a graph that contains no $P_6$, $C_5$, {or} 3-caterpillar $F_1$ as 
{an edge-induced subgraph}.
Thus, in particular, the graph $G$ contains no $P_6$, $C_5$, mini lobster
$F_2$, trigraph $F_3$, and no $F_1\cup F_1$.

By Theorem~\ref{thm:lineAA}, every component of $G$ is a candy,
a star book, a diamond of flowers, a tetrahedron of flowers, a single galaxy,
a double galaxy, a full tree,
or a satellite.
Since a full tree {of type~$E_1$} and a satellite 
{of type~$E_2$ contain a 3-caterpillar} 
no component of $G$ is a full tree {of type~$E_1$} or a satellite 
{of type~$E_2$}. Thus (3) holds.

\item[(3)$\Longrightarrow$(1)] 

The permitted configurations are line $[B,A]$-nice:
we proved this 
for the candy in Lemma~\ref{BniceCandy},
for the star book in Lemma~\ref{BAniceStarBook},
for the single galaxy in Lemma~\ref{BAniceSingleGalaxy},
for the double galaxy in Lemma~\ref{BAniceDoubleGalaxy},
for the diamond of flowers in Lemma~\ref{BAniceDiamondOfFlowers}, and
for the tetrahedron of flowers in Lemma~\ref{BAniceTetrahedronOfFlowers}.

{Let $G$ be a graph whose components are of one of the permitted types
for game $[B,A]$.
Then Alice always reacts in the component where Bob has played
according to her strategy for the $[B,A]$-edge colouring game
(or she misses her turn if this component is completely coloured).
By the mentioned lemmata, Alice will win. 
Thus $G$ is line $[B,A]$-nice.}

Furthermore, the permitted configurations are hereditary, which can be
seen from the first six entries in Table~\ref{tab:AAhereditary}.
{From this we conclude that $G$ is line $[B,A]$-perfect,
which proves~(1).}
\end{description}
\end{proof}

\subsection{Proof of Theorem~\ref{thm:lineB}}

\begin{proof}[of Theorem~\ref{thm:lineB}]
We prove the equivalence by a ring closure. 
\begin{description}
%
%
\item[(1)$\Longrightarrow$(2)] 
This implication is part of Proposition~\ref{prop:lineB}.

\item[(2)$\Longrightarrow$(3)]
Let $G$ be a graph that fulfils (2), i.e., 
it contains no $P_5\cup P_2$, $C_4\cup P_2$, $P_6$, $C_5$,
bull, diamond, {or} 3-caterpillar as {an edge-induced subgraph}.

By (2), the graph $G$, in particular, contains no $P_6$, $C_5$, 3-caterpillar.
Thus, by Theorem~\ref{thm:lineBA}, each component of $G$ is a diamond of
flowers, a tetrahedron of flowers, a candy, a star book, a single galaxy,
or a double galaxy.
Let $H$ be a component of~$G$.

The component $H$ may neither be a diamond of flowers nor a tetrahedron
of flowers, since those two configurations contain a diamond as a subgraph,
which is forbidden by~(2).

Consider the case that $H$ is a star book. It may not contain more than one
book sheet, since otherwise it would contain a diamond, which is forbidden
by~(2). If $H$ has exactly one book sheet, it may not have star edges on
both sides, since otherwise it would contain a bull, which is forbidden by~(2).
Thus, in this case, the component $H$ is a {\bf vase of flowers}. 
If $H$ has no book sheet, then $H$ is a {\bf double star}.

Now consider the case that $H$ is a single galaxy or a double galaxy.
In both cases, the component $H$ has a vertex $v$ with $k_0$ pending $P_2$s,
$k_1$ pending $P_3$s, $k_2$ pending triangles, and $k_3$ pending stars,
where $k_0,k_1,k_2\ge0$ and $k_3\in\{0,1\}$. First note that
\begin{equation}\label{maineqgalaxy}
k_1+k_2+k_3\le2,
\end{equation}
since otherwise, if $k_1+k_2+k_3\ge3$, two of the pending objects would
contain a $P_5$ and the third would contain a $P_2$ that is not adjacent
with the $P_5$, thus $H$ would contain 
{an edge-induced} $P_5\cup P_2$, which is
forbidden by (2). So we may assume that (\ref{maineqgalaxy}) holds. 
We distinguish some cases.
\begin{itemize}
\item
If $k_1=2$ and $k_2=k_3=0$, then $H$ is a {\bf shooting star}.
\item
If $k_1=1$ and $k_2=k_3=0$, then $H$ is a {\bf double star}.
\item
If $k_1=k_2=1$ and $k_3=0$, then $H$ is an {\bf amaryllis}.
\item
If $k_1=k_3=1$ and $k_2=0$, then $H$ is a {\bf shooting star}.
\item
If $k_2\le2$ and $k_1=k_3=0$, then $H$ is a {\bf double vase},
a {\bf vase of flowers}, or a star (which is either a {\bf double star}
or an {\bf isolated vertex}).
\item
If $k_2=k_3=1$ and $k_1=0$, then $H$ is an {\bf amaryllis}.
\item
If $k_3=1$ and $k_1=k_2=0$, then $H$ is a {\bf double star}.
\end{itemize}

Finally, we have to prove that if one of the components of $G$ is a
candy, a shooting star, a double vase, or an amaryllis, but neither
a double star nor a vase of flowers, then $G$ has only one nontrivial
component. We observe that
\begin{itemize}
\item
a candy that is not a double star
contains a $P_5$ or a $C_4$, 
\item
a shooting star that is not a double star contains
a $P_5$,
\item
a double vase contains a $P_5$, and
\item
an amaryllis that is not a vase of flowers contains a $P_5$.
\end{itemize}
Thus, if there is such a component in the graph, then there may not
be another component that contains an edge, since otherwise $G$
would contain a $P_5\cup P_2$ or a $C_4\cup P_2$, which are forbidden
by~(2). Therefore, (3) holds.

\item[(3)$\Longrightarrow$(1)] 
Let $G$ be a graph fulfilling~(3).
Then, by Lemmata~\ref{lem:AwinsBBB},
\ref{BniceCandy},
\ref{BniceShootingStar},
\ref{BniceDoubleVase}, and
\ref{BniceAmaryllis},
the graph $G$ is line $[B,-]$-perfect, i.e., (1)~holds.
%
%
%
\end{description}
\end{proof}

\subsection{Proof of Theorem~\ref{thm:lineBB}}

\begin{proof}[of Theorem~\ref{thm:lineBB}]
We prove the equivalence by a ring closure. 
\begin{description}
\item[(1)$\Longrightarrow$(2)]
This follows from {${\cal LP}[B,B]\subseteq{\cal LP}[A,B]$ (Observation~\ref{obs:compareclasses})}.
 
\item[(2)$\Longrightarrow$(3)] 
This follows from {${\cal LP}[A,B]\subseteq{\cal LP}[A,-]$ (Observation~\ref{obs:compareclasses})}.

\item[(3)$\Longrightarrow$(4)] 
This implication is part of Proposition~\ref{prop:lineA}.

\item[(4)$\Longrightarrow$(5)] 
Let $G$ be a graph that contains neither $P_5$ nor $C_4$ as an edge-induced subgraph.
Thus, in particular $G$ contains no $P_5\cup P_2$, $C_4\cup P_2$, $P_6$,
$C_5$, bull, diamond, 3-caterpillar (since these configurations either
contain a $P_5$ or, in the case of $C_4\cup P_2$, a $C_4$). Thus
$G$ is line $[B,-]$-perfect. Therefore, by Theorem~\ref{thm:lineB}
every component of $G$ is a double star, a vase of flowers, an isolated vertex,
a candy, a shooting star, a double vase, or an amaryllis. Let $H$ be a
component of $G$.

If $H$ is a candy, then it must be an empty candy, since otherwise it
would contain a $C_4$, which is forbidden by~(4). Furthermore, it may
not have star edges at both sides, since otherwise it would contain a $P_5$,
which is forbidden by~(4). Thus $H$ is a {\bf double star}.

If $H$ is a shooting star, then it must have diameter 3, since otherwise
it would contain a $P_5$, which is forbidden by~(4). Thus $H$ is a
{\bf double star}.

Note that $H$ may not be a double vase as the two triangles contain 
{an edge-induced} $P_5$,
which is forbidden by~(4).

Furthermore, if $H$ is an amaryllis, then the pending star must be empty,
since otherwise two edges of the pending star and two edges of the triangle
would form a $P_5$, which is forbidden by~(4).
{Thus $H$ is a {\bf vase of flowers}.}

{We conclude that} (5) holds.

\item[(5)$\Longrightarrow$(1)]
We have to prove that graphs each component of which is a
double star, vase of flowers or isolated vertex is line $[B,B]$-perfect.
%
This was shown in Lemma~\ref{lem:AwinsBBB},
which proves the last implication of the theorem.
\end{description}
\end{proof}

\section{Final remarks}
\label{sec:final}

In this paper, we completely characterize line game-perfect graphs for all six
possible games. 

\subsection{\rd{Similar characterisations for vertex colouring games}}

Similar characterisations for game-perfect graphs
(where vertex games are considered instead of edge games) are only known
for the games $[B,B]$, $[A,B]$, $[A,-]$ and $[B,-]$. Thus the following
question might be interesting for further research.

\begin{prob}\label{prob:genGPG}
Characterise game-perfect graphs for the games $[B,A]$ and $[A,A]$ (by
forbidden induced subgraphs and/or explicit structural descriptions).
\end{prob} 

\rd{Note that we have no idea how to extend our methods to the more general case
of Problem~\ref{prob:genGPG}.
This might be very difficult as there are infinitely many minimal
forbidden configurations, namely (among others) all odd antiholes
\cite[Thm~23]{andresgperfect}.}

\rd{There is a historic analog for this discrepancy: 
the characterisation of line-perfect graphs by forbidden subgraphs
was found by \cite{trotter} in 1977, 
but the more general result,
the characterisation of perfect graphs by forbidden induced subgraphs
(the famous Strong Perfect Graph Theorem)
was proved by \cite{spgt}
nearly 30 years later and published in 2006.}

\rd{We remark that an analog for the explicit characterisation of 
line perfect graphs by \cite{maffray} has
not yet been found (more than 30 years later)
for perfect graphs.}

\subsection{\rd{A variant of line game-perfectness}}

One might also consider a variant of line game-perfectness: A graph $G$
is \emph{edge $[X,Y]$-perfect} if, for any edge-induced subgraph $H$ of $G$,
\[\chi_{[X,Y]}'(H)=\Delta(H).\]
By Corollary~\ref{cor:whitney}, the only difference between line $[X,Y]$-perfect
graphs and edge $[X,Y]$-perfect graphs is that in edge $[X,Y]$-perfect graphs
we have an additional forbidden configuration, namely the triangle $K_3$. Thus,
edge $[X,Y]$-perfect graphs can be obtained from our explicit structural
descriptions of line $[X,Y]$-perfect graphs by deleting all graphs that contain
a triangle, which leaves fairly trivial classes of graphs. Therefore our
notion of line $[X,Y]$-perfect graphs might be the better concept to describe
game-perfectness for edge colouring games.

\subsection{\rd{Games with \rdrd{a} bounded number of skipping turns}}
\label{subsec:reviewer}

\rd{In our games, skipping a turn was either forbidden or allowed for an
\rdrd{unlimited} number of turns. Now, we consider the question what happens
if we allow only a bounded number of skipping turns.}

\rd{Let $X,Y\in\{A,B\}$, and $k,s\in{\mathbb{N}}$, and $G$ be a graph. 
In the \emph{edge colouring game} $[X,Y]_s$ 
played with $k$ colours on the graph $G$ the
players alternately move with player $x$ beginning. Player $Y$ may skip
a turn (including the first one) up to $s$ times. A move that is not skipped
consists in colouring an uncoloured edge $e$ of $G$ with a colour from the
set $\{1,2,\ldots,k\}$ that \rdrd{is} different from the colours of the edges adjacent
to~$e$. This game defines a \emph{game chromatic index} $\chi_{[X,Y]_s}'(G)$
of the graph~$G$. The graph $G$ is \emph{line $[X,Y]_s$-perfect} if,
for any edge-induced subgraph $H$ of $G$,}
\[\omega(L(H))=\chi_{[X,Y]_s}'(H).\]
\rd{The class of all line $[X,Y]_s$-perfect graphs is denoted by
${\cal LP}[X,Y]_s$. By definition,} 
\[{\cal LP}[X,Y]_0={\cal LP}[X,-].\]

\rd{We observe the following.}

\begin{obs}\label{obs:skipone}
\rd{Let $X\in\{A,B\}$. Then we have:
\begin{itemize}
\item[(i)]
${\cal LP}[X,B]\subseteq\ldots\subseteq
{\cal LP}[X,B]_3\subseteq
{\cal LP}[X,B]_2\subseteq
{\cal LP}[X,B]_1\subseteq
{\cal LP}[X,-]$
\item[(ii)]
${\cal LP}[X,-]\subseteq
{\cal LP}[X,A]_1\subseteq
{\cal LP}[X,A]_2\subseteq
{\cal LP}[X,A]_3\subseteq\ldots\subseteq
{\cal LP}[X,A]$
\end{itemize}}
\end{obs}

\begin{proof}
\rd{This holds since the possibility to skip one time more is no disadvantage
for the player who is allowed to skip.}
\end{proof}

\begin{obs}\label{obs:skiptwo}
\rd{Let $s\in{\mathbb{N}}$. Then we have:
\begin{itemize}
\item[(i)] ${\cal LP}[B,B]_{s+1}\subseteq{\cal LP}[A,B]_{s}$
\item[(i)] ${\cal LP}[B,A]_{s}\subseteq{\cal LP}[A,A]_{s+1}$
\end{itemize}}
\end{obs}

\begin{proof}
\rd{Ad (i): If Bob has a winning strategy for the game $[A,B]_s$ on a
graph $G$, then, by skipping his first turn, he can use the same
strategy in order to win the game $[B,B]_{s+1}$ played on~$G$.} 

\rd{Ad (ii): If Alice has a winning strategy for the game $[B,A]_s$ on a
graph $G$, then, by skipping her first turn, she can use the same
strategy in order to win the game $[A,A]_{s+1}$ played on~$G$.} 
\end{proof}

\rd{Using the two observations above, we obtain the following corollary
of Theorem~\ref{thm:lineBB}.}

\begin{cor}\label{cor:skip}
\rd{Let $s\in{\mathbb{N}}$ with $s\ge1$. Then}
\[{\cal LP}[X,B]_s={\cal LP}[B,B].\]
\end{cor}

\begin{proof}
\rd{For any $s\in{\mathbb{N}}$ with $s\ge1$, by Theorem~\ref{thm:lineBB},
Observation~\ref{obs:compareclasses},
Observation~\ref{obs:skipone}~(i), and Observation~\ref{obs:skiptwo}~(i), 
we have}
\begin{eqnarray*}
{\cal LP}[B,B]
\stackrel{\text{Obs}~\ref{obs:compareclasses}}{\subseteq}{\cal LP}[A,B]
&\stackrel{\text{Obs}~\ref{obs:skipone}~(i)}{\subseteq}&{\cal LP}[A,B]_s\\
&\stackrel{\text{Obs}~\ref{obs:skipone}~(i)}{\subseteq}&{\cal LP}[A,-]
\stackrel{\text{Thm}~\ref{thm:lineBB}}{=}{\cal LP}[B,B]
\end{eqnarray*}
\rd{and}
\begin{eqnarray*}
{\cal LP}[B,B]
\stackrel{\text{Obs}~\ref{obs:skipone}~(i)}{\subseteq}{\cal LP}[B,B]_s
&\stackrel{\text{Obs}~\ref{obs:skiptwo}~(i)}{\subseteq}&{\cal LP}[A,B]_{s-1}\\
&\stackrel{\text{Obs}~\ref{obs:skipone}~(i)}{\subseteq}&{\cal LP}[A,-]
\stackrel{\text{Thm}~\ref{thm:lineBB}}{=}{\cal LP}[B,B].
\end{eqnarray*}
\rd{Thus, the equality ${\cal LP}[X,B]_s={\cal LP}[B,B]$
is true when $X=A$ or $X=B$.}
\end{proof}

\rd{According to Corollary~\ref{cor:skip}, our new games give no new classes
in case the skipping is allowed to Bob. The situation changes if the
skipping is allowed to Alice. Here the characterisation of the 
respective classes
of line game-perfect graphs seems to be very intricate. One reason is that
in our strategies for Alice sometimes Alice has to miss a turn in order
to avoid beginning to colour in a new, uncoloured component, which
makes the discussion of disconnected graphs very difficult. But
even for connected graphs our strategies require that Alice skips
several times. This is the case for single or double galaxies, where
our strategies require that Alice skips if Bob plays on the second
star edge of a pending \rdrd{triangle. Note that} a (single or double) galaxy may
have arbitrarily many pending \rdrd{triangles; therefore it might}
seem to be straightforward
that some of the classes ${\cal LP}[X,A]_s$ are different from
the classes ${\cal LP}[X,-]$ and ${\cal LP}[X,A]$.
However, it is not clear whether the strategies given in this
paper cannot be improved in some way using fewer skipping moves.}

\begin{prob}\label{prob:reviewer}
\rd{For any $s\in{\mathbb{N}}\setminus\{0\}$ and $X\in\{A,B\}$, 
characterise the class ${\cal LP}[X,A]_s$, i.e.,
characterise line game-perfect graphs for the game $[X,A]_s$ (by
forbidden edge-induced subgraphs and/or explicit structural descriptions).}
\end{prob}

\rd{The edge-colouring games $[X,Y]_s$ defined in this section might be considered
more generally, thus, we might define vertex colouring games $[X,Y]_s$
in the same way. Then we might ask the following question.}

\begin{prob}
\rd{For any $s\in{\mathbb{N}}\setminus\{0\}$ and $X,Y\in\{A,B\}$, 
characterise game-perfect graphs for the game $[X,Y]_s$ (by
forbidden induced subgraphs and/or explicit structural descriptions).}
\end{prob}

\rd{We expect that the answer to this question will be even more intricate
than the answer to Problem~\ref{prob:reviewer}.}


\acknowledgements
\label{sec:ack}
\rd{The authors thank the two \rdrd{anonymous} reviewers for many useful suggestions
that helped to improve the presentation of the paper.
Furthermore, in particular, we acknowledge \rdrd{that} the idea of the games 
discussed in Section~\ref{subsec:reviewer}
\rdrd{originates} from one of the reviewers.}

\nocite{*}
\bibliographystyle{abbrvnat}
\bibliography{linegameperfect}
\label{sec:biblio}

\end{document}